\numberwithin{equation}{section}
\numberwithin{figure}{section}
\theoremstyle{plain}
\newtheorem{thm}{\protect\theoremname}[section]
\theoremstyle{remark}
\newtheorem{rem}[thm]{\protect\remarkname}
\theoremstyle{plain}
\newtheorem{cor}[thm]{\protect\corollaryname}
\theoremstyle{plain}
\newtheorem{lem}[thm]{\protect\lemmaname}
\theoremstyle{definition}
\newtheorem{defn}[thm]{\protect\definitionname}
\theoremstyle{plain}
\newtheorem{prop}[thm]{\protect\propositionname}
\providecommand{\corollaryname}{Corollary}
\providecommand{\definitionname}{Definition}
\providecommand{\lemmaname}{Lemma}
\providecommand{\propositionname}{Proposition}
\providecommand{\remarkname}{Remark}
\providecommand{\theoremname}{Theorem}
\begin{document}
\title{Ricci Flow with Ricci Curvature and Volume Bounded Below}
\author{Max Hallgren}
\begin{abstract}
We show that a simply-connected closed four-dimensional Ricci flow
whose Ricci curvature is uniformly bounded below and whose volume
does not approach zero must converge to a $C^{0}$ orbifold at any
finite-time singularity, so has an extension through the singularity
via orbifold Ricci flow. Moreover, a Type-I blowup of the flow based
at any orbifold point converges to a flat cone in the Gromov-Hausdorff
sense, without passing to a subsequence. In addition, we prove $L^{p}$
bounds for the curvature tensor on time-slices for any $p<2$. In
higher dimensions, we show that every singular point of the flow is
a Type-II point, and that any tangent flow at a singular point is
a static flow corresponding to a Ricci flat cone. 
\end{abstract}

\maketitle

\section{Introduction}

In this paper, we study closed solutions $(M^{n},(g_{t})_{t\in[0,T)})$
of Ricci flow satisfying the following assumptions for all $t\in[0,T)$:

\begin{equation}
Rc(g_{t})\geq-Ag_{t},\label{eq:ric}
\end{equation}
\begin{equation}
|M|_{g_{t}}\geq A^{-1},\label{eq:vol}
\end{equation}
where $A<\infty$ is constant. One motiviation for studying such flows
is to get a clearer picture of how the curvature of a Ricci flow fails
to be controlled near finite-time singularities. N. {\v S}e{\v s}um showed in
\cite{sesumricci} that any Ricci flow satisfying a two-sided curvature
bound
\[
-Ag_{t}\leq Rc(g_{t})\leq Ag_{t}
\]
cannot develop a finite-time singularity. B. Wang showed in \cite{wang1}
that a Ricci flow satisfying (\ref{eq:ric}) as well as the spacetime
integral estimate for scalar curvature
\[
\int_{0}^{T}\int_{M}R^{\frac{n+2}{2}}dg_{t}dt<\infty
\]
cannot develop a singularity either. Even when $n=4$, it is still
an open problem whether a finite-time singularity can occur for a
Ricci flow with bounded scalar curvature:
\begin{equation}
\sup_{M\times[0,T)}R<\infty,\label{eq:scal}
\end{equation}
(in which case assumption (\ref{eq:vol}) holds automatically) though
considerable progress has been made \cite{BZ1,BZ2,bam2,chenwang1,chenwang2a,chenwang2b,simonscalar}.
In particular, Bamler-Zhang \cite{BZ1} proved, and it was shown independently
by M. Simon \cite{simonscalar}, that in four dimensions, a Ricci
flow satisfying (\ref{eq:scal}) must converge in the Gromov-Hausdorff
sense as $t\nearrow T$ to a $C^{0}$ Riemannian orbifold, with smooth
convergence away from the orbifold points. Simon also showed that
such a flow can be continued through the singularity via orbifold
Ricci flow (Theorem 9.1 of \cite{simonscalar}). 

Ricci flow solutions satisfying (\ref{eq:ric}),(\ref{eq:vol}) were
studied by X. Chen and F. Yuan in \cite{chenlowerbound}, where they
asked whether such a flow can develop a singularity in finite time.
They found that this cannot occur in dimension three. This had previously
been shown by Z. Zhang (Theorem 1.1 of \cite{zhanglowerricci}) in
all dimensions if we assume $g_{0}$ is K{\"a}hler. Assumption (\ref{eq:ric})
by itself is clearly insufficient to rule out singularities (the round
shrinking sphere is a counterexample), and in general the behavior
of Ricci flows satisfying (\ref{eq:ric}) and $\lim_{t\to T}|M|_{g_{t}}=0$
is more complicated than those satisfying (\ref{eq:ric}),(\ref{eq:vol}). 

Another motivation for considering conditions (\ref{eq:ric}),(\ref{eq:vol})
is the extensive compactness and partial regularity theory developed
by Cheeger-Colding-Naber-Tian-Jiang \cite{coldingvolume,cheegercoldingwarped,cheegercolding1,cheegercolding2,cheegercoldingtian,cheegernaberquant,cheegernaberdim4,rectifiability}
for sequences of Riemannian manifolds $(M_{i},g_{i})$ satisfying
assumptions

\begin{equation}
Rc(g_{i})\geq-Ag_{i},\label{eq:ccnric}
\end{equation}
\begin{equation}
|M_{i}|_{g_{i}}\geq A^{-1},\label{eq:ccnvol}
\end{equation}
\begin{equation}
\text{diam}_{g_{i}}(M_{i})\leq A.\label{eq:ccndiam}
\end{equation}
We refer to Gromov-Hausdorff limits of such $(M_{i},g_{i})$ (or pointed
Gromov-Hausdorff limits if we drop assumption (\ref{eq:ccndiam}))
as noncollapsed Ricci limit spaces. The aforementioned works showed
that any sequence satisfying (\ref{eq:ccnric}),(\ref{eq:ccnvol}),(\ref{eq:ccndiam})
must subsequentially converge to a compact metric length space $(X,d)$
which is bi-H\"older homeomorphic to a smooth Riemannian manifold on
a subset $\mathcal{R}$ whose complement has Hausdorff codimension
two \cite{rectifiability}. Also, if $dg_{i}$ is the Riemannian volume
measure of $(M_{i},g_{i})$, then 
\[
(M_{i},g_{i},dg_{i})\to(X,d,\mathcal{H}^{n})
\]
in the measured Gromov-Hausdorff sense, where $\mathcal{H}^{n}$ is
the n-dimensional Hausdorff measure of $X$ (this is known as Colding's
Volume Convergence Theorem \cite{coldingvolume}). Theorem 1 of \cite{chenlowerbound}
states that if $(M_{i},g_{i})=(M,g_{t_{i}})$ for some sequence of
times $t_{i}\nearrow T$, where $(M,(g_{t})_{t\in[0,T)})$ is a closed
Ricci flow satisfying (\ref{eq:ric}),(\ref{eq:vol}), then $\mathcal{R}$
is open and actually has the structure of a smooth Riemannian manifold.
The intuition espoused in \cite{chenlowerbound} is that the Ricci
flow equation should imply that in some respects, $(M,g_{t_{i}})_{i\in\mathbb{N}}$
behaves like a sequence of Riemannian manifolds satisfying (\ref{eq:ccnvol}),(\ref{eq:ccndiam}),
and the two-sided Ricci bound
\begin{equation}
-Ag_{i}\leq Rc(g_{i})\leq Ag_{i},\label{eq:2sidedric}
\end{equation}
whose limit spaces have singularities of codimension four \cite{cheegernaberdim4}.
Some evidence for this idea is that limits of $(M,g_{t_{i}})_{i\in\mathbb{N}}$
must satisfy Anderson's $\epsilon$-regularity theorem for the volume
ratio, which usually only holds for Ricci limit spaces corresponding
to sequences satisfying (\ref{eq:2sidedric}) (Theorem 3.2 and Remark
3.3 of \cite{anderson}).

In this paper, we address the question posed in \cite{chenlowerbound},
with our strongest results holding only in the special case of dimension
four. The structure of the limit space in four dimensions is summarized
in the following theorem.
\begin{thm}
\label{thm:theorem1} Suppose $(M^{4},(g_{t})_{t\in[0,T)})$ is a
simply connected four-dimensional closed Ricci flow satisfying $Rc(g_{t})\geq-Ag_{t}$
and $\inf_{t\in[0,T)}|M|_{g_{t}}\geq A^{-1}>0$ for some constant
$A<\infty$. Then $(M,d_{g_{t}})$ converge in the Gromov-Hausdorff
sense as $t\nearrow T$ to a Ricci limit space $(X,d)$ with the structure
of a $C^{0}$ Riemannian orbifold with finite singular set. Moreover,
the convergence is smooth away from the singular points of $X$, and
each singular point $\overline{x}\in X$ has unique tangent cone equal
to $C(\mathbb{S}^{3}/\Gamma_{\overline{x}})$ for some finite subgroup
$\Gamma_{\overline{x}}\leq O(4,\mathbb{R})$. 
\end{thm}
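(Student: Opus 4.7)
My approach is to combine the non-collapsed Ricci limit space theory with Ricci flow tangent-flow analysis to produce the orbifold limit, then to use simple-connectedness and uniqueness of the tangent cone at each singular point to obtain the stated structure.

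First I would establish Gromov--Hausdorff precompactness. Under (\ref{eq:ric}), $\partial_t g_t = -2Rc(g_t) \le 2Ag_t$, so $g_t \le e^{2At}g_0$ and the diameter of $(M,g_t)$ is uniformly bounded on $[0,T)$. Together with (\ref{eq:ric}) and (\ref{eq:vol}), this places $(M,g_{t_i})_{t_i \nearrow T}$ in the Cheeger--Colding--Naber--Tian--Jiang regime, so any sequence of times subsequentially converges in the measured Gromov--Hausdorff sense to a non-collapsed Ricci limit space $(X,d,\mathcal{H}^4)$, and by Cheeger--Naber $\dim_{\mathcal{H}}\mathcal{S}(X)\le 0$. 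To upgrade dimension zero to discreteness I would invoke the Anderson-type volume $\epsilon$-regularity available in this flow setting (via the Chen--Yuan result cited in the introduction): for some $\epsilon(A)>0$, any point with volume density exceeding $1-\epsilon$ lies in the regular part $\mathcal{R}$, so singular points have strictly smaller density and form a finite set by compactness of $X$.

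Next I would identify the tangent cones at each singular point $\bar x$. By Cheeger--Colding, any tangent cone is a metric cone $C(Y)$. The Ricci flow structure forces this cone to be Ricci-flat: a Type-I rescaling at $\bar x$ produces a tangent flow in the sense of Bamler, and the paper's higher-dimensional companion statement (promised in the abstract) identifies every such tangent flow as a static flow on a Ricci-flat cone. In dimension four a non-collapsed Ricci-flat cone whose cross-section is smooth (by codimension-$4$ regularity applied to $Y$) must have $Y$ of constant sectional curvature $+1$, so $Y\cong\mathbb{S}^3/\Gamma$ for some finite $\Gamma\le O(4,\mathbb{R})$. For uniqueness of the cone at $\bar x$, I would couple blow-ups at different scales through the static tangent flow: the volume density is a single real number, fixing $|\Gamma|$, while the static rigidity forbids continuous drift in the conjugacy class of $\Gamma$ across the scale parameter.

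Finally I would assemble the $C^0$ orbifold structure and upgrade subsequential convergence to convergence along all $t\nearrow T$. Uniqueness of the singular tangent cone, combined with $\epsilon$-regularity on nearby annular regions where the volume density is near that of $\mathbb{R}^4/\Gamma$, yields $C^0$ orbifold charts at each singular $\bar x_k$ modelled on $\mathbb{R}^4/\Gamma_k$; simple-connectedness of $M$ removes any potential monodromy obstruction when extending local covers to the limit. Smooth convergence on $\mathcal{R}$ then follows from the Chen--Yuan smoothness of $\mathcal{R}$ together with Shi-type interior estimates applied to the flow. Convergence along all $t\nearrow T$ (not merely subsequentially) follows from the Lipschitz-in-$t$ distance control from (\ref{eq:ric}), combined with the pointwise uniqueness just established. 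The main obstacle I expect is exactly this uniqueness of tangent cones: Cheeger--Colding--Naber give them only up to subsequence, and the Einstein analogue of Cheeger--Tian rests on elliptic rigidity arguments that do not port verbatim to the flow, so the bulk of the technical work should lie in a parabolic rigidity statement inside Bamler's tangent-flow framework that rules out $\Gamma$-drift across scales.
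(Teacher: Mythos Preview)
Your proposal has the right high-level shape but contains a genuine structural gap: you conflate \emph{tangent flows} (spacetime $\mathbb{F}$-limits of Type-I rescalings, based at conjugate heat kernels) with \emph{tangent cones of the limit space $X$} (spatial GH-limits of dilations of $(X,d,\bar x)$). The paper's Theorem~\ref{thm:theoremhigherdim} tells you the former is a Ricci-flat cone, but it says nothing directly about the latter. Bridging these is the core of the proof, and your outline skips it. Concretely, the paper first shows (via a delicate argument using the Jordan--Brouwer separation theorem---this is where simple-connectedness actually enters, not via monodromy) that the singular point $x$ itself is GH-close to the vertex of the Type-I limit cone at every scale $\sqrt{T-t}$. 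It then applies pseudolocality on annular regions of the Type-I rescaling to push curvature bounds \emph{forward to time $T$}, obtaining $\widetilde r_{Rm}^X(\bar y)\ge c\,d_X(\bar x,\bar y)$ near $\bar x$. This is what gives isolatedness (hence finiteness) of $\mathcal S(X)$ and the quadratic curvature decay needed for smooth tangent cones of $X$---your appeal to ``$\dim_{\mathcal H}\mathcal S\le 0$ via Cheeger--Naber'' is unjustified (Cheeger--Naber requires a two-sided Ricci bound, which you do not have) and in any case Hausdorff dimension zero does not imply finiteness.

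Second, once you know tangent cones of $X$ have smooth links, showing they are \emph{flat} is not a consequence of the tangent-flow statement either. The paper runs a pointwise maximum-principle computation for $Rc(\partial_r,\partial_r)$ on a short backwards Ricci flow extracted from the blowup sequence (Claim~3 in the proof), concluding $Rc\equiv 0$ on the cone and hence constant curvature $+1$ on the link. Uniqueness of the tangent cone then comes from discreteness of conjugacy classes of finite subgroups of $O(4,\mathbb R)$ together with continuity in $t$ of the rescaled metrics---not from a parabolic rigidity theorem in Bamler's framework as you anticipate. So the ``bulk of the technical work'' lies elsewhere than you predict: it is in locating $x$ near the cone vertex (where Jordan--Brouwer and a heat-kernel Gaussian bound are used) and in the pseudolocality transfer to time $T$.
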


\begin{rem}
The assumption of simple connectedness is only used to imply orientability
and the generalized Jordan-Brouwer Separation Theorem, so may be replaced
with the weaker condition $H_{1}(M,\mathbb{Z}/2\mathbb{Z})=\{0\}$.
Even this condition may not be necessary, though it is convenient
for our proof of Theorem \ref{thm:theorem1}.
\end{rem}

In addition, $X=M/\sim$ is a topological quotient of the original
space $M$ (see Section 3, and also Corollary 1.3 of \cite{BZ2}).
The orbifold structure of $X$ mirrors the corresponding result for
four-dimensional Gromov-Hausdorff limits of sequences of pointed Riemannian
manifolds satisfying (\ref{eq:ccnvol}),(\ref{eq:ccndiam}),(\ref{eq:2sidedric}).
In fact, \cite{nakajimaale,cheegernaberdim4} imply that such limits
must be $C^{0}$ Riemannian orbifolds. We also note that the same
result was proved for time-slices of Ricci flows by \cite{BZ1,simonscalar}
with the hypotheses (\ref{eq:ric}),(\ref{eq:vol}) replaced by (\ref{eq:scal}).
Using Theorem \ref{thm:theorem1}, we can apply Theorem 9.1 of \cite{simonscalar}
to conclude that there is a flow through the singularity at time $T$.
\begin{cor}
With the same hypotheses of Theorem \ref{thm:theorem1}, there exists
$\delta>0$ and a solution $(\widetilde{M}^{4},(\widetilde{g}_{t})_{t\in[T,T+\delta)})$
of the orbifold Ricci flow such that 
\[
\lim_{t\searrow T}d_{GH}\left((\widetilde{M},d_{\widetilde{g}_{t}}),(X,d)\right)=0.
\]
\end{cor}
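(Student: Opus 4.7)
The plan is to reduce the corollary to a direct invocation of Theorem 9.1 of \cite{simonscalar}, once Theorem \ref{thm:theorem1} is established. Theorem \ref{thm:theorem1} produces precisely the data that Simon's orbifold Ricci flow short-time existence theorem takes as initial datum: a compact $C^{0}$ Riemannian orbifold $(X,d)$ with finitely many singular points, each with unique tangent cone $C(\mathbb{S}^{3}/\Gamma_{\overline{x}})$ for a finite subgroup $\Gamma_{\overline{x}}\leq O(4,\mathbb{R})$, together with smooth Riemannian structure on the regular set $\mathcal{R}=X\setminus\{\overline{x}_{1},\dots,\overline{x}_{N}\}$.

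First I would check that the orbifold charts produced in the course of proving Theorem \ref{thm:theorem1} match the hypotheses of Simon's existence theorem: namely, that each singular point $\overline{x}$ has a neighborhood homeomorphic to a ball in $\mathbb{R}^{4}/\Gamma_{\overline{x}}$, carrying a $C^{0}$ Riemannian metric which lifts to a $C^{0}$ $\Gamma_{\overline{x}}$-invariant metric that is smooth away from the origin, and that this smooth structure agrees with the smooth structure on $\mathcal{R}$. Both properties are asserted directly by Theorem \ref{thm:theorem1}.

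Second, I would apply Theorem 9.1 of \cite{simonscalar} to $(X,d)$ to produce the orbifold Ricci flow $(\widetilde{M},(\widetilde{g}_{t})_{t\in[T,T+\delta)})$, together with the Gromov--Hausdorff convergence of $(\widetilde{M},d_{\widetilde{g}_{t}})$ back to $(X,d)$ as $t\searrow T$. The convergence statement in the corollary is part of the output of Simon's theorem, corresponding to the fact that $\widetilde{g}_{t}$ starts from the orbifold metric on $X$.

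The only step requiring any care is the compatibility check in the first paragraph: matching the orbifold regularity and tangent-cone data from Theorem \ref{thm:theorem1} to the precise hypotheses of Simon's existence theorem. Since \cite{simonscalar} deduces an analogous corollary from its orbifold convergence theorem under the scalar curvature hypothesis (\ref{eq:scal}), and the orbifold structure produced here is of the same quality, no new analytic input is required beyond Theorem \ref{thm:theorem1} itself.
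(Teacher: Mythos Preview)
Your proposal is correct and matches the paper's own treatment exactly: the paper does not give a separate proof of this corollary, merely stating (in the paragraph preceding it) that ``we can apply Theorem 9.1 of \cite{simonscalar} to conclude that there is a flow through the singularity at time $T$.'' Your additional remarks about checking that the orbifold structure from Theorem \ref{thm:theorem1} meets the hypotheses of Simon's theorem are reasonable due diligence but go slightly beyond what the paper records.
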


One major difference between our setting and that of Ricci flows satisfying
(\ref{eq:scal}) is that in \cite{BZ1}, it is shown that the 'deepest
bubbles' of $(M,(g_{t})_{t\in[0,T)})$ satisfying (\ref{eq:scal})
(that is, the dilation limits arising from rescaling so that the maximum
value of $|Rm|_{g_{t}}$ is 1) are Ricci-flat ALE spaces (Corollary
1.9 of \cite{BZ1}), so that orbifold singularities are precluded
by a simple topological condition (Corollary 1.10 of \cite{BZ1}).
We are unable to prove such results in our setting at present, but
we are still able to give a fairly complete description of the behavior
of the flow at the Type-I curvature scale. Both the statement and
the proof of this behavior rely heavily on the recent convergence
and partial regularity theory for Ricci flows produced by R. Bamler
\cite{bamlergen1,bamlergen2,bamlergen3}, which can be seen as the
parabolic analogue of Cheeger-Colding-Naber-Tian-Jiang's theory. In
\cite{bamlergen2}, Bamler defines the parabolic notion of a metric
space, called a metric flow (Definition 3.2 of \cite{bamlergen2}),
and the parabolic analogue of pointed Gromov-Hausdorff convergence
of pointed metric spaces, termed $\mathbb{F}$-convergence of metric
flow pairs (Definition 5.8 \cite{bamlergen2}). Roughly speaking,
he shows that any sequence of Ricci flows $(M_{i},(g_{t}^{i})_{t\in I^{i}},(\nu_{x_{i},t_{i};t}^{i})_{t\in I^{i}\cap(-\infty,t_{i})})$,
where $\nu_{x_{i},t_{i};t}^{i}=K^{i}(x_{i},t_{i};\cdot,t)dg_{t}$,
and $K^{i}(x_{i},t_{i};\cdot,\cdot)$ is the conjugate heat kernel
of $(M_{i},(g_{t}^{i})_{t\in I^{i}})$ based at $(x_{i},t_{i})$,
$\mathbb{F}$-converge (after passing to a subsequence) to some metric
flow, which is a smooth Ricci flow spacetime (see Definition 9.1 of
\cite{bamlergen2}) outside of a set of parabolic codimension four.
If $(M_{i},(g_{t}^{i})_{t\in I^{i}})$ are Type-I rescalings of any
fixed Ricci flow with fixed basepoint, Bamler proves that any $\mathbb{F}$-limit
(called a tangent flow) must be a singular shrinking gradient Ricci
soliton with singularities of Minkowski codimension four (Theorem
1.19 of \cite{bamlergen3}). We show that under assumptions (\ref{eq:ric}),(\ref{eq:vol}),
such a tangent flow must be a static flow modeled on $C(\mathbb{S}^{3}/\Gamma)$
for some finite subgroup $\Gamma\leq O(4,\mathbb{R})$, and show that
convergence also occurs in the Gromov-Hausdorff sense on each time
slice. 
\begin{thm}
\label{thm:theorem2} Given the notation and hypotheses of Theorem
\ref{thm:theorem1}, let $x\in M$ correspond to a singular point
$\overline{x}\in X=M/\sim$, and let $C(\mathbb{S}^{3}/\Gamma_{\overline{x}})$
be the corresponding tangent cone as in Theorem \ref{thm:theorem1}. 

$(i)$ Let $(\nu_{x,T;t})_{t\in(0,T)}$ be a conjugate heat kernel
at the singular time based at $x$ (see the discussion at the beginning
of Section 3). Then every corresponding tangent flow (c.f. Theorem
1.38 of \cite{bamlergen3}) is a static metric flow corresponding
to the singular space $C(\mathbb{S}^{3}/\Gamma_{\overline{x}})$.

$(ii)$ If $o_{\ast}$ is the vertex of $C(\mathbb{S}^{3}/\Gamma_{\overline{x}})$,
then 
\[
(M,(T-t)^{-\frac{1}{2}}d_{g_{t}},x)\to(C(\mathbb{S}^{3}/\Gamma_{\overline{x}}),d_{C(\mathbb{S}^{3}/\Gamma_{\overline{x}})},o_{\ast})
\]
in the pointed Gromov-Hausdorff sense as $t\nearrow T$.
\end{thm}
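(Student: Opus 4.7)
The plan is to handle (i) first by combining Bamler's tangent flow classification with the Ricci lower bound and Theorem \ref{thm:theorem1}, and then to derive (ii) from the resulting uniqueness of tangent flows together with Bamler's $\mathbb{F}$-convergence theory.

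For (i), I would invoke Theorem 1.19 of \cite{bamlergen3}, which identifies every tangent flow at $(x,T)$ as a (singular) shrinking gradient Ricci soliton metric flow. The Ricci lower bound scales well under parabolic rescaling: rescaling by factors $\lambda_{i}\to\infty$ turns $Rc(g_{t})\geq -Ag_{t}$ into $Rc\geq -A\lambda_{i}^{-2}g\to 0$, so the tangent flow satisfies $Rc\geq 0$ on its regular part. Next I would identify the time slices: using the volume non-collapsing coming from (\ref{eq:vol}) and Perelman's $\nu$-entropy monotonicity, Bamler's $\mathbb{F}$-convergence upgrades to pointed Gromov-Hausdorff convergence on each time slice along a subsequence. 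Comparing with the unique tangent cone of $(X,d)$ at $\overline{x}$ provided by Theorem \ref{thm:theorem1}, every slice of every tangent flow must be pointed isometric to $C(\mathbb{S}^{3}/\Gamma_{\overline{x}})$. Because this cone is Ricci-flat, the shrinking soliton equation $Rc+\nabla^{2}f=\frac{1}{2\tau}g$ restricted to the regular part reduces to $\nabla^{2}f=\frac{1}{2\tau}g$, forcing $f$ to be a Gaussian potential and the associated soliton flow to be static over the cone.

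For (ii), once (i) is established, every tangent flow at $(x,T)$ coincides with the static cone flow over $C(\mathbb{S}^{3}/\Gamma_{\overline{x}})$, and by Bamler's $\mathbb{F}$-precompactness theory the parabolic rescalings based at $(x,T)$ therefore $\mathbb{F}$-converge to this limit without passing to a subsequence. Repeating the time-slice upgrade from (i), now without subsequences, one obtains pointed Gromov-Hausdorff convergence of every time slice of the rescaled flow. Specializing to the time $s=-1$ slice, which is precisely $(M,(T-t)^{-1/2}d_{g_{t}},x)$, gives the desired convergence to $(C(\mathbb{S}^{3}/\Gamma_{\overline{x}}),o_{\ast})$.

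The hard part will be the slice identification in (i): bridging the tangent cone structure of $(X,d)$ at $\overline{x}$ (a statement about metric rescalings of the time-$T$ limit space) with the time slices of Bamler's tangent flow (a priori obtained by parabolic rescalings of the flow itself). This requires establishing that at a Type-I singular point, parabolic rescalings of $g_{t}$ and metric rescalings of $(X,d)$ produce pointed isometric GH limits. I would expect this to rely on the volume non-collapsing from (\ref{eq:vol}) together with control of the metric evolution from $t$ to $T$ via the Ricci lower bound, possibly combined with pseudolocality-type estimates on the regular part near $\overline{x}$ at scales of order $\sqrt{T-t}$.
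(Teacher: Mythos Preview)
Your proposal has a genuine gap at precisely the step you flag as ``the hard part'': identifying the basepoint $x$ with the vertex of the limiting cone. Bamler's $\mathbb{F}$-convergence gives, on each time slice, $W_{1}$-Gromov-Wasserstein convergence of the conjugate heat kernel measures $\nu_{t}^{i}$, not pointed Gromov-Hausdorff convergence based at $x$. Even after upgrading to pointed GH convergence (which already requires nontrivial work---the paper uses the segment inequality and quantitative stratum estimates in Lemma \ref{lem:firsttechlemma} together with Proposition \ref{prop:easyconvergence}), the natural basepoints are the $H_{n}$-centers of $\nu^{i}$, not $x$. By Theorem \ref{thm:theorem4} the point $x$ is Type-II, not Type-I as you write, so there is no curvature control at $x$ and no a priori way to track where $x$ lands in the limit. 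The paper devotes most of Section 4 to this: it combines Bamler's Gaussian upper bound on $K^{i}$ with the explicit Gaussian for $K^{\infty}$ on the cone to place the $H_{4}$-centers near the vertex (Claim 1), then uses a separate Gaussian upper bound (Proposition \ref{prop:heatkernel}), Jordan--Brouwer separation, forward pseudolocality, and Croke's isoperimetric inequality (Claims 2--4) to trap $x$ in a region of shrinking diameter around the vertex, yielding Proposition \ref{prop:part1lemma}.

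Your plan to identify tangent flow slices with the tangent cone of $(X,d)$ at $\overline{x}$ also inverts the paper's logic. In the paper, the orbifold structure and unique tangent cone of $X$ (Theorem \ref{thm:theorem1}) are established \emph{using} the pointed GH convergence of part (ii) (via Lemma \ref{lem:part2lemma}), not conversely; the identification $\Gamma=\Gamma_{\overline{x}}$ (Claim 4 in the proof of Theorem \ref{thm:theorem1}) compares parabolic rescalings of $g_{t}$ with metric rescalings of $d_{X}$ through distance-distortion estimates that rely on the curvature bounds on annuli obtained from (ii) and pseudolocality. So appealing to Theorem \ref{thm:theorem1} to prove the slice identification is circular unless you supply an independent argument for the basepoint, which your proposal does not.
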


It was shown in \cite{cheegernaberquant} that any Riemannian manifold
$(M,g)$ satisfying (\ref{eq:ccnvol}),(\ref{eq:ccndiam}),(\ref{eq:2sidedric})
also satisfies an estimate of the form 
\[
\int_{M}r_{h}^{-p}(x)dg\leq C(A,p)
\]
for any $p\in(0,4)$, where $r_{h}$ is the $C^{1,\alpha}$ harmonic
radius. If in addition $(M,g)$ is Einstein, then $r_{h}$ can be
replaced by the curvature scale $\widetilde{r}_{Rm}$ (see Section
2 for the definition). Our next theorem states that such an estimate
also holds for the time slices of a Ricci flow satisfying (\ref{eq:ric}),(\ref{eq:vol})
when $n=4$.
\begin{thm}
\label{thm:theorem3} Given the hypotheses of Theorem \ref{thm:theorem1},
we have 
\[
\sup_{t\in[0,T)}\int_{M}|Rm|^{\frac{p}{2}}(x,t)dg_{t}(x)\leq\sup_{t\in[0,T)}\int_{M}\widetilde{r}_{Rm}^{-p}(x,t)dg_{t}(x)<\infty
\]
for any $p\in(0,4)$. Moreover, there exists $E<\infty$ such that
for all $s\in(0,1]$ and $t\in[0,T)$, we have 
\[
|\{\widetilde{r}_{Rm}(\cdot,t)<s\}|_{g_{t}}\leq Es^{4}.
\]
\end{thm}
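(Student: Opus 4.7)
The first, pointwise inequality $|Rm|^{p/2}(x,t)\leq\widetilde{r}_{Rm}(x,t)^{-p}$ is immediate from the definition of the curvature scale, which ensures $|Rm|(x,t)\leq\widetilde{r}_{Rm}(x,t)^{-2}$ pointwise. To deduce the integral bound from the Minkowski estimate, I would apply the layer-cake formula
\[
\int_{M}\widetilde{r}_{Rm}^{-p}(\cdot,t)\,dg_{t}\;=\;p\int_{0}^{\infty}s^{-p-1}\bigl|\{\widetilde{r}_{Rm}(\cdot,t)<s\}\bigr|_{g_{t}}\,ds,
\]
split the integral at $s=1$, and control the tail $s\geq 1$ by the uniform upper bound $|M|_{g_{t}}\leq e^{nAT}|M|_{g_{0}}$, which follows from the Ricci lower bound via $\partial_{t}dg_{t}=-R\,dg_{t}$. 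The remaining integral $\int_{0}^{1}Es^{3-p}\,ds$ then converges precisely when $p<4$, and the total estimate is uniform in $t\in[0,T)$.

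To prove the Minkowski estimate itself, I would argue by contradiction and compactness, building on Bamler's $\mathbb{F}$-convergence theory \cite{bamlergen2,bamlergen3}. Suppose there were sequences $t_{i}\in[0,T)$ and $s_{i}\in(0,1]$ with $|\{\widetilde{r}_{Rm}(\cdot,t_{i})<s_{i}\}|_{g_{t_{i}}}>i\,s_{i}^{4}$. A Vitali-type covering of the violating set supplies a growing collection of disjoint $g_{t_{i}}$-balls of radius comparable to $s_{i}$ centered at points $x_{i}$ with $\widetilde{r}_{Rm}(x_{i},t_{i})<s_{i}$. Parabolically rescaling the flow by $s_{i}^{-1}$ about $(x_{i},t_{i})$ and applying Bamler's compactness theorem---the required non-collapsing being supplied by the Ricci and volume lower bounds together with Perelman's $\mathcal{W}$-monotonicity---produces a limit metric flow pair with a time-slice on which $\{\widetilde{r}_{Rm}\leq 1\}$ has arbitrarily large volume. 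On the other hand, by Theorem \ref{thm:theorem2}, every tangent flow of the original flow based at a singular point is a static flow over a flat cone $C(\mathbb{S}^{3}/\Gamma)$, on which $\widetilde{r}_{Rm}$ is comparable to the distance to the vertex, so $\{\widetilde{r}_{Rm}\leq 1\}$ has bounded $4$-dimensional volume; Bamler's parabolic $\epsilon$-regularity then excludes contributions from regular points of the limit, providing the desired contradiction.

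The main obstacle is achieving uniformity in $t\in[0,T)$ and $s\in(0,1]$ as $t\nearrow T$. For $t$ bounded away from $T$ the estimate is immediate from the smoothness and finite curvature of the flow. The hard case is to promote the qualitative fact, supplied by Theorem \ref{thm:theorem1}, that the limit space $X$ has only finitely many orbifold singularities---each with flat cone tangent structure---into a single Minkowski constant $E$ that works uniformly on every time-slice. This will likely require a careful adaptation of Bamler's quantitative stratification and parabolic $\epsilon$-regularity to our setting, using the full strength of Theorems \ref{thm:theorem1} and \ref{thm:theorem2} to localize the analysis near each orbifold point.
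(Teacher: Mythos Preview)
Your reduction of the $L^{p}$ bound to the Minkowski estimate via the layer-cake formula is correct and is exactly what the paper does. The proposed proof of the Minkowski estimate, however, has a genuine gap. Your contradiction argument rescales by $s_{i}^{-1}$ and invokes Theorem~\ref{thm:theorem2} to say that the limit is a flat cone, but Theorem~\ref{thm:theorem2} only describes tangent flows, i.e.\ limits of rescalings at the Type-I scale $\sqrt{T-t_{i}}$. If $s_{i}/\sqrt{T-t_{i}}\to 0$ you are blowing up far faster than the Type-I scale and the limit need not be a cone at all; if $s_{i}/\sqrt{T-t_{i}}\to\infty$ you are blowing up more slowly and the limit looks like the original manifold or the limit space $X$. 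In neither regime does the flat-cone structure apply, so your contradiction does not close. There is also a secondary issue: after a Vitali covering you obtain many disjoint bad balls, but rescaling about a single basepoint $(x_{i},t_{i})$ and passing to a pointed limit only sees a bounded region, so the fact that the total number of balls diverges does not by itself force the limit to have unbounded bad volume.

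The paper's argument is not by contradiction but by a direct three-scale decomposition according to how $s$ compares with $\sqrt{T-t}$. For $s\leq\epsilon_{0}\sqrt{T-t}$ one uses the codimension-three $\epsilon$-regularity theorem (Proposition~\ref{prop:codim3}, proved in Sections~5--6 via a delicate blowup analysis involving two-dimensional expanding solitons) together with the Cheeger--Jiang--Naber quantitative stratification; this is the content of Theorem~\ref{thm:highdimlp}. For $s\geq\epsilon_{P}^{-1}\sqrt{T-t}$ one uses forward pseudolocality to push the estimate to time $T$ and then invokes the orbifold structure of $X$ (Proposition~\ref{prop:smallbigpoints}). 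For the intermediate range one first proves (Claim~1 in the proof) that $\{\widetilde{r}_{Rm}(\cdot,t)<\alpha\sqrt{T-t}\}$ is contained in a fixed number of balls of radius $\delta^{-1}\sqrt{T-t}$ about the finitely many singular points, which reduces to the small-scale estimate via a covering argument. The key input you are missing is the codimension-two and codimension-three $\epsilon$-regularity theorems, which are what control the regime $s\ll\sqrt{T-t}$.
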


For $p\in[2,4)$, we are unable to get a bound which is uniform in
the parameter $A$, even if we only take the supremum over $t\in[\frac{T}{2},T)$
and include an upper bound on diameter or only integrate over geodesic
balls of fixed radius. It is unclear whether or not the estimate of
Theorem \ref{thm:theorem3} can be made uniform in the parameters
$A,T,p$. Another remaining question is whether this estimate holds
when $p=4$.

We now consider the higher-dimensional case.
\begin{thm}
\label{thm:theoremhigherdim} Let $(M^{n},(g_{t})_{t\in[0,T)})$ be
a closed Ricci flow satisfying (\ref{eq:ric}),(\ref{eq:vol}), and
let $(\nu_{x,T;t})_{t\in(0,T)}$ be a conjugate heat kernel based
at the singular time at $x\in M$, corresponding to a singular point
$\overline{x}$ in the Gromov-Hausdorff limit $\lim_{t\nearrow T}(M,d_{g_{t}})$.
Then every tangent flow at $\nu_{x,T}$ is a static metric flow modeled
on a metric cone $(C(Z),o_{\ast})$ which is Ricci-flat on its regular
part, but $C(Z)\neq\mathbb{R}^{n}$. Moreover, if $(g_{t}^{i})_{t\in[-\tau_{i}^{-1}T,0)}$
is a sequence of Type-I rescalings $\mathbb{F}$-converging to a given
tangent flow, then we can pass to a subsequence so that for every
$t\in(-\infty,0)$, there exist $x_{t}^{i}\in M$ such that
\[
(M,d_{g_{t}^{i}},x_{t}^{i})\to(C(Z),d_{C(Z)},o_{\ast})
\]
in the pointed Gromov-Hausdorff sense as $i\to\infty$.
\end{thm}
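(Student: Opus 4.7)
The plan is to combine Bamler's tangent flow theory with a rigidity argument for singular shrinking gradient Ricci solitons whose Ricci curvature is nonnegative on the regular part. First, I would apply the $\mathbb{F}$-compactness theorem of \cite{bamlergen2} to the Type-I rescaled flows $g_t^i := \tau_i^{-1} g_{T + \tau_i t}$ equipped with the rescaled conjugate heat kernels based at $(x,T)$. By Theorem 1.19 of \cite{bamlergen3}, after extracting a subsequence these $\mathbb{F}$-converge to a metric soliton $\mathcal{X}$: a singular self-similar shrinking gradient Ricci soliton with singular set of parabolic codimension four, whose regular part is a smooth Ricci flow spacetime satisfying $R_{ij} + \nabla_i \nabla_j f = \frac{1}{2} g_{ij}$ for some soliton potential $f$.

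Next, I would pass the Ricci and volume lower bounds to $\mathcal{X}$. Since Ricci curvature is invariant under homothetic rescaling of the metric, a direct calculation gives $Rc(g_t^i) \geq -A \tau_i g_t^i$, so smooth Cheeger-Gromov convergence on the regular part of $\mathcal{X}$ (provided by \cite{bamlergen3}) yields $Rc \geq 0$ in the limit. The volume hypothesis $|M|_{g_t} \geq A^{-1}$, combined with Perelman's $\kappa$-noncollapsing (with $\kappa$ controlled uniformly by the initial entropy), gives a uniform lower bound on the Gaussian density of the conjugate heat kernels, so $\mathcal{X}$ is noncollapsed at all scales and its time slices have positive asymptotic volume ratio.

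The crux of the argument is the rigidity: a singular shrinking gradient Ricci soliton whose regular part satisfies $Rc \geq 0$ and whose time slices are noncollapsed Ricci limit spaces with positive asymptotic volume ratio must be a static metric flow modeled on a Ricci-flat metric cone $(C(Z), o_*)$. To prove this, I would first apply Cheeger-Colding's volume-cone-implies-metric-cone theorem to the time-$(-1)$ slice of $\mathcal{X}$: as a noncollapsed $Rc \geq 0$ Ricci limit space, its tangent cone at the minimum $o_*$ of $f$ is a metric cone. The spacetime self-similarity of $\mathcal{X}$, generated by the flow of $\nabla f$ which fixes $o_*$, then identifies $\mathcal{X}$ with this tangent cone: the rescaling symmetry in spacetime forces the time slice to coincide with its own tangent cone at $o_*$, so it is a metric cone $C(Z)$ on which $f = \frac{1}{4} d^2(o_*, \cdot)$. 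The soliton equation then reduces to $\nabla^2 f = \frac{1}{2} g$ and hence $Rc = 0$ on the regular part, and the Ricci flow equation $\partial_t g = -2 Rc = 0$ makes the flow static.

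Finally, the pointed Gromov-Hausdorff convergence on each time slice follows from the static character of the limit. Choosing $x_t^i \in M$ to be $H$-centers (in the sense of \cite{bamlergen2}) of the rescaled conjugate heat kernel at time $t$, the $\mathbb{F}$-convergence refines to pointed Gromov-Hausdorff convergence of $(M, d_{g_t^i}, x_t^i)$ to $(C(Z), d_{C(Z)}, o_*)$ for each $t \in (-\infty, 0)$, since when the target is static and noncollapsed the $\mathbb{F}$-distance dominates a suitable Gromov-Hausdorff distance on time slices. The principal obstacle I anticipate is the rigidity step: rigorously deducing the cone structure from $Rc \geq 0$ and positive asymptotic volume ratio on a singular shrinking soliton, with a careful treatment of the vertex $o_*$, which may lie in the singular set of $\mathcal{X}$; this requires combining Bamler's structural results for the singular stratum with Cheeger-Colding-Naber theory for noncollapsed Ricci limit spaces with nonnegative Ricci curvature.
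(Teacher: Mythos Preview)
Your proposal has the right high-level architecture (pass to a metric soliton via Bamler's compactness, show it is Ricci-flat, deduce it is static and a cone), but two steps are genuine gaps rather than details.

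\textbf{The rigidity step is incorrect as stated.} You argue that the soliton's self-similarity ``forces the time slice to coincide with its own tangent cone at $o_*$.'' The metric-soliton identification from \cite{bamlergen3} says only that $(\mathcal{Y}_t,d_t)=(Y,|t|^{1/2}d)$: all time slices are homothetic copies of the \emph{same} space $Y$. This has nothing to do with the tangent cone of $Y$ at any point; it does not exhibit $Y$ as a blow-up of itself. Smooth shrinkers with $Rc\geq 0$ are typically not cones, and there is no a~priori reason a singular one should be. The paper proceeds quite differently: it first proves $(\mathcal{R}_Y,g_Y)$ is Ricci-flat by adapting Ni's argument \cite{leinisoliton} to the singular setting (Lemma~\ref{lem:niargument}). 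The point is that if $Rc\not\equiv 0$, then $\inf_{\mathcal{R}_Y}R>0$; but positive asymptotic volume ratio, combined with Chen-Yuan $\epsilon$-regularity (Theorem~\ref{thm:chenyuan}) applied along an asymptotic cone of $Y$, produces points with arbitrarily large curvature radius, forcing $\inf R=0$. Only after establishing $Rc\equiv 0$ does one invoke Theorems~1.17 and~1.19 of \cite{bamlergen3} to conclude the flow is static and $Y$ is a metric cone. The singular version of Ni's argument is itself delicate: $f$ is defined only on $\mathcal{R}_Y$, the gradient flow of $\nabla f$ need not be complete, and the paper uses the Minkowski-dimension estimate on the singular set to show the flow is complete on a full-measure subset.

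\textbf{The passage from $\mathbb{F}$-convergence to Gromov--Hausdorff convergence of time slices is the main technical content, and your proposal inverts the logical order.} You claim the GH convergence follows once the limit is known to be static, but in fact GH convergence is needed \emph{before} the rigidity step: one needs Colding's volume convergence to transfer the noncollapsing $|B(x,t,r)|\geq A^{-1}r^n$ to the limit $Y$ (Proposition~\ref{prop:volconverge}), and this volume growth is exactly what drives Ni's argument. The paper explicitly identifies this as the main hurdle (see the introduction) and resolves it via Lemma~\ref{lem:firsttechlemma}: using the Cheeger--Colding segment inequality together with the quantitative singular-strata estimates (Corollary~\ref{cor:highcurvaturesize}), one shows that near any $H_n$-center the conjugate heat kernel $K^i$ is bounded below by some $\sigma>0$ on a set of almost-full measure. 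This lower bound is what allows Proposition~\ref{prop:easyconvergence} to upgrade $W_1$-Gromov--Wasserstein convergence to pointed Gromov--Hausdorff convergence (Lemma~\ref{lem:gromovhaus}). Your assertion that ``when the target is static and noncollapsed the $\mathbb{F}$-distance dominates a suitable Gromov--Hausdorff distance'' is both circular and, as far as I know, not available in the literature.
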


We also prove a local version of the fact that a Ricci flow satisfying
(\ref{eq:ric}),(\ref{eq:vol}) cannot develop a Type-I singularity.
\begin{thm}
\label{thm:theorem4} Let $(M^{n},(g_{t})_{t\in[0,T)})$ be a closed
Ricci flow satisfying $(\ref{eq:ric}),(\ref{eq:vol})$. Again let
$(X,d)=\lim_{t\nearrow T}(M,d_{g_{t}})$ be the corresponding Ricci
limit space. Then, for any $x\in M$ corresponding to a singular point
$\overline{x}\in X$, we have
\[
\limsup_{t\nearrow T}(T-t)r_{Rm}^{-2}(x,t)=\infty.
\]
\end{thm}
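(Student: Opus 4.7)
The plan is to argue by contradiction, using the structure of the tangent flow at the singular time established in Theorem \ref{thm:theoremhigherdim}. Suppose $\limsup_{t \nearrow T}(T-t) r_{Rm}^{-2}(x, t) < \infty$, so that there exist $c > 0$ and $t_0 \in [0, T)$ with $r_{Rm}(x, t) \geq c\sqrt{T-t}$ for all $t \in [t_0, T)$. Fix any sequence $t_i \nearrow T$ and form the Type-I rescaled Ricci flows $g_t^i := (T-t_i)^{-1} g_{t_i + (T-t_i)t}$; by hypothesis, $r_{Rm}^{g^i}(x, s) \geq c\sqrt{-s}$ for $s \in [-1, 0)$. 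Perelman's $\kappa$-noncollapsing, Shi's parabolic derivative estimates, and Hamilton-Cheeger-Gromov compactness then produce a subsequence converging in the pointed $C^\infty$ Cheeger-Gromov sense on parabolic neighborhoods of $\{x\} \times [-1, -\delta]$ for every $\delta > 0$ to a smooth Ricci flow limit.

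By Theorem \ref{thm:theoremhigherdim}, after passing to a further subsequence the same rescalings $\mathbb{F}$-converge to a static tangent flow modeled on a Ricci-flat metric cone $(C(Z), o_{\ast})$, and the smooth limit constructed above must coincide with this tangent cone on a neighborhood of the point $x_{\infty}$ to which $x$ converges. The key step is to show $x_{\infty} = o_{\ast}$. This is automatic in dimension four by Theorem \ref{thm:theorem2}(ii), where the pointed Gromov-Hausdorff convergence of the Type-I rescalings is already based at $x$. In general dimension, one argues via the composed convergence $(M, d_{g_t}, x) \to (X, d, \overline{x})$ together with the spatial tangent-cone structure of $X$ at $\overline{x}$: the one-sided distance distortion from $Rc(g_t) \geq -Ag_t$ yields a pre-singular GH convergence rate of $O(T-t) = o(\sqrt{T-t})$, so that the diagonal Type-I blowup based at $x$ itself converges to $(C(Z), o_{\ast})$.

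Granted $x_{\infty} = o_{\ast}$, the cone $C(Z)$ carries a smooth Riemannian structure on a neighborhood of its vertex, which forces $C(Z) = \mathbb{R}^n$ since any metric cone smooth at its vertex is Euclidean. Hence the tangent flow at $\nu_{x,T}$ is Euclidean, and by Bamler's $\epsilon$-regularity theorem $(x, T)$ is a regular point of the Ricci flow, so that $\overline{x}$ is a regular point of $X$, contradicting the hypothesis. The main obstacle is precisely the identification $x_{\infty} = o_{\ast}$ in dimensions $n \geq 5$: Theorem \ref{thm:theoremhigherdim} only provides GH convergence at auxiliary basepoints $x_{-1}^i$ at $g^i$-bounded distance from $x$, and handling this subtlety requires carefully combining the tangent-cone structure of $X$ at $\overline{x}$ with the rate of convergence of $(M, d_{g_t}, x) \to (X, d, \overline{x})$ under our one-sided Ricci lower bound.
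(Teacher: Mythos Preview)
Your contradiction setup and the invocation of Theorem~\ref{thm:theoremhigherdim} are fine, but the argument breaks at exactly the point you flag as the ``main obstacle,'' and the sketch you give there does not work. The Ricci lower bound $Rc(g_t)\geq -Ag_t$ only gives one-sided distance control: $e^{-At}d_{g_t}$ is nonincreasing, so $d_{g_t}(x,y)\geq e^{-A(T-t)}d_X(\overline{x},\overline{y})$. There is no complementary inequality of the form $d_{g_t}(x,y)\leq d_X(\overline{x},\overline{y})+O(T-t)$ without a Ricci \emph{upper} bound, and near a singular point distances can collapse at an uncontrolled rate. Thus you cannot conclude that $(M,(T-t)^{-1/2}d_{g_t},x)$ is close to a rescaling of $(X,d_X,\overline{x})$, and you have no way to identify the Type-I blowup limit at $x$ with a tangent cone of $X$ at $\overline{x}$. (In dimension four this identification is established as Claim~4 in the proof of Theorem~\ref{thm:theorem1}, but that proof runs \emph{after} and logically \emph{uses} the material in Section~3, so invoking Theorem~\ref{thm:theorem2}(ii) here would be circular.)

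The paper sidesteps the identification $x_\infty=o_\ast$ entirely. The key missing ingredient is a Gaussian \emph{lower} bound on the conjugate heat kernel $K(\cdot,t)$: the hypothesis $r_{Rm}(x,t)\geq c\sqrt{T-t}$ bounds $\int_t^T\sqrt{T-s}\,R(x,s)\,ds$, and Zhang's heat kernel lower bound then yields $K^i(y,-1)\geq c'\exp(-C d_{g^i_{-1}}^2(x,y))$. This is precisely what is needed to apply Proposition~\ref{prop:easyconvergence} with the \emph{fixed} basepoint $x_i=x$, forcing the pointed Gromov--Hausdorff limit to be $(C(Z),d,x_\infty)$ with $x$ converging to some $x_\infty$. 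Since $r_{Rm}^{g^i}(x,-1)\geq c$, one gets $x_\infty\in\mathcal{R}$, i.e.\ $x_\infty$ lies in the smooth part of the cone, \emph{not} at the vertex. The paper then exploits staticity: the worldline $x_\infty(t)$ stays in $\mathcal{R}$ for all $t\in[-1,0)$, the diffeomorphisms $\psi_i$ track $x$ along this worldline, and the uniform curvature bound at $x_\infty(t)$ transfers to $\widetilde{r}_{Rm}^{g^i}(x,-\tau)\geq c$ for every $\tau>0$ and $i$ large. Pseudolocality then pushes the curvature bound past $t=0$, contradicting that $\overline{x}$ is singular. So rather than proving the cone is Euclidean, the paper proves $x$ sees only the regular part of the cone and uses that to extend regularity forward.
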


Observe that this implies there is a sequence $(x_{i},t_{i})\in M\times[0,T)$
with $t_{i}\nearrow T$, $d_{g_{t_{i}}}(x_{i},x)=o(\sqrt{T-t_{i}})$
and 
\[
\lim_{i\to\infty}|Rm|(x_{i},t_{i})(T-t_{i})=\infty.
\]
In the notation and terminology of Definition 1.1 of \cite{buzanodimatteo},
this means that every singular point $x\in M$ is a Type-II point,
so $\Sigma=\Sigma_{II}$.

Finally, though we are so far unable to prove $L^{p}$ curvature estimates
for $p\in[1,2)$ in higher dimensions, we do get $L^{p}$ bounds on
the (time-dependent) set where the curvature scale is smaller than
$\sqrt{T-t}$. This is made precise in the following Theorem. 
\begin{thm}
\label{thm:highdimlp} For any $A<\infty$, $\underline{T}>0$, and
$p\in(0,4)$, there exist $r_{0}=r_{0}(p,A,\underline{T})>0$, $E=E(A,\underline{T})<\infty$,
such that the following hold. Let $(M^{n},(g_{t})_{t\in[0,T)})$ be
a closed Ricci flow satisfying $Rc(g_{t})\geq-Ag_{t}$ and $|B(x,t,r)|_{g_{t}}\geq A^{-1}r^{n}$
for all $(x,t)\in M\times[0,T)$ and $r\in(0,1]$, where $T\geq\underline{T}$. 

$(i)$ For any $(x,t)\in M\times[\frac{T}{2},T)$, $r\in(0,r_{0}\sqrt{T-t}]$,
and $s\in(0,1]$, we have 
\[
|\{\widetilde{r}_{Rm}(\cdot,t)<sr\}\cap B(x,t,r)|_{g_{t}}\leq Es^{4}r^{n}.
\]

$(ii)$ For any $\alpha\in[1,\infty)$, there exists $C_{\alpha}:=C_{\alpha}(p,A,\underline{T})<\infty$
such that
\[
\int_{B(x,t,\alpha\sqrt{T-t})}|Rm|^{\frac{p}{2}}dg_{t}\leq\int_{B(x,t,\alpha\sqrt{T-t})}\widetilde{r}_{Rm}^{-p}(\cdot,t)dg_{t}\leq C_{\alpha}(T-t)^{\frac{n-p}{2}}.
\]
for all $(x,t)\in M\times[\frac{T}{2},T)$. 
\end{thm}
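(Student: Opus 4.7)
The plan is to establish part (i) as a local, time-slice Minkowski-type estimate for the curvature scale, and then deduce part (ii) by a layer-cake integration. The first inequality in (ii) is immediate from the definition of $\widetilde{r}_{Rm}$, so only the second inequality requires argument.

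For (i), I would argue by contradiction and compactness, using Theorem \ref{thm:theoremhigherdim} together with Bamler's $\mathbb{F}$-convergence framework. Suppose (i) fails. Then for some fixed $p,A,\underline{T}$ there are Ricci flows $(M_{i},(g^{i}_{t})_{t\in[0,T_{i})})$ satisfying the hypotheses, points $(x_{i},t_{i})\in M_{i}\times[T_{i}/2,T_{i})$, radii $r_{i}\in(0,i^{-1}\sqrt{T_{i}-t_{i}}]$, scales $s_{i}\in(0,1]$, and $E_{i}\nearrow\infty$ with
\[
\bigl|\{\widetilde{r}_{Rm}(\cdot,t_{i})<s_{i}r_{i}\}\cap B(x_{i},t_{i},r_{i})\bigr|_{g^{i}_{t_{i}}}>E_{i}s_{i}^{4}r_{i}^{n}.
\]
Parabolically rescaling by $r_{i}^{-2}$, the new flows continue to satisfy (\ref{eq:ric}) and the noncollapsing hypothesis $|B(x,t,r)|_{g_t}\geq A^{-1}r^n$, on time intervals whose upper endpoint tends to $+\infty$ because $r_{i}/\sqrt{T_{i}-t_{i}}\to 0$. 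Bamler's $\mathbb{F}$-compactness theorem from \cite{bamlergen2} then lets me extract an $\mathbb{F}$-limit metric flow which, since the base points correspond to singular points of the original flows, is a tangent flow in the sense of Theorem \ref{thm:theoremhigherdim}, hence a \emph{static} metric flow on a Ricci-flat cone $C(Z)$ with singular set of Hausdorff codimension at least four. Because the limit is static, the codimension-four statement also holds time-slice-wise, and Bamler's $\epsilon$-regularity gives the clean Minkowski bound $|\{\widetilde{r}_{Rm}<s\}\cap B_{1}|\leq \overline{E}s^{4}$ in the limit; promoting this to a uniform quantitative bound along the sequence via Cheeger-Naber-style quantitative stratification adapted to the $\mathbb{F}$-setting contradicts $E_{i}\to\infty$, producing the asserted constants $r_{0}(p,A,\underline{T})$ and $E(A,\underline{T})$.

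For (ii), fix $\alpha\geq 1$ and $(x,t)\in M\times[T/2,T)$, and cover $B(x,t,\alpha\sqrt{T-t})$ by a uniformly bounded number $N=N(\alpha,r_{0},A)$ of balls $B_{j}:=B(x_{j},t,\tfrac{1}{2}r_{0}\sqrt{T-t})$ via a Vitali argument justified by Bishop-Gromov. On each $B_{j}$ the layer-cake identity gives
\[
\int_{B_{j}}\widetilde{r}_{Rm}^{-p}\,dg_{t}=p\int_{0}^{\infty}u^{-p-1}\,\bigl|B_{j}\cap\{\widetilde{r}_{Rm}(\cdot,t)<u\}\bigr|_{g_{t}}\,du.
\]
Split at $u_{0}:=\tfrac{1}{2}r_{0}\sqrt{T-t}$: for $u\leq u_{0}$ apply (i) with $r=u_{0}$ and $s=u/u_{0}$ to bound the distribution function by $Eu^{4}(T-t)^{(n-4)/2}$, and for $u>u_{0}$ use $|B_{j}|_{g_{t}}\leq C(A,\underline{T})(T-t)^{n/2}$ from Bishop-Gromov applied to $Rc\geq-Ag$ on balls of bounded radius. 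Since $p<4$, both pieces integrate to yield the stated bound, and summing over the $N$ balls absorbs combinatorial constants into $C_{\alpha}$.

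The main obstacle is the quantitative Minkowski step in (i): although Theorem \ref{thm:theoremhigherdim} supplies the qualitative codimension-four structure of tangent flows and Bamler's work supplies the compactness and $\epsilon$-regularity framework, extracting a \emph{time-slice} Minkowski bound with the sharp $s^{4}r^{n}$ exponent depends essentially on the static nature of the tangent flow, since a purely parabolic spacetime codimension-four bound would not specialize cleanly to a single time-slice. Careful choice of the threshold $r_{0}\sqrt{T-t}$ is needed so that every parabolic rescaling to unit scale lies within the regime in which Theorem \ref{thm:theoremhigherdim} applies.
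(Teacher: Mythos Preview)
Your argument for part (ii) via layer-cake and a Vitali cover is essentially the same as the paper's, and is fine. The issue is with part (i).

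There are two genuine gaps in your proposed contradiction argument. First, you invoke Theorem~\ref{thm:theoremhigherdim} to conclude that the $\mathbb{F}$-limit is a static Ricci-flat cone, but that theorem concerns \emph{tangent flows}, i.e.\ limits of Type-I rescalings $\tau_i^{-1}g_{T+\tau_i t}$ based at the singular time $T$ and at a point $x$ projecting to a singular point of $X$. Your setup is different: the points $(x_i,t_i)$ are arbitrary interior spacetime points where the estimate fails, and you rescale by $r_i\ll\sqrt{T_i-t_i}$, so the rescaled flows extend arbitrarily far into the future. The resulting $\mathbb{F}$-limit is not a tangent flow at the singular time in the sense of Theorem~\ref{thm:theoremhigherdim}, and that theorem gives you no information about it.

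Second, even granting a limit with codimension-four singular set, the step you describe as ``promoting this to a uniform quantitative bound along the sequence via Cheeger--Naber-style quantitative stratification adapted to the $\mathbb{F}$-setting'' is exactly the content that needs to be supplied. Bamler's codimension-four estimates are parabolic (spacetime) Minkowski bounds, and you yourself note that these do not specialize cleanly to a single time slice; moreover, if $s_i\to 0$ you would need to rescale again, landing in a different blowup regime. A compactness argument alone does not produce a uniform $s^4$ bound valid for all $s\in(0,1]$ simultaneously.

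The paper takes a different and more direct route. It first proves a codimension-two $\epsilon$-regularity theorem (Proposition~\ref{prop:codim2}): at scales $r\leq\epsilon_0\sqrt{T-t}$, Gromov--Hausdorff closeness of a ball to $\mathbb{R}^{n-2}\times C(Z)$ forces a curvature-scale lower bound. Combining this with the Cheeger--Jiang--Naber quantitative stratification estimate (Theorem~\ref{thm:singsetsize}) gives an $s^3$ Minkowski bound (Proposition~\ref{prop:bigbigpoints}). This $s^3$ bound is then the input to a codimension-three $\epsilon$-regularity theorem (Proposition~\ref{prop:codim3}), whose proof uses it to show that the link of any $\mathbb{R}^{n-3}\times C(Z)$ blowup limit is smooth, hence the cone is flat. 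Finally, feeding codimension-three $\epsilon$-regularity back into Theorem~\ref{thm:singsetsize} (now with $\mathcal{S}^{n-4}_{\eta,r}$ in place of $\mathcal{S}^{n-3}_{\eta,r}$) yields the $s^4$ bound of part (i). The essential content you are missing is this bootstrap through the $\epsilon$-regularity hierarchy; the compactness framework you outline does not substitute for it.
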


We now give an outline of the paper and the proofs of the main theorems.

In Section 2, we recall and develop some necessary facts relevant
to Ricci flow and Ricci limit spaces.

In Section 3, we prove Theorems \ref{thm:theoremhigherdim},\ref{thm:theorem4},
and part $(i)$ of Theorem \ref{thm:theorem2}. The rough idea of
the proof comes from observing that any singularity model which is
a shrinking gradient Ricci soliton (GRS) with nonnegative Ricci curvature
and maximal volume growth must be flat (Corollary 1.1 in \cite{carilloni}).
If the flow $(M,(g_{t})_{t\in[0,T)})$ were Type-I, then any Type-I
singularity model at any point would therefore be flat, so that no
singularity occurs (for example, using Theorem 1.2 of \cite{enderstoppingmuller}).
To make this work for Type-II flows, we need to apply the compactness
and partial regularity theory of \cite{bamlergen1,bamlergen2,bamlergen3}.
In particular, we fix $x\in M$ corresponding to a singular point
$\overline{x}\in X$, a sequence $\tau_{i}\searrow0$, and consider
the Type-I rescaled solutions $g_{t}^{i}:=\tau_{i}^{-1}g_{T+\tau_{i}t}$
and their corresponding conjugate heat kernels $K^{i}$ based at $x$
at the singular time. Seen as metric flow pairs, we can pass to a
subsequence to obtain $\mathbb{F}$-convergence (see Section 5 of
\cite{bamlergen2}) to a metric soliton modeled on a singular shrinking
GRS $(Y,d,\mathcal{R}_{Y},g_{Y},f)$ with nonnegative Ricci curvature
on $\mathcal{R}_{Y}$ (see Section 2 for relevent definitions). This
implies in particular that there are diffeomorphisms $\psi_{i}:U_{i}\to M$,
where $(U_{i})$ is an exhaustion of $\mathcal{R}_{Y}$, such that
$\psi_{i}^{\ast}g^{i}\to g^{\infty}$ in $C_{loc}^{\infty}(\mathcal{R}_{Y})$.

One technical hurdle for proceeding as in the Type-I case is that
in general volume convergence does not follow from $\mathbb{F}$-convergence,
which makes it difficult to show that the limiting GRS has maximal
volume growth. To address this, we prove an estimate (Lemma \ref{lem:firsttechlemma})
for the flows $g^{i}$ showing that, for some $\sigma>0$, the set
of points near an $H_{n}$-center of $x$ which satisfy $K^{i}(\cdot,-1)\geq\sigma$
has almost-full measure. This relies on several results from Cheeger-Colding
theory, including Cheeger-Colding's segment inequality and an estimate
on the size of the quantitative singular strata on each time slice.
From this inequality, we can extract pointed Gromov-Hausdorff convergence
of time slices from $W_{1}$-Gromov-Wasserstein convergence, and then
apply Colding's volume convergence theorem. We use this to establish
Theorem \ref{thm:theoremhigherdim}. A modification of a proof of
Ni's (Proposition 1.1 in \cite{leinisoliton}) shows that Corollary
1.1 of \cite{carilloni} holds for singular shrinking solitons in
our setting, which we use to prove Theorem \ref{thm:theorem4} and
part $(i)$ of Theorem \ref{thm:theorem2}.

In Section 4, we specialize to dimension four, and prove part $(ii)$
of Theorem \ref{thm:theorem2} by showing that for each time slice
of the Type-I rescaled solutions, $x$ is near the part of $M$ corresponding
to the vertex $o_{\ast}\in C(\mathbb{S}^{3}/\Gamma)$. The idea is
that the (rescaled) conjugate heat kernels based at $(x,T)$ converge
in the Cheeger-Gromov sense to the heat kernel of $C(\mathbb{S}^{3}/\Gamma)$,
which gives a lower bound on $K^{i}$ at bounded distance from points
corresponding to the vertex. On the other hand, Bamler's Gaussian
upper bound on the conjugate heat kernel in terms of distance from
an $H_{4}$-center contradicts this lower bound if an $H_{4}$ center
is too far away from points corresponding to the vertex. We combine
this with a Gaussian heat kernel upper bound (see the Appendix) to
show that $x$ is in the ``inner'' component of $M$ separated by
some hypersurfaces $\psi_{i}(\partial B(o_{\ast},\alpha_{i}))$, where
$\alpha_{i}\searrow0$, and then show that the diameter of this component
goes to zero. We prove Theorem \ref{thm:theorem1} by applying Perelman's
pseudolocality theorem on regions of $(M,g_{t}^{i})$ corresponding
to annuli centered at the vertex of $C(\mathbb{S}^{3}/\Gamma)$, which
(as in \cite{enderstoppingmuller}) implies that the curvature bound
extends forwards in time all the way to the singular time, and is
uniform on annuli centered at $x$. We can extract from this that
$\overline{x}$ is an isolated singular point of $X$, and that the
curvature bound blows up no faster than the rate $\frac{1}{d^{2}(\cdot,\overline{x})}$
on $X$. Then a maximum principle argument for the Ricci tensor rules
out nonflat tangent cones, and implies that the curvature blows up
along $X$ at a rate strictly slower than the rate $\frac{1}{d^{2}(\cdot,\overline{x})}$.
Standard arguments then show that $X$ must be a $C^{0}$ orbifold,
and Simon's construction allows one to flow through the singularity. 

In Section 5, we prove a codimension two $\epsilon$-regularity result,
which roughly says that, for a closed Ricci flow satisfying (\ref{eq:ric}),(\ref{eq:vol}),
Gromov-Hausdorff closeness to a cone which splits $\mathbb{R}^{n-2}$
implies a curvature bound, at scales $<<\sqrt{T-t}$. This is done
by a contradiction-compactness argument with careful point-picking,
where we first rescale and obtain points which converge in the Gromov-Hausdorff
sense to $\mathbb{R}^{n-2}\times Z$ for some metric space $Z$, but
whose curvature blows up. We use Cheeger-Naber's slicing theorem as
in the proof of Theorem 5.2 in \cite{cheegernaberdim4} to change
the blowup points (without changing the time slices) and further rescale
to get smooth Cheeger-Gromov convergence to $\mathbb{R}^{n-2}\times S$,
where $S$ is an immortal 2-dimensional Ricci flow with nonnegative
curvature. Using the classification of 2-dimensional steady and expanding
solitons, we can change basepoints and scales once again to get smooth
Cheeger-Gromov convergence to $\mathbb{R}^{n-2}\times E$, where $E$
is an expanding Ricci soliton asymptotic at infinity to $C(\mathbb{S}_{\beta}^{1})$,
where $\mathbb{S}_{\beta}^{1}$ denotes the circle with circumference
$\beta\in(0,2\pi)$. Because this is still a blowup of the original
flow, we can use Bamler's compactness theory for $\mathbb{F}$-convergence
to obtain an ancient metric flow coinciding with $\mathbb{R}^{n-2}\times E$
for time $t\in(0,1)$. However, at its initial time $t=0$, the soliton
$E$ converges smoothly to the flat cone $C(\mathbb{S}_{\beta}^{1})$
away from its vertex, so we can show that the metric flow is static
for all times $t\in(-\infty,0]$, and in fact coincides with $\mathbb{R}^{n-2}\times C(\mathbb{S}_{\beta}^{1})$
on each time slice, contradicting the Minkowski dimension estimates
for the singular set of static metric flows. 

In Section 6, we prove a codimension three $\epsilon$-regularity
result via another contradiction argument, where now the Gromov-Hausdorff
limit is $\mathbb{R}^{n-3}\times C(Z)$ for some two dimensional metric
space $Z$. To show $Z$ is smooth, we use the codimension two $\epsilon$-regularity
theorem to estimate size of the quantitative singular set at scales
$<<\sqrt{T-t}$. This rules out codimension 2 singularities for $\mathbb{R}^{n-3}\times C(Z)$,
showing that $Z$ is smooth. Then a maximum principle argument for
the Ricci curvature on the regular set of $\mathbb{R}^{n-3}\times C(Z)$
guarantees that the cone is flat, and because $M$ is orientable,
it follows that $\mathbb{R}^{n-3}\times C(Z)$ is the Euclidean cone.
From this, we obtain Theorem \ref{thm:highdimlp}.

In Section 7, we prove Theorem \ref{thm:theorem3} using a time-dependent
decomposition of time slices to estimate the size of the set $\{\widetilde{r}_{Rm}(\cdot,t)<s\}$.
When $s<<\sqrt{T-t},$ this is achieved by combining codimension three
$\epsilon$-regularity with Cheeger-Jiang-Naber's volume estimates
for quantitative singular strata. When $s\approx\sqrt{T-t}$, our
estimate uses our knowledge of the flow at the Type-I scale. When
$s>>\sqrt{T-t}$, we use pseudolocality to estimate this region using
the corresponding region of the limit space $X$.

Finally, in the Appendix, we indicate a modification of the proof
of Theorem 3.1 in \cite{cao}, which leads to a Gaussian upper bound
for the heat kernel in the presence of a negative Ricci curvature
lower bound. This result was used in Section 4.

\medskip{}

The author thanks his advisor Xiaodong Cao for his encouragement and
helpful discussions, as well as Richard Bamler and Bennett Chow for
their useful comments and suggestions.

\section{Preliminaries}

Throughout this paper, we usually omit the dependence of constants
on the dimension $n$. Given a closed Ricci flow $(M^{n},(g_{t})_{t\in[0,T)})$,
along with $(x,t)\in M\times[0,T)$ and $r>0$, we write 
\[
B_{g}(x,t,r):=\{y\in M;d_{g_{t}}(x,y)<r\}.
\]
Let $dg_{t}$ denote the Riemannian volume measure on $M$ corresponding
to $g_{t}$, and for a subset $A\subseteq M$, we let $|A|_{g_{t}}$
denote the volume of $A$ with respect to $g_{t}$. Given $(x,t)\in M\times[0,T)$,
we let $K(x,t;\cdot,\cdot):M\times[0,t)\to(0,\infty)$ denote the
conjugate heat kernel based at $(x,t)$. We also define the probability
measures $\nu_{x,t;s}:=K(x,t;\cdot,s)dg_{s}=(4\pi(t-s))^{-\frac{n}{2}}e^{-f_{x,t}}dg_{s}$
for $s<t$. Also, let 
\[
\mathcal{N}_{x,t}^{g}(\tau):=\int_{M}fd\nu_{x,t;t-\tau}-\frac{n}{2}
\]
denote the pointed Nash entropy. Denote by $\ell_{(x,t)}^{g}$ Perelman's
reduced distance, based at $(x,t)\in M\times[0,T)$. 

Given a metric space $(X,d)$, along with $x\in X$ and $r>0$, we
denote by $B^{X}(x,r)$ the open metric ball, though we may write
$B(x,r)$ when $(X,d)$ is unambiguous. If $\text{diam}(X)\leq\pi$,
we let $(C(X),d_{C(X)},o_{\ast})$ denote the metric cone over $(X,d)$,
the pointed metric space whose underlying set is $([0,\infty)\times X)/(\{0\}\times X)$,
with vertex $o_{\ast}$, and with metric defined by
\[
d_{C(X)}^{2}([r_{1},x_{1}],[r_{2},x_{2}])=r_{1}^{2}+r_{2}^{2}-2r_{1}r_{2}\cos(d_{X}(x_{1},x_{2}))
\]
for $[r_{1},x_{1}],[r_{2},x_{2}]$, where we let $[r,x]\in C(X)$
denote the equivalence class corresponding to $(r,x)$. 

For bounded subsets $B_{1},B_{2}$ of a metric space $(X,d)$, let
$d_{H}^{X}(B_{1},B_{2})$ be the corresponding Hausdorff distance.
For Borel probability measures $\mu_{1},\mu_{2}$ on $(X,d)$, let
$d_{W_{1}}^{X}(\mu_{1},\mu_{2})$ denote their $W_{1}$-Wasserstein
distance. For pointed metric spaces $(X_{1},d_{1},x_{1})$, $(X_{2},d_{2},x_{2})$,
we denote by 
\[
d_{GH}((X_{1},d_{1}),(X_{2},d_{2}))
\]
the Gromov-Hausdorff distance between the underlying metric spaces,
and
\[
d_{PGH}((X_{1},d_{1},x_{1}),(X_{2},d_{2},x_{2}))
\]
the pointed Gromov-Hausdorff distance between the pointed metric spaces
(for definitions and basic properties, see Chapters 7,8 of \cite{metricgeometry}).
For convenience, when $(X_{i},d_{i},x_{i})$ are not bounded, we define
\[
d_{PGH}((X_{1},d_{1},x_{1}),(X_{2},d_{2},x_{2})):=\sum_{j=1}^{\infty}2^{-j}\frac{d_{PGH}((B^{X_{1}}(x_{1},j),d_{1},x_{1}),(B^{X_{2}}(x_{2},j),d_{2},x_{2}))}{1+d_{PGH}((B^{X_{1}}(x_{1},j),d_{1},x_{1}),(B^{X_{2}}(x_{2},j),d_{2},x_{2}))},
\]
which metrizes the pointed Gromov-Hausdorff topology on the class
of isometry classes of complete metric length spaces. 

We now establish several basic geometric estimates that follow from
assumptions (\ref{eq:ric}),(\ref{eq:vol}).
\begin{lem}
\label{lem:basiclemma} There exists $\overline{A}=\overline{A}(n,A,T,\text{diam}_{g_{0}}(M))\in(1,\infty)$
such that the following hold for any Ricci flow satisfying (\ref{eq:ric}),(\ref{eq:vol}):

$(i)$$\overline{A}^{-1}\leq\text{diam}_{g_{t}}(M)\leq\overline{A}$
for all $t\in[0,T)$,

$(ii)$ For every $r\in(0,\max\{1,\sqrt{T}\}]$ and $(x,t)\in M\times[0,T)$,
$\overline{A}^{-1}r^{n}\leq|B(x,t,r)|_{g_{t}}\leq\overline{A}r^{n}$,

$(iii)$ $\mathcal{N}_{x,t}(\tau)\geq-\overline{A}$ for all $(x,t)\in M\times[0,T)$
and $\tau\in(0,t]$. 
\end{lem}

\begin{proof}
$(i)$ Since $[0,T)\to(0,\infty),t\mapsto\text{diam}_{g(t)}(M)$ is
locally Lipschitz, it suffices to note that, for any $x\in M$ and
$V\in T_{x}M$, we have
\[
\log\left(\frac{g_{t}(V,V)}{g_{0}(V,V)}\right)=\int_{0}^{t}\partial_{s}\log g_{s}(V,V)ds=-2\int_{0}^{t}\dfrac{Rc_{g_{s}}(V,V)}{g_{s}(V,V)}ds\leq2At,
\]
so that $\text{diam}_{g_{t}}(M)\leq e^{AT}\text{diam}_{g_{0}}(M)$.
The lower bound follows by combining $\inf_{t\in[0,T)}|M|_{g_{t}}\geq A^{-1}$
with $|M|_{g_{t}}\leq v_{-A}(\text{diam}_{g_{t}}(M))$.

\noindent $(ii)$ The upper bound follows from Bishop volume comparison.
For the lower bound, fix $(x,t)\in[0,T)$, $r\in(0,\max\{1,\sqrt{T}\}]$,
and let $\overline{A}=\overline{A}(n,\text{diam}_{g_{0}}(M),T,A)$
be the constant from $(i)$. Then
\begin{align*}
|B(x,t,r)|_{g_{t}}= & \dfrac{|B(x,t,r)|_{g_{t}}}{|B(x,t,\overline{A}+\max\{1,\sqrt{T}\})|_{g_{t}}}|M|_{g_{t}}\\
\geq & \dfrac{v_{-A}(r)}{v_{-A}(\overline{A}+\max\{1,\sqrt{T}\})}A^{-1}\\
\geq & C(n,\text{diam}_{g_{0}}(M),T,A)r^{n},
\end{align*}
so $(ii)$ follows after possibly modifying $\overline{A}$. 

$(iii)$ Because $R\geq-nA$, we can apply Theorem 8.1 of \cite{bamlergen1}
with $r=\sqrt{\tau}$ and use $(ii)$ to estimate
\[
\overline{A}^{-1}\tau^{\frac{n}{2}}\leq\left|B\left(x,t,\sqrt{\tau}\right)\right|_{g_{t}}\leq C(n,A,T)\exp(\mathcal{N}_{x,t}(\tau))\tau^{\frac{n}{2}}.
\]
\end{proof}
We recall the following notions of curvature scale:
\begin{defn}
For $(x,t)\in M\times[0,T)$, we define 
\[
r_{Rm}(x,t):=\sup\{r>0;|Rm|\leq r^{-2}\text{ on }B(x,t,r)\times([t-r^{2},t+r^{2}]\cap[0,T))\},
\]
\[
\widetilde{r}_{Rm}(x,t):=\sup\{r>0;|Rm|\leq r^{-2}\text{ on }B(x,t,r)\}.
\]

It is important that these two notions are comparable, which follows
from combining Perelman's pseudolocality theorem \cite{perl1} (we
use the version stated and proved in \cite{lupenglocal}) with the
backwards pseudolocality theorem, proved for Ricci flows with a Ricci
lower bound by Chen-Yuan (Theorem 3 of \cite{chenlowerbound}), and
later in the general setting by Bamler (Theorem 1.48 of \cite{bamlergen3}).
\end{defn}

\begin{thm}
\label{thm:pseudolocality} (Forwards and Backwards Pseudolocality)
For any $A<\infty$, there exists $\epsilon_{P}=\epsilon_{P}(A)>0$
such that the following hold. Suppose $(M^{n},(g_{t})_{t\in[-2,T]})$
is a closed, pointed Ricci flow (where $T\geq0$) satisfying $Rc(g_{t})\geq-Ag_{t}$
and $|B(x,t,r)|_{g_{t}}\geq A^{-1}r^{n}$ for all $(x,t)\in M\times[-1,T]$
and $r\in(0,1]$. If $x\in M$ and $r\in(0,1]$ are such that $|Rm|(\cdot,0)\leq r^{-2}$
on $B(x,0,r)$, then $|Rm|\leq(\epsilon_{P}r)^{-2}$ on $B(x,0,\epsilon_{P}r)\times[-(\epsilon_{P}r)^{2},\min((\epsilon_{P}r)^{2},T)]$.
In particular, we have 
\[
r_{Rm}(y,t)\leq\widetilde{r}_{Rm}(y,t)\leq\epsilon_{P}^{-1}(A)r_{Rm}(y,t)
\]
for all $(y,t)\in M\times[-1,T]$. 
\end{thm}

\begin{proof}
\noindent The bound $|Rm|\leq(\epsilon_{0}r)^{-2}$ on $B(x,0,\epsilon_{0}r)\times[0,\min((\epsilon_{0}r)^{2},T)]$
for some $\epsilon_{0}=\epsilon_{0}(A)>0$ is Proposition 2.1 of \cite{lupenglocal},
while the analogous bound on $B(x,0,\epsilon_{1}r)\times[-(\epsilon_{1}r)^{2},0]$
for some $\epsilon_{1}=\epsilon_{1}(A)>0$ is Theorem 2.47 of \cite{bamlergen3}.
Take $\epsilon_{P}(A):=\min(\epsilon_{0}(A),\epsilon_{1}(A))$. For
the remaining claim, we suppose $(y,t)\in M\times[-1,T]$ satisfies
$\widetilde{r}_{Rm}(y,t)=r$. Then $|Rm|(\cdot,t)\leq r^{-2}$, so
applying the first claim to the time-translated flow gives $r_{Rm}(y,t)\geq\epsilon_{P}(A)r$. 
\end{proof}
If $(M,g)$ is a (possibly incomplete) Riemannian manifold, we let
$\widetilde{r}_{Rm}^{(M,g)}(x)$ be the supremum of all $r>0$ such
that $B_{g}(x,r)$ has compact closure in $M$ and $|Rm|\leq r^{-2}$
on $B_{g}(x,r)$. If $(X,d)$ is a noncollapsed Ricci limit space
whose regular part $\mathcal{R}$ is open and has the structure of
a smooth Riemannian manifold $(\mathcal{R},g)$, we let $\widetilde{r}_{Rm}^{X}(x)$
be the supremum of $r>0$ such that $B(x,r)\subseteq\mathcal{R}$
and $|Rm|\leq r^{-2}$ on $B(x,r)$. Equivalently, $\widetilde{r}_{Rm}^{X}=\widetilde{r}_{Rm}^{(\mathcal{R},g)}$
on $\mathcal{R}$, while $\widetilde{r}_{Rm}^{X}=0$ on $X\setminus\mathcal{R}$. 

We also make use of the following theorem from \cite{chenlowerbound},
which is essentially a combination of backwards pseudolocality and
the point-picking procedure from Anderson's $\epsilon$-regularity
theorem. We let $v_{-\rho}(r)$ denote the volume of a geodesic ball
of radius $r$ in the simply connected $n$-dimensional space form
with curvature $-\frac{\rho}{n-1}$, and let $\omega_{n}$ be the
volume of the unit ball in $\mathbb{R}^{n}$.
\begin{thm}
\label{thm:chenyuan} (Chen-Yuan $\epsilon$-Regularity) For any $A\in[0,\infty)$
and $\lambda\in(0,1]$, there exists $\delta=\delta(A)>0$ such that
the following hold. If $(M^{n},(g_{t})_{t\in[-\lambda^{-1},0]})$
is a closed, pointed Ricci flow satisfying $Rc(g_{t})\geq-\lambda Ag_{t}$
and $|B(x,t,r)|_{g_{t}}\geq A^{-1}r^{n}$ for all $(x,t)\in M\times[-\lambda^{-1},0]$
and $r\in(0,\lambda^{-\frac{1}{2}}]$, then for any $x\in M$ and
$r\in(0,\lambda^{-\frac{1}{2}}]$ with $|B(x,0,r)|_{g_{0}}>(1-\delta)v_{-\lambda A}(r)$,
we have $\widetilde{r}_{Rm}(x,0)\geq\delta r$. 
\end{thm}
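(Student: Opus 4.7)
The plan is a contradiction argument modeled on Anderson's $\epsilon$-regularity theorem, adapted to the Ricci flow setting via backwards pseudolocality (Theorem \ref{thm:pseudolocality}). Suppose the conclusion fails, producing sequences $\delta_i\searrow 0$, $\lambda_i\in(0,1]$, closed Ricci flows $(M_i^n,(g_t^i)_{t\in[-\lambda_i^{-1},0]})$ satisfying the hypotheses with constants $A,\lambda_i$, points $x_i\in M_i$, and radii $r_i\in(0,\lambda_i^{-1/2}]$ with $|B(x_i,0,r_i)|_{g_0^i}>(1-\delta_i)v_{-\lambda_iA}(r_i)$ but $\widetilde{r}_{Rm}(x_i,0)<\delta_i r_i$. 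After parabolically rescaling so that $r_i=1$ and setting $\mu_i:=\lambda_i r_i^2\in(0,1]$, the rescaled flows $h^i_t$ satisfy $Rc(h^i_t)\geq -\mu_i A h^i_t\geq -A h^i_t$ on $[-\mu_i^{-1},0]$, $|B^{h^i}(x_i,0,1)|_{h^i_0}>(1-\delta_i)v_{-\mu_i A}(1)$, and $\widetilde{r}_{Rm}(x_i,0)<\delta_i$.

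I then apply the standard iterative point-picking procedure to select $y_i\in B^{h^i}(x_i,0,1/2)$ with $Q_i:=|Rm|(y_i,0)\to\infty$, $d_{h^i_0}(x_i,y_i)\to 0$, and $|Rm|(\cdot,0)\leq 4Q_i$ on $B^{h^i}(y_i,0,A_i Q_i^{-1/2})$ for some $A_i\to\infty$. Choosing $A_i=\delta_i^{-1/2}$ makes both $A_i\to\infty$ and the point-picking displacement (bounded by $2A_i\delta_i$ via a geometric series) go to zero. A second rescaling $\widetilde{g}^i_t:=Q_i h^i_{Q_i^{-1}t}$ then yields $|Rm|(y_i,0)=1$, $|Rm|(\cdot,0)\leq 4$ on $B^{\widetilde{g}^i}(y_i,0,A_i)$, $Rc(\widetilde{g}^i_t)\geq -(Q_i^{-1}\mu_i A)\widetilde{g}^i_t\to 0$, and a time interval $[-Q_i\mu_i^{-1},0]$ of length tending to $\infty$. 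Applying Theorem \ref{thm:pseudolocality} at each point of $B^{\widetilde{g}^i}(y_i,0,R)$ extends the time-zero curvature bound to a uniform parabolic neighborhood $B^{\widetilde{g}^i}(y_i,0,R-1)\times[-\tau_0(A),0]$, and Shi's derivative estimates together with Hamilton's compactness theorem produce a smooth pointed Cheeger-Hamilton limit $(N^n,(g^\infty_t)_{t\in[-\tau_0,0]},y_\infty)$ with $Rc(g^\infty_t)\geq 0$ and $|Rm|(y_\infty,0)=1$.

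The proof concludes by establishing Euclidean volume growth in the limit. Bishop-Gromov monotonicity at $x_i$ at scale $1$ forces the volume ratio to be almost maximal on all sub-balls of $B^{h^i}(x_i,0,1)$ centered at $x_i$; comparing $B^{h^i}(y_i,0,1/2)\supseteq B^{h^i}(x_i,0,1/2-d_{h^i_0}(x_i,y_i))$ with $d_{h^i_0}(x_i,y_i)\to 0$ yields $|B^{h^i}(y_i,0,1/2)|_{h^i_0}/v_{-\mu_i A}(1/2)\to 1$. Bishop-Gromov monotonicity at $y_i$ then gives the same conclusion uniformly for scales $s\in(0,1/2]$, and after rescaling by $Q_i$ this translates (using $\mu_i A\rho^2 Q_i^{-1}\to 0$) into $|B^{\widetilde{g}^i}(y_i,0,\rho)|_{\widetilde{g}^i_0}\to\omega_n\rho^n$ for every fixed $\rho>0$. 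Colding's volume convergence theorem then forces $|B(y_\infty,\rho)|_{g^\infty_0}=\omega_n\rho^n$, and Bishop-Gromov rigidity applied to the Ricci-nonnegative limit $(N,g^\infty_0)$ shows it is isometric to $\mathbb{R}^n$, contradicting $|Rm|(y_\infty,0)=1$. The main technical obstacle is the first application of Theorem \ref{thm:pseudolocality} to the twice-rescaled flows: one must verify that the rescaled Ricci and noncollapsing hypotheses hold uniformly in $i$, and must choose $A_i$ so that the point-picking displacement $d_{h^i_0}(x_i,y_i)$ tends to zero faster than the final comparison scale $1/2$, so that the Bishop-Gromov transfer of almost-maximal volume from $x_i$ to $y_i$ is valid.
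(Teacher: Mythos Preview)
Your proof is correct and follows the same overall strategy as the paper: contradiction, rescale, point-pick to get locally bounded curvature with vanishing displacement from $x_i$, apply pseudolocality for backward-time control, extract a smooth limit with $Rc\geq 0$, transfer almost-maximal volume from $x_i$ to $y_i$ via Bishop--Gromov (using $d(x_i,y_i)\to 0$), and conclude the limit is flat by rigidity. The one substantive difference is the point-picking mechanism. The paper selects $y_i\in B(x_i,0,s_i)$ minimizing $\widetilde r_{Rm}(y,0)/d(y,\partial B(x_i,0,s_i))$ for an auxiliary sequence $s_i\searrow 0$, which automatically forces $d(x_i,y_i)<s_i\to 0$ and yields $\widetilde r_{Rm}\geq\tfrac12$ on balls of radius $\to\infty$ after rescaling. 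You instead use the iterative $|Rm|$-doubling argument with the explicit choice $A_i=\delta_i^{-1/2}$, so that simultaneously $A_i\to\infty$ and the displacement $\lesssim A_i\delta_i\to 0$. Both devices repair the gap in the original Chen--Yuan argument (which the paper flags: their point-picking did not force $d(x_i,y_i)\to 0$, so Euclidean volume growth in the limit was unjustified), and both are valid; yours is perhaps the more familiar mechanism. Two minor remarks: since your convergence is already smooth Cheeger--Gromov, Colding's theorem is not needed for volume convergence; and the paper handles general $\lambda$ by a separate rescaling at the end rather than absorbing it into $\mu_i$ at the outset, but this is purely organizational.
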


\begin{rem}
\label{rem:volremark} We will often consider a blowup sequence $(g_{t}^{i})_{t\in[-\lambda_{i},0]}$
of Ricci flows satisfying (\ref{eq:ric}),(\ref{eq:vol}), in which
case we will apply this theorem with parameters $\lambda_{i}\searrow0$.
Then for any $r\in(0,D]$, we can replace the almost-maximal volume
ratio with the assumption
\[
|B_{g^{i}}(x,0,r)|_{g_{0}^{i}}>(1-\delta)\omega_{n}r^{n}
\]
when $i=i(D)\in\mathbb{N}$ is sufficiently large, after possibly
adjusting $\delta>0$, since 
\[
\inf_{r\in(0,D]}\frac{\omega_{n}r^{n}}{v_{-\lambda_{i}A}(r)}=\frac{\int_{0}^{D}s^{n-1}ds}{\int_{0}^{D}\left(\sqrt{\frac{n-1}{\lambda_{i}A}}\sinh\left(\sqrt{\frac{\lambda_{i}A}{n-1}s}\right)\right)^{n-1}ds}\to1
\]
as $i\to\infty$.

The proof of Theorem \ref{thm:chenyuan} given in \cite{chenlowerbound}
contains a small gap (their point-picking does not lead to a limit
with Euclidean volume growth), so we indicate the changes needed to
give a correct proof of (a modification of) their statement.
\end{rem}

\begin{proof}
We first prove the theorem when $\lambda=1$. As in \cite{chenlowerbound},
proceed by contradiction and choose (after parabolic rescaling) a
sequence $(M_{i},(g_{t}^{i})_{t\in[-1,0]})$ along with $x_{i}\in M_{i}$,
$\delta_{i}\searrow0$, $A_{i}\in[0,A]$, such that $Rc(g_{t}^{i})\geq-A_{i}g_{t}^{i}$,
$|B_{g^{i}}(x,t,r)|_{g_{t}^{i}}\geq A^{-1}r^{n}$ for all $(x,t)\in M\times[-1,0]$
and $r\in(0,1]$, $|B_{g^{i}}(x_{i},0,1)|_{g_{0}^{i}}>(\omega_{n}-\delta_{i})v_{-A_{i}}(1)$,
but also $\widetilde{r}_{Rm}^{g^{i}}(x_{i},0)<\delta_{i}$. Choose
$s_{i}\searrow0$ such that $\lim_{i\to\infty}\frac{\widetilde{r}_{Rm}^{g^{i}}(x_{i},0)}{s_{i}}=0$,
and then choose $y_{i}\in B_{g^{i}}(x_{i},0,s_{i})$ minimizing
\[
\epsilon_{i}(y):=\dfrac{\widetilde{r}_{Rm}^{g^{i}}(y,0)}{d_{g_{0}^{i}}(y,\partial B_{g^{i}}(x_{i},0,s_{i}))},
\]
so that 
\[
\epsilon_{i}(y_{i})\leq\epsilon_{i}(x_{i})=\frac{\widetilde{r}_{Rm}^{g^{i}}(x_{i},0)}{s_{i}}\to0
\]
as $i\to\infty$. Set $r_{i}:=\widetilde{r}_{Rm}^{g^{i}}(y_{i},0)$,
$\widetilde{g}_{t}^{i}:=r_{i}^{-2}g_{r_{i}^{2}t}^{i}$ for $t\in[-1,0]$,
so that $\widetilde{r}_{Rm}^{\widetilde{g}^{i}}(y_{i},0)=1$. For
$y\in M_{i}$ with $d_{\widetilde{g}_{0}^{i}}(y_{i},y)\leq\frac{1}{2}d_{\widetilde{g}_{0}^{i}}(y_{i},\partial B_{g_{0}^{i}}(x_{i},0,s_{i}))$,
we have 
\[
\widetilde{r}_{Rm}^{\widetilde{g}^{i}}(y,0)=\dfrac{\widetilde{r}_{Rm}^{\widetilde{g}^{i}}(y,0)}{\widetilde{r}_{Rm}^{\widetilde{g}^{i}}(y_{i},0)}\geq\dfrac{d_{\widetilde{g}_{0}^{i}}(y,\partial B_{g_{0}^{i}}(x_{i},0,s_{i}))}{d_{\widetilde{g}_{0}^{i}}(y_{i},\partial B_{g_{0}^{i}}(x_{i},0,s_{i}))}\geq\frac{1}{2}.
\]
However, we also know that 
\[
d_{\widetilde{g}_{0}^{i}}(y_{i},\partial B_{g_{0}^{i}}(x_{i},0,s_{i}))=\frac{d_{g_{0}^{i}}(y_{i},\partial B_{g_{0}^{i}}(x_{i},0,s_{i}))}{\widetilde{r}_{Rm}^{g_{0}^{i}}(y_{i},0)}=\epsilon_{i}(y_{i})^{-1}\to\infty,
\]
so, for any $D<\infty$, we have $\widetilde{r}_{Rm}^{\widetilde{g}^{i}}(\cdot,0)\geq\frac{1}{2}$
on $B_{\widetilde{g}^{i}}(y_{i},0,D)$ for $i=i(D)\in\mathbb{N}$
sufficiently large. By Theorem \ref{thm:pseudolocality}, Shi's estimates,
and the assumed lower bound on volume of geodesic balls, we can pass
to a subsequence to get $C^{\infty}$ Cheeger-Gromov convergence $(M_{i},\widetilde{g}_{0}^{i},y_{i})\to(M_{\infty},\widetilde{g}^{\infty},y_{\infty})$
for some complete Riemannian manifold with bounded curvature, such
that $\widetilde{r}_{Rm}^{\widetilde{g}^{\infty}}(y_{\infty})=1$
and $Rc(\widetilde{g}^{\infty})\geq0$ everywhere. 

\noindent \textbf{Claim: }$|B_{\widetilde{g}^{\infty}}(y_{\infty},r)|_{\widetilde{g}^{\infty}}=\omega_{n}r^{n}$
for all $r\in(0,\infty)$.

Since $Rc(g_{0}^{i})\geq-A_{i}g_{0}^{i}$, $|B_{g^{i}}(y_{i},0,1)|_{g_{0}^{i}}>(1-\delta_{i})v_{-A_{i}}(1)$,
and $d_{i}:=d_{g_{0}^{i}}(x_{i},y_{i})\to0$, we can use Bishop volume
comparison to conclude
\begin{align*}
|B_{g^{i}}(y_{i},0,1)|_{g_{0}^{i}}\geq & \dfrac{v_{-A_{i}}(1)}{v_{-A_{i}}(1+d_{i})}|B_{g^{i}}(y_{i},0,1+d_{i})|_{g_{0}^{i}}\\
> & (1-\delta_{i})\frac{v_{-A_{i}}(1)}{v_{-A_{i}}(1+d_{i})}v_{-A_{i}}(1)=:\tau_{i}v_{-A_{i}}(1),
\end{align*}
where $\tau_{i}\to1$ as $i\to\infty$. Now rescale and pass to the
limit to get, for any fixed $r\in(0,\infty)$, 
\begin{align*}
|B_{\widetilde{g}^{\infty}}(y_{\infty},r)|_{\widetilde{g}^{\infty}}= & \lim_{i\to\infty}|B_{\widetilde{g}^{i}}(y_{i},0,r)|_{\widetilde{g}_{0}^{i}}=\lim_{i\to\infty}r_{i}^{-n}|B_{g^{i}}(y_{i},0,r_{i}r)|_{g_{0}^{i}}\\
\geq & \limsup_{i\to\infty}r_{i}^{-n}\tau_{i}v_{-A_{i}}(r_{i}r)=\limsup_{i\to\infty}v_{-r_{i}^{2}A_{i}}(r)=\omega_{n}r^{n}.\quad\text{\ensuremath{\square}}
\end{align*}

The only complete Riemannian manifold with nonnegative Ricci curvature
and Eulidean volume growth is flat $\mathbb{R}^{n}$, so $(M_{\infty},\widetilde{g}^{\infty})$
is flat $\mathbb{R}^{n}$, contradicting $\widetilde{r}_{Rm}^{g^{\infty}}(y_{\infty})=1$. 

Finally, consider the case where $\lambda\in(0,1)$. Define $\overline{g}_{t}:=\lambda g_{\lambda^{-1}t}$
for $t\in[-1,0]$, so that $Rc(\overline{g}_{t})\geq-A\overline{g}_{t}$,
$|B_{\overline{g}}(y,t,s)|_{\overline{g}_{t}}\geq A^{-1}s^{n}$ for
all $(y,t)\in M\times[-1,0]$, $s\in(0,1]$, and also 
\[
|B_{\overline{g}}(x,0,\lambda^{\frac{1}{2}}r)|_{\overline{g}}\geq(1-\delta)\lambda^{\frac{n}{2}}v_{-\lambda A}(r)=(1-\delta)v_{-A}(\lambda^{\frac{1}{2}}r),
\]
where $\lambda^{\frac{1}{2}}r\in(0,1]$. Then we can apply the $\lambda=1$
case, replacing $r$ with $\lambda^{\frac{1}{2}}r$, to obtain $\widetilde{r}_{Rm}^{\overline{g}}(x,0)\geq\delta\lambda^{\frac{1}{2}}r$,
and the claim follows.
\end{proof}
Using Theorem \ref{thm:chenyuan} and volume comparison, Chen-Yuan
prove that the regular set of a noncollapsed Ricci limit space is
open and equipped with a smooth Riemannian metric if it is the Gromov-Hausdorff
limit of time-slices of Ricci flows satisfying (\ref{eq:ric}),(\ref{eq:vol}).
This is made precise in the following.
\begin{thm}
\label{thm:openandsmooth} (Theorem 1 of \cite{chenlowerbound}) Given
$A<\infty$ and $\underline{T}>0$, suppose $(M_{i},(g_{t}^{i})_{t\in[-T_{i},0]})$
is a sequence of Ricci flows satisfying $Rc(g_{t}^{i})\geq-Ag_{t}^{i}$
and $|B_{g^{i}}(x,t,r)|_{g_{t}^{i}}\geq A^{-1}r^{n}$ for all $(x,t)\in M\times[-T_{i},0]$
and $r\in(0,1]$, where $T_{i}\geq\underline{T}$. If $(X,d,x)$ is
a pointed, complete metric length space such that 
\[
(M,d_{g_{0}^{i}},x_{i})\to(X,d,x)
\]
in the pointed Gromov-Hausdorff sense for some $x_{i}\in M_{i}$,
then $(X,d)$ is a noncollapsed Ricci limit space with open regular
part $\mathcal{R}$, and $\mathcal{R}$ admits the structure of a
smooth Riemannian manifold $(\mathcal{R},g)$ such that $d|(\mathcal{R}\times\mathcal{R})$
is the length metric $d_{g}$ of $(\mathcal{R},g)$. Moreover, there
is an increasing exhaustion $(U_{i})$ of $\mathcal{R}$ by precompact
open sets with diffeomorphisms $\psi_{i}:U_{i}\to M$ such that $\psi_{i}(y)\to y$
locally uniformly in $\mathcal{R}$ (with respect to the Gromov-Hausdorff
convergence) and $\psi_{i}^{\ast}g_{0}^{i}\to g$ in $C_{loc}^{\infty}(\mathcal{R})$. 
\end{thm}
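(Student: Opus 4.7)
The plan is to combine the Cheeger--Colding theory for noncollapsed limits of manifolds with Ricci lower bounds with the $\epsilon$-regularity of Theorem \ref{thm:chenyuan}, using Hamilton's Cheeger--Gromov compactness and Shi's derivative estimates to upgrade Gromov--Hausdorff convergence to smooth convergence on the regular part.

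First I would note that, since each $g_0^i$ satisfies $Rc(g_0^i)\geq -Ag_0^i$ and the uniform volume lower bound at all unit-size balls, standard Cheeger--Colding theory yields that $(X,d)$ is a noncollapsed Ricci limit space in the usual sense, with a well-defined regular set $\mathcal{R}$ consisting of points whose tangent cones are all isometric to $\mathbb{R}^n$, and with Colding's volume convergence theorem applicable to any ball whose center lies in $\mathcal{R}$. By the tangent cone characterization, for any $\bar{y}\in\mathcal{R}$ and any $\eta>0$ there is a small scale $r\in(0,\min(1,\underline{T}^{1/2}))$ with $\mathcal{H}^n(B^X(\bar{y},r))>(1-\eta)\omega_n r^n$.

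Next, given $\bar{y}\in\mathcal{R}$, I would fix such an $r$ with $\eta=\delta(A)/2$, where $\delta(A)$ comes from Theorem \ref{thm:chenyuan} applied with $\lambda=1$ (permissible because $r^2\leq\underline{T}\leq T_i$). Choosing $y_i\in M_i$ with $y_i\to\bar{y}$ in the pointed Gromov--Hausdorff sense, Colding's volume convergence gives $|B_{g^i}(y_i,0,r)|_{g_0^i}>(1-\delta(A))\omega_n r^n$ for all large $i$, which by Remark \ref{rem:volremark} satisfies the Chen--Yuan hypothesis. Hence Theorem \ref{thm:chenyuan} yields $\widetilde{r}_{Rm}^{g_0^i}(y_i,0)\geq \delta(A) r$ uniformly in $i$. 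By forwards and backwards pseudolocality (Theorem \ref{thm:pseudolocality}), one then obtains a uniform parabolic curvature bound on $B_{g^i}(y_i,0,c r)\times[-(cr)^2,(cr)^2]$ for a constant $c=c(A)>0$, and Shi's estimates give uniform higher derivative bounds.

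With these uniform curvature and derivative bounds plus the volume noncollapsing, I would invoke Hamilton's compactness theorem to extract, along any subsequence, smooth pointed Cheeger--Gromov convergence $(M_i,g_0^i,y_i)\to (N,g_N,y_N)$ on the neighborhood $B(y_i,0,cr)$. The limit underlying metric space must agree with the original Gromov--Hausdorff limit $(X,d)$ on the corresponding open neighborhood of $\bar{y}$, hence this neighborhood inherits an open smooth Riemannian structure whose length metric coincides with $d$. The limit metric is independent of subsequence by the uniqueness of Gromov--Hausdorff limits, so these local charts patch into a global smooth Riemannian metric $g$ on $\mathcal{R}$. Exhausting $\mathcal{R}$ by precompact open sets on which the above holds and diagonalizing produces the increasing exhaustion $(U_i)$ and diffeomorphisms $\psi_i:U_i\to M_i$ with $\psi_i^*g_0^i\to g$ in $C^\infty_{loc}(\mathcal{R})$.

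The main technical hurdle is matching scales: Theorem \ref{thm:chenyuan} needs the flow to exist for duration comparable to the square of the scale on which the almost-Euclidean volume ratio holds, and it also needs $r\leq 1$. The first issue is controlled by the uniform lower bound $T_i\geq\underline{T}$, which is why the bound $r\leq \underline{T}^{1/2}$ appears above; if one instead needs to use a much smaller radius in the exhaustion argument, the scale-invariance of the hypotheses lets one parabolically rescale to reduce to the unit scale before invoking Chen--Yuan. A minor secondary point is ensuring that the locally defined smooth structures glue consistently, but this follows from the fact that each is uniquely determined as the smooth limit of the fixed sequence $g_0^i$.
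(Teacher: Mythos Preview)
The paper does not prove this theorem itself; it is quoted as Theorem~1 of \cite{chenlowerbound}, and the sentence preceding the statement already indicates the strategy (``Using Theorem~\ref{thm:chenyuan} and volume comparison, Chen--Yuan prove\ldots''). Your sketch follows exactly this route---Colding volume convergence at a regular point, Chen--Yuan $\epsilon$-regularity for the curvature bound, pseudolocality plus Shi for parabolic derivative estimates, then Hamilton's compactness to extract smooth convergence---and is essentially correct.

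Two small comments. First, your appeal to Remark~\ref{rem:volremark} is slightly off: that remark treats blowup sequences with $\lambda_i\searrow 0$, whereas here the Ricci bound is the fixed $-A$. The hypothesis of Theorem~\ref{thm:chenyuan} with $\lambda=1$ is $|B|>(1-\delta)v_{-A}(r)$, not $(1-\delta)\omega_n r^n$. This is harmless: since $v_{-A}(r)/(\omega_n r^n)\to 1$ as $r\to 0$, shrinking $r$ (which you may do freely at a regular point) makes the two conditions equivalent up to adjusting $\delta$. Second, the global assertion $d|(\mathcal{R}\times\mathcal{R})=d_g$ does not follow immediately from the local chart-by-chart identification you obtain from Hamilton compactness: one must check that distances in $X$ between regular points can be approximated by curves lying in $\mathcal{R}$, which uses the Cheeger--Colding codimension-two estimate for the singular set (cf.\ Theorem~3.7 of \cite{cheegercolding2}, invoked later in the paper for precisely this purpose).
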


The following proposition gives a sufficient criterion for Gromov-Wasserstein
convergence to imply Gromov-Hausdorff convergence. 
\begin{prop}
\label{prop:easyconvergence} Suppose $(X_{i},d_{i},\mu_{i})$ is
a sequence of complete metric measure spaces converging in the $W_{1}$-Gromov-Wasserstein
sense to a complete, locally compact metric measure space $(X_{\infty},d_{\infty},\mu_{\infty})$
with $\text{supp}(\mu_{\infty})=X_{\infty}$, such that $(X_{\infty},d_{\infty}),(X_{i},d_{i})$
are all length spaces. Suppose $x_{i}\in X$ are such that, for any
$D<\infty$ and $r>0$, there exists $c=c(r,D)>0$ with $\mu_{i}(B^{X_{i}}(x,r))\geq c$
for all $x\in B^{X_{i}}(x_{i},D)$. Finally, assume $\text{Var}(\mu_{\infty})\leq H<\infty$,
and choose $x_{\infty}^{\ast}\in X_{\infty}$ such that $\text{Var}(\mu_{\infty},\delta_{x_{\infty}^{\ast}})\leq H$.
Then there exist $x_{i}^{\ast}\in X_{i}$ with $d_{i}(x_{i},x_{i}^{\ast})\leq C(H,c(\frac{1}{2},1))$
such that 
\[
(X_{i},d_{i},x_{i}^{\ast})\to(X_{\infty},d_{\infty},x_{\infty}^{\ast})
\]
in the pointed Gromov-Haudorff sense. In fact, if $\phi_{i}:(X_{i},d_{i})\to(Z,d^{Z})$
and $\phi_{\infty}:(X_{\infty},d_{\infty})\to(Z,d^{Z})$ are isometric
embeddings into a common metric space such that 
\begin{equation}
\lim_{i\to\infty}d_{W_{1}}^{Z}\left((\phi_{i})_{\ast}\mu_{i},(\phi_{\infty})_{\ast}\mu_{\infty}\right)=0,\label{eq:W1convergence}
\end{equation}
then we also have
\[
\lim_{i\to\infty}d_{H}^{Z}\left(B^{Z}(\phi_{i}(x_{i}^{\ast}),r),B^{Z}(\phi_{\infty}(x_{\infty}^{\ast}),r)\right)=0
\]
for each fixed $r\in(0,\infty)$. 

By passing to a subsequence, we can therefore find $x_{\infty}\in X_{\infty}$
such that $(X_{i},d_{i},x_{i})\to(X_{\infty},d_{\infty},x_{\infty})$
in the pointed Gromov-Hausdorff sense. 
\end{prop}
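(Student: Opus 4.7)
The plan is to first fix a common metric space $Z$ witnessing the $W_{1}$-convergence, then identify the basepoint $x_{\infty}$ by a tightness argument that rules out escape to infinity, and finally upgrade the measure-level convergence to Hausdorff closeness of balls using the full support of $\mu_{\infty}$ together with the uniform lower volume bound near $x_{i}$.

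By the definition of $W_{1}$-Gromov-Wasserstein convergence, there exist isometric embeddings $\phi_{i}:(X_{i},d_{i})\hookrightarrow(Z,d^{Z})$ and $\phi_{\infty}:(X_{\infty},d_{\infty})\hookrightarrow(Z,d^{Z})$ with $d_{W_{1}}^{Z}((\phi_{i})_{\ast}\mu_{i},(\phi_{\infty})_{\ast}\mu_{\infty})\to 0$, so it suffices to verify the ``moreover'' clause and locate $x_{\infty}$. My main tool is the standard Kantorovich-Rubinstein duality bound, which says that if $d_{W_{1}}^{Z}(\mu,\nu)<\epsilon$ then $\mu(A)\leq\nu(A^{\eta})+\epsilon/\eta$ for every Borel set $A$ and every $\eta>0$. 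To show $\phi_{i}(x_{i})$ stays bounded in $Z$, fix $q\in X_{\infty}$ and $\epsilon>0$, choose $R$ with $\mu_{\infty}(B^{X_{\infty}}(q,R))>1-\epsilon$, and apply the duality to the complement with $\eta=1$: this gives $\mu_{i}(\phi_{i}^{-1}(B^{Z}(\phi_{\infty}(q),R+1)))>1-2\epsilon$ for $i$ large. Taking $\epsilon<c(1,0)/2$, the hypothesis $\mu_{i}(B^{X_{i}}(x_{i},1))\geq c(1,0)$ forces $\phi_{i}(x_{i})\in B^{Z}(\phi_{\infty}(q),R+2)$.

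For each $r>0$, the same duality converts the lower bound $(\phi_{i})_{\ast}\mu_{i}(B^{Z}(\phi_{i}(x_{i}),r))\geq c(r,0)$ into $(\phi_{\infty})_{\ast}\mu_{\infty}(B^{Z}(\phi_{i}(x_{i}),2r))>0$ for $i$ large; the hypothesis $\mathrm{supp}(\mu_{\infty})=X_{\infty}$ then yields a point $y_{i}^{(r)}\in X_{\infty}$ with $d^{Z}(\phi_{\infty}(y_{i}^{(r)}),\phi_{i}(x_{i}))<2r$. A diagonal argument combined with local compactness of $X_{\infty}$ produces $x_{\infty}\in X_{\infty}$ with $\phi_{i}(x_{i})\to\phi_{\infty}(x_{\infty})$ along a subsequence. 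For the Hausdorff bound, the direction $\phi_{\infty}(X_{\infty})\cap B^{Z}(\phi_{\infty}(x_{\infty}),r)\subseteq $ neighborhood of $\phi_{i}(X_{i})$ uses full support: cover $\overline{B^{X_{\infty}}(x_{\infty},r)}$ by finitely many $\delta$-balls via local compactness, apply the duality in reverse at each center, and extract points of $X_{i}$ within $Z$-distance $2\delta$. The reverse direction uses the lower volume bound: for $z_{i}\in X_{i}$ with $\phi_{i}(z_{i})\in B^{Z}(\phi_{\infty}(x_{\infty}),r)$, one has $z_{i}\in B^{X_{i}}(x_{i},r+1)$ for large $i$, so $\mu_{i}(B^{X_{i}}(z_{i},\delta))\geq c(\delta,r+1)$, whence duality plus full support produce $y\in X_{\infty}$ within $Z$-distance $2\delta$ of $\phi_{i}(z_{i})$.

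The main obstacle is bookkeeping the distortion introduced by the duality estimate uniformly at small scales; this is handled by reducing to a finite $\delta$-net via local compactness, so that for any fixed $\delta>0$ only finitely many applications of Kantorovich-Rubinstein are needed, all of which succeed once $i$ is sufficiently large. The length-space hypothesis is used implicitly to ensure that the resulting Hausdorff-distance control in $Z$ corresponds to pointed Gromov-Hausdorff convergence of the abstract length spaces.
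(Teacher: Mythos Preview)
Your approach is correct and structurally identical to the paper's: locate $x_{\infty}$ by showing the basepoints cannot escape to infinity, then establish the two directions of the Hausdorff estimate using, respectively, full support of $\mu_{\infty}$ and the uniform lower volume bound. The only substantive difference is packaging: you use the Kantorovich--Rubinstein inequality $\mu(A)\leq\nu(A^{\eta})+\epsilon/\eta$ as a black box, whereas the paper works directly with optimal couplings $q_{i}$ and integrates $d^{Z}(\phi_{i}(x),\phi_{\infty}(y))$ against them. These are equivalent, and your formulation is arguably cleaner.

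One point where the paper is more careful than your sketch: the boundary of the ball $B^{Z}(\phi_{\infty}(x_{\infty}),r)$. When you approximate $\phi_{\infty}(y)$ by some $\phi_{i}(z)$ within $2\delta$, the point $\phi_{i}(z)$ may land just outside the ball if $y$ is near the boundary, so it does not witness Hausdorff closeness of the \emph{intersections}. The paper handles this explicitly: given $y$ with $d_{\infty}(y,x_{\infty})<r$, it first uses the length-space hypothesis to move to $y'$ with $d_{\infty}(y',x_{\infty})<r-\epsilon$ and $d_{\infty}(y,y')<\epsilon$, and only then finds the approximating point of $X_{i}$, which is now guaranteed to land inside $B^{Z}(\phi_{\infty}(x_{\infty}),r)$. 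Your final sentence gestures at this, but you should make the role of the length-space assumption precise here rather than ``implicit''. With that addition your argument is complete.
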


\begin{proof}
Given that $(X_{i},d_{i},\mu_{i})\to(X_{\infty},d_{\infty},\mu_{\infty})$
in the $W_{1}$-Gromov-Wasserstein sense, a direct limit construction
gives $(Z,d^{Z}),\phi_{i},\phi_{\infty}$ such that (\ref{eq:W1convergence})
is satisfied. Let $q_{i}$ be couplings of $\mu_{i},\mu_{\infty}$
realizing this convergence, so that 
\[
\lim_{i\to\infty}\int_{X_{i}\times X_{\infty}}d^{Z}(\phi_{i}(x),\phi_{\infty}(y))dq_{i}(x,y)=0.
\]

\noindent \textbf{Claim 1:} For each $D<\infty$, we have
\[
\lim_{i\to\infty}\sup_{x\in B^{X_{i}}(x_{i},D)}d^{Z}\left(\phi_{i}(x),\phi_{\infty}(X_{\infty})\right)=0.
\]
Otherwise, we can pass to a subsequence to obtain $\epsilon>0$ and
$y_{i}\in B^{X_{i}}(x_{i},D)$ such that $d^{Z}(\phi_{i}(y_{i}),\phi_{\infty}(X_{\infty}))\geq2\epsilon$
for all $i\in\mathbb{N}$. Then $d^{Z}(\phi_{i}(x),\phi_{\infty}(X_{\infty}))\geq\epsilon$
for all $x\in B^{X_{i}}(y_{i},\epsilon)$, hence 
\[
\epsilon\mu_{i}(B^{X_{i}}(y_{i},\epsilon))\leq\int_{B^{X_{i}}(y_{i},\epsilon)\times X_{\infty}}d^{Z}(\phi_{i}(x),\phi_{\infty}(y))dq_{i}(x,y)\to0,
\]
contradicting $\liminf_{i\to\infty}\mu_{i}(B^{X_{i}}(y_{i},\epsilon))\geq c(\epsilon,D)>0$.
$\square$

Now choose $x_{i}'\in X_{\infty}$ such that $d^{Z}(\phi_{\infty}(x_{i}'),\phi_{i}(x_{i}))\to0$. 

\noindent \textbf{Claim 2: }$\liminf_{i\to\infty}\mu_{\infty}(B^{X_{\infty}}(x_{i}',1))\geq c(1/2,1)$.

When $i\in\mathbb{N}$ is sufficiently large, we have
\begin{align*}
d^{Z}(\phi_{i}(x),\phi_{\infty}(y))\geq & d^{Z}(\phi_{\infty}(y),\phi_{\infty}(x_{i}'))-d^{Z}(\phi_{\infty}(x_{i}'),\phi_{i}(x_{i}))-d^{Z}(\phi_{i}(x_{i}),\phi_{i}(x))\geq\frac{1}{4}
\end{align*}
for all $x\in B^{X_{i}}(x_{i},\frac{1}{2})$ and $y\in X_{\infty}\setminus B^{X_{\infty}}(x_{i}',1)$
so
\[
\frac{1}{4}q_{i}\left(B^{X_{i}}(x_{i},\frac{1}{2})\times\left(X_{\infty}\setminus B^{X_{\infty}}(x_{i}',1)\right)\right)\leq\int_{B^{X_{i}}(x_{i},\frac{1}{2})\times\left(X_{\infty}\setminus B^{X_{\infty}}(x_{i}',1)\right)}d^{Z}(\phi_{i}(x),\phi_{\infty}(y))dq_{i}(x,y),
\]
but
\[
q_{i}\left(B^{X_{i}}(x_{i},\frac{1}{2})\times B^{X_{\infty}}(x_{i}',1)\right)\leq\mu_{\infty}(B^{X_{\infty}}(x_{i}',1)),
\]
so combining estimates gives
\begin{align*}
c(1/2,1)\leq & \liminf_{i\to\infty}\mu_{i}\left(B^{X_{i}}(x_{i},\frac{1}{2})\right)=\liminf_{i\to\infty}q_{i}\left(B^{X_{i}}(x_{i},\frac{1}{2})\times X_{\infty}\right)\\
\leq & \liminf_{i\to\infty}\mu_{\infty}(B^{X_{\infty}}(x_{i}',1)),\hfill\hfill\hfill\hfill\hfill\square
\end{align*}

If $d(x_{i}',x_{\infty}^{\ast})>\sqrt{\frac{H}{c(1/2,1)}}+1$, then
\[
H\geq\int_{B(x_{i}',1)}d^{2}(x_{\infty}^{\ast},x)d\mu_{\infty}(x)>\frac{H}{c(1/2,1)}\mu_{\infty}(B(x_{i}',1))\geq H,
\]
a contradiction. We therefore have $\Lambda=\Lambda(H,c(1/2,1))<\infty$
such that $x_{i}'\in B(x_{\infty}^{\ast},\frac{1}{2}\Lambda)$ for
all $i\in\mathbb{N}$ sufficiently large. 

\noindent \textbf{Claim 3: }For each $D<\infty$, we have 
\[
\lim_{i\to\infty}\sup_{x\in B^{X_{\infty}}(x_{\infty}^{\ast},D)}d^{Z}(\phi_{i}(X_{i}),\phi_{\infty}(x))=0.
\]
Otherwise, we can pass to a subsequence to obtain $\epsilon>0$ and
$y_{i}\in B^{X_{\infty}}(x_{\infty}^{\ast},D)$ such that $d^{Z}(\phi_{\infty}(y_{i}),\phi_{i}(X_{i}))\geq3\epsilon$
for all $i\in\mathbb{N}$. We can pass to a further subsequence so
that $y_{i}\to y_{\infty}\in\overline{B}^{X_{\infty}}(x_{\infty}^{\ast},D)$.
Then
\[
d^{Z}(\phi_{\infty}(y_{\infty}),\phi_{i}(X_{i}))\geq3\epsilon-d^{Z}(\phi_{\infty}(y_{i}),\phi_{\infty}(y_{\infty}))\geq2\epsilon
\]
for sufficiently large $i\in\mathbb{N}$, hence
\[
\epsilon\mu_{\infty}\left(B^{X_{\infty}}(y_{\infty},\epsilon)\right)\leq\int_{X_{i}\times B^{X_{\infty}}(y_{\infty},\epsilon)}d^{Z}(\phi_{i}(x),\phi_{\infty}(y))dq_{i}(x,y)\to0
\]
as $i\to\infty$, contradicting the fact that $\mu_{\infty}$ has
full support. $\square$

In particular, we can find $x_{i}^{\ast}\in X_{i}$ such that $d^{Z}(\phi_{i}(x_{i}^{\ast}),\phi_{\infty}(x_{\infty}^{\ast}))\to0$,
which implies 
\[
d_{i}(x_{i},x_{i}^{\ast})\leq d^{Z}(\phi_{i}(x_{i}),\phi_{\infty}(x_{i}'))+d_{\infty}(x_{i}',x_{\infty}^{\ast})+d^{Z}(\phi_{i}(x_{i}^{\ast}),\phi_{\infty}(x_{\infty}^{\ast}))\leq\Lambda
\]
for sufficiently large $i\in\mathbb{N}$. This implies the following
for $i\in\mathbb{N}$ sufficiently large: for any $D<\infty$, $r>0$,
and $y\in B^{X_{i}}(x_{i}^{\ast},r)$, we have 
\[
\mu_{i}(B^{X_{i}}(y,r))\geq c(r,D+\Lambda).
\]
Arguing as in Claim 1 thus gives
\[
\lim_{i\to\infty}\sup_{x\in B^{X_{i}}(x_{i}^{\ast},D)}d^{Z}(\phi_{i}(x),\phi_{\infty}(X_{\infty}))=0
\]
for any $D<\infty$. 

Now fix $D\in(0,\infty)$ and $\epsilon\in(0,D)$. For any $x\in B^{X_{i}}(x_{i}^{\ast},D)$,
because $(X_{i},d_{i})$ is a length space, we can find $x'\in B^{X_{i}}(x_{i}^{\ast},D-\epsilon/2)$
such that $d_{i}(x,x')<\epsilon/2$. We have shown that there exists
$x_{i}'\in X_{\infty}$ such that 
\[
d^{Z}(\phi_{i}(x'),\phi_{\infty}(y'))<\frac{\epsilon}{4}
\]
when $i=i(\epsilon,D)$ is sufficiently large. Thus, for $i=i(\epsilon,D)\in\mathbb{N}$
large, we have
\[
d_{\infty}(y',x_{\infty}^{\ast})\leq d^{Z}(\phi_{\infty}(y'),\phi_{i}(x'))+d_{i}(x',x_{i}^{\ast})+d^{Z}(\phi_{i}(x_{i}^{\ast}),\phi_{\infty}(x_{\infty}^{\ast}))<D,
\]
hence 
\[
\lim_{i\to\infty}\sup_{x\in B^{X_{i}}(x_{i}^{\ast},D)}d^{Z}\left(\phi_{i}(x),\phi_{\infty}(B^{X_{\infty}}(x_{\infty}^{\ast},D))\right)=0.
\]
Similarly, because $(X_{\infty},d_{\infty})$ is a length space, we
have 
\[
\lim_{i\to\infty}\sup_{x\in B^{X_{\infty}}(x_{\infty}^{\ast},D)}d^{Z}\left(\phi_{\infty}(x),\phi_{\infty}(B^{X_{i}}(x_{i}^{\ast},D))\right)=0.
\]
Together, these facts give 
\[
\lim_{i\to\infty}d_{H}\left(\phi_{i}(B^{X_{i}}(x_{i}^{\ast},D)),\phi_{\infty}(B^{X_{\infty}}(x_{\infty}^{\ast},D))\right)=0.
\]
The remaining claim follows from the distance bound $d_{i}(x_{i},x_{i}^{\ast})\leq\Lambda$. 
\end{proof}
\begin{defn}
We recall the definition of the quantitative singular strata of a
Riemannian manifold $(M^{n},g)$: $\mathcal{S}_{\eta,r}^{k}$ is the
set of $x\in M$ such that there does not exist $s\in[r,2]$ and a
metric cone $C(Z)$ such that
\[
d_{PGH}\left(\left(B(x,s),d_{g},x\right),\left(B\left((0^{k+1},z_{\ast}),s\right),d_{\mathbb{R}^{k+1}\times C(Z)},(0^{k+1},z_{\ast})\right)\right)<\eta s,
\]
where $z_{\ast}$ is the vertex of the cone $C(Z)$. Given a noncollapsed
Ricci limit space $(X,d)$, we let $\mathcal{S}^{k}(X)$ be the set
of $x\in X$ such that no tangent cone of $(X,d)$ at $x$ isometrically
splits a factor of $\mathbb{R}^{k+1}$, and let $\mathcal{S}(X)$
be the singular set, which consists of $x\in X$ such that no tangent
cone of $(X,d)$ at $x$ is $\mathbb{R}^{n}$. The regular set is
$\mathcal{R}(X):=X\setminus\mathcal{S}(X)$. 
\end{defn}

The following obtained from Theorem 1.7 in \cite{rectifiability}
by rescaling.
\begin{thm}
(Estimating the Size of Quantitative Singular Strata)\label{thm:singsetsize}
For any $\eta>0$ and $A<\infty$, there exists $C=C(A,\eta)>0$ such
that for any Riemannian manifold $(M^{n},g)$ satisfying $Rc(g)\geq-Ag$
and $|B_{g}(y,1)|_{g}\geq A^{-1}$ for all $y\in M$, we have 
\[
|\mathcal{S}_{\eta,sr}^{k}\cap B_{g}(y,r)|_{g}\leq Cs^{n-k}r^{n}
\]
for any $y\in M$ and $r,s\in(0,1]$. 
\end{thm}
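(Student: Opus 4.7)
The plan is to reduce the claim directly to Theorem 1.7 of \cite{rectifiability}, which is the analogous estimate at unit scale, via a straightforward rescaling argument. First I would introduce the rescaled metric $\widetilde{g} := r^{-2}g$ on $M$. Under this rescaling, distances transform as $d_{\widetilde{g}} = r^{-1}d_g$, so that $B_{\widetilde{g}}(y,\rho) = B_g(y,r\rho)$; volumes transform as $|\cdot|_{\widetilde{g}} = r^{-n}|\cdot|_g$; and the Ricci tensor satisfies $Rc(\widetilde{g}) = Rc(g) \geq -Ag = -(Ar^2)\widetilde{g}$, which gives $Rc(\widetilde{g}) \geq -A\widetilde{g}$ since $r \in (0,1]$. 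For the quantitative singular strata, the scale invariance of pointed Gromov-Hausdorff distance under joint rescaling of metric and radius yields the inclusion $\mathcal{S}^k_{\eta,sr}(g) \subseteq \mathcal{S}^k_{\eta,s}(\widetilde{g})$: if no metric cone and no scale $\sigma \in [sr,2]$ in $g$ realize the required $\eta\sigma$-closeness, then in particular no cone and scale $\tau \in [s,2]$ in $\widetilde{g}$, corresponding to $\sigma \in [sr,2r] \subseteq [sr,2]$ in $g$, do so.

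Next I would verify the unit-scale volume hypothesis in the rescaled metric. Applying Bishop-Gromov volume comparison to $(M,g)$ with $Rc(g) \geq -Ag$ gives, for every $y \in M$ and $r \in (0,1]$,
\[
|B_g(y,r)|_g \geq \frac{v_{-A}(r)}{v_{-A}(1)}|B_g(y,1)|_g \geq \frac{v_{-A}(r)}{v_{-A}(1)}A^{-1} \geq c(n,A)r^n,
\]
using that $v_{-A}(r)/r^n \to \omega_n$ as $r \searrow 0$. Consequently $|B_{\widetilde{g}}(y,1)|_{\widetilde{g}} = r^{-n}|B_g(y,r)|_g \geq c(n,A)$, so $(M,\widetilde{g})$ satisfies the hypotheses of Theorem 1.7 of \cite{rectifiability} with the parameter $A$ replaced by some $A' = A'(n,A)$ depending only on $n$ and $A$.

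Applying that theorem to $(M,\widetilde{g})$ yields
\[
|\mathcal{S}^k_{\eta,s}(\widetilde{g}) \cap B_{\widetilde{g}}(y,1)|_{\widetilde{g}} \leq C(n,A,\eta)s^{n-k}
\]
for all $s \in (0,1]$. Combining with the inclusion of strata established above, and unwinding the scaling identities, gives
\[
|\mathcal{S}^k_{\eta,sr}(g) \cap B_g(y,r)|_g \leq |\mathcal{S}^k_{\eta,s}(\widetilde{g}) \cap B_g(y,r)|_g = r^n |\mathcal{S}^k_{\eta,s}(\widetilde{g}) \cap B_{\widetilde{g}}(y,1)|_{\widetilde{g}} \leq Cs^{n-k}r^n,
\]
which is the desired estimate. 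I do not foresee any serious obstacle here: the entire argument is a bookkeeping comparison of hypotheses and conclusions under rescaling, with all the essential analytic content provided by the cited theorem of \cite{rectifiability}.
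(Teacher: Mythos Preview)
Your proposal is correct and follows essentially the same approach as the paper, which simply states that the result is obtained from Theorem 1.7 of \cite{rectifiability} by rescaling, with the key observation (given in a remark) that $\mathcal{S}_{\eta,sr}^{k}\subseteq\widetilde{\mathcal{S}}_{\eta,s}^{k}$ for $\widetilde{g}:=r^{-2}g$. You have simply filled in the bookkeeping details (the Ricci bound, the Bishop--Gromov step for the volume hypothesis, and the strata inclusion) that the paper leaves implicit.
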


\begin{rem}
In fact, if $\widetilde{g}:=r^{-2}g$ and $\widetilde{\mathcal{S}}_{\eta,s}^{k}$
denotes the quantitative singular set of the rescaled manifold $(M,\widetilde{g})$,
we just note that $\mathcal{S}_{\eta,sr}^{k}\subseteq\widetilde{\mathcal{S}}_{\eta,s}^{k}$.
\end{rem}

We will combine this with various Gromov-Hausdorff $\epsilon$-regularity
theorems as in \cite{cheegernaberquant} to get $L^{p}$ estimates
for $\widetilde{r}_{Rm}^{-2}$. 
\begin{prop}
\label{prop:codim1} (Codimension one $\epsilon$-Regularity) For
any $\underline{T}>0$, and $A<\infty$, there exists $\epsilon_{0}=\epsilon_{0}(\underline{T},A)>0$
such that the following holds for any closed Ricci flow $(M^{n},(g_{t})_{t\in[0,T)})$
with $T\geq\underline{T}$ satisfying $Rc(g_{t})\geq-Ag_{t}$ and
$|B(x,t,r)|_{g_{t}}\geq A^{-1}r^{n}$, for all $(x,t)\in M\times[0,T)$
and $r\in(0,1]$. For any $(x,t)\in M\times[\frac{T}{2},T)$ and $r\in(0,1]$,
if 
\[
d_{PGH}\left(\left(B(x,t,\epsilon_{0}^{-1}r),d_{g_{t}},x\right),\left(B\left((0^{n-1},z_{\ast}),\epsilon_{0}^{-1}r\right),d_{\mathbb{R}^{n-1}\times C(Z)},(0^{n-1},z_{\ast})\right)\right)<\epsilon_{0}r
\]
for some metric cone $C(Z)$, then $\widetilde{r}_{Rm}(x,t)\geq\epsilon_{0}r$. 
\end{prop}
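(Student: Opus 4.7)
The plan is a contradiction-compactness argument combined with cone rigidity. Assume no such $\epsilon_0$ exists: pick flows $(M_i^n,(g_t^i)_{t\in[0,T_i)})$ with $T_i\geq\underline{T}$ satisfying the hypotheses, points $(x_i,t_i)\in M_i\times[T_i/2,T_i)$, scales $r_i\in(0,1]$, cones $C(Z_i)$, and $\epsilon_i\searrow 0$ such that $(B(x_i,t_i,\epsilon_i^{-1}r_i),d_{g_{t_i}^i},x_i)$ is $\epsilon_i r_i$-close in the pointed Gromov-Hausdorff sense to the model ball in $\mathbb{R}^{n-1}\times C(Z_i)$ based at $(0^{n-1},z_{*,i})$, yet $\widetilde{r}_{Rm}(x_i,t_i)<\epsilon_i r_i$. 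I rescale via $\widetilde{g}_s^i:=r_i^{-2}g_{t_i+r_i^2 s}^i$; the rescaled flows satisfy $Rc(\widetilde{g}_s^i)\geq -A\widetilde{g}_s^i$ (since $r_i\leq 1$), the unit-scale noncollapsing $|B_{\widetilde{g}^i}(y,s,\rho)|_{\widetilde{g}_s^i}\geq A^{-1}\rho^n$ for $\rho\in(0,1]$, and persist backwards at least $\underline{T}/(2r_i^2)\geq\underline{T}/2$, so I may reduce to the case $r_i=1$ with $\widetilde{r}_{Rm}^{\widetilde{g}^i}(x_i,0)<\epsilon_i$.

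By Theorem \ref{thm:openandsmooth}, after passing to a subsequence, $(M_i,d_{\widetilde{g}_0^i},x_i)$ converges in the pointed Gromov-Hausdorff sense to a noncollapsed Ricci limit space $(X,d_\infty,x_\infty)$ whose regular part is open and carries a smooth Riemannian metric, with $C^\infty$-Cheeger-Gromov convergence on that regular part. By Gromov precompactness I may further assume $(\mathbb{R}^{n-1}\times C(Z_i),(0^{n-1},z_{*,i}))\to(\mathbb{R}^{n-1}\times W,(0^{n-1},w_\infty))$ for a pointed Gromov-Hausdorff limit $W$ of the $1$-dimensional cones $C(Z_i)$, itself a $1$-dimensional metric cone. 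Comparing the two convergences via the $\epsilon_i$-closeness yields $X\cong\mathbb{R}^{n-1}\times W$.

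The crux is the rigidity $W=\mathbb{R}$. A $1$-dimensional metric cone is isometric to $\mathbb{R}$ precisely when its base consists of two points at distance $\pi$; in every other case the vertex $o_*$ of $W$ fails to be locally Euclidean, so the tangent cone of $\mathbb{R}^{n-1}\times W$ at any point of $\mathbb{R}^{n-1}\times\{o_*\}$ is not $\mathbb{R}^n$, and therefore $\mathbb{R}^{n-1}\times\{o_*\}\subseteq\mathcal{S}(X)$ has Hausdorff dimension $n-1$, violating the Cheeger-Colding codimension-two bound on the singular set of a noncollapsed Ricci limit. Hence $X=\mathbb{R}^n$, $x_\infty$ is a regular point, and the smooth Cheeger-Gromov convergence from Theorem \ref{thm:openandsmooth} gives $\widetilde{r}_{Rm}^{\widetilde{g}^i}(x_i,0)\to\infty$, contradicting $\widetilde{r}_{Rm}^{\widetilde{g}^i}(x_i,0)<\epsilon_i\to 0$. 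The main obstacle is this cone-rigidity step; a natural alternative is to replace the final smooth-convergence argument with Colding's volume convergence theorem and Theorem \ref{thm:chenyuan}.
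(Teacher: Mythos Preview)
Your argument is correct and essentially identical to the paper's proof: both proceed by contradiction, rescale to unit scale, pass to a subsequence converging to a noncollapsed Ricci limit space of the form $\mathbb{R}^{n-1}\times C(Z_\infty)$, invoke the Cheeger--Colding codimension-two bound on the singular set to force $C(Z_\infty)=\mathbb{R}$, and then use the smooth convergence from Theorem~\ref{thm:openandsmooth} to contradict $\widetilde{r}_{Rm}(x_i,0)<\epsilon_i$. The only cosmetic difference is that you separate the identification of the limit as $\mathbb{R}^{n-1}\times W$ into two steps (limit of the manifolds, limit of the model cones, then compare), whereas the paper passes directly to a conical limit in one line.
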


\begin{proof}
Suppose the claim is false. Then, after rescaling and time translating,
we can find closed Ricci flows $(M_{i}^{n},(g_{t}^{i})_{t\in[-\frac{T_{i}}{2},0]})$
with $T_{i}\geq\underline{T}$ satisfying $Rc(g_{t}^{i})\geq-Ag_{t}^{i}$
and $|B_{g^{i}}(x,t,r)|_{g_{t}^{i}}\geq A^{-1}r^{n}$ for all $(x,t)\in M\times[-\frac{T_{i}}{2},0]$
and $r\in(0,1]$, and also $\epsilon_{i}\searrow0$, $x_{i}\in M_{i}$
such that 
\[
d_{PGH}\left(\left(B_{g^{i}}(x_{i},0,\epsilon_{i}^{-1}),d_{g_{t}^{i}},x_{i}\right),\left(B\left((0^{n-1},z_{\ast}^{i}),\epsilon_{i}^{-1}\right),d_{\mathbb{R}^{n-1}\times C(Z_{i})},(0^{n-1},z_{\ast}^{i})\right)\right)<\epsilon_{i}
\]
for some metric spaces $(Z_{i},d_{i})$, yet $\widetilde{r}_{Rm}^{g^{i}}(x_{i},0)<\epsilon_{i}$.
We can pass to a subsequence to assume that $(M_{i},d_{g_{0}^{i}},x_{i})$
converges in the pointed Gromov-Hausdorff sense to some metric cone
$(\mathbb{R}^{n-1}\times C(Z_{\infty}),d_{\mathbb{R}^{n-1}\times C(Z_{\infty})},(0^{n-1},z_{\ast}^{\infty}))$.
By Theorem 6.1 of \cite{cheegercolding1}, $\mathcal{S}(\mathbb{R}^{n-1}\times C(Z_{\infty}))$
has Hausdorff dimension $\leq n-2$, so $\mathcal{S}(C(Z_{\infty}))=\emptyset$,
hence $C(Z_{\infty})=\mathbb{R}$. By Theorem \ref{thm:openandsmooth},
the convergence is actually smooth everywhere, so $\lim_{i\to\infty}\widetilde{r}_{Rm}(x_{i},0)=\infty$,
a contradiction.
\end{proof}
\begin{rem}
\label{rem:remarque} Suppose that we replace the Gromov-Hausdorff
closeness assumption of Proposition \ref{prop:codim1} by
\[
d_{PGH}\left(\left(B_{g}(x,t_{0},r),d_{g_{t}},x\right),\left(B\left((0^{n-1},z_{\ast}),r\right),d_{\mathbb{R}^{n-1}\times C(Z)},(0^{n-1},z_{\ast})\right)\right)<\epsilon_{0}^{2}r
\]
for some $r\in(0,1]$. Then we can apply Proposition \ref{prop:codim1}
with $r$ replaced by $\epsilon_{0}r$, to get
\[
\widetilde{r}_{Rm}^{g}(x,t_{0})\geq\epsilon_{0}^{2}r.
\]
Thus the conclusion of \ref{prop:codim1} holds (after replacing $\epsilon_{0}$
with $\epsilon_{0}^{2}$) if we only ask for a Gromov-Hausdorff closeness
condition for $B_{g}(x,t,1)$ rather than $B_{g}(x,t,\epsilon_{0}^{-1})$. 
\end{rem}

\begin{cor}
\label{cor:highcurvaturesize} (Estimating the Size of High-Curvature
Regions) For any $p\in(0,2)$, $\underline{T}>0$, $D<\infty$, and
$A<\infty$, there exist $E=E(D,\underline{T},A)<\infty$ and $C=C(\underline{T},A,p,D)<\infty$
such that the following hold for any closed Ricci flow $(M^{n},(g_{t})_{t\in[0,T)})$
with $T\geq\underline{T}$ satisfying $Rc(g_{t})\geq-Ag_{t}$ and
$|B(x,t,r)|_{g_{t}}\geq A^{-1}r^{n}$, for all $(x,t)\in M\times[0,T)$
and $r\in(0,1]$. 

$(i)$ For all $(x,t)\in M\times[\frac{T}{2},T)$ and $s\in(0,1]$,
\[
|\{\widetilde{r}_{Rm}(\cdot,t)<s\}\cap B_{g}(x,t,D)|_{g_{t}}\leq Es^{2}.
\]

$(ii)$ For all $(x,t)\in M\times[\frac{T}{2},T)$,
\[
\int_{B(x,t,D)}|Rm|^{\frac{p}{2}}(\cdot,t)dg_{t}\leq\int_{B(x,t,D)}\widetilde{r}_{Rm}^{-p}(\cdot,t)dg_{t}\leq C.
\]
\end{cor}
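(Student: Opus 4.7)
The plan is to obtain $(i)$ by combining the codimension-one $\epsilon$-regularity of Proposition \ref{prop:codim1} with the quantitative stratification estimate of Theorem \ref{thm:singsetsize}, and then to deduce $(ii)$ from $(i)$ via the layer cake formula.

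For $(i)$, set $\eta := \epsilon_0(\underline{T},A)^2$ and assume without loss of generality that $\epsilon_0 \leq \tfrac12$. The contrapositive of Proposition \ref{prop:codim1} (equivalently Remark \ref{rem:remarque}) shows that whenever $(y,t) \in M\times[T/2,T)$ and $s' \in (0,2]$ satisfy $\widetilde{r}_{Rm}(y,t) < \eta s'$, the ball $B_g(y,t,s')$ fails to be $\eta s'$-close in pointed Gromov--Hausdorff distance to a ball at the vertex of any $\mathbb{R}^{n-1}\times C(Z)$. Consequently, if $\widetilde{r}_{Rm}(y,t) < s$ with $s \leq 2\eta$, then $y \in \mathcal{S}^{n-2}_{\eta,\,s/\eta}(M,g_t)$. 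Using Bishop--Gromov volume comparison (from $Rc(g_t)\geq -Ag_t$), cover $B_g(x,t,D)$ by at most $N = N(D,A)$ unit balls $B_g(y_j,t,1)$. Theorem \ref{thm:singsetsize}, whose hypotheses hold by the assumed volume noncollapsing and Ricci lower bound, then yields
\[
\bigl|\{\widetilde{r}_{Rm}(\cdot,t) < s\} \cap B_g(y_j,t,1)\bigr|_{g_t} \leq \bigl|\mathcal{S}^{n-2}_{\eta,\,s/\eta} \cap B_g(y_j,t,1)\bigr|_{g_t} \leq C(A,\eta)\,s^2
\]
whenever $s \leq \eta$. Summing over $j$ gives $(i)$ in that range; for $s \in (\eta,1]$, the trivial estimate $|B_g(x,t,D)|_{g_t} \leq C(D,A)$ combined with $s \geq \eta$ produces the same quadratic bound after enlarging $E$.

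For $(ii)$, the first inequality $|Rm|^{p/2}(\cdot,t) \leq \widetilde{r}_{Rm}^{-p}(\cdot,t)$ is immediate from the definition of $\widetilde{r}_{Rm}$. For the second, apply the layer-cake formula
\[
\int_{B_g(x,t,D)} \widetilde{r}_{Rm}^{-p}(\cdot,t)\,dg_t = p \int_0^\infty s^{-p-1}\, \bigl|\{\widetilde{r}_{Rm}(\cdot,t) < s\} \cap B_g(x,t,D)\bigr|_{g_t}\, ds
\]
and split the integral at $s=1$. On $(0,1]$, part $(i)$ bounds the integrand by $pE\,s^{1-p}$, which is integrable since $p<2$. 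On $[1,\infty)$, the sublevel set is contained in $B_g(x,t,D)$, whose volume is $\leq C(D,A)$ by Bishop--Gromov, giving an integrand $\leq C(D,A)\,s^{-p-1}$, integrable since $p>0$. The sum yields the constant $C = C(\underline{T},A,p,D)$.

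No step is truly an obstacle; the only real technical point is aligning the scaling conventions between the curvature-scale threshold $\widetilde{r}_{Rm} < s$ and the quantitative stratification scale $s/\eta$, which is handled by the factor $\eta = \epsilon_0^2$. Beyond this, the argument is a standard combination of $\epsilon$-regularity, quantitative stratification, and a layer cake decomposition.
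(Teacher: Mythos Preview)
Your proof is correct and follows essentially the same approach as the paper: use the contrapositive of codimension-one $\epsilon$-regularity (Proposition~\ref{prop:codim1} / Remark~\ref{rem:remarque}) to place $\{\widetilde{r}_{Rm}(\cdot,t)<s\}$ inside a quantitative singular stratum $\mathcal{S}^{n-2}_{\eta,\,s/\eta}$, apply Theorem~\ref{thm:singsetsize} on unit balls, cover $B_g(x,t,D)$ by such balls via Bishop--Gromov, and then run the layer-cake formula for $(ii)$. The only cosmetic differences are your choice $\eta=\epsilon_0^2$ versus the paper's $\eta=\epsilon_0$, and the change of variables in the layer-cake integral.
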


\begin{proof}
$(i)$ We first assume $D=1$. Then, for any $y\in B_{g}(x,t,1)$
with $\widetilde{r}_{Rm}(y,t)<\epsilon_{0}s$, Proposition \ref{prop:codim1}
and Remark \ref{rem:remarque} give
\[
d_{PGH}\left(\left(B(x,t,r'),d_{g},x\right),\left(B\left((0^{n-1},z_{\ast}),r'\right),d_{\mathbb{R}^{n-1}\times C(Z)},(0^{n-1},z_{\ast})\right)\right)>\epsilon_{0}r'
\]
for all metric cones $C(Z)$ and all $r'\in[s,1]$. In other words,
$y\in\mathcal{S}_{\epsilon_{0},s}^{n-2}$, so Theorem \ref{cor:highcurvaturesize}
with $\eta:=\epsilon_{0}$ gives
\begin{align*}
|\{\widetilde{r}_{Rm}(\cdot,t)<s\}\cap B_{g}(x,t,1)|_{g_{t}}\leq & |\mathcal{S}_{\epsilon_{0},\epsilon_{0}^{-1}s}^{n-2}\cap B(x,t,1)|_{g_{t}}\leq C(A,\underline{T})s^{2}
\end{align*}
for $s\in(0,\epsilon_{0}]$. For $s\in(\epsilon_{0},1]$, we estimate
\[
|\{\widetilde{r}_{Rm}(\cdot,t)<s\}\cap B_{g}(x,t,1)|_{g}\leq C(A)\leq C(A,\underline{T})s^{2}.
\]
For arbitrary $D<\infty$, we can combine the $D=1$ case with a standard
covering argument to get
\[
|\{\widetilde{r}_{Rm}(\cdot,t)<s\}\cap B_{g}(x,t,D)|_{g_{t}}\leq C(A,\underline{T},D)s^{2}
\]
for all $s\in(0,1]$.

$(ii)$ For the remaining claim, we apply the first estimate to get
\begin{align*}
\int_{B(x,t,D)}|Rm|^{\frac{p}{2}}(\cdot,t)dg_{t}\leq & \int_{B(x,t,D)}\widetilde{r}_{Rm}^{-p}(\cdot,t)dg_{t}\\
\leq & |B(x,t,D)|_{g_{t}}+\frac{1}{p}\int_{1}^{\infty}s^{p-1}|B(x,t,D)\cap\{\widetilde{r}_{Rm}^{-1}(\cdot,t)>s\}|_{g_{t}}ds\\
\leq & C(\underline{T},A,p,D)\left(1+\int_{1}^{\infty}s^{p-1}s^{-2}ds\right)\\
\leq & C(\underline{T},A,p,D).
\end{align*}
\end{proof}
\begin{rem}
After possibly modifying the constants $E_{p},C$, Corollary \ref{cor:highcurvaturesize}
also holds with $\widetilde{r}_{Rm}$ replaced with $r_{Rm}$, using
the proof of part $(iii)$ of Lemma \ref{lem:basiclemma}. 
\end{rem}

\begin{rem}
\label{rem:epsregremark}The proof of Corollary \ref{cor:highcurvaturesize}
can be trivially modified to produce stronger estimates given stronger
$\epsilon$-regularity theorems.
\end{rem}

By \cite{bamlergen2,bamlergen3}, a metric flow arising as an $\mathbb{F}$-limit
of noncollapsed Ricci flow is characterized by its regular set, which
is endowed with the structure of a Ricci flow spacetime. We now introduce
some definitions and notation relevant to this notion.
\begin{defn}
(Definition 1.2 in \cite{klott1}) A Ricci flow spacetime is a tuple
$(\mathcal{R},g,\mathfrak{t},\partial_{\mathfrak{t}})$, where $\mathcal{R}$
is a smooth $(n+1)$-dimensional manifold, $\mathfrak{t}:\mathcal{R}\to\mathbb{R}$
is a smooth submersion, $\partial_{\mathfrak{t}}\in\mathfrak{X}(\mathcal{R})$
satisfies $\partial_{\mathfrak{t}}\mathfrak{t}=1$, and $g$ is a
bundle metric on $\ker(d\mathfrak{t})$ satisfying $\mathcal{L}_{\partial_{\mathfrak{t}}}g=-2Rc(g_{t})$,
where $g_{t}$ is Riemannian metric on $\mathcal{R}_{t}:=\mathfrak{t}^{-1}(t)$
obtained by restricting $g$. Given a point $x\in\mathcal{R}$, let
$\gamma:I_{x}\to\mathcal{R}$ denote the maximally defined integral
curve of $\partial_{\mathfrak{t}}$ satisfying $\gamma(\mathfrak{t}(x))=x$.
If $t\in(\alpha,\beta)$, then we say $x$ survives until time $t$,
and we write $x(t):=\gamma(t)$. For any subset $S\subseteq\mathcal{R}$,
write $S_{t}:=S\cap\mathcal{R}_{t}$, and if $S\subseteq\mathcal{R}_{t}$,
define 
\[
S(t'):=\{y(t');y\in S,t'\in I_{y}\}.
\]
Also define the parabolic neighborhood 
\[
P(x;A,-T^{-},T^{+}):=\cup_{t\in[\mathfrak{t}(x)-T^{-},\mathfrak{t}(x)+T^{+}]}\left(B_{g_{\mathfrak{t}(x)}}(x,A)\right)(t).
\]
We call $P(x;A,-T^{-},T^{+})$ unscathed if $B_{g_{\mathfrak{t}(x)}}(x,A)$
has compact closure in $\mathcal{R}_{t}$ and $[\mathfrak{t}(x)-T^{-},\mathfrak{t}(x)+T^{+}]\subseteq I_{y}$
for all $y\in B_{g_{\mathfrak{t}(x)}}(x,A)$. 
\end{defn}

According to \cite{bamlergen3}, special limits of noncollapsed Ricci
flows (in particular, static flows and metric solitons) are actually
determined by the restriction of the metric flow to a single time
slice, where the corresponding metric space has the structure of singular
space. We will thus often restrict our attention to studying the properties
of these singular spaces, so it is important to review the following
definitions.
\begin{defn}
(Definition 1.7 in \cite{bam1}) A singular space is a tuple $(X,d,\mathcal{R}_{X},g_{X})$,
where $(X,d)$ is a complete metric length space and $\mathcal{R}\subseteq X$
is a dense open subset equipped with the structure of an $n$-dimensional
Riemannian manifold $(\mathcal{R}_{X},g_{X})$ such that $d|(\mathcal{R}_{X}\times\mathcal{R}_{X})$
is the length metric $d_{g_{X}}$, and for any compact subset $K\subseteq X$
and $D<\infty$, there exist $0<\kappa_{1}(K,D)<\kappa_{2}(K,D)<\infty$
such that 
\[
\kappa_{1}r^{n}<|B^{X}(x,r)\cap\mathcal{R}_{X}|_{g_{X}}<\kappa_{2}r^{n}
\]
for all $x\in K$ and $r\in(0,D)$. 

A singular shrinking soliton is a tuple $(X,d,\mathcal{R}_{X},g_{X},f_{X})$,
where $(X,d,\mathcal{R}_{X},g_{X})$ is a singular space and $f_{X}\in C^{\infty}(\mathcal{R}_{X})$
satisfies the Ricci soliton equation
\[
Rc(g_{X})+\nabla^{2}f_{X}=\frac{1}{2}g_{X}
\]
on $\mathcal{R}_{X}$. 
\end{defn}

\section{Tangent Flows are Ricci-Flat Cones}

We first recall the notion of a conjugate heat kernel based at the
singular time. Suppose $(M^{n},(g_{t})_{t\in[0,T)},p)$ is any closed,
pointed Ricci flow. Let $K(x,t;\cdot,\cdot)$ denote the conjugate
heat kernel based at $(x,t)\in M\times(0,T)$. By Lemma 2.2 of \cite{mant},
for any sequence of times $t_{i}\nearrow T$, we can pass to a subsequence
so that $K(x,t_{i};\cdot,\cdot)$ converge in $C_{loc}^{\infty}(M\times[0,T))$
to a solution $K(x,T;\cdot,\cdot)$ of the conjugate heat equation
on $M\times[0,T)$ satisfying $\int_{M}K(x,T;y,t)dg_{t}(y)=1$ for
all $t\in[0,T)$. In particular, $d\nu_{x,T;t}:=K(x,T;\cdot,t)dg_{t}$
is a probability measure. By a slight abuse of language, we refer
to both $K(x,T;\cdot,\cdot)$ and $(\nu_{x,T;t})_{t\in[0,T)}$ as
a conjugate heat kernel at the singular time based at $x$. Note that
$K(x,T;\cdot,\cdot)$ is not unique, and may depend on the sequence
$(t_{i})$. 
\begin{lem}
\label{lem:heatkersingtime} $(i)$ $\lim_{i\to\infty}d_{W_{1}}^{g_{s}}(\nu_{x,t_{i};s},\nu_{x,T;s})$
for each $s\in[0,T)$.

$(ii)$ $\text{Var}(\nu_{x,T;s})\leq H_{n}(T-s)$ for all $s\in[0,T)$.
\end{lem}

\begin{proof}
$(i)$ In fact, for any $1$-Lipschitz function $f:(M,d_{g_{s}})\to\mathbb{R}$,
and any fixed point $y_{0}\in M$,
\begin{align*}
\int_{M}fd\nu_{x,t_{i};s}-\int_{M}fd\nu_{x,T;s}= & \int_{M}f(y)\left(K(x,t_{i};y,s)-K(x,T;y,s)\right)dg_{s}(y)\\
= & \int_{M}(f(y)-f(y_{0}))\left(K(x,t_{i};y,s)-K(x,T;y,s)\right)dg_{s}(y)\\
\leq & \left(\sup_{t\in[0,T)}\text{diam}_{g_{t}}(M)\right)\int_{M}|K(x,t_{i};y,s)-K(x,T;y,s)|dg_{s}(y),
\end{align*}
but the right hand side approaches 0 as $i\to\infty$ by the dominated
convergence theorem.

$(ii)$ Corollary 3.8 of \cite{bamlergen1} gives $\text{Var}(\nu_{x,t_{i};s})\leq H_{n}(t_{i}-s)$,
so the claim follows from the lower semicontinuity of $\text{Var}$
under $W_{1}$-convergence. 
\end{proof}
\begin{rem}
In Section 1.7 of \cite{bamlergen3}, Bamler defines $\nu_{x,T;s}$
as the limit in $W_{1}$ of $\nu_{x,t_{i};s}$ as $t_{i}\nearrow T$.
We choose to define $\nu_{x,T;s}$ via the smooth convergence of $K(x,t_{i};\cdot,\cdot)$
because we will need to pass the Gaussian heat kernel estimate stated
in Proposition \ref{prop:heatkernel} to the limit as $t_{i}\nearrow T$
in Section 4, and it is not immediate to us how to justify this using
only $W_{1}$-convergence.
\end{rem}

By assertion $(ii)$ of Lemma \ref{lem:heatkersingtime}, $(\nu_{x,T;t})_{t\in[0,T)}$
admits $H_{n}$-centers: for any $t\in[0,T)$, there exists $y\in M$
such that $\text{Var}(\nu_{x,T;t},\delta_{y})\leq H_{n}(T-t)$. The
following lemma shows that near such an $H_{n}$-center $y$, there
is a pointwise lower bound for $K(x,T;\cdot,t)$ on a set of almost
full measure in $B(y,t,1)$. 
\begin{lem}
\label{lem:firsttechlemma} Let $(M,(g_{t})_{t\in[-4,0)},(\nu_{t})_{t\in[-4,0)})$
be a closed Ricci flow satisfying $Rc(g_{t})\geq-Ag_{t}$ and $A^{-1}r^{n}<|B(x,t,r)|_{g_{t}}$
for all $(x,t)\in M\times[-4,0)$ and $r\in(0,1]$, where $\nu_{t}$
is a conjugate heat kernel based at the singular time. For any $D<\infty$
and $\delta>0$, there exists $\sigma=\sigma(A,D,\delta)>0$ such
that the following holds. For any $H_{n}$-center $(y,t)\in M\times[-2,-1]$
of $\nu$, there is a subset $S\subseteq B(y,t,D)$ such that $|B(y,t,D)\setminus S|_{g_{t}}<\delta$
and $K(\cdot,t)>\sigma$ on $S$, where $d\nu_{t}=K(\cdot,t)dg_{t}$. 
\end{lem}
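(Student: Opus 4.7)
My plan is to first establish a pointwise lower bound for $K^i(\cdot,t)$ at a single seed point near $y$, and then propagate this to most of $B_{g^i}(y,t,D)$ using Cheeger-Colding's segment inequality together with a gradient estimate for the conjugate heat kernel.

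For the seed point, I use mass concentration: by definition of $H_n$-center, $\int_M d_{g_t^i}^2(\cdot,y)\,d\nu_t^i \leq H_n|t| \leq 2H_n$, so Chebyshev's inequality yields $\nu_t^i(B_{g^i}(y,t,R_0)) \geq \tfrac{1}{2}$ for $R_0:=\sqrt{4H_n}$. Volume comparison under the Ricci lower bound (\ref{eq:ric}), combined with the local noncollapsing assumption $|B_{g^i}(x,t,r)|_{g_t^i}\geq A^{-1}r^n$ for $r\in(0,1]$ and a covering argument, gives $|B_{g^i}(y,t,R_0)|_{g_t^i}\leq V_0 = V_0(n,A)$. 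Averaging $K^i(\cdot,t)$ over $B_{g^i}(y,t,R_0)$ then produces a point $z_i$ in this ball with $K^i(z_i,t) \geq \sigma_0 := 1/(2V_0)$.

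To propagate this to most of $B_{g^i}(y,t,D)$, set $f^i := -\log((4\pi|t|)^{n/2}K^i(\cdot,t))$, so that a lower bound on $K^i$ is an upper bound on $f^i$. A Hein-Naber / Bamler type gradient estimate for conjugate heat kernels, adapted to the one-sided bound (\ref{eq:ric}) and the uniform range $|t|\in[1,2]$, should yield $|\nabla f^i|^2 \leq C(A)(1+f^i)$, equivalently $|\nabla\sqrt{1+f^i}|\leq C(A)$, on a uniform enlargement of $B_{g^i}(y,t,D)$. Applying Cheeger-Colding's segment inequality to $h=|\nabla\sqrt{1+f^i}|$ with pairs drawn from a small ball around $z_i$ and from $B_{g^i}(y,t,D)$ shows that, outside a set of $g_t^i$-measure smaller than $\delta$, a typical point $p\in B_{g^i}(y,t,D)$ is connected to a seed point near $z_i$ by a minimizing geodesic along which $\int h\,ds \leq C(A,D,\delta)$. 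Integrating the gradient estimate along this geodesic gives $\sqrt{1+f^i(p)}\leq \sqrt{1+f^i(z_i)}+C(A,D,\delta)$, which translates back to $K^i(p,t)\geq \sigma$ for some $\sigma=\sigma(A,D,\delta)>0$.

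The main obstacle is controlling the failure set of the segment inequality so that its measure is $<\delta$ uniformly in $i$. This requires quantitative control on the singular structure of the time slice $(M_i,g_t^i)$, supplied by Theorem \ref{thm:singsetsize}: the $\eta$-quantitative singular strata have volume bounded by $Cs^{n-k}$ at scale $s$, which lets us discard a small-measure subset of $B_{g^i}(y,t,D)$ on which geodesics run through regions where the gradient estimate degenerates, and this is where the "sufficiently large $i$" hypothesis is invoked (so that these bad regions can be taken at ever-smaller scales). A secondary difficulty is upgrading the Hein-Naber pointwise bound from $Rc\geq 0$ to $Rc\geq -Ag$; the extra lower-order terms contribute only bounded corrections on the time range $[-2,-1]$ but must be tracked carefully through the integration along geodesics.
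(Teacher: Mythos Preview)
Your seed-point argument is fine and close to the paper's Step~1, but the propagation step has a genuine gap. The pointwise inequality $|\nabla f^{i}|^{2}\leq C(A)(1+f^{i})$ that you invoke is a Hein--Naber type bound valid on shrinking solitons; it is not known for the potential of a conjugate heat kernel along a general Ricci flow with only a one-sided Ricci bound, and there is no obvious way to extract it from the hypotheses. Note also that if this estimate \emph{did} hold pointwise on the relevant ball, the segment inequality would be superfluous: you could integrate $|\nabla\sqrt{1+f^{i}}|\leq C(A)$ along any minimizing geodesic from $z_{i}$ to $p$ and be done. Your appeal to Theorem~\ref{thm:singsetsize} to excise a region ``where the gradient estimate degenerates'' is therefore circular, since you have not identified what quantity actually degenerates or why it is tied to the quantitative strata.

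The paper avoids this issue entirely. Instead of a gradient bound on $f^{i}$, it uses Perelman's integrated Harnack inequality, which converts an upper bound on $\mathcal{L}$-length into a lower bound on $K^{i}$. Bounding the $\mathcal{L}$-length only requires curvature bounds along the curve, so the relevant ``bad set'' is $\{\widetilde{r}_{Rm}^{g^{i}}(\cdot,t+\tau_{1})<\theta_{\ast}\}$, whose volume is controlled by Corollary~\ref{cor:highcurvaturesize}. The segment inequality is applied to the indicator of this set: for most $y_{2}\in B_{g^{i}}(y,t,D)$ one finds $y_{1}$ in a seed ball (obtained at a slightly later time $t+\tau_{0}$, with both $K^{i}$ and $\widetilde{r}_{Rm}$ bounded below, then pushed back via pseudolocality and the local gradient estimate of \cite[Lemma~9.15]{bamlergen2}) such that the minimizing geodesic $\gamma_{y_{1},y_{2}}$ spends almost no time in the bad set. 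A short reparametrization of $\gamma_{y_{1},y_{2}}$ then has bounded $\mathcal{L}$-length, and the Harnack inequality gives $K^{i}(y_{2},t)\geq\sigma$. The curvature scale, not a hypothetical gradient bound on $f^{i}$, is the quantity whose sublevel sets are estimated.
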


\begin{proof}
\textbf{Step 1: }Find a small set in a future time-slice with controlled
curvature and conjugate heat kernel. 

Fix an $H_{n}$-center $(y,t)\in M\times[-2,-1]$ of $\nu$. Fix $\tau_{0}\in(0,\frac{1}{2})$
to be determined, and let $(z,t+\tau_{0})$ be an $H_{n}$-center
of $\nu$, so that 
\[
\nu_{t+\tau_{0}}(B(z,t+\tau_{0},\sqrt{4H_{n}}))\geq\frac{1}{2}.
\]
By assertion $(iii)$ of Lemma 2.6, for any $(x,t)\in M\times[-2,0]$
and $\tau\in(0,2]$, we can estimate $\mathcal{N}_{x,t}(\tau)\geq-Y$
for some $Y=Y(A)<\infty$. Thus Bamler's on-diagonal heat-kernel upper
bound (Theorem 7.1 of \cite{bamlergen1}) gives $K\leq C_{1}(A)$
on $M\times[-2,-\frac{1}{2}]$. Because (by volume comparison)
\[
|B(z,t+\tau_{0},\sqrt{4H_{n}})|_{g_{t+\tau_{0}}}\leq C_{2}(n,A),
\]
the subset $\widetilde{S}$ consisting of points $z'\in B(z,t+\tau_{0},\sqrt{4H_{n}})$
with $K(z',t+\tau_{0})>\frac{1}{4C_{2}}$ satisfies
\begin{align*}
\frac{1}{2}\leq & \int_{B(z,t+\tau_{0},\sqrt{4H_{n}})}K(\cdot,t+\tau_{0})dg_{t+\tau_{0}}\\
\leq & \int_{\widetilde{S}}K(\cdot,t+\tau_{0})dg_{t+\tau_{0}}+\frac{1}{4C_{2}}|B(z,t+\tau_{0},\sqrt{4H_{n}})\setminus\widetilde{S}|_{g_{t+\tau_{0}}}\\
\leq & C_{1}|\widetilde{S}|_{g_{t+\tau_{0}}}+\frac{1}{4},
\end{align*}
and in particular, $|\widetilde{S}|_{g_{t+\tau_{0}}^{i}}\geq c_{3}(A)>0$.
By Corollary \ref{cor:highcurvaturesize}, there exists $E_{0}:=E(\sqrt{4H_{n}},1,A)<\infty$
such that 
\[
|\{\widetilde{r}_{Rm}(\cdot,t+\tau_{0})<s\}\cap B(z,t+\tau_{0},\sqrt{4H_{n}})|_{g_{t+\tau_{0}}}\leq E_{0}s^{2}
\]
for all $s\in(0,1]$. In particular, we can choose $s_{0}=s_{0}(A)>0$
such that 
\[
\widehat{S}:=\{z'\in\widetilde{S};\widetilde{r}_{Rm}(z',t+\tau_{0})\geq s_{0}\}
\]
satisfies $|\widehat{S}|_{g_{t+\tau_{0}}}\geq\frac{1}{2}c_{3}$. By
Theorem \ref{thm:pseudolocality}, we can then modify $\tau_{0}=\tau_{0}(A)>0$
so that $\widetilde{r}_{Rm}(z',t')\geq\tau_{0}$ for all $(z',t')\in\widehat{S}\times[t,t+2\tau_{0}]$.
Now fix $z'\in\widehat{S}$, and apply Lemma 9.15 of \cite{bamlergen2}
to get 
\[
|\partial_{t}K|+|\nabla K|\leq C_{4}(A)
\]
on $B(z',t+\tau_{0},\frac{\tau_{0}}{2})\times[t,t+\tau_{0}]$. By
again modifying $\tau_{0}(A)$, and by standard distortion estimates,
we can integrate along geodesics in $B(z',t+\tau_{0},\frac{\tau_{0}}{2})$
emanating from $z'$, and then integrate backwards in time to conclude
that $K(z'',s)>(8C_{2})^{-1}$ for all $s\in[t,t+\tau_{0}]$ and $z''\in B(z',t+\tau_{0},\alpha)$,
where $\alpha=\alpha(A)>0$. 

Applying the volume lower bound once again, we get 
\[
\nu_{t}(B(z',t,\alpha))\geq\frac{A^{-1}\alpha^{\frac{n}{2}}}{4C_{2}}=:c_{5},
\]
where $c_{5}=c_{5}(A)>0$. On the other hand, we know
\[
\nu_{t}\left(B\left(y,t,\sqrt{4H_{n}c_{5}^{-1}}\right)\right)\geq1-\frac{c_{5}}{2},
\]
so that
\[
B\left(y,t,\sqrt{4H_{n}c_{5}^{-1}}\right)\cap B(z',t,\alpha)\neq\emptyset,
\]
hence $z'\in B(y,t,C_{6})$ for some $C_{6}=C_{6}(A)<\infty$. 

\noindent \textbf{Step 2: }Combine weak $L^{p}$ curvature scale estimates
with Colding's segment inequality to construct curves with controlled
$\mathcal{L}$-length.

Now apply Corollary \ref{cor:highcurvaturesize} to obtain $E:=E(D',1,A)<\infty$
such that
\[
|\{\widetilde{r}_{Rm}(\cdot,s)<\theta\}\cap B(y,s,D')|_{g_{s}}\leq E\theta^{2}
\]
for any $s\in[t,t+\tau_{0}]$, $D'<\infty$, and $\theta\in(0,1]$.
Fix $\tau_{1}\in(0,\frac{1}{2}\tau_{0})$ to be determined, and set
$D':=D'(A,D):=2C_{6}+8(D+\alpha)$. 

Let $\gamma_{y_{1},y_{2}}:[0,l_{y_{1},y_{2}}]\to M$ denote a unit-speed
minimizing geodesic from $y_{1}\in B(z',t,\alpha)$ to $y_{2}\in B(y,t,D)$
with respect to $g_{t+\tau_{1}}$. To deal with nonuniqueness, we
will only integrate over a set of $(y_{1},y_{2})$ with full $dg_{t+\tau_{1}}\otimes dg_{t+\tau_{1}}$-measure
such that there is a unique such $\gamma_{y_{1},y_{2}}$, and so that
$(y_{1},y_{2})\mapsto l_{y_{1},y_{2}}$ and $(y_{1},y_{2},u)\mapsto\gamma_{y_{1},y_{2}}(u)$
are smooth. For any such geodesic $\gamma_{y_{1},y_{2}}$, and any
$u\in[0,l_{y_{1},y_{2}}]$, we can estimate
\begin{align*}
d_{g_{t+\tau_{1}}}(\gamma_{y_{1},y_{2}}(u),y)\leq & d_{g_{t+\tau_{1}}}(y_{1},y_{2})+d_{g_{t+\tau_{1}}}(y_{2},y)\\
\leq & 4(D+\alpha)+d_{g_{t+\tau_{1}}}(z',y)+2D\leq D',
\end{align*}
We apply the Cheeger-Colding segment inequality (Theorem 2.11 of \cite{cheegercoldingwarped})
with respect to the time slice $g_{t+\tau_{1}}$, the sets 
\[
B(z',t,\alpha)\subseteq B(z',t+\tau_{1},2\alpha),
\]
\[
B(y,t,D)\subseteq B(y,t+\tau_{1},2D),
\]
and with the function $\chi_{W}$, where

\begin{align*}
W:= & \left\{ w\in B(y,t+\tau_{1},D');\widetilde{r}_{Rm}(w,t+\tau_{1})<\theta\right\} ,
\end{align*}
and $\theta\in(0,1]$ is to be determined, to get (since $d_{g_{t+\tau_{1}}}(y,z')\leq2C_{6}$)
\begin{align*}
\int_{B(z',t,\alpha)\times B(y,t,D)}|\{u & \in[0,l_{y_{1},y_{2}}];\widetilde{r}_{Rm}(\gamma_{y_{1},y_{2}}(u),t+\tau_{1})<\theta\}|dg_{t+\tau_{1}}(y_{1})dg_{t+\tau_{1}}(y_{2})\\
\leq & C_{7}(A,D')\left(|B(z',t,\alpha)|_{g_{t+\tau_{1}}}+|B(y,t,D)|_{g_{t+\tau_{1}}}\right)\int_{B(y,t+\tau_{1},D')}\chi_{W}dg_{t+\tau_{1}}\\
\leq & C_{7}(A,D')\left(|B(z',t+\tau_{1},2\alpha)|_{g_{t+\tau_{1}}}+|B(y,t+\tau_{1},D)|_{g_{t+\tau_{1}}}\right)|W|_{g_{t+\tau_{1}}}\\
\leq & C_{8}(A,D)\theta^{2}.
\end{align*}
We now define 
\[
S:=\left\{ y_{2}\in B(y,t,D);\int_{B(z',t,\alpha)}|\{u\in[0,l_{y_{1},y_{2}}];\widetilde{r}_{Rm}(\gamma_{y_{1},y_{2}}(u),t+\tau_{1})<\theta\}|dg_{t+\tau_{1}}(y_{1})\leq\theta^{\frac{3}{2}}\right\} .
\]
Then 
\begin{align*}
\theta^{\frac{3}{2}}|B(y,t,D)\setminus S|_{g_{t+\tau_{1}}\hfill}\\
\leq\int_{B(z',t,\alpha)\times(B(y,t,D)\setminus S)} & |\{u\in[0,l_{y_{1},y_{2}}];\widetilde{r}_{Rm}(\gamma_{y_{1},y_{2}}(u),t+\tau_{1})<\theta\}|dg_{t+\tau_{1}}(y_{1})dg_{t+\tau_{1}}(y_{2})\\
\leq C_{8}\theta^{2}\hfill\qquad\,\qquad\qquad
\end{align*}
implies $|B(y,t,D)\setminus S|_{g_{t+\tau_{1}}}\leq C_{8}\theta^{\frac{1}{2}}$.
For any $y_{2}\in S$, the subset $S_{y_{2}}$ consisting of $y_{1}\in B_{g^{i}}(z',t,\alpha)$
such that 
\[
|\{u\in[0,l_{y_{1},y_{2}}];\widetilde{r}_{Rm}(\gamma_{y_{1},y_{2}}(u),t+\tau_{1})<\theta\}|\leq\frac{1}{4}\theta
\]
must satisfy 
\[
\frac{1}{4}\theta|B(z',t,\alpha)\setminus S_{y_{2}}|_{g_{t+\tau_{1}}}\leq\int_{B(z',t,\alpha)}|\{u\in[0,l_{y_{1},y_{2}}];\widetilde{r}_{Rm}(\gamma_{y_{1},y_{2}}(u),t+\tau_{1})<\theta\}|dg_{t+\tau_{1}}(y)\leq\theta^{\frac{3}{2}},
\]
or equivalently, $|B(z',t,\alpha)\setminus S_{y_{2}}|_{g_{t+\tau_{1}}}\leq4\theta^{\frac{1}{2}}$.
For any $y_{2}\in S$ and $y_{1}\in S_{y_{2}}\setminus B(y_{2},t+\tau_{1},\theta)$,
if $\widetilde{r}_{Rm}(\gamma_{y_{1},y_{2}}(u),t+\tau_{1})<\frac{1}{2}\theta$
for some $u\in[0,l_{y_{1},y_{2}}]$, then because $\widetilde{r}_{Rm}(\cdot,t+\tau_{1})$
is 1-Lipschitz with respect to $g_{t+\tau_{1}}$, we get 
\[
|\{u\in[0,l_{y_{1},y_{2}}];\widetilde{r}_{Rm}(\gamma_{y_{1},y_{2}}(u),t+\tau_{1})<\theta\}|>\frac{1}{2}\theta,
\]
a contradiction. Assume we have chosen $\theta<\min\{\alpha,\tau_{0}\}$.
Then, because $\widetilde{r}_{Rm}(\cdot,t+\tau_{1})\geq\tau_{0}$
on $B(z',t+\tau_{1},\alpha)$, we know that if $y_{1}\in S_{y_{2}}\cap B(y_{2},t+\tau_{1},\theta_{\ast})$,
we must have 
\[
\gamma_{y_{1},y_{2}}([0,l_{y_{1},y_{2}}])\subseteq B(z',t+\tau_{1},\alpha)\subseteq\{\widetilde{r}_{Rm}(\cdot,t+\tau_{1})\geq\theta\}.
\]
In either case, we conclude that $\widetilde{r}_{Rm}(\cdot,t+\tau_{1})>\frac{1}{2}\theta$
along $\gamma_{y_{1},y_{2}}$ for any $y_{2}\in S_{1}$, $y_{1}\in S_{y_{2}}$.
Let $\epsilon_{P}=\epsilon_{P}(A)>0$ be as in Theorem \ref{thm:pseudolocality},
and assume we have chosen $\tau_{1}=\tau_{1}(A,D,\theta)\in(0,\epsilon_{P}^{2}\theta^{4})$.
Then, for any $y_{2}\in S$, $y_{1}\in S_{y_{2}}$, $u\in[0,l_{y_{1},y_{2}}]$,
and $s\in[t,t+\tau_{1}]$, we have
\[
|Rm|(\gamma_{y_{1},y_{2}}(u),s)\leq\epsilon_{P}^{-2}\theta^{-2}.
\]
In particular, we can estimate
\begin{align*}
\left|\partial_{s}\log|\dot{\gamma}_{y_{1},y_{2}}(u)|_{g_{s}}^{2}\right|\leq & 2n\epsilon_{P}^{-2}\theta^{-2},
\end{align*}
so that integration in time and $|\dot{\gamma}_{y_{1},y_{2}}(u)|_{g_{t+\tau_{1}}}^{2}=1$
give $|\dot{\gamma}_{y_{1},y_{2}}(u)|_{g_{s}}\leq e^{n\epsilon_{P}^{-2}}$
for all $s\in[t,t+\tau_{1}]$. 

For the moment, fix $y_{2}\in S$ and $y_{1}\in S_{y_{2}}$, and set
$\eta(r):=\gamma_{y_{1},y_{2}}(\tau_{1}^{-1}l_{y_{1},y_{2}}r)$ for
$t\in[0,\tau_{1}]$. Because $l_{y_{1},y_{2}}\leq D'$, we have the
reduced length estimate
\begin{align*}
\ell_{(y_{1},t+\tau_{1})}(y_{2},t)\leq & \frac{1}{2\sqrt{\tau_{1}}}\int_{0}^{\tau_{1}}\sqrt{r}\left(R(\eta(r),t+\tau_{1}-r)+|\dot{\eta}(r)|_{g_{t+\tau_{1}-r}}^{2}\right)dr\\
\leq & C(n)\epsilon_{P}^{-2}\theta^{-2}+e^{2n\epsilon_{P}^{-2}}\int_{0}^{\tau_{1}}\frac{l_{y_{1},y_{2}}^{2}}{\tau_{1}^{2}}dr\leq C_{9}(A,D,\theta,\tau_{1}).
\end{align*}
Because $\widetilde{r}_{Rm}(z'',s)\geq\tau_{0}$ for all $z''\in B(z',t,\alpha)$,
we can integrate $\partial_{s}dg_{s}|_{z''}\geq-c(n)\tau_{0}^{-2}dg_{s}|_{z''}$
from $t$ to $t+\tau_{1}$ to get
\[
|B(z',t,\alpha)|_{g_{t+\tau_{1}}}\geq c_{10}(A).
\]
Then, for $y_{2}\in S$ fixed, we integrate over $y_{1}\in S_{y_{2}}$
and use the integrated form of Perelman's differential Harnack inequality
(see Proposition 16.54 of \cite{chowbook2}) to get a lower bound
for $K$ at $(y_{2},t)$:
\begin{align*}
K(y_{2},t)\geq & (4\pi\tau_{1})^{-\frac{n}{2}}e^{-C_{9}}\int_{S_{y_{2}}}K(y',t+\tau_{1})dg_{t+\tau_{1}}(y')\\
\geq & c_{11}(A,D,\theta,\tau_{1})\left(|B(z',t,\alpha)|_{g_{t+\tau_{1}}}-|B(z',t,\alpha)\setminus S_{y_{2}}|_{g_{t+\tau_{1}}}\right)\\
\geq & c_{11}(A,D,\theta,\tau_{1})\left(c_{10}(A)-4\theta^{\frac{1}{2}}\right)=:\sigma>0.
\end{align*}
assuming we have chosen $\theta\leq(\frac{1}{8}c_{10}(n,A))^{2}$. 

Finally, we have $\partial_{s}dg_{s}|_{z''}\ge-\frac{c(n)}{\tau_{1}}dg_{s}|_{z''}$
for all $z''\in M$, $s\in[t,t+\tau_{1}]$ with $\widetilde{r}_{Rm}(z'',t)\geq\epsilon_{P}^{-1}\sqrt{\tau_{1}}$,
so we can integrate from $t$ to $t+\tau_{1}$ to obtain $dg_{t+\tau_{1}}\geq e^{-c(n)}dg_{t}$
for such $z''$, hence
\begin{align*}
|B(y,t,D)\setminus S|_{g_{t}}\leq & \left|\left(B(y,t,D)\setminus S\right)\cap\{\widetilde{r}_{Rm}(\cdot,t)\geq\epsilon_{P}^{-1}\sqrt{\tau_{1}}\}\right|_{g_{t}}\\
 & +|B(y,t,D)\cap\{\widetilde{r}_{Rm}(\cdot,t)<\epsilon_{P}^{-1}\sqrt{\tau_{1}}\}|_{g_{t}}\\
\leq & e^{c(n)}|B(y,t,D)\setminus S|_{g_{t+\tau_{1}}}+E(\epsilon_{P}^{-2}\sqrt{\tau_{1}})^{2}\\
\leq & C_{8}e^{c(n)}\theta^{\frac{1}{4}}+C(A,D)\tau_{1}.
\end{align*}
The claim follows by taking $\theta=\theta(n,A,D,\delta)>0$, then
(since $\tau_{1}$ depends on $\theta$) $\tau_{1}=\tau_{1}(A,D,\delta)>0$
sufficiently small. 
\end{proof}
Now let $(M_{i},(g_{t}^{i})_{t\in[-4,0)},(\nu_{t}^{i})_{t\in[-4,0)})$
be a sequence of Ricci flow solutions satisfying $Rc(g_{t}^{i})\geq-Ag_{t}^{i}$
and $|B_{g^{i}}(x,t,r)|_{g_{t}^{i}}>A^{-1}r^{n}$ for all $(x,t)\in M_{i}\times[-4,0)$
and $r\in(0,1]$, where $\nu_{t}^{i}=K^{i}(\cdot,t)dg_{t}^{i}$ are
conjugate heat kernels based at the singular time (if $(g_{t}^{i})_{t\in[-4,0)}$
does not develop a singularity at time $t=0$, these are just the
usual conjugate heat kernel based at some points $x_{i}\in M_{i}$).
By Theorem 1.38 of \cite{bamlergen3}, we can pass to a subsequence
to obtain $\mathbb{F}$-convergence within some correspondence $\mathfrak{C}$:

\noindent 
\[
(M,(g_{t}^{i})_{t\in(-4,0)},(\nu_{t}^{i})_{t\in(-4,0)})\xrightarrow[i\to\infty]{\mathbb{F},\mathfrak{C}}(\mathcal{Y},(\mu_{t}^{\infty})_{t\in(-4,0)}),
\]
where $\mathcal{Y}$ is an $H_{n}$-concentrated, future-continuous
metric flow of full support.
\begin{lem}
\label{lem:gromovhaus} Let $(y_{i},t)$ be $H_{n}$-centers of $\nu^{i}$
for some fixed time $t\in[-2,-1]$ where the $\mathbb{F}$-convergence
is timewise. Then there exist $\Lambda<\infty$, $y_{i}'\in B_{g^{i}}(y_{i},t,\Lambda)$,
and $y_{\infty}'\in\mathcal{Y}_{t}$ such that, after passing to a
subsequence,
\[
(M_{i},d_{g_{t}^{i}},y_{i}')\to(\mathcal{Y}_{t},d_{g_{t}^{\infty}},y_{\infty}')
\]
in the pointed Gromov-Hausdorff sense. By passing to a subsequence,
we may also find $y_{\infty}\in\mathcal{Y}_{t}$ such that 
\[
(M_{i},d_{g_{t}^{i}},y_{i})\to(\mathcal{Y}_{t},d_{g_{t}^{\infty}},y_{\infty})
\]
in the pointed Gromov-Hausdorff sense. 
\end{lem}

\begin{proof}
Fix $D<\infty$ and $r\in(0,1]$. Fix $\delta>0$ to be determined,
and apply Lemma \ref{lem:firsttechlemma} to obtain subsets $S_{i}\subseteq B_{g^{i}}(y_{i},t,D+1)$
such that 
\[
|B_{g^{i}}(y_{i},t,D+1)\setminus S_{i}|_{g_{t}^{i}}<\delta
\]
and $\sigma=\sigma(A,D,\delta)>0$ such that $K^{i}(\cdot,t)>\sigma$
on $S_{i}$. For any $z\in B_{g^{i}}(y_{i},t,D+1)$ and $r\in(0,1]$,
we estimate
\begin{align*}
|B_{g^{i}}(z,t,r)\cap S_{i}|_{g_{t}^{i}}\geq & |B_{g^{i}}(z,t,r)|_{g_{t}^{i}}-|B_{g^{i}}(y_{i},t,D+1)\setminus S_{i}|_{g_{t}^{i}}\\
\geq & A^{-1}r^{n}-\delta,
\end{align*}
so if we choose $\delta:=\frac{1}{2}A^{-1}r^{n}$, then 
\[
\nu^{i}(B_{g^{i}}(z,t,r))=\int_{B_{g^{i}}(z,t,r)}K^{i}(\cdot,t)dg_{t}^{i}\geq\sigma|B_{g^{i}}(z,t,r)\cap S_{i}|_{g_{t}^{i}}\geq\sigma',
\]
where $\sigma'=\sigma'(A,D,r)>0$. We may therefore apply Proposition
\ref{prop:easyconvergence} along with the fact that time-wise convergence
at time $t$ implies 
\[
(M_{i},d_{g_{t}^{i}},\nu_{t}^{i})\to(\mathcal{Y},d_{g_{t}^{\infty}},\nu_{t}^{\infty})
\]
in the $W_{1}$-Gromov-Wasserstein sense.
\end{proof}
\begin{lem}
\label{lem:niargument} Suppose that $(X,d,\mathcal{R},g,f)$ is a
singular shrinking GRS with $Rc(g)\geq0$ on $\mathcal{R}$, corresponding
to a tangent flow of a smooth, closed Ricci flow at the singular time.
Also assume that there are closed Ricci flows $(M_{i},(g_{t}^{i})_{t\in[-4,0]},p_{i})$
satisfying $Rc(g_{t}^{i})\geq-Ag_{t}^{i}$ and $|B_{g^{i}}(x,t,r)|_{g_{t}^{i}}\geq A^{-1}r^{n}$
for all $(x,t)\in M_{i}\times[-4,0]$ and $r\in(0,1]$, such that
$(M_{i},d_{g_{0}^{i}},p_{i})\to(X,d,p)$ in the pointed Gromov-Hausdorff
sense for some $p\in\mathcal{R}$ and $t'\in[-2,-1]$. 

\noindent If $(\mathcal{R},g)$ is not Ricci flat, then $\inf_{\mathcal{R}}R(g)>0$. 
\end{lem}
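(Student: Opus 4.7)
The plan is to adapt Ni's argument (Proposition 1.1 of \cite{leinisoliton}) to the present singular setting, first establishing the pointwise positivity $R > 0$ on $\mathcal{R}$, then upgrading to the uniform bound $\inf_\mathcal{R} R > 0$.

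For the pointwise statement, the shrinker equation $Rc + \nabla^2 f = \tfrac{1}{2} g$ yields on $\mathcal{R}$ the standard identity $\Delta_f R = R - 2|Rc|^2$, where $\Delta_f = \Delta - \langle \nabla f, \nabla \cdot \rangle$. Since $Rc \geq 0$ gives $R \geq 0$, and since the regular set of a noncollapsed Ricci limit is connected (the singular set has codimension at least two and hence does not disconnect $X$), Hamilton's strong maximum principle applied to this identity forces $R \equiv 0$ on $\mathcal{R}$ whenever $R$ vanishes at any interior point; the identity then gives $|Rc| \equiv 0$, contradicting the hypothesis that $g$ is not Ricci flat. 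Hence $R > 0$ pointwise on $\mathcal{R}$.

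To upgrade to the uniform bound, suppose for contradiction that there exist $x_k \in \mathcal{R}$ with $R(x_k) \to 0$. Using the diffeomorphisms $\psi_i : U_i \to M_i$ from Theorem \ref{thm:openandsmooth} with $\psi_i^* g_0^i \to g$ in $C^\infty_{\mathrm{loc}}(\mathcal{R})$, a diagonal argument produces points $y_i \in M_i$ with $R_{g_0^i}(y_i) \to 0$. Forming the conjugate heat kernels based at $(y_i, 0)$ on $(M_i, (g_t^i)_{t \in [-1, 0]})$ and extracting an $\mathbb{F}$-subsequential limit via Theorem 1.38 of \cite{bamlergen3} yields a metric flow $\mathcal{Y}'$ that is a smooth Ricci flow spacetime outside a parabolic codimension-four set. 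Using Lemma \ref{lem:firsttechlemma} to verify the measure hypotheses of Proposition \ref{prop:easyconvergence}, the $\mathbb{F}$-convergence at time $0$ upgrades to pointed Gromov-Hausdorff convergence $(M_i, d_{g_0^i}, y_i) \to (\mathcal{Y}'_0, d_{\mathcal{Y}'_0}, y_\infty)$, and the smooth convergence on a neighborhood of $y_i$ identifies $y_\infty$ as a regular point of $\mathcal{Y}'_0$ at which $R = 0$.

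The main obstacle is to turn this zero of $R$ into a contradiction with the non-Ricci-flatness of $X$. When $x_k$ stays inside a fixed compact subset of $X$, the smooth convergence identifies a neighborhood of $y_\infty$ in $\mathcal{Y}'_0$ with a neighborhood of $\lim x_k$ in $\mathcal{R}$, and the vanishing of $R$ at $y_\infty$ then contradicts the pointwise positivity established in the first step. The delicate case is when $d(x_k, p) \to \infty$: here I would invoke the Harnack-type gradient estimate $|\nabla \log R| \leq 2|\nabla f|$ (which follows from $\nabla R = 2\, Rc(\nabla f, \cdot)$ via the contracted second Bianchi identity together with $|Rc|^2 \leq R^2$ under $Rc \geq 0$), combined with the quadratic estimate $|\nabla f|^2 \leq f + \mu$ coming from $R + |\nabla f|^2 - f = \mu$, to rule out $R$ decaying to zero along an unbounded sequence in $\mathcal{R}$.
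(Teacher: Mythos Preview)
Your first step is fine; the paper simply cites Theorem~1.19 of \cite{bamlergen3} for $R>0$ on $\mathcal{R}$, but the strong maximum principle argument you give also works since $\mathcal{R}$ is connected and the principle is interior.

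The second step has a genuine gap in the case $d(x_k,p)\to\infty$. The detour through a fresh $\mathbb{F}$-limit $\mathcal{Y}'$ is unnecessary (and does not obviously do what you want, since $\mathcal{Y}'_0$ is not a priori identified with $X$, and if $x_k\to x_\infty\in\mathcal{S}$ the curvature at $y_i$ need not stay bounded, so $y_\infty$ need not be regular). In the bounded case one argues directly on $X$: if $x_k\to x_\infty\in\mathcal{R}$ then continuity gives $R(x_\infty)=0$, contradicting step one; if $x_k\to x_\infty\in\mathcal{S}$, the local bound $|\nabla\log R|\le 2|\nabla f|\le 2\sqrt{f}\le C$ on $B(p,D)\cap\mathcal{R}$ prevents $R(x_k)\to 0$ along a path of bounded length to $p$. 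The paper's final paragraph does exactly this.

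The real problem is the unbounded case, where your proposed use of $|\nabla\log R|\le 2|\nabla f|$ does not work: since $|\nabla f|\le\sqrt{f}\le C(1+r)$ at distance $r$ from $p$, integrating along a geodesic of length $L$ only gives $|\log R(x)-\log R(p)|\lesssim L^2$, which permits $R$ to decay like $e^{-cL^2}$ and certainly does not rule out $\inf_\mathcal{R} R=0$. What is actually needed is Ni's gradient-flow argument, adapted to the singular setting. One shows via a second-variation estimate along minimizing geodesics in $\mathcal{R}$ (the paper's Claim~1) that $\langle\nabla f,\nabla r\rangle\ge\tfrac{1}{8}r-C$ at points far from $p$ with $R\le 1$, so the backward gradient flow of $\nabla f$ carries any such point into a fixed ball $B(p,\Lambda)$; since $\tfrac{d}{dt}R(\sigma(t))=2\,Rc(\nabla f,\nabla f)\ge 0$, this bounds $R$ below by $\inf_{B(p,\Lambda)\cap\mathcal{R}}R>0$. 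The new technical ingredient in the singular setting is establishing that the gradient flow of $\nabla f$ is complete on a set of full measure in $\mathcal{R}$ (the paper's Claim~2): this uses the Minkowski codimension-four bound on $\mathcal{S}$ from Theorem~1.19 of \cite{bamlergen3} together with a Jacobian estimate along the flow, exploiting the local bound $|\nabla^2 f|\le\tfrac12+|Rc|\le\tfrac12+R\le C$ on bounded sets. None of these ingredients appears in your sketch.
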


\begin{rem}
Both the statement and the proof of this are modifications of Proposition
1.1 in \cite{leinisoliton}. The main technical difficulty is showing
that the integral curve of $\nabla f$ starting at $x\in\mathcal{R}$
is complete for almost-every $x\in\mathcal{R}$. To establish this,
we argue similarly to Claim 2.32 of \cite{chenwang2a}. The argument
of Claim 2.32 used $\nabla^{2}f=0$ to establish estimates for the
distortion of the volume form along the gradient flow of $\nabla f$.
We no longer have this estimate, but $Rc\geq0$ along with $R\leq f$
and a locally uniform upper bound for $f$ tell us that $|Rc|$ is
locally bounded on $\mathcal{R}$, and it turns out this is enough
to make the argument work. 

We observe that the proof is a trivial modification of Ni's when $n=4$
since the regular set $\mathcal{R}$ is convex and all orbifold points
are critical points for $f$, hence the gradient flow is complete.
\end{rem}

\begin{proof}
Suppose $(\mathcal{R},g)$ is not Ricci flat, so that $R>0$ on $\mathcal{R}$
by Theorem 1.19 of \cite{bamlergen3}. Because $\mathcal{R}$ is connected,
we can add a constant to $f$ to assume that $R+|\nabla f|^{2}=f$.
Write $r:=d(\cdot,p)$. Integrating $|\nabla\sqrt{f}|\leq\frac{1}{2}$
along almost-minimizing curves in $\mathcal{R}$ from $p$ to $x\in\mathcal{R}$
gives $\sqrt{f(x)}\leq\sqrt{f(p)}+\frac{1}{2}r(x)$, so that
\[
f(x)\leq\frac{1}{2}r^{2}(x)+2f(p)
\]
for all $x\in\mathcal{R}$. By Theorem 3.7 of \cite{cheegercolding2}
(here we are also using that $\mathcal{R}$ is open by Theorem \ref{thm:openandsmooth})
and the proof of Proposition 2.3$(c)$ in \cite{bam1}, there is an
open subset $\mathcal{G}^{\ast}\subseteq\mathcal{R}$ of full measure
such that, for any $x\in\mathcal{G}^{\ast}$, there is a unique minimizing
geodesic of $(X,d)$ from $p$ to $x$ that lies entirely in $\mathcal{R}$.
Given $x\in\mathcal{G}$, let $\gamma:[0,l]\to\mathcal{R}$ be such
a unit-speed arclength minimizing geodesic, where $l:=r(x)$. Let
$(E_{i})_{i=1}^{n}$ be an orthonormal frame at $p$ with $E_{n}:=\dot{\gamma}(0)$,
and let $E_{i}\in\mathfrak{X}(\gamma)$ be the corresponding parallel
translations along $\gamma$. If $l\geq2$, then for any $r_{0}\in[0,1]$,
we define 
\[
Y_{i}(s):=\begin{cases}
sE_{i}(s), & s\in[0,1]\\
E_{i}(s), & s\in[1,l-r_{0}]\\
\frac{l-s}{r_{0}}E_{i}(s) & s\in[l-r_{0},l]
\end{cases}
\]
for $i=1,...,n-1$. Because $\gamma$ is minimizing, hence stable,
we have
\begin{align*}
0\leq & \int_{0}^{l}\left(|\nabla_{\dot{\gamma}}Y_{i}|^{2}-R(Y_{i},\dot{\gamma},\dot{\gamma},Y_{i})\right)ds\\
= & \int_{0}^{1}\left(1-s^{2}R(E_{i},\dot{\gamma},\dot{\gamma},E_{i})\right)ds-\int_{1}^{l-r_{0}}R(E_{i},\dot{\gamma},\dot{\gamma},E_{i})ds\\
 & +\int_{l-r_{0}}^{l}\left(\frac{1}{r_{0}^{2}}-\left(\frac{l-s}{r_{0}}\right)^{2}R(E_{i},\dot{\gamma},\dot{\gamma},E_{i})\right)ds,
\end{align*}
so we can sum to obtain (using $Rc\geq0$)
\begin{align*}
0\leq & n-1+\frac{n-1}{r_{0}}+\int_{0}^{1}(1-s^{2})Rc(\dot{\gamma},\dot{\gamma})ds-\int_{0}^{l-r_{0}}Rc(\dot{\gamma},\dot{\gamma})ds-\int_{l-r_{0}}^{l}\left(\frac{l-s}{r_{0}}\right)^{2}Rc(\dot{\gamma},\dot{\gamma})ds\\
\leq & C\left(n,\sup_{B(p,1)\cap\mathcal{R}}|Rc|\right)-\int_{0}^{l-r_{0}}Rc(\dot{\gamma},\dot{\gamma})ds+\frac{n-1}{r_{0}}.
\end{align*}
Moreover, we know $Rc\geq0$ and $R\leq f\leq\frac{1}{2}r^{2}(x)+2f(p)$
on $\mathcal{R}$, so combining these gives
\[
\sup_{B(p,1)\cap\mathcal{R}}|Rc|\leq\sup_{B(p,1)\cap\mathcal{R}}R\leq\frac{1}{2}+2f(p)<\infty
\]

\noindent \textbf{Claim 1: }There exist $C<\infty$ and $\Lambda<\infty$
such that if $R(x)\leq1$ and $l\geq\Lambda$, then 
\[
\int_{0}^{l}Rc(\dot{\gamma},\dot{\gamma})ds\leq\frac{l}{4}+C.
\]

Choose $\Lambda>1$ such that $f(x)\leq r^{2}(x)$ for $x\in\mathcal{G}^{\ast}\setminus B(p,\Lambda)$.
Since $R+|\nabla f|^{2}=f$, this implies $|\nabla f(x)|\leq r(x)$
for $x\in\mathcal{G}^{\ast}\setminus B(p,\Lambda)$. Thus
\[
|\nabla R|^{2}=4|Rc(\nabla f)|^{2}\leq4R^{2}|\nabla f|^{2}.
\]
Combining estimates gives $|\nabla\log R|\leq2|\nabla f|\leq2r$ on
$\mathcal{G}^{\ast}\setminus B(p,\Lambda)$. Set $r_{0}:=\min\{\frac{4(n-1)}{l},1\}$,
so that if $l\geq2\Lambda$, then for $s\in[l-r_{0},l]$,
\[
\log\left(\dfrac{R(\gamma(s))}{R(x)}\right)\leq2lr_{0}\leq2(n-1),
\]
hence $R(\gamma(s))\leq e^{2(n-1)}R(x)$ for $s\in[l-r_{0},l]$. Combining
estimates gives 
\begin{align*}
\int_{0}^{l}Rc(\dot{\gamma},\dot{\gamma})ds\leq & \int_{0}^{l-r_{0}}Rc(\dot{\gamma},\dot{\gamma})ds+\int_{l-r_{0}}^{l}R(\gamma(s))ds\\
\leq & C(g,p)+\frac{n-1}{r_{0}}+e^{n-1}r_{0}R(x)\leq C(g,p)+\frac{l}{4}.\text{\ensuremath{\square}}
\end{align*}
Whenever $r(x)=l\geq\Lambda$ and $R(x)\leq1$, we can thus estimate

\begin{align*}
\langle\nabla f(x),\dot{\gamma}(l)\rangle-\langle\nabla f(p),\dot{\gamma}(0)\rangle= & \int_{0}^{l}\nabla^{2}f(\dot{\gamma},\dot{\gamma})ds=\int_{0}^{l}\left(\frac{1}{2}-Rc(\dot{\gamma},\dot{\gamma})\right)ds\\
\geq & \frac{l}{2}-C-\frac{l}{4}\geq\frac{l}{8},
\end{align*}
so that,
\[
\langle\nabla f(x),\dot{\gamma}(l)\rangle\geq\left(\frac{1}{8}r(x)-|\nabla f(p)|\right),
\]
which implies that $\nabla f(x)\neq0$ if in addition $r(x)>8|\nabla f(p)|$.

By Theorem 1.19 of \cite{bamlergen3}, $\mathcal{S}$ has Minkowski
dimension 4, which implies that for any $D<\infty$, there exists
$E=E(A,D)<\infty$ such that 
\begin{equation}
|\{d(\cdot,\mathcal{S})<s\}\cap B(p,D)\cap\mathcal{R}|_{g}\leq Es^{\frac{7}{2}}\label{eq:mink4}
\end{equation}
for all $s\in(0,1]$. 

\noindent \textbf{Claim 2: }There is a Borel subset $\mathcal{G}'\subseteq\mathcal{G}^{\ast}$
of full measure such that for any $x\in\mathcal{G}'$, the integral
curve of $\nabla f$ through $x$ exists for all time.

Let $(\varphi_{t})$ be the (partially defined) flow corresponding
to $\nabla f$. We first observe that because $|\nabla f|$ is locally
bounded, the escape lemma for ODEs guarantees that $\varphi_{t}(x)$
exists for all time unless $t\mapsto\varphi_{t}(x)$ has a limit point
in $\mathcal{S}$. For each $D<\infty$ and $s\in(0,1]$, define
\[
S_{D,s}:=\left\{ x\in\mathcal{R}\cap B(p,D);d(\varphi_{t}(x),\mathcal{S})<\frac{1}{2}s\text{ for some }t\in[-D,D]\right\} .
\]
Because $d(\cdot,\mathcal{S})$ is 1-Lipschitz, we can find $h\in C^{\infty}(\mathcal{R})$
such that $|\nabla h|\leq2$ and $\frac{1}{2}d(\cdot,\mathcal{S})<h<2d(\cdot,\mathcal{S})$
on $\mathcal{R}$. For any $D'<\infty$, (\ref{eq:mink4}) gives $E=E(A,D')<\infty$
such that 
\[
|\{h<2s\}\cap B(p,D')\cap\mathcal{R}|_{g}\leq Es^{\frac{7}{2}}
\]
for all $s\in(0,1]$. Thus, by the coarea formula, 
\begin{align*}
\int_{s}^{2s}\mathcal{H}^{n-1}(h^{-1}(t)\cap B(p,D'))dt= & \int_{\{s\leq h\leq2s\}\cap B(p,D')\cap\mathcal{R}}|\nabla h|dg\\
\leq & 2|\{h<2s\}\cap B(p,D')\cap\mathcal{R}|_{g}\leq2Es^{\frac{7}{2}}.
\end{align*}
By Sard's theorem, for any $s\in(0,1]$, we can find $t=t(s)\in(s,2s)$
such that $\Sigma_{s}:=h^{-1}(t)\cap B(p,D')$ is smooth and $\mathcal{H}^{n-1}(\Sigma_{s})\leq4Es^{\frac{5}{2}}$.
Write $S_{D,s}:=I_{s}\cup II_{s}$, where 
\[
I_{s}:=\{x\in S_{D,s};d(x,\mathcal{S})\leq4s\},
\]
\[
II_{s}:=\{x\in S_{D,s};d(x,\mathcal{S})>4s\},
\]
so that $|I_{s}|_{g}\leq Es^{\frac{7}{2}}$. For any $x\in II_{s}$,
there exists $t\in(-D,D)$ such that $\varphi_{t}(x)\in\Sigma_{s}$.
Because $|\nabla f|\leq\sqrt{f}\leq C(r+1)$, we have
\[
\left|\frac{d}{dt}r(\varphi_{t}(x))\right|=|\langle\nabla r,\nabla f\rangle|(\varphi_{t}(x))\leq C\left(r(\varphi_{t}(x))+1\right)
\]
for almost every $t$, so we can find $D'=D'(D)<\infty$ such that
$\varphi_{t}(x)\in B(p,D')$ for all $x\in II_{s}$ and $t\in(-D,D)$
such that $\varphi_{t}(x)$ exists. Set
\[
\Omega_{s}:=\{(t,x)\in(-D',D')\times\Sigma_{s};\varphi_{t}(x)\text{ is well defined\}},
\]
which is open in $(-D,D)\times\Sigma_{s}$, and define $\eta:\Omega_{s}\to\mathcal{R},(t,x)\mapsto\varphi_{t}(x).$
For any $x\in\Sigma_{s}$ and $v\in T_{x}\Sigma_{s}$, we have (since
$|Rc|\leq C(D)$ on $\eta(\Omega_{s})$)
\begin{align*}
\left|\frac{d}{dt}(\eta^{\ast}g)_{(t,x)}(v,v)\right|= & \left|\frac{d}{dt}(\varphi_{t}^{\ast}g)_{x}(v,v)\right|\\
= & \left|(\mathcal{L}_{\nabla f}g)(d(\varphi_{t})_{x}v,d(\varphi_{t})_{x}v)\right|\\
= & \left|2\nabla^{2}f_{\varphi_{t}(x)}(d(\varphi_{t})_{x}v,d(\varphi_{t})_{x}v)\right|\\
\leq & 2|\nabla^{2}f|(\varphi_{t}(x))|g_{\varphi_{t}(x)}(d(\varphi_{t})_{x}v,d(\varphi_{t})_{x}v)|\\
\leq & C(D)\left|(\eta^{\ast}g)_{(t,x)}(v,v)\right|
\end{align*}
which we can integrate in $t$ to obtain $(\eta^{\ast}g)_{(t,x)}(v,v)\leq C(D)$
for $|v|_{g_{\Sigma_{s}}}=1$, where $g_{\Sigma_{s}}$ is the restriction
of $g$ to $\Sigma_{s}$. Also,
\[
(\eta^{\ast}g)_{(t,x)}(\partial_{t},\partial_{t})=|\nabla f|_{\varphi_{t}(x)}^{2}\leq C(D)
\]
so the Jacobian $\mathcal{J}=\frac{\eta^{\ast}dg}{dt\wedge dg_{\Sigma_{s}}}$
satisfies $|\mathcal{J}|\leq C(D)$. Because $II_{s}\subseteq\eta(\Omega_{s})$,
we thus obtain 
\[
|II_{s}|_{g}\leq\int_{\Sigma_{s}\times[-D,D]}d(\eta^{\ast}g)\leq\int_{-D}^{D}\int_{\Sigma_{s}}C(D)dg_{\Sigma_{s}}dt\leq C(D)\mathcal{H}^{n-1}(\Sigma_{s})\leq C(A,D)s^{\frac{5}{2}}.
\]
Combine estimates to get $|S_{D,s}|\leq C(A,D)s^{\frac{5}{2}}$. By
taking $s\searrow0$, we get that the set $S_{D}$ of $x\in B(p,D)$
such that $\varphi_{t}(x)$ is undefined for some $t\in(-D,D)$ has
measure zero. Now taking $D\nearrow\infty$, we see that the set of
$x\in\mathcal{R}$ such that $\varphi_{t}(x)$ is undefined for some
$t\in\mathbb{R}$ has measure zero. $\square$

Next, we observe that $\mathcal{G}'\subseteq\mathcal{R}$ is a set
of full measure which is preserved by the flow $(\varphi_{t})$. If
$\mathcal{D}\subseteq\mathbb{R}\times\mathcal{R}$ is the (open) maximal
flow domain, then 
\[
\xi:\mathcal{D}\to\mathbb{R}\times\mathcal{R},\quad(t,x)\mapsto(t,\varphi_{t}(x))
\]
is a diffeomorphism onto its (open) image. Note that $\mathbb{R}\times(\mathcal{R}\setminus\mathcal{G}^{\ast})$
has measure zero in $\mathbb{R}\times\mathcal{R}$, hence 
\[
\xi^{-1}(\mathbb{R}\times(\mathcal{R}\setminus\mathcal{G}^{\ast}))=\{(t,x)\in\mathcal{D};\varphi_{t}(x)\in\mathcal{R}\setminus\mathcal{G}^{\ast}\}
\]
has measure zero in $\mathcal{D}$. In particular, 
\[
\{(t,x)\in\mathbb{R}\times\mathcal{G}';\varphi_{t}(x)\in\mathcal{R}\setminus\mathcal{G}^{\ast}\}\subseteq\xi^{-1}(\mathbb{R}\times(\mathcal{R}\setminus\mathcal{G}'))
\]
has measure zero in $\mathbb{R}\times\mathcal{G}'$. By Fubini's theorem,
we may conclude that the set 
\[
\mathcal{G}'':=\left\{ x\in\mathcal{G}';|\{t\in\mathbb{R};\varphi_{t}(x)\in\mathcal{R}\setminus\mathcal{G}^{\ast}\}|=0\right\} 
\]
has full measure in $\mathcal{G}'$, hence in $\mathcal{R}$. 

Suppose $x\in\mathcal{G}''\setminus B(p,\Lambda)$ satisfies $R(x)\leq1$,
and let $\sigma:\mathbb{R}\to\mathcal{R}$ be the integral curve of
$\nabla f$ with $\sigma(0)=x$. Then 
\[
\frac{d}{dt}R(\sigma(t))=\langle\nabla R(\sigma(t)),\nabla f(\sigma(t))\rangle=2Rc(\nabla f,\nabla f)(\sigma(t))\geq0,
\]
which implies $R(\sigma(t))\geq R(x)$ for $t\leq0$. Recall that
$r$ in smooth on $\mathcal{G}^{\ast}$ (see Proposition 2.3 of \cite{bam1}),
so for any $t\in\mathbb{R}$ such that $r(\sigma(t))\geq\Lambda$
and $\sigma(t)\in\mathcal{G}$, we have
\[
\frac{d}{dt}r(\sigma(t))=\langle\nabla r(\sigma(t)),\nabla f(\sigma(t))\rangle\geq\frac{1}{8}r(\sigma(t))-|\nabla f(p)|,
\]
so that 
\[
\frac{d}{dt}\log\left(\frac{1}{8}r(\sigma(t))-|\nabla f(p)|\right)\geq\frac{1}{8}.
\]
Because $t\mapsto\log(\frac{1}{8}r(\sigma(t))-|\nabla f(p)|)$ is
locally Lipschitz, with derivative almost-everywhere $\geq\frac{1}{8}$,
we can thus integrate to obtain that $\sigma$ moves $x$ into $B(p,\Lambda)\cap\mathcal{R}$
for sufficiently negative $t<0$, but $R$ decreases as $t$ decreases,
so 
\[
R(x)\geq\min\left\{ \inf_{B(p,\Lambda)\cap\mathcal{R}}R,1\right\} 
\]
for all $x\in\mathcal{G}''$. Next, recall that we have the estimate
$|\nabla R|=2|Rc(\nabla f)|\leq CR\sqrt{f}\leq C(\Lambda)R$ on $B(p,2\Lambda)\cap\mathcal{R}$,
or equivalently $|\nabla\log R|\leq C(\Lambda)$. Fix $\eta>0$, and
suppose $R(y)<\eta$ for some $y\in B(p,\Lambda)\cap\mathcal{R}$.
Integrating $|\nabla\log R|\leq C(\Lambda)$ along a curve from $p$
to $y$ within $B(p,2\Lambda)\cap\mathcal{R}$ gives $R(p)\leq C(\Lambda)\eta$.
By choosing $\eta<c(\Lambda,R(p))$ sufficiently small, we get a contradiction,
so we conclude that $\inf_{B(p,\Lambda)\cap\mathcal{R}}R>0$. Because
$R|\mathcal{R}$ is continuous and $\mathcal{G}''$ is dense in $\mathcal{R}$,
the claim follows. 
\end{proof}
\noindent We now restrict our attention to tangent flows of a fixed
Ricci flow. 

For the remainder of this section, we assume that $(M^{n},(g_{t})_{t\in[0,T)})$
is a closed Ricci flow satisfying $Rc(g_{t})\geq-Ag_{t}$ for all
$t\in[0,T)$, as well as $A^{-1}r^{n}\leq|B(x,t,r)|_{g_{t}}\leq Ar^{n}$
for all $(x,t)\in M\times[0,T)$ and $r\in(0,1]$. Also assume $\text{diam}_{g_{t}}(M)\leq A$
for all $t\in[0,T)$, and that $T\leq A$. By Lemma \ref{lem:basiclemma},
all of these assumptions hold with $A$ replaced by some $\overline{A}(n,A,T,\text{diam}_{g_{0}}(M))<\infty$
for a closed Ricci flow satisfying (\ref{eq:ric}),(\ref{eq:vol}). 

The Ricci lower bound implies that $t\mapsto e^{-At}d_{g_{t}}$ are
pointwise nonincreasing as functions on $M\times M$, so in particular
\[
d_{g_{T}}(x,y):=\lim_{t\to\infty}d_{g_{t}}(x,y)=e^{AT}\lim_{t\to\infty}e^{-At}d_{g_{t}}(x,y)\in[0,\infty)
\]
is a well-defined pseudometric on $M$. We can form the metric space
whose underlying set is $X=M/\sim$, where $x\sim y$ if and only
if $\lim_{t\nearrow T}d_{t}(x,y)=0$, and equip $X$ with the induced
metric $d_{X}$ from passing $d_{g_{T}}$ to the quotient.
\begin{lem}
$\lim_{t\nearrow T}d_{GH}\left((M,d_{g_{t}}),(X,d_{X})\right)=0$.
\end{lem}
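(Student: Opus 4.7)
The plan is to exploit the monotonicity of $e^{-At}d_{g_{t}}$ together with Dini's theorem to upgrade pointwise convergence on $M\times M$ to uniform convergence, and then use the quotient map $\pi\colon M\to X$ as an explicit $\epsilon$-approximation realizing Gromov--Hausdorff closeness.

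First, I would verify that the pseudometric $d_{g_{T}}$ is continuous on the compact space $M\times M$. The Ricci lower bound $Rc(g_{t})\geq-Ag_{t}$ implies the distortion estimate $d_{g_{t}}(x,y)\leq e^{A(t-s)}d_{g_{s}}(x,y)$ for $s\leq t$ (this is where the factor $e^{-At}$ in the monotone quantity comes from). In particular, each $d_{g_{t}}$ is 1-Lipschitz with respect to itself, and combining with the distortion bound from $t=0$ yields
\[
|d_{g_{t}}(x,y)-d_{g_{t}}(x',y')|\leq e^{AT}\bigl(d_{g_{0}}(x,x')+d_{g_{0}}(y,y')\bigr)
\]
uniformly in $t\in[0,T)$. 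Passing to the limit $t\nearrow T$ shows $d_{g_{T}}$ is Lipschitz on $(M\times M,d_{g_{0}}\oplus d_{g_{0}})$, hence continuous.

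Second, the family of functions $F_{t}:=e^{-At}d_{g_{t}}\colon M\times M\to[0,\infty)$ is pointwise nonincreasing in $t$ (by the defining monotonicity), pointwise convergent to $F_{T}:=e^{-AT}d_{g_{T}}$, continuous, and defined on a compact space with continuous limit. Dini's theorem then gives uniform convergence $F_{t}\to F_{T}$ on $M\times M$. Multiplying through by $e^{At}$ and using the uniform boundedness of $d_{g_{T}}$ (which follows from Lemma \ref{lem:basiclemma}(i)) together with $e^{At}\to e^{AT}$, I obtain
\[
\lim_{t\nearrow T}\sup_{x,y\in M}\bigl|d_{g_{t}}(x,y)-d_{g_{T}}(x,y)\bigr|=0.
\]

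Third, let $\pi\colon M\to X=M/{\sim}$ denote the quotient map, so that $d_{X}(\pi(x),\pi(y))=d_{g_{T}}(x,y)$ by construction. Setting $\epsilon_{t}:=\sup_{x,y}|d_{g_{t}}(x,y)-d_{g_{T}}(x,y)|$, the map $\pi\colon(M,d_{g_{t}})\to(X,d_{X})$ satisfies $|d_{g_{t}}(x,y)-d_{X}(\pi(x),\pi(y))|\leq\epsilon_{t}$ for all $x,y\in M$, and is surjective so every point of $X$ is trivially $0$-close to $\pi(M)$. Hence $\pi$ is an $\epsilon_{t}$-approximation in the sense of Gromov--Hausdorff, and $d_{GH}((M,d_{g_{t}}),(X,d_{X}))\leq\epsilon_{t}\to 0$. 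There is no serious obstacle here; the only point that required a moment of thought was verifying continuity of the limit pseudometric $d_{g_{T}}$, which is needed to invoke Dini's theorem.
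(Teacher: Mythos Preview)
Your proof is correct and follows essentially the same strategy as the paper: both arguments show that $d_{g_t}\to d_{g_T}$ uniformly on $M\times M$ and then observe that the quotient map $\pi$ is an $\epsilon$-Gromov--Hausdorff approximation. The only difference is in how uniform convergence is obtained: the paper picks a finite $\frac{1}{5}e^{-AT}\epsilon$-net in $(M,d_{g_0})$, uses pointwise convergence on that finite set, and then the uniform Lipschitz bound (the same one you use) to propagate to all of $M\times M$ via the triangle inequality; you instead package the monotonicity and the continuity of the limit into an application of Dini's theorem. Your route is slightly cleaner conceptually, while the paper's is more hands-on; both rest on the same ingredient, namely the equicontinuity coming from $d_{g_t}\leq e^{AT}d_{g_0}$.
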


\begin{proof}
It suffices to show that for any $\epsilon>0$, there exists $\delta>0$
such that the quotient map $\pi:(M,d_{g_{t}})\to(X,d_{X})$ is an
$\epsilon$-Gromov-Hausdorff approximation for all $t\in(T-\delta,T)$.
Let $\{x_{1},...,x_{N}\}$ be an $\frac{1}{5}e^{-AT}\epsilon$-dense
subset of $(M,d_{g_{0}})$, which is thus $\frac{\epsilon}{5}$-dense
in each $(M,d_{g_{t}})$. Next, choose $\delta>0$ such that $|d_{g_{t}}(x_{i},x_{j})-d_{g_{T}}(x_{i},x_{j})|<\frac{\epsilon}{5}$
for all $i,j\in\{1,...,N\}$ whenever $t\in(T-\delta,T)$. Let $x,y\in M$
and $t\in(T-\delta,T)$ be arbitrary, and choose $i,j\in\{1,...,N\}$
such that $d_{g_{t}}(x,x_{i}),d_{g_{t}}(y,x_{j})<\frac{1}{5}e^{-AT}\epsilon$,
hence $d_{g_{T}}(x,x_{i}),d_{g_{T}}(y,x_{j})<\frac{\epsilon}{5}$.
Then we can estimate
\begin{align*}
|d_{g_{t}}(x,y)-d_{g_{T}}(x,y)|\leq & d_{g_{t}}(x,x_{i})+d_{g_{t}}(y,x_{j})+d_{g_{T}}(x,x_{i})+d_{g_{T}}(y,x_{j})\\
 & +|d_{g_{t}}(x_{i},x_{j})-d_{g_{T}}(x_{i},x_{j})|\\
< & \epsilon.
\end{align*}
Because $d_{g_{T}}(x,y)=d_{X}(\pi(x),\pi(y))$ and $\pi$ is surjective
(hence $\epsilon$-dense), the claim follows.
\end{proof}
In particular, $(X,d_{X})$ is a noncollapsed Ricci limit space, so
by Theorem \ref{thm:openandsmooth}, there is an open dense subset
$\mathcal{R}_{X}$ defined as the set of points $x\in X$ which have
a tangent cone isometric to the standard Euclidean space $\mathbb{R}^{n}$.
Moreover, $(X,d_{X})$ is a compact metric length space, and it follows
from Theorem 3.7 of \cite{cheegercolding2} that $(X,d_{X})$ is the
completion of $\mathcal{R}_{X}$ equipped with the length metric corresponding
to $d_{X}|(\mathcal{R}_{X}\times\mathcal{R}_{X})$. We also set $\mathcal{S}(X):=X\setminus\mathcal{R}_{X}$,
and for any $x\in M$, we denote by $\overline{x}\in X$ the corresponding
equivalence class. 
\begin{lem}
$\pi$ restricts to a homeomorphism from the open dense subset 
\[
M\setminus\Sigma:=\left\{ x\in M;\sup_{U\times[0,T)}|Rm|<\infty\text{ for some neighborhood }U\text{ of }x\text{ in }M\right\} 
\]
of $M$ to its image $(\mathcal{R}_{X},g_{X})$ .
\end{lem}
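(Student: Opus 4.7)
The plan is to verify four facts: (a) $M\setminus\Sigma$ is open (immediate from the definition, since bounded curvature on $U\times[0,T)$ for $U$ open means $U\subseteq M\setminus\Sigma$); (b) $\pi(M\setminus\Sigma)\subseteq\mathcal{R}_X$ and $\pi|_{M\setminus\Sigma}$ is a local isometry into $(\mathcal{R}_X,g_X)$; (c) $\pi|_{M\setminus\Sigma}$ is injective; and (d) $\pi(M\setminus\Sigma)\supseteq\mathcal{R}_X$. Combined, these imply $\pi|_{M\setminus\Sigma}$ is a homeomorphism onto $\mathcal{R}_X$; density of $M\setminus\Sigma$ in $M$ will then follow from density of $\mathcal{R}_X$ in $X$ by applying the argument of (d) to approximants and extracting a $g_0$-subsequential limit.

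For (b), given $x\in M\setminus\Sigma$ with $\sup_{U\times[0,T)}|Rm|<\infty$ for an open $U\ni x$, Shi's interior derivative estimates on any $V\Subset U$ bound all $\nabla^k Rm$ uniformly on $V\times[0,T)$; combined with $\partial_t g=-2Rc$, this shows $g_t|_V$ is Cauchy in $C^\infty(V)$ and converges to a smooth Riemannian metric $g_\infty$. Taking $r>0$ with $B_{g_\infty}(x,2r)\Subset V$ so that no $g_\infty$- or $g_t$-curve of length $\leq 2r$ starting in $B_{g_\infty}(x,r)$ can exit $V$ (for $t$ near $T$, by $C^\infty$-convergence), one has $d_{g_t}(y,z)=d_{g_t|_V}(y,z)\to d_{g_\infty}(y,z)$, so $d_X(\pi(y),\pi(z))=d_{g_T}(y,z)=d_{g_\infty}(y,z)$ for $y,z\in B_{g_\infty}(x,r/2)$. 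Hence $\pi$ is a local isometry near $x$, and every tangent cone of $(X,d_X)$ at $\pi(x)$ equals the Euclidean tangent space of $(V,g_\infty)$ at $x$, so $\pi(x)\in\mathcal{R}_X$. For injectivity in (c), given $y\in M\setminus\Sigma$ with $y\neq x$, either $y\in B_{g_\infty}(x,r/2)$ and $d_{g_T}(x,y)=d_{g_\infty}(x,y)>0$, or else $y$ lies outside some $V'\Subset V$ containing $x$ and $d_{g_T}(x,y)\geq\lim_{t\nearrow T}d_{g_t}(x,\partial V')=d_{g_\infty}(x,\partial V')>0$.

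For (d), fix $\overline{x}\in\mathcal{R}_X$ and apply Theorem \ref{thm:openandsmooth} to $(M,d_{g_{t_i}})$ for some $t_i\nearrow T$: there exist an exhaustion $U_i$ of $\mathcal{R}_X$ and diffeomorphisms $\psi_i:U_i\to M$ with $\psi_i^\ast g_{t_i}\to g_X$ in $C^\infty_{\mathrm{loc}}$ and $\psi_i(y)\to y$ in the Gromov-Hausdorff sense. The smooth convergence provides $r,C$ with $|Rm|_{g_{t_i}}\leq C$ on $B_{g_{t_i}}(\psi_i(\overline{x}),r)$ for $i$ large, and Theorem \ref{thm:pseudolocality} extends this to $|Rm|\leq\rho^{-2}$ on $B_{g_{t_i}}(\psi_i(\overline{x}),\rho)\times[t_i,T)$ for a uniform $\rho>0$. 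Passing to a subsequence with $\psi_i(\overline{x})\to x$ in $(M,g_0)$, the parabolic region contains a fixed $g_0$-neighborhood of $x$ for $i$ large, so $x\in M\setminus\Sigma$. Since the quotient map $\pi:(M,d_{g_t})\to(X,d_X)$ is itself an $\epsilon(t)$-Gromov-Hausdorff approximation with $\epsilon(t)\to 0$ (by the lemma preceding this one), it agrees asymptotically with the GH-approximations underlying Theorem \ref{thm:openandsmooth}, so $\pi(\psi_i(\overline{x}))\to\overline{x}$ in $X$. Combined with Lipschitz continuity of $\pi:(M,d_{g_0})\to(X,d_X)$ (from the monotonicity of $e^{-At}d_{g_t}$), this forces $\pi(x)=\overline{x}$.

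The main technical hurdle is (d): ensuring that the curvature bound at the moving base-points $\psi_i(\overline{x})\in M$ uniformizes under pseudolocality and transfers to a fixed $g_0$-limit $x\in M$, and then correctly identifying $\pi(x)=\overline{x}$ via the compatibility between $\pi$ and the ambient Gromov-Hausdorff convergence. Once this is achieved, the local isometry from (b) together with the bijection from (c) and (d) upgrades $\pi|_{M\setminus\Sigma}$ to a homeomorphism onto $\mathcal{R}_X$, and density of $M\setminus\Sigma$ in $M$ is handled by the same compactness-and-pseudolocality argument applied to sequences approximating an arbitrary $y\in M$ by points of $\mathcal{R}_X$.
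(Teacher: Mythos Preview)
Your argument is correct; the essential difference from the paper lies in part (d). The paper fixes an arbitrary representative $x\in\overline{x}$ from the start and shows $x\in M\setminus\Sigma$ directly: since $\pi$ itself is an $\epsilon(t)$-Gromov--Hausdorff approximation (by the preceding lemma), one has $(B_g(x,t,r),d_{g_t},x)\to(B^X(\overline{x},r),d_X,\overline{x})$ as $t\nearrow T$, and regularity of $\overline{x}$ makes these balls close to Euclidean at small scales. Colding's volume convergence then gives almost-maximal volume for $B_g(x,t,r)$, and Chen--Yuan $\epsilon$-regularity (Theorem~\ref{thm:chenyuan}) plus forward pseudolocality yield the uniform curvature bound at $x$ itself. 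Your route instead invokes Theorem~\ref{thm:openandsmooth} to obtain curvature control at moving base-points $\psi_i(\overline{x})$, extracts a $g_0$-subsequential limit, and must then identify $\pi(x)=\overline{x}$ via compatibility of Gromov--Hausdorff realizations. This works, but the identification step is a detour the paper avoids by exploiting that $\pi$ is already the canonical approximation, so no base-point ambiguity ever arises. A minor further point: the paper's version of (c) proves the stronger statement $d_{g_T}(x,y)>0$ for \emph{every} $y\in M\setminus\{x\}$ when $x\in M\setminus\Sigma$ (not only $y\in M\setminus\Sigma$), which immediately yields $\pi^{-1}(\mathcal{R}_X)=M\setminus\Sigma$; your injectivity on $M\setminus\Sigma$ alone suffices for the bijection stated in the lemma but does not give this saturation directly.
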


\begin{proof}
If $x\in M\setminus\Sigma$, then standard distortion estimates imply
$d_{g_{T}}(x,y)\neq0$ for all $y\in M\setminus\{x\}$, so $\pi|(M\setminus\Sigma)$
is injective. Also, $\pi:(M\setminus\Sigma,d_{g_{T}})\hookrightarrow(\mathcal{R},d)$
is an isometry, hence a homeomorphism onto its image. Now suppose
we are given $\overline{x}\in\mathcal{R}_{X}$. Then for any $\sigma>0$,
there exists $r_{0}=r_{0}(\sigma)>0$ such that 
\[
d_{PGH}\left((B^{X}(\overline{x},r),d_{X},\overline{x}),(B^{\mathbb{R}^{n}}(0^{n},r)),d_{\mathbb{R}^{n}},0^{n})\right)<\sigma r
\]
for all $r\in(0,r_{0}]$. 

\noindent \textbf{Claim: }$\lim_{t\nearrow T}d_{PGH}\left((B^{X}(\overline{x},r),d_{X},\overline{x}),(B_{g}(x,t,r),d_{g_{t}},x)\right)=0$
for any fixed $r\in(0,r_{0}]$.

Fix $\epsilon>0$. By the proof of the previous lemma, there exists
$\delta=\delta(\epsilon)>0$ such that $\pi$ satisfies 
\[
|d_{X}(\overline{y}_{1},\overline{y}_{2})-d_{g_{t}}(y_{1},y_{2})|<\epsilon
\]
for all $t\in(T-\delta,T)$ and $y_{1},y_{2}\in B_{g}(x,t,r)$. It
thus remains to show that $\pi(B_{g}(x,t,r))$ is $\epsilon$-dense
in $B^{X}(\overline{x},r)$. To see this, choose an $\frac{\epsilon}{2}$-dense
subset $\{\overline{y}_{1},...,\overline{y}_{N}\}$ of $B^{X}(\overline{x},r)$.
Then, for any choice of representatives $y_{i}\in\overline{y}_{i}$,
we can modify $\delta$ to assume that $d_{t}(y_{i},x)<r$ for all
$i\in\{1,...,N\}$ whenever $t\in(T-\delta,T)$. $\square$

We may therefore find $\tau_{r}=\tau_{r}(\sigma)>0$ such that for
any $t\in(T-\tau_{r},T)$, we have
\[
d_{PGH}\left((B^{X}(\overline{x},r),d_{X},\overline{x}),(B^{\mathbb{R}^{n}}(0,r),d_{\mathbb{R}^{n}},0^{n})\right)<\sigma r.
\]
By Proposition \ref{thm:chenyuan}, Remark \ref{rem:epsregremark},
and choosing $\sigma=\sigma(A)>0$ sufficiently small, we have $\widetilde{r}_{Rm}(x,t)>\sigma r$
for all $t\in(T-\tau_{r_{0}},T)$. By choosing $t=t(r')$ sufficiently
close to $t$, we get a curvature bound $|Rm|\leq(r')^{-2}$on $B_{g}(x,t,r')\times[t,T)$,
so $x\in M\setminus\Sigma$.
\end{proof}
In particular, $\mathcal{R}_{X}$ has the structure of a smooth Riemannian
manifold, equipped with the metric $g_{X}$ using the homeomorphism
$\pi|(M\setminus\Sigma)\to\mathcal{R}_{X}$. Moreover, it is standard
that $g_{t}\to g_{T}$ in $C_{loc}^{\infty}(M\setminus\Sigma)$ as
$t\nearrow T$, hence $(M\setminus\Sigma,g_{t})\to(\mathcal{R}_{X},g_{X})$
in the $C^{\infty}$ Cheeger-Gromov sense, with canonical diffeomorphism
$\pi|M\setminus\Sigma$ and Gromov-Hausdorff approximations $\pi$.
Moreover, we know that $\mathcal{S}(X)$ has Hausdorff dimension $\leq n-2$
by \cite{cheegercolding1}. 

Now fix $x\in M$ corresponding to a singular point $\overline{x}\in\mathcal{S}(X)$.
Choose $t_{i}\nearrow T$ such that the conjugate heat kernels $K(x,t_{i};\cdot,\cdot)$
converge in $C_{loc}^{\infty}(M\times[0,T))$ to some conjugate heat
kernel $K\in C^{\infty}(M\times(0,T))$ at the singular time based
at $x$. Write $d\nu_{s}:=K(\cdot,s)dg_{s}$. Fix any sequence $\tau_{i}\searrow0$,
and define the rescaled flows $g_{t}^{i}:=\tau_{i}^{-1}g_{T+\tau_{i}t}$,
as well as the correspondingly rescaled conjugate heat kernels $K^{i}(y,t;z,s):=\tau_{i}^{\frac{n}{2}}K(y,T+\tau_{i}t;z,T+\tau_{i}s)$.
Also set $K^{i}(y,t):=\tau_{i}^{\frac{n}{2}}K(y,T+\tau_{i}t)$ and
$d\nu_{t}^{i}:=d\nu_{x,T;T+\tau_{i}t}=K^{i}(\cdot,t)dg_{t}^{i}$.
By Theorem 1.38 of \cite{bamlergen3}, we can pass to a subsequence
to obtain uniform $\mathbb{F}$-convergence within some correspondence
$\mathfrak{C}$ on compact time intervals:

\noindent 
\[
(M,(g_{t}^{i})_{t\in[-\tau_{i}^{-1}T,0)},(\nu_{t}^{i})_{t\in[-\tau_{i}^{-1}T,0)})\xrightarrow[i\to\infty]{\mathbb{F},\mathfrak{C}}(\mathcal{Y},(\mu_{t}^{\infty})_{t<0}),
\]
where $(\mathcal{Y},(\mu_{t}^{\infty})_{t\in(-\infty,0)})$ is a metric
soliton with Nash entropy $\mathcal{N}_{(\mu_{t}^{\infty})}(\tau)<0$
(in particuar, the soliton is not flat Euclidean space). Let $\mathcal{R}$
be the regular set of $\mathcal{Y}$, equipped with a Ricci flow spacetime
$(\mathcal{R},\mathfrak{t},\partial_{\mathfrak{t}},(g_{t}^{\infty})_{t\in(-\infty,0)})$.
By Theorem 1.19 of \cite{bamlergen3}, there is an $n$-dimensional
singular space $(Y,d_{Y},\mathcal{R}_{Y},g_{Y})$, a probability measure
$\mu$ on $Y$, a smooth function $f_{Y}\in C^{\infty}(\mathcal{R}_{Y})$,
and an identification $\mathcal{Y}=Y\times(-\infty,0)$ such that
the following hold for all $t\in(-\infty,0)$:

$(a)$ $(\mathcal{Y}_{t},d_{t},\mu_{t}^{\infty})=(Y\times\{t\},|t|^{\frac{1}{2}}d,\mu)$,

$(b)$ $\mathcal{R}=\mathcal{R}_{Y}\times(-\infty,0)$, and $\partial_{\mathfrak{t}}-\nabla f$
corresponds to the standard vector field on the second factor,

$(c)$ $(\mathcal{R}_{t},g_{t}^{\infty},f(\cdot,t))=(\mathcal{R}_{Y}\times\{t\},|t|g_{Y},f_{Y})$,
where $d\mu_{t}^{\infty}=K^{\infty}(\cdot,t)dg_{t}^{\infty}=:(4\pi|t|)^{-\frac{n}{2}}e^{-f}dg_{t}^{\infty}$
on $\mathcal{R}$

$(d)$ $Rc+\nabla^{2}f=\frac{1}{2\tau}g$ on $\mathcal{R}$, hence
$Rc(g_{Y})+\nabla^{2}f_{Y}=\frac{1}{2}g_{Y}$ on $\mathcal{R}_{Y}$

\noindent Using this identification, we can apply Theorem 9.31 of
\cite{bamlergen2} to obtain an exhaustion $(U_{i})_{i\in\mathbb{N}}$
of $\mathcal{R}_{Y}$ by precompact open sets, a sequence $\alpha_{i}\searrow0$,
along with embeddings $\psi_{i}:U_{i}\times(-\alpha_{i}^{-1},-\alpha_{i})\to M\times(-\alpha_{i}^{-1},-\alpha_{i})$
satisfying
\begin{align}
||\partial_{t}-(\psi_{i}^{-1})_{\ast}\partial_{t}^{i}||_{C^{i}(U_{i}\times(-\alpha_{i}^{-1},-\alpha_{i}))} & <\alpha_{i},\label{eq:esty}\\
||\psi_{i}^{\ast}g^{i}-g^{\infty}||_{C^{i}(U_{i}\times(-\alpha_{i}^{-1},-\alpha_{i}))} & <\alpha_{i},\nonumber \\
||\psi_{i}^{\ast}K^{i}-K^{\infty}||_{C^{i}(U_{i}\times(-\alpha_{i}^{-1},-\alpha_{i}))} & <\alpha_{i},\nonumber 
\end{align}
for all $i\in\mathbb{N}$, where the $C^{i}$ norms are with respect
to the metric $|t|g_{Y}$. Set $\psi_{i,t}|U_{i,}\times\{t\}:U_{i}\to M$.
Because the $\mathcal{Y}$ is a continuous metric flow on $(-\infty,0)$,
we can pass to a subsequence so that the $\mathbb{F}$-convergence
is timewise for all $t\in(-\infty,0)$. 
\begin{prop}
\label{prop:volconverge} For any $y_{\infty}\in\mathcal{R}_{Y}$
and $r>0$, we have 
\[
\mathcal{H}^{n}(B^{Y}(y_{\infty},r))=|B^{Y}(y_{\infty},r)\cap\mathcal{R}_{Y}|_{g_{Y}}\geq A^{-1}r^{n}.
\]
Moreover, the volume ratio 
\[
r\mapsto\frac{\mathcal{H}^{n}(B^{Y}(y_{\infty},r))}{r^{n}}
\]
is nonincreasing. 
\end{prop}

\begin{proof}
The first equality follows from $\mathcal{H}^{n}(Y\setminus\mathcal{R}_{Y})=0$
along with the fact that for any connected Riemannian manifold $(N^{n},h)$,
the $n$-dimensional Lebesgue measure induced by $h$ coincides with
the $n$-dimensional Hausdorff measure induced by $d_{h}$. Fix any
sequence $(z_{i},-1)$ of $H_{n}$-centers of $\nu^{i}$. By Lemma
\ref{lem:gromovhaus}, we have pointed Gromov-Hausdorff convergence
\[
(M,d_{g_{-1}^{i}},z_{i})\to(Y,d,z_{\infty})
\]
for some $z_{\infty}\in Y$, so we can find $y_{i}\in M$ converging
to $y_{\infty}$ with respect to this Gromov-Hausdorff convergence.
By Colding's volume convergence theorem, we get 
\[
\mathcal{H}^{n}(B^{Y}(y_{\infty},r))=\lim_{i\to\infty}|B_{g^{i}}(y_{i},-1,r)|_{g_{-1}^{i}}\geq A^{-1}r^{n}.
\]
For any $\kappa>0$, we know that $Rc(g^{i})\geq-\kappa g^{i}$ for
sufficiently large $i\in\mathbb{N}$, so that Bishop volume comparison
and volume convergence tell us that 
\[
r\mapsto\frac{\mathcal{H}^{n}(B^{Y}(y_{\infty},r))}{v_{-\kappa}(r)}
\]
is nonincreasing. Taking $\kappa\searrow0$ thus gives the remaining
claim. 
\end{proof}
\begin{prop}
\label{prop:tangentflowricciflat} If $(Y,d,\mathcal{R}_{Y},g_{Y})$
is the metric soliton corresponding to the tangent flow of 
\[
(M,(g_{t}^{i})_{t\in[-\tau_{i}^{-1}T,0)},(\nu_{t}^{i})_{t\in[-\tau_{i}^{-1}T,0)})
\]
as described above, then $(\mathcal{R}_{Y},g_{Y})$ is Ricci flat.
\end{prop}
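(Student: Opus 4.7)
The plan is to apply Lemma \ref{lem:niargument} to the tangent soliton, deducing $\inf_{\mathcal{R}_Y} R(g_Y) > 0$ whenever it fails to be Ricci flat, and then to derive a contradiction via a blow-down argument using the maximal volume growth supplied by Proposition \ref{prop:volconverge}.

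First I verify the hypotheses of Lemma \ref{lem:niargument}. The singular shrinking GRS structure of $(Y, d, \mathcal{R}_Y, g_Y, f)$ is supplied by Theorem 1.19 of \cite{bamlergen3}, as recalled in the discussion preceding the proposition. To see that $Rc(g_Y) \geq 0$ on $\mathcal{R}_Y$, I pull back the rescaled Ricci lower bound $Rc(g^i_t) \geq -A\tau_i g^i_t$ along the embeddings $\psi_i$; the $C^i$-smooth convergence $\psi_i^* g^i \to g^\infty$ on $U_i \times (-\alpha_i^{-1}, -\alpha_i)$ from (\ref{eq:esty}), combined with $\tau_i \to 0$, yields $Rc(g^\infty_t) \geq 0$ on $\mathcal{R}$, and the identification $g^\infty_t = |t| g_Y$ gives $Rc(g_Y) \geq 0$ on $\mathcal{R}_Y$. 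To supply the required sequence of closed Ricci flows converging to $(Y, d)$ at a suitable time slice, I take the time-shifted rescalings $\tilde g^i_t := g^i_{t-1}$ on $M$ for $t \in [-1, 0]$, with basepoints $p_i$ chosen to be $H_n$-centers of $\nu^i$ at time $-1$. Lemma \ref{lem:basiclemma} guarantees that $Rc(\tilde g^i_t) \geq -\overline A \tilde g^i_t$ and $|B_{\tilde g^i}(x, t, r)|_{\tilde g^i_t} \geq \overline A^{-1} r^n$ for all $(x, t) \in M \times [-1, 0]$ and $r \in (0, 1]$ when $i$ is sufficiently large, and Lemma \ref{lem:gromovhaus} provides pointed Gromov-Hausdorff convergence $(M, d_{\tilde g^i_0}, p_i) \to (Y, d, y_\infty)$. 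Lemma \ref{lem:niargument} then asserts that, if $(\mathcal{R}_Y, g_Y)$ is not Ricci flat, there exists $c > 0$ with $\inf_{\mathcal{R}_Y} R(g_Y) \geq c$.

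To reach a contradiction, I consider a blow-down of $(Y, d, g_Y, y_\infty)$: along a sequence $\lambda_j \to \infty$, the pointed rescalings $(Y, \lambda_j^{-1} d, y_\infty)$ subsequentially converge in the pointed Gromov-Hausdorff sense to a tangent cone at infinity $(Y_\infty, d_\infty, y^{\ast}_\infty)$. Since $(Y, d)$ is a noncollapsed Ricci limit with $Rc(g_Y) \geq 0$ on $\mathcal{R}_Y$ and $|B^Y(y_\infty, r) \cap \mathcal{R}_Y|_{g_Y} \geq A^{-1} r^n$ by Proposition \ref{prop:volconverge}, the theory of Cheeger-Colding realizes $(Y_\infty, d_\infty)$ as a metric cone whose regular part $\mathcal{R}_{Y_\infty}$ is open, nonempty, and carries a smooth Riemannian structure $(\mathcal{R}_{Y_\infty}, g_{Y_\infty})$. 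A diagonal argument realizes $(Y_\infty, d_\infty, y^{\ast}_\infty)$ as a pointed Gromov-Hausdorff limit of time slices of Ricci flows satisfying the hypotheses of Theorem \ref{thm:openandsmooth} with uniform constants (obtained by combined rescalings $(\tau_i, \lambda_j)$ applied to the original flow), so that theorem upgrades this convergence to $C^\infty_{loc}$ Cheeger-Gromov convergence on $\mathcal{R}_{Y_\infty}$. Passing scalar curvature to the limit at a fixed $x \in \mathcal{R}_{Y_\infty}$ then gives
\[
R(g_{Y_\infty})(x) = \lim_{j \to \infty} \lambda_j^2 R(g_Y)(x_j) \geq \lim_{j \to \infty} \lambda_j^2 c = +\infty,
\]
contradicting the finiteness of $R(g_{Y_\infty})(x)$, and thereby forcing $(\mathcal{R}_Y, g_Y)$ to be Ricci flat.

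The main obstacle is the diagonal argument realizing the iterated blow-down $(Y_\infty, d_\infty, y^{\ast}_\infty)$ as a pointed Gromov-Hausdorff limit of Ricci flow time slices satisfying the hypotheses of Theorem \ref{thm:openandsmooth} with \emph{uniform} constants; this requires a careful choice of the rescaling parameters $\tau_i, \lambda_j$ and a verification that the Ricci lower bound and the volume non-collapsing constant remain controlled under the combined rescalings, exploiting that the Ricci bound rescales by the factor $\tau_i \lambda_j^{-2} \to 0$ while the volume non-collapsing extends to all scales $r \in (0, \lambda_j]$ in the blown-down flow.
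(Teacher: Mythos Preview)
Your overall strategy coincides with the paper's: blow down $(Y,d)$ along $\lambda_j\to\infty$, and contradict $\inf_{\mathcal{R}_Y} R(g_Y)>0$ (supplied by Lemma~\ref{lem:niargument}) by producing points where the curvature of $g_Y$ is arbitrarily small. The verification of the hypotheses of Lemma~\ref{lem:niargument} is fine.

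The gap is in your final displayed equation. You assert
\[
R(g_{Y_\infty})(x)=\lim_{j\to\infty}\lambda_j^{2}R(g_Y)(x_j),
\]
which would require smooth convergence of $(\mathcal{R}_Y,\lambda_j^{-2}g_Y)$ to $(\mathcal{R}_{Y_\infty},g_{Y_\infty})$ near $x$. But the smooth convergence you have justified, via the diagonal and Theorem~\ref{thm:openandsmooth}, is that of the \emph{doubly rescaled Ricci flow time slices} $(M,\lambda_j^{-2}g^{i_j}_{-1})$ to $Y_\infty$. Theorem~\ref{thm:openandsmooth} does not apply to the sequence $(Y,\lambda_j^{-1}d)$ itself, since $Y$ is a singular space rather than a time slice of a closed Ricci flow; in particular, you have not shown that the points $x_j\in Y$ tracking $x$ even lie in $\mathcal{R}_Y$, let alone with a uniform curvature-radius bound, so $R(g_Y)(x_j)$ may be undefined or fail to pass to the limit. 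What you identify as the ``main obstacle'' (uniform constants for the diagonal) is routine; the real missing step is transferring regularity back to the intermediate space $Y$.

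The paper closes exactly this gap, and in doing so avoids the second application of Lemma-type reasoning altogether. It picks $z'\in\mathcal{R}(Z)\setminus\{z_0\}$ in the blow-down cone, lifts to $y_j\in Y$, applies Colding's volume convergence twice (first on $Y$, then on $M$) to obtain almost-Euclidean volume $|B_{g^i}(y_{i,j},-1,\lambda_j r)|_{g^i_{-1}}\ge(\omega_n-\delta)(\lambda_j r)^n$, and then invokes Chen--Yuan $\epsilon$-regularity (Theorem~\ref{thm:chenyuan}, cf.\ Remark~\ref{rem:volremark}) on the smooth flow to obtain $\widetilde r_{Rm}^{g^i}(y_{i,j},-1)\ge c(A)\lambda_j r$. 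Passing this curvature-radius bound to the limit $i\to\infty$ yields $y_j\in\mathcal{R}_Y$ with $\widetilde r_{Rm}^{g_Y}(y_j)\ge c(A)\lambda_j r\to\infty$, hence $\inf_{\mathcal{R}_Y}R=0$ directly; Lemma~\ref{lem:niargument} then forces Ricci flatness. Your argument can be repaired along the same lines, but as written the key transfer step from $M$ to $Y$ is absent.
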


\begin{proof}
Fix any sequence $\lambda_{j}\nearrow\infty$ and basepoint $y_{0}\in\mathcal{R}_{Y}$.
By Proposition \ref{prop:volconverge}, we know$\mathcal{H}^{n}(B(y_{0},r))\geq A^{-1}r^{n}$
for all $r>0$, and that the volume ratios monotonically decrease
to a positive number as $r\nearrow\infty$. Because $Y$ is a Gromov-Hausdorff
limit of some noncollapsed $(M_{i},d_{g_{-1}^{i}},x_{i})$, where
$(M_{i},(g_{t}^{i})_{t\in[-4,0]})$ are closed Ricci flows satisfying
$Rc(g_{t}^{i})\geq-\tau_{i}Ag_{t}^{i}$ where $\tau_{i}\searrow0$,
we may use a diagonal argument to get that the rescaled spaces $(Y,\lambda_{j}^{-1}d_{Y},y_{0})$
converge in the pointed Gromov-Hausdorff sense to another noncollapsed
Ricci limit space $(Z,d_{Z},z_{0})$, which is moreover a metric cone.
In particular, we can choose $z'\in Z\setminus\{z_{0}\}$ in the regular
set of $Z$, which corresponds under the Gromov-Hausdorff convergence
to a sequence $y_{j}\in Y$ with 
\[
\lim_{j\to\infty}\lambda_{j}^{-1}d_{Y}(y_{j},y_{0})=d_{Z}(z_{0},z')\in(0,\infty).
\]
Recall that Colding's volume convergence theorem also holds for sequences
of noncollapsed Ricci limit spaces (Theorem 10.15 of \cite{cheegernotes}),
so for any $\delta>0$, there exists $r=r(\delta)>0$ such that
\[
\lambda_{j}^{-n}\mathcal{H}^{n}(B^{Y}(y_{j},\lambda_{j}r))\geq(\omega_{n}-\delta/2)r^{n}
\]
for $j=j(\delta)\in\mathbb{N}$ sufficiently large. On the other hand,
we know that $(M,d_{g_{-1}^{i}},x_{i})\to(Y,d,y_{0})$ in the pointed
Gromov-Hausdorff sense for some sequence $x_{i}\in M$, so that for
any fixed $j\in\mathbb{N}$, there is a sequence $y_{i,j}\in M$ with
\[
(B_{g^{i}}(y_{i,j},-1,\lambda_{j}r),d_{g_{-1}^{i}},y_{i,j})\to(B^{Y}(y_{j},\lambda_{j}r),d_{g_{-1}^{i}},y_{i,j}))
\]
in the pointed Gromov-Hausdorff sense as $i\to\infty$. By Colding's
volume convergence theorem, for any $j=j(\delta)\in\mathbb{N}$ large,
we have the following for all $i=i(\delta,j)\in\mathbb{N}$ sufficiently
large:
\[
|B_{g^{i}}(y_{i,j},-1,\lambda_{j}r)|_{g_{-1}^{i}}\geq(\omega_{n}-\delta)(\lambda_{j}r)^{n}.
\]
Moreover, we have $Rc(g_{-1}^{i})\geq-\tau_{i}Ag_{-1}^{i}$, but $\tau_{i}\searrow0$,
so we can choose $\delta=\delta(A)>0$ according to Theorem \ref{thm:chenyuan}
and Remark \ref{rem:volremark}, so that 
\[
\widetilde{r}_{Rm}^{g^{i}}(y_{i,j},\lambda_{j}r)\geq c(A)(\lambda_{j}r)
\]
for $i=i(j)\in\mathbb{N}$ large. Taking $i\to\infty$, we get that
$y_{j}\in\mathcal{R}_{Y}$ and $\widetilde{r}_{Rm}^{g_{Y}}(y_{j})\geq c(A)\lambda_{j}r.$
Since $\lambda_{j}\nearrow\infty$, we have $\inf_{\mathcal{R}_{Y}}R=0$,
hence $(\mathcal{R}_{Y},g_{Y})$ is Ricci flat by Lemma \ref{lem:niargument}.
\end{proof}
\begin{proof}[Proof of Theorem \ref{thm:theoremhigherdim}]
 Let $\mathcal{Y}$ be a tangent flow based at $\nu_{x,T}$, which
is modeled on a singular shrinking soliton $(Y,d_{Y},\mathcal{R}_{Y},g_{Y})$.
We first pass to a subsequence so that the $\mathbb{F}$-convergence
is timewise at almost every time. We can apply Lemma \ref{lem:gromovhaus}
to the sequence $(M,(g_{t}^{i}))$ or a rescaling $\widetilde{g}_{t}^{i}:=\lambda g_{\lambda^{-1}t}^{i}$
to obtain $y_{i,t}'\in M$ and $y_{\infty,t}'\in\mathcal{Y}_{t}$
such that $(M,d_{g_{t}^{i}},y_{i,t}')\to(\mathcal{Y}_{t},d_{t},y_{\infty,t}')$
for almost every $t\in(-\infty,0)$. By Proposition \ref{prop:tangentflowricciflat},
$Rc(g_{Y})=0$ on $\mathcal{R}_{Y}$, so $Y$ is a metric cone by
Theorem 2.18 of \cite{bamlergen3}. 
\end{proof}
\begin{proof}[Proof of Theorem \ref{thm:theorem4}]
 Suppose by way of contradiction that $L:=\limsup_{t\nearrow T}(T-t)r_{Rm}^{-2}(x,t)<\infty$.
Since $\overline{x}\notin\mathcal{S}(X)$, we know $L>0$. Because
$r_{Rm}(x,t)\geq\frac{1}{2\sqrt{L}}\sqrt{T-t}$ for all $t\in(0,T)$
sufficiently close to $T$, we have
\[
\frac{1}{\sqrt{T-t}}\int_{t}^{T}\sqrt{T-s}R(x,s)ds\leq\frac{1}{\sqrt{T-t}}\int_{t}^{T}\frac{4L}{\sqrt{T-s}}ds=8L.
\]
Fix $t_{i}\nearrow T$ such that $K(x,t_{i};\cdot,\cdot)$ converge
in $C_{loc}^{\infty}(M\times[0,T))$ to a conjugate heat kernel at
the singular time based at $\overline{x}$. Then we can apply the
heat kernel lower bound of \cite{qizhangvol} to get

\[
K(y,t)\geq\frac{ce^{8L}}{(T-t)^{\frac{n}{2}}}\exp\left(-\dfrac{d_{g_{t}}^{2}(x,y)}{c(T-t)}\right),
\]
for all $y\in M$, (in fact, we even have the stronger bound where
we replace $d_{g_{t}}(x,y)$ with $d_{g_{T}}(x,y)$) where $c=c(g_{0})>0$.
Now fix $\tau_{i}\searrow0$, and consider the $\mathbb{F}$-convergence
within a correspondence
\[
(M,(g_{t}^{i})_{t\in[-\tau_{i}^{-1}T,0)},(\nu^{i})_{t\in[-\tau_{i}^{-1}T,0)})\xrightarrow[i\to\infty]{\mathbb{F},\mathfrak{C}}(\mathcal{Y},(\mu_{t}^{\infty})_{t<0})
\]
in the discussion preceding Proposition \ref{prop:volconverge}, where
$\mathcal{Y}$ is a metric soliton corresponding to the singular shrinking
GRS $(Y,d,\mathcal{R}_{Y},g_{Y},f)$. The rescaled metrics satisfy
\[
\frac{1}{C^{\ast}}\exp\left(-C^{\ast}d_{g_{-1}^{i}}^{2}(x,y)\right)\leq K^{i}(y,t)
\]
for all $(y,t)\in M\times[-2,0)$ and $i\in\mathbb{N},$ where $C^{\ast}=C^{\ast}(n,A,T,g_{0},L)<\infty$.
In particular, for any $D<\infty$, $r>0$, and $y\in B_{g^{i}}(x,-1,D)$,
we have 
\[
\int_{B_{g^{i}}(y,-1,r)}K^{i}(y,t)\geq\frac{1}{C^{\ast}}\exp\left(-C^{\ast}D^{2}\right)|B_{g^{i}}(y,-1,r)|_{g^{i}}\geq\frac{e^{-C^{\ast}D^{2}}}{C^{\ast}}A^{-1}r^{n},
\]
hence the hypotheses of Proposition \ref{prop:easyconvergence} are
satisfied with $x_{i}=x$.

Because $\mathcal{Y}$ is continuous on $[-2,0)$, the $\mathbb{F}$-convergence
is timewise at $t=-1$. We let $\mathfrak{C}=((Z_{t},d_{t}^{Z})_{t\in I'},(\phi_{t}^{i})_{t\in I'^{,i},i\in\mathbb{N}\cup\{\infty\}})$
be a correspondence such that
\[
\lim_{i\to\infty}d_{W_{1}}^{Z_{t}}\left((\phi_{-1}^{i})_{\ast}\nu_{-1}^{i},(\phi_{-1}^{\infty})_{\ast}\mu_{-1}^{\infty}\right)=0.
\]
Let $U_{i},\psi_{i}$ be as in (\ref{eq:esty}). By Proposition \ref{prop:easyconvergence},
we can pass to a subsequence to find $x_{\infty}\in Y$ such that
\[
\lim_{i\to\infty}d_{t}^{Z}((\phi_{-1}^{i})(x),(\phi_{-1}^{\infty})(x_{\infty}))=0
\]
 and
\[
(M,d_{g_{-1}^{i}},x)\to(Y,d,x_{\infty})
\]
in the pointed Gromov-Hausdorff sense. Because $\liminf_{i\to\infty}r_{Rm}^{g^{i}}(x,-1)\geq\frac{1}{2\sqrt{L}}>0$,
we moreover have $x_{\infty}\in\mathcal{R}$. 

By Proposition \ref{prop:tangentflowricciflat}, $(\mathcal{R}_{Y},g_{Y})$
must be Ricci flat. By Theorems 1.17, 1.19 of \cite{bamlergen3},
we know $\mathcal{Y}$ is a static flow, and $(Y,d)$ is a metric
cone $C(Z)$ over some compact metric space $(Z,d_{Z})$. Choose $r_{0}\in(0,1]$
such that $B(x_{\infty},2r_{0})\subseteq\mathcal{R}_{-1}$.

Let $\gamma:[0,1]\to M$ be a minimizing curve from $\psi_{i}(x_{\infty})$
to $x$ with respect to $g_{-1}^{i}$. Suppose by way of contradiction
that $\gamma([0,1])\not\subseteq\psi_{i}(U_{i})$, and define 
\[
u^{\ast}:=\inf\left\{ u\in[0,1];\gamma(u)\in\psi_{i}(\partial B(x_{\infty},r_{0}))\right\} .
\]
Because $B(x_{\infty},r_{0})\subset\subset U_{i}\times\{-1\}$ for
sufficiently large $i=i(r_{0},x_{\infty})\in\mathbb{N}$, we must
have 
\[
d_{g_{-1}^{i}}(\psi_{i}(x_{\infty}),x)\geq\text{length}_{g_{-1}^{i}}(\gamma|[0,u^{\ast}))\geq\frac{1}{2}\text{length}_{g_{-1}^{\infty}}(\psi_{i}^{-1}\circ\gamma|[0,u^{\ast}))\geq\frac{1}{2}r_{0}.
\]
However, 
\begin{align*}
d_{g_{-1}^{i}}(\psi_{i}(x_{\infty}),x)= & d_{-1}^{Z}\left((\phi_{-1}^{i}\circ\psi_{i})(x_{\infty}),\phi_{-1}^{i}(x)\right)\\
\leq & d_{-1}^{Z}(\phi_{-1}^{i}(x),\phi_{-1}^{\infty}(x_{\infty}))+d_{-1}^{Z}\left((\phi_{-1}^{i}\circ\psi_{i})(x_{\infty}),\phi_{-1}^{\infty}(x_{\infty})\right)\to0
\end{align*}
as $i\to\infty$ by the choice of $x_{\infty}$ and Theorem 9.31(d)
in \cite{bamlergen2}, a contradiction. We therefore have $x_{i}:=\psi_{i}^{-1}(x)\in U_{i}$
for sufficiently large $i\in\mathbb{N}$, and
\[
d_{-1}(x_{i},x_{\infty})\leq\text{length}_{g_{-1}^{\infty}}(\psi_{i}^{-1}\circ\gamma)\leq2\text{length}_{g_{-1}^{i}}(\gamma)\leq2d_{g_{-1}^{i}}(\psi_{i}(x_{\infty}),x)\to0
\]
as $i\to\infty$.

\noindent \textbf{Claim: }For any $\tau>0$, we have $\widetilde{r}_{Rm}^{g^{i}}(x,-\tau)\geq\frac{1}{2}r_{0}$
for $i=i(\tau)\in\mathbb{N}$ sufficiently large. 

Because $x_{\infty}\in\mathcal{R}_{-1}$ and $\mathcal{Y}$ is static,
we have $x_{\infty}(t)\in\mathcal{R}$ for all $t\in[-1,0)$. Moreover,
for any $\tau>0$, $\{x_{\infty}(t);t\in[-1,-\tau]\}$ is a compact
subset of $\mathcal{R}$, so admits a neighborhood $V\subset\subset\mathcal{R}$.
Because $(\psi_{i}^{-1})_{\ast}\partial_{t}\to\partial_{\mathfrak{t}}$
in $C_{loc}^{\infty}(\mathcal{R})$, a standard statement about continuous
dependence of ODE solutions on parameters (see Chapter 5 of \cite{hartmanode})
implies that for large $i\in\mathbb{N}$, the integral curve $\gamma_{i}$
of $(\psi_{i}^{-1})_{\ast}\partial_{t}$ starting at $x_{i}$ is well-defined
for all $t\in[-1,-\tau]$, and $\gamma_{i}(t)\to x_{\infty}(t)$ uniformly
in $t\in[-1,-\tau]$ as $i\to\infty$, so $\gamma_{i}([-1,-\tau])\subseteq V\subseteq U_{i}\times\{-\tau\}$
for sufficiently large $i\in\mathbb{N}$. In particular, $B(\gamma_{i}(\tau),\frac{3}{2}r_{0})\subset\subset\mathcal{R}$
when $i=i(\tau)\in\mathbb{N}$ is large, hence $B(\gamma_{i}(\tau),\frac{3}{2}r_{0})\subseteq U_{i}$
and 
\[
B_{g^{i}}(x,-\tau,r_{0})\subseteq\psi_{i}\left(B(\gamma_{i}(\tau),\frac{3}{2}r_{0})\right).
\]
However, because $\mathcal{Y}$ is static, we know $|Rm|_{g^{\infty}}(y(t),t)\leq\overline{C}$
for all $y\in B(x_{\infty},\frac{3}{2}r_{0})$ and $t\in[-1,0)$,
which implies that $|Rm|_{g^{i}}(\cdot,-\tau)\leq2\overline{C}$ on
$B_{g^{i}}(x,-\tau,r_{0})$ for $i=i(\tau)\in\mathbb{N}$ large. $\square$

Now apply Theorem \ref{thm:pseudolocality} on the ball $B_{g^{i}}(x,-\tau,\min\{r_{0},\overline{C}^{-2}\})$
for some $\tau=\tau(r_{0},\overline{C},A)\in(-1,0)$ sufficiently
small in order to contradict the fact that $x$ is a singular point.
\end{proof}

\section{$C^{0}$ Orbifold Structure in Dimension 4}

Throughout this section, we assume that $(M^{4},(g_{t})_{t\in[0,T)})$
is a closed, simply connected four-dimensional Ricci flow satisfying
$Rc(g_{t})\geq-Ag_{t}$ for all $t\in[0,T)$, as well as $A^{-1}r^{4}\leq|B(x,t,r)|_{g_{t}}\leq Ar^{4}$
for all $(x,t)\in M\times[0,T)$ and $r\in(0,1]$. 

\noindent As before, fix a sequence $\tau_{i}\searrow0$, a singular
point $\overline{x}\in X$, let $K\in C^{\infty}(M\times[0,T))$ be
a conjugate heat kernel at the singular time based at $x$, and let
$g_{t}^{i}:=\tau_{i}^{-1}g_{T+\tau_{i}t}$, $K^{i}(\cdot,t):=\tau_{i}^{\frac{n}{2}}K(\cdot,T+\tau_{i}t)$,
$d\nu_{t}^{i}:=K^{i}(\cdot,t)dg_{t}^{i}$. By Proposition \ref{prop:tangentflowricciflat}
and Theorems 1.17, 1.38, 1.47 of \cite{bamlergen3}, we can pass to
a subsequence to obtain uniform $\mathbb{F}$-convergence within some
correspondence $\mathfrak{C}$ on compact time intervals:

\noindent 
\[
(M,(g_{t}^{i})_{t\in[-\tau_{i}^{-1}T,0)},(\nu_{t}^{i})_{t\in[-\tau_{i}^{-1}T,0)})\xrightarrow[i\to\infty]{\mathbb{F},\mathfrak{C}}(\mathcal{Y},(\mu_{t}^{\infty})_{t\in(-\infty,0)}),
\]
where $(\mathcal{Y},(\mu_{t}^{\infty})_{t\in(-\infty,0)})$ is a static,
$H_{4}$-concentrated metric soliton. Let $(\mathcal{R},g^{\infty},\mathfrak{t},\partial_{\mathfrak{t}})$
be the spacetime structure on the regular set of $\mathcal{Y}$ as
before, so that by Theorem 1.47 of \cite{bamlergen3}, there is a
finite subgroup $\Gamma\leq O(4,\mathbb{R})$ and an identification
$\mathcal{Y}=C(\mathbb{S}^{3}/\Gamma)\times(-\infty,0)$ such that
the following hold for all $t\in(-\infty,0)$:

$(a)$ $(\mathcal{Y}_{t},d_{t})=(C(\mathbb{S}^{3}/\Gamma)\times\{t\},d)$,
where $d$ is the cone metric on $C(\mathbb{S}^{3}/\Gamma)$,

$(b)$ $\mathcal{R}=\left(C(\mathbb{S}^{3}/\Gamma)\setminus\{o_{\ast}\}\right)\times(-\infty,0)$,
where $o_{\ast}$ is the vertex of $C(\mathbb{S}^{3}/\Gamma)$, and
$\partial_{\mathfrak{t}}$ corresponds to the standard vector field
on the second factor,

$(c)$ $(\mathcal{R}_{t},g_{t}^{\infty})=\left(\left(C(\mathbb{S}^{3}/\Gamma)\setminus\{o_{\ast}\}\right)\times\{t\},g_{Y}\right)$,
where $g_{Y}=dr^{2}+r^{2}g_{\mathbb{S}^{3}/\Gamma}$ is the smooth
cone metric over $\mathbb{S}^{3}/\Gamma$,

$(d)$ $Rc=0$ and $d\mu_{t}^{\infty}=K^{\infty}(\cdot,t)dg_{t}^{\infty}=\frac{1}{|\Gamma|}(4\pi|t|)^{-2}\exp\left(-\frac{d^{2}(o_{\ast},\cdot)}{4|t|}\right)dg_{t}^{\infty}$
on $\mathcal{R}$.

\noindent For ease of notation, we also write $B(o_{\ast},r_{2}):=B^{C(\mathbb{S}^{3}/\Gamma)}(o_{\ast},r_{2})$,
$A(o_{\ast},r_{1},r_{2}):=B(o_{\ast},r_{2})\setminus\overline{B}(o_{\ast},r_{1})$
for all $r_{2}>r_{1}>0$. There is a sequence $\alpha_{i}\searrow0$
such that if $A_{i}:=A(o_{\ast},\alpha_{i},\alpha_{i}^{-1})$, then
there are embeddings $\psi_{i}:A_{i}\times(-2,0)\to M$ satisfying
\[
||\partial_{\mathfrak{t}}-(\psi_{i}^{-1})_{\ast}\partial_{t}||_{C^{i}(U_{i}\times(-\alpha_{i}^{-1},-\alpha_{i}))}<\alpha_{i},
\]
\[
||\psi_{i}^{\ast}g^{i}-g^{\infty}||_{C^{i}(U_{i}\times(-\alpha_{i}^{-1},-\alpha_{i}))}<\alpha_{i},
\]
\[
||\psi_{i}^{\ast}K^{i}-K^{\infty}||_{C^{i}(U_{i}\times(-\alpha_{i}^{-1},-\alpha_{i}))}<\alpha_{i}
\]
for all $i\in\mathbb{N}$, where the $C^{i}$ norms are with respect
to the metric $g_{Y}$. By abuse of notation, we write $\psi_{i}$
for the restriction $\psi_{i}|(A_{i}\times\{-1\}):A_{i}\to M$. In
the remainder of this section, all geometric quantities ($H_{4}$-centers,
for example) correspond to the rescaled flows $g^{i}$. 
\begin{prop}
\label{prop:part1lemma} $(M,d_{g_{-1}^{i}},x)$ converge in the pointed
Gromov-Hausdorff sense to $(C(\mathbb{S}^{3}/\Gamma),d,o_{\ast})$,
with smooth convergence on $C(\mathbb{S}^{3}/\Gamma)\setminus\{o_{\ast}\}$. 
\end{prop}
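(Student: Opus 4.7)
The plan is to combine the smooth convergence on annular regions (already provided by the embeddings $\psi_i$) with Claims 3 and 4, which assert that $x$ lies in a component $M'_{i,\gamma}$ whose diameter collapses as $\gamma\to 0$. The smooth convergence assertion on $C(\mathbb{S}^3/\Gamma)\setminus\{o_\ast\}$ is essentially immediate: the embeddings $\psi_i$ are defined on annuli $A_i=A(o_\ast,\alpha_i,\alpha_i^{-1})$ that exhaust $C(\mathbb{S}^3/\Gamma)\setminus\{o_\ast\}$, with $\psi_i^\ast g^i\to g^\infty$ in $C^\infty_{loc}$. So the work lies in extracting pointed Gromov--Hausdorff convergence with the prescribed basepoints $x\mapsto o_\ast$.

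To do this, I would fix $R<\infty$ and $\epsilon>0$, and use Claim 4 to choose $\gamma=\gamma(A,\epsilon)>0$ so small that $\mathrm{diam}_{g_{-1}^i}(M'_{i,\gamma})<\epsilon$ for all large $i$; by Claim 3, $x\in M'_{i,\gamma}$ for large $i$ as well. I would then build a candidate approximation $\phi_i\colon B^{C(\mathbb{S}^3/\Gamma)}(o_\ast,R)\to M$ by setting $\phi_i\equiv x$ on $\overline{B}(o_\ast,\gamma)$ and $\phi_i=\psi_i$ on $A(o_\ast,\gamma,R)$. Distance preservation up to an $O(\epsilon)$ error follows from two ingredients: the $C^\infty$-closeness $\psi_i^\ast g^i\to g^\infty$ on the compact annulus $A(o_\ast,\gamma/2,2R)$ forces $\psi_i$ to be a near-isometry there, and $\mathrm{diam}_{g_{-1}^i}(M'_{i,\gamma})<\epsilon$ controls the error incurred when one or both of the points lie near the vertex (using that $x$ and $\phi_i(\partial B(o_\ast,\gamma))=\psi_i(\partial B(o_\ast,\gamma))=\partial M'_{i,\gamma}$ belong to the same set $\overline{M'_{i,\gamma}}$).

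The main obstacle is to verify approximate surjectivity onto $B_{g^i}(x,-1,R)$. Given $y\in B_{g^i}(x,-1,R)$, I need to show that either $y\in M'_{i,\gamma}$ (so $y$ is $\epsilon$-close to $x=\phi_i(o_\ast)$) or $y\in\psi_i(A(o_\ast,\gamma,R+\epsilon))$. The subtle point is that $M''_{i,\gamma}$ is a priori strictly larger than $\psi_i(A(o_\ast,\gamma,\alpha_i^{-1}))$; it can also contain a ``far'' region lying outside $\psi_i(A_i)$, bounded by $\psi_i(\partial B(o_\ast,\alpha_i^{-1}))$. To exclude this possibility, I would argue that a $g_{-1}^i$-minimizing geodesic from $x$ to any such far $y$ must first cross $\partial M'_{i,\gamma}=\psi_i(\partial B(o_\ast,\gamma))$ and then traverse the entire annulus $\psi_i(A(o_\ast,\gamma,\tfrac{1}{2}\alpha_i^{-1}))$. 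The near-isometry property of $\psi_i$ on this annulus forces the length of that traversal to be at least $\tfrac{1}{2}\alpha_i^{-1}-\gamma-\epsilon$, which exceeds $R$ for large $i$, contradicting $y\in B_{g^i}(x,-1,R)$.

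Once approximate surjectivity is in hand, $\phi_i$ is an $O(\epsilon)$-Gromov--Hausdorff approximation between $B^{C(\mathbb{S}^3/\Gamma)}(o_\ast,R)$ and $B_{g^i}(x,-1,R)$ for all large $i$. Since $R$ and $\epsilon$ were arbitrary, taking a diagonal sequence yields pointed Gromov--Hausdorff convergence $(M,d_{g_{-1}^i},x)\to(C(\mathbb{S}^3/\Gamma),d,o_\ast)$, completing the proof.
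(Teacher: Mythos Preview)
Your approach is correct but takes a genuinely different route from the paper. The paper argues indirectly: it first notes that Claims~1--4 give $\sup_i d_{g_{-1}^i}(x,x_i)<\infty$, where $(x_i,-1)$ are the $H_4$-centers; since Lemma~\ref{lem:gromovhaus} already supplies pointed Gromov--Hausdorff convergence $(M,d_{g_{-1}^i},x_i)\to (C(\mathbb{S}^3/\Gamma),d,y_\infty)$, one can change basepoint and extract subsequential convergence $(M,d_{g_{-1}^i},x)\to (C(\mathbb{S}^3/\Gamma),d,x_\infty)$ for some $x_\infty$. The paper then identifies $x_\infty=o_\ast$ by estimating $d(x_\infty,y)$ for $y\in\partial B(o_\ast,2\gamma)$ through the correspondence $\mathfrak{C}$, using Theorem~9.31(d) of \cite{bamlergen2} together with Claims~3--4 to bound this by $c(A)\gamma^{1/3}$. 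Your construction of explicit Gromov--Hausdorff approximations via gluing $\psi_i$ with the constant map $x$ is more elementary and self-contained: it bypasses Lemma~\ref{lem:gromovhaus} (hence Lemma~\ref{lem:firsttechlemma} and Proposition~\ref{prop:easyconvergence}) and does not invoke the correspondence at all. One caution: the assertion that $C^\infty$-closeness ``forces $\psi_i$ to be a near-isometry'' on the annulus is not automatic, since annuli in $C(\mathbb{S}^3/\Gamma)$ are not geodesically convex and $g_{-1}^i$-geodesics between points of $\psi_i(A(o_\ast,\gamma,R))$ can enter $M'_{i,\gamma}$; you must use that $\mathrm{diam}_{g_{-1}^i}(M'_{i,\gamma})<\epsilon$ to show such shortcuts mimic passage through the vertex up to $O(\epsilon)$, and the outer-traversal argument you sketched for surjectivity to exclude shortcuts through the far region. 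With those details supplied, your argument goes through.
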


\begin{proof}
Suppose that $(x_{i},-1)$ are $H_{4}$-centers of $\nu^{i}$. Bamler's
Gaussian upper bounds for the conjugate heat kernel (Theorem 7.2 of
\cite{bamlergen1}) then give
\[
K^{i}(y,-1)\leq C\exp\left(-\frac{1}{C}d_{g_{-1}^{i}}^{2}(x_{i},y)\right)
\]
for all $y\in M$, where $C=C(A)<\infty$. On the other hand, we know
\[
K^{\infty}(v,-1)=\frac{1}{|\Gamma|(4\pi)^{2}}\exp\left(-\frac{1}{4}d^{2}(o_{\ast},v)\right),
\]
for all $v\in C(\mathbb{S}^{3}/\Gamma)\setminus\{o_{\ast}\}$, where
$|\Gamma|\leq C(A)$. Thus $\psi_{i}^{\ast}K^{i}\to K^{\infty}$ in
$C_{loc}^{\infty}(A_{i})$ implies that (after possibly reindexing)
\[
K^{i}(y,-1)\geq\frac{1}{|\Gamma|(8\pi)^{2}}\exp\left(-\frac{1}{4}d^{2}(o_{\ast},\psi_{i}^{-1}(y))\right)
\]
for any $y\in\psi_{i}(A_{i})$, when $i\in\mathbb{N}$ is sufficiently
large. Combining estimates, we see that
\[
d^{2}(o_{\ast},\psi_{i}^{-1}(y))\geq\frac{1}{C}d_{g_{-1}^{i}}^{2}(x_{i},y)-C
\]
for all $y\in\psi_{i}(A_{i})$ when $i\in\mathbb{N}$ is sufficiently
large, where $C=C(A)<\infty$. This gives an upper bound of the form
$d_{g_{-1}^{i}}(x_{i},y)\leq D$ for all $y\in\psi_{i}(\partial B(o_{\ast},1))$,
where $D=D(A)<\infty$. Because $M$ is simply connected, for any
$r\in(\alpha_{i},\alpha_{i}^{-1})$, $\psi_{i}(\partial B(o_{\ast},r))$
disconnects $M$ into two pieces $M_{i,r}',M_{i,r}''$ by the generalized
Jordan-Brouwer theorem, where $\psi_{i}(A(o_{\ast},\alpha_{i},r))\subseteq M_{i,r}'$
and $\psi_{i}(A(o_{\ast},r,\alpha_{i}^{-1}))\subseteq M_{i,r}''$. 

\noindent \textbf{Claim 1: }When $i\in\mathbb{N}$ is sufficiently
large, we have $x_{i}\in M_{i,4D}'$.

Suppose instead that $x_{i}\in M_{i,4D}''$, and let $\gamma:[0,1]\to M$
be a $g_{-1}^{i}$-minimizing geodesic from $x_{i}$ to some $y\in\psi_{i}(\partial B(o_{\ast},1))$.
Because $M$ is simply connected, we know $\psi_{i}(\partial B(o_{\ast},4D))$,
$\psi_{i}(\partial B(o_{\ast},1))$ each separate $M$ into two components,
so we can find $0\leq s_{1}<s_{2}\leq1$ such that $\gamma(s_{1})\in\psi_{i}(\partial B(o_{\ast},1))$,
$\gamma(s_{2})\in\psi_{i}(\partial B(o_{\ast},4D))$, and $\gamma|[s_{1},s_{2}]\subseteq\psi_{i}(A(o_{\ast},1,4D))$.
Then, for $i\in\mathbb{N}$ sufficiently large, 
\[
3D\leq\text{length}_{g_{Y}}(\psi_{i}^{-1}\circ\gamma|[s_{1},s_{2}])\leq2\text{length}_{g_{-1}^{i}}(\gamma|[s_{1},s_{2}])\leq2d_{g_{-1}^{i}}(x_{i},y)\leq2D,
\]
a contradiction. $\square$

For any $y\in\psi_{i}(A(o_{\ast},D,\frac{1}{2}\alpha_{i}^{-1}))$
and $z\in B_{g^{i}}(y,-1,\frac{D}{4})$, let $\gamma:[0,1]\to M$
be a minimizing $g_{-1}^{i}$-geodesic from $y$ to $z$. If $\gamma$
leaves $\psi_{i}(A(o_{\ast},\alpha_{i},\alpha_{i}^{-1}))$, we can
argue as in Claim 1 to obtain $\text{length}_{g_{-1}^{i}}(\gamma)\geq\frac{1}{2}D$
for $i\in\mathbb{N}$ sufficiently large, a contradiction. Thus, for
$i$ large, we have $|Rm|_{g^{i}}(\cdot,-1)\leq\epsilon_{i}$ on $B_{g^{i}}(y,-1,\frac{D}{4})$
for all $y\in\psi_{i}(A(o_{\ast},D,\frac{1}{2}\alpha_{i}^{-1}))$,
where $\epsilon_{i}\searrow0$. We can thus apply Theorem \ref{thm:pseudolocality}
to the ball $B(y,-1,\frac{D}{4})$ to get (after possibly increasing
$D=D(A)<\infty)$ $|Rm|_{g^{i}}\leq C(A)$ on $\psi_{i}(A(o_{\ast},D,\frac{1}{2}\alpha_{i}^{-1}))\times(-2,0)$
when $i=i(A)\in\mathbb{N}$ is large.

\noindent \textbf{Claim 2: }For $i\in\mathbb{N}$ sufficiently large,
we have $x\in M_{i,16D}'$.

Suppose instead that $x\in M_{i,16D}''$. Let $\gamma:[0,1]\to M$
be a $g_{t}^{i}$-minimizing curve from $x$ to any $y\in M_{i,8D}'$,
where $t\in[-1,0)$ is arbitrary. Arguing as in Claim 1, we find $0\leq s_{1}<s_{2}\leq1$
such that $\gamma(s_{1})\in\psi_{i}(\partial B(o_{\ast},4D))$ and
$\gamma(s_{2})\in\psi_{i}(\partial B(o_{\ast},8D))$, and $\gamma|[s_{1},s_{2}]\in\psi_{i}(A(o_{\ast},8D,16D))$.
From the curvature bound $|Rm|_{g^{i}}\leq C(A)$ on $\psi_{i}(A(o_{\ast},8D,16D))\times(-1,0)$,
we can estimate
\[
d_{g_{t}^{i}}(x,y)\geq\text{length}_{g_{t}^{i}}(\gamma|[s_{1},s_{2}])\geq\frac{1}{C(A)}\text{length}_{g_{-1}^{i}}(\gamma|[s_{1},s_{2}])\geq cD
\]
for all $t\in[-1,0)$, where $c=c(A)>0$. However, Proposition \ref{prop:heatkernel}
applied to the original flow $(M,(g_{t})_{t\in[0,T)})$ gives the
following after rescaling:
\[
K^{i}(y,-1)\leq C^{\ast}\exp\left(-\frac{1}{C^{\ast}}\lim_{t\nearrow0}d_{g_{t}^{i}}^{2}(y,x)\right)
\]
for all $y\in M$, where $C^{\ast}=C^{\ast}(A)$. In particular, for
$y\in M_{i,8D}'$, we have
\[
K^{i}(y,-1)\leq C^{\ast}\exp\left(-\frac{1}{C^{\ast}}c^{2}D^{2}\right).
\]
Now we integrate on $B(x_{i},-1,\sqrt{2H_{4}})\subseteq M_{i,8D}'$,
using the volume upper bound, and the concentration estimate near
$H_{4}$-centers (Proposition 3.13 of \cite{bamlergen1}) to get
\[
\frac{1}{2}\leq\int_{B(x_{i},-1,\sqrt{2H_{4}})}K^{i}(y,-1)dg_{-1}^{i}(y)\leq C(A)\exp\left(-\frac{c^{2}D^{2}}{C^{\ast}}\right),
\]
so that $D\leq C(A)$, a contradiction after adjusting $D=D(A)$.
$\square$

For any $\gamma\in(0,1]$, we note that $\widetilde{r}_{Rm}^{g^{i}}(\cdot,-1)\geq c(\gamma)$
on $\psi_{i}(A(o_{\ast},\gamma,\gamma^{-1}))$ $i=i(\gamma)\in\mathbb{N}$
is sufficiently large. 

\noindent \textbf{Claim 3: }For any $\gamma\in(0,1]$, we have $x\in M_{i,\gamma}'$
when $i=i(\gamma,A)\in\mathbb{N}$ is large.

Suppose not, so that after passing to a subsequence we have $x\in M_{i,16D}'\cap M_{i,\gamma}''$
for all $i\in\mathbb{N}$. Then we can pass to a subsequence to get
$\psi_{i}^{-1}(x)\to x_{\infty}\in A(o_{\ast},\frac{\gamma}{2},32D)$.
Because $\mathcal{Y}$ is static, we can argue as in the proof of
Theorem \ref{thm:theorem4} to get $\psi_{i,t}^{-1}(x)\to x_{\infty}(t)$
as $i\to\infty$, uniformly in $t\in[-1,-\beta]$ for $\beta\in(0,1)$
fixed. In particular, we can find $c(\gamma)>0$ such that $\widetilde{r}_{Rm}^{g^{i}}(x,-\beta)\geq c(\gamma)$,
so by taking $\beta=\beta(\gamma,A)$ sufficiently small, applying
Theorem \ref{thm:pseudolocality} gives a contradiction. $\square$

\noindent \textbf{Claim 4: }For any $\delta>0$, there exists $\gamma=\gamma(A,\delta)>0$
such that 

\noindent 
\[
\limsup_{i\to\infty}\text{diam}_{g_{-1}^{i}}(M_{i,\gamma}')<\delta.
\]

Set $\mathcal{B}_{i}:=M_{i,\gamma}'$, so that $\text{Area}_{g_{-1}^{i}}(\partial\mathcal{B}_{i})\leq2\text{Area}_{g_{-1}^{i}}(\partial B(o_{\ast},\gamma))\leq C(n)\gamma$
as $i\to\infty$. Thus we can apply Croke's isoperimetric inequality
(Theorem 13 of \cite{croke}) to the original (unrescaled) Riemannian
manifold $(M,g_{T-\tau_{i}})$ and then rescale to obtain
\[
\text{Area}_{g_{-1}^{i}}(\partial\mathcal{B}_{i})\geq c(A)\left(\min\{|\mathcal{B}_{i}|_{g_{-1}^{i}},|M\setminus\mathcal{B}_{i}|_{g_{-1}^{i}}\}\right)^{\frac{3}{4}}.
\]
On the other hand, for $i\in\mathbb{N}$ large, we know
\[
|M\setminus\mathcal{B}_{i}|_{g_{-1}^{i}}\geq|\psi_{i,-1}(A(o_{\ast},1,2))|_{g_{-1}^{i}}\geq c(\Gamma),
\]
hence $|\mathcal{B}_{i}|_{g_{-1}^{i}}\le C\gamma^{\frac{4}{3}}.$
For any $r>2C^{\frac{1}{4}}A^{\frac{1}{4}}\gamma^{\frac{1}{3}}$,
when $i=i(r)\in\mathbb{N}$ is sufficiently large, we have $B_{g^{i}}(y,-1,r)\cap\partial\mathcal{B}_{i}\neq\emptyset$
for all $y\in\mathcal{B}_{i}$, since otherwise $B_{g^{i}}(y,-1,r)\subseteq\mathcal{B}_{i}$,
in which case $A^{-1}r^{4}\leq|\mathcal{B}_{i}|_{g_{-1}^{i}}<C\gamma^{\frac{4}{3}},$a
contradiction. Thus
\[
\text{diam}_{g_{-1}^{i}}(\mathcal{B}_{i})\leq C(A)\gamma^{\frac{1}{3}}+\text{diam}_{g_{-1}^{i}}(\partial\mathcal{B}_{i})\leq2C(A)\gamma^{\frac{1}{3}}
\]
for sufficiently large $i\in\mathbb{N}$. $\square$

Claims 1-4 imply that $\sup_{i\in\mathbb{N}}d_{g_{-1}^{i}}(x,x_{i})<\infty$
for each $t\in[-1,0)$. Thus, given any subsequence of $(M,d_{g_{-1}^{i}},x)$,
we can pass to a further subsequence so that 
\[
\lim_{i\to\infty}d_{-1}^{Z}(\phi_{-1}^{i}(x),\phi_{-1}^{\infty}(x_{\infty}))=0,
\]
for some $x_{\infty}\in C(\mathbb{S}^{3}/\Gamma)$, hence
\[
(M,d_{g_{-1}^{i}},x)\to(C(\mathbb{S}^{3}/\Gamma),d_{C(\mathbb{S}^{3}/\Gamma)},x_{\infty})
\]
in the pointed Gromov-Hausdorff sense. Claims 3,4 imply that for any
$y\in\partial B(o_{\ast},2\gamma)$, 
\begin{align*}
d(x_{\infty},y)= & d_{-1}^{Z}(\phi_{-1}^{\infty}(x_{\infty}),\phi_{-1}^{\infty}(y))\\
\leq & d_{-1}^{Z}\left(\phi_{-1}^{i}(x),\phi_{-1}^{i}(\psi_{i}(y))\right)+d_{-1}^{Z}\left(\phi_{-1}^{i}(\psi_{i}(y)),\phi_{-1}^{\infty}(y)\right)+d_{-1}^{Z}\left(\phi_{-1}^{\infty}(x_{\infty}),\phi_{-1}^{i}(x)\right)\\
\leq & \text{diam}_{g_{-1}^{i}}(M_{i,2\gamma}')+\sup_{z\in U_{i}\cap A(o_{\ast},\gamma,4\gamma)}d_{-1}^{Z}\left(\phi_{-1}^{i}(\psi_{i}(z)),\phi_{-1}^{\infty}(z)\right)+d_{-1}^{Z}\left(\phi_{-1}^{\infty}(x_{\infty}),\phi_{-1}^{i}(x)\right).
\end{align*}
Taking $i\to\infty$ and appealing to Theorem 9.31(d) of \cite{bamlergen2},
we have
\[
d(x_{\infty},y)\leq\liminf_{i\to\infty}\text{diam}_{g_{-1}^{i}}(M_{i,2\gamma}')\leq C(A)\gamma^{\frac{1}{3}}.
\]
Thus $d(o_{\ast},x_{\infty})\leq2C(A)\gamma^{\frac{1}{3}}$, but $\gamma>0$
was arbitrary, so $x_{\infty}=o_{\ast}$.
\end{proof}
\begin{lem}
\label{lem:part2lemma} For any $x\in M$ corresponding to a singular
point $\overline{x}\in\mathcal{S},$ there exists a finite subgroup
$\Gamma\leq O(4,\mathbb{R})$ only depending on $x$ such that
\[
\lim_{t\nearrow T}d_{PGH}\left((M,(T-t)^{-\frac{1}{2}}d_{g_{t}},x),(C(\mathbb{S}^{3}/\Gamma),d_{C(\mathbb{S}^{3}/\Gamma)},o_{\ast})\right)=0,
\]
where $o_{\ast}$ is the vertex of $C(\mathbb{S}^{3}/\Gamma)$. 
\end{lem}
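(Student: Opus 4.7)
The plan is to bootstrap the subsequential convergence from Proposition~\ref{prop:part1lemma} to full convergence as $t \nearrow T$ by propagating the smooth convergence on annular regions forward in time via Perelman's pseudolocality theorem, and then using the resulting uniform spacetime curvature bound to force tangent cone uniqueness.

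First I would take an arbitrary sequence $\tau_i \searrow 0$ and run the setup preceding Proposition~\ref{prop:part1lemma}: after passing to a subsequence, the $\mathbb{F}$-convergence machinery of \cite{bamlergen3} and Proposition~\ref{prop:part1lemma} produce $(M, d_{g_{-1}^i}, x) \to (C(\mathbb{S}^3/\Gamma), d, o_*)$ in pGH together with smooth convergence $\psi_i^* g_{-1}^i \to g_Y$ on each fixed annulus $A(o_*, r, R)$, for some finite $\Gamma \leq O(4,\mathbb{R})$. In particular $|Rm|_{g_{-1}^i} \leq C(r, R)$ on $\psi_i(A(o_*, r, R))$ for large $i$.

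Next, covering $\psi_i(A(o_*, r, R))$ by $g_{-1}^i$-balls of radius comparable to the curvature scale and applying Theorem~\ref{thm:pseudolocality} at each time $-1$ yields $|Rm|_{g_t^i} \leq C'(r, R)$ on a slightly shrunken spacetime annular region for $t \in [-1, 0)$ (the length $1$ of this interval is controlled by the square of the curvature scale on the annulus). Un-rescaling and iterating pseudolocality at a suitably dense set of subsequence times, I obtain the key uniform bound
\[
|Rm|(y, t) \leq \frac{C(r, R)}{T - t}
\]
on the moving spacetime annulus $\{(y, t) : r \sqrt{T - t} \leq d_{g_t}(x, y) \leq R \sqrt{T - t}, \, t \in [T - \delta(r, R), T)\}$, where the bound holds for \emph{all} such $t$, not only along the subsequence, thanks to the forward propagation guaranteed by Theorem~\ref{thm:pseudolocality} and the overlap of consecutive forward parabolic neighborhoods.

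Combining the spacetime curvature bound with Shi's estimates and the Ricci flow equation, the Type-I rescaled metrics $h_\tau := \tau^{-1} g_{T - \tau}$ form a $C^\infty_{\mathrm{loc}}$-precompact family on annular regions around $x$, and they vary smoothly in $\tau$ via the Ricci flow equation. By the subsequential Proposition~\ref{prop:part1lemma} limit, any $C^\infty_{\mathrm{loc}}$-limit on an annulus is the cone metric $dr^2 + r^2 g_{\mathbb{S}^3/\Gamma}$, and smooth continuity in $\tau$ together with the discreteness of finite subgroups of $O(4)$ (up to conjugacy) rules out any jump in the cross-section. Thus $\Gamma$ is independent of the subsequence. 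Claim~4 in the proof of Proposition~\ref{prop:part1lemma}, which bounds the $g^i_{-1}$-diameter of the inner piece $M'_{i,\gamma}$ by $c(A)\gamma^{1/3}$ uniformly in $i$, is applicable to any subsequence and upgrades the annular smooth convergence to pGH convergence of the whole rescaled space, giving the statement as $t \nearrow T$. The main obstacle is precisely this tangent-cone uniqueness step: one must check that the uniform curvature bound is strong enough to yield $C^\infty_{\mathrm{loc}}$ (not merely GH) convergence on annuli and that the diffeomorphisms relating $h_\tau$ for nearby $\tau$'s can be chosen compatibly, so that smooth continuity in $\tau$ actually forces the cross-section $\mathbb{S}^3/\Gamma$ to be rigid.
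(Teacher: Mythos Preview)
Your overall strategy --- subsequential convergence from Proposition~\ref{prop:part1lemma}, then a continuity-plus-discreteness argument to pin down $\Gamma$ --- has the right shape, but two steps have gaps as written, and the paper takes a shorter route that sidesteps both.

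\textbf{The propagation step.} You extract a subsequence via $\mathbb{F}$-compactness, get the annular curvature bound at those times, and then want to fill in the gaps by forward pseudolocality. But $\mathbb{F}$-compactness gives no control on the density of the extracted subsequence: the ratio $\tau_{i_{k+1}}/\tau_{i_k}$ can tend to zero, while pseudolocality applied at a point on the inner edge of the annulus (distance $\sim r\sqrt{\tau_{i_k}}$ from $x$) propagates the bound forward only for duration $\sim (\epsilon_P r)^2 \tau_{i_k}$, which need not reach $T - \tau_{i_{k+1}}$. So ``overlap of consecutive forward parabolic neighborhoods'' is not guaranteed. The paper avoids this entirely by arguing directly by contradiction: if for some sequence $t_i\nearrow T$ the rescaled metrics stay $\epsilon$-far in pGH from every cone $C(\mathbb{S}^3/\Gamma)$, apply Proposition~\ref{prop:part1lemma} to \emph{that} sequence and contradict. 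This yields, for every $t$ near $T$, some $\Gamma_t$ (arising as a tangent flow) with the rescaled metric $\epsilon$-close to $C(\mathbb{S}^3/\Gamma_t)$ --- no spacetime curvature bound is needed at this stage.

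\textbf{Uniqueness of $\Gamma$.} ``Discreteness of finite subgroups of $O(4)$ up to conjugacy'' is not enough on its own: there are infinitely many conjugacy classes (cyclic groups of every order), so you must first bound $|\Gamma|$ uniformly, and you never do this. The paper uses the pointed Nash entropy: any tangent-flow cone $C(\mathbb{S}^3/\Gamma)$ at $(x,T)$ satisfies $\log(1/|\Gamma|)=\lim_{\tau\searrow 0}\mathcal N(\tau)$, so $|\Gamma|$ is bounded (indeed determined) independently of the subsequence. With only finitely many conjugacy classes of bounded order, the corresponding cones form a finite set at positive mutual pGH distance, and then the elementary continuity of $t\mapsto (B_g(x,t,1),(T-t)^{-1/2}d_{g_t},x)$ in the pGH topology forces $\Gamma_t$ to be eventually constant. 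This completely bypasses your acknowledged ``main obstacle'' of building compatible diffeomorphisms and $C^\infty_{\mathrm{loc}}$ continuity on annuli --- pGH continuity is trivial and suffices.
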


\begin{proof}
We first show that, for any $\epsilon>0$, there exists $\delta>0$
such that for any $t\in(T-\delta,T)$, there exists a finite subgroup
$\Gamma_{t}$ (possibly depending on $t$) such that
\[
d_{PGH}\left((M,(T-t)^{-\frac{1}{2}}d_{g_{t}},x),(C(\mathbb{S}^{3}/\Gamma_{t}),d_{C(\mathbb{S}^{3}/\Gamma_{t})},o_{\ast})\right)<\epsilon,
\]
and where $C(\mathbb{S}^{3}/\Gamma_{t})$ models a tangent flow of
$(M,(g_{t})_{t\in[0,T)})$ at $(x,T)$. Suppose this does not hold,
so that there are $t_{i}\nearrow T$ such that 
\[
d_{PGH}\left((M,(T-t_{i})^{-\frac{1}{2}}d_{g_{t_{i}}},x),(C(\mathbb{S}^{3}/\Gamma),d_{C(\mathbb{S}^{3}/\Gamma)},o_{\ast})\right)\geq\epsilon
\]
for all $i\in\mathbb{N}$ and all finite subgroups $\Gamma\leq O(4,\mathbb{R})$.
Let $K\in C^{\infty}(M\times(0,T))$ be a conjugate heat kernel at
$x$ based at the singular time $T$, and let $K^{i}\in C^{\infty}(M\times(0,T))$
be the conjugate heat kernels corresponding to the rescaled solutions
$g_{t}^{i}:=(T-t_{i})^{-1}g_{T+(T-t_{i})t}$. Write $\nu_{t}^{i}:=K^{i}(\cdot,t)dg_{t}$.
After passing to a subsequence, we have $\mathbb{F}$-convergence
\[
(M,(g_{t}^{i})_{t\in(-2,0)},(\nu_{t}^{i})_{t\in(-2,0)})\xrightarrow[i\to\infty]{\mathbb{F},\mathfrak{C}}(\mathcal{X},(\mu_{t}^{\infty})_{t\in(-2,0)})
\]
where $\mathcal{X}$ is the metric flow corresponding to the static
flow on $C(\mathbb{S}^{3}/\Gamma)$ for some finite subgroup $\Gamma\leq O(4,\mathbb{R})$.
From the previous section, we know this implies 
\[
(M,d_{g_{-1}^{i}},x)\to(C(\mathbb{S}^{3}/\Gamma),d_{C(\mathbb{S}^{3}/\Gamma)},o_{\ast})
\]
in the pointed Gromov-Hausdorff sense, a contradiction.

Now, recall that if $C(\mathbb{S}^{3}/\Gamma)$ models a tangent flow
at $(x,T)$, and if $\mathcal{N}(\tau)$ is the pointed Nash entropy
corresponding to $K$, then 
\[
\log\left(\frac{1}{|\Gamma|}\right)=\lim_{\tau\searrow0}\mathcal{N}(\tau),
\]
so if we take $N(x)$ to be any integer greater than $\exp\left(-\lim_{\tau\searrow0}\mathcal{N}(\tau)\right)$,
then we obtain
\[
\lim_{t\nearrow T}\inf_{|\Gamma|\leq N}d_{PGH}\left((M,(T-t)^{-\frac{1}{2}}d_{g_{t}},x),(C(\mathbb{S}^{3}/\Gamma),d_{C(\mathbb{S}^{3}/\Gamma)},o_{\ast})\right)=0,
\]
where the infimum is taken over all finite subgroups $\Gamma\leq O(4,\mathbb{R})$
with $|\Gamma|\leq N$. Finally, we note that there are only finitely
many finite subgroups of $O(4,\mathbb{R})$ with $|\Gamma|\leq N$
up to conjugation (in fact, embeddings of a finite group $G\hookrightarrow O(4,\mathbb{R})$
correspond to real representations $G\curvearrowright\mathbb{R}^{4}$,
of which there are finitely many up to isomorphism, and if two representations
of $G$ are isomorphic, then the corresponding subgroups of $O(4,\mathbb{R})$
are conjugate). Also, $C(\mathbb{S}^{3}/\Gamma)$, $C(\mathbb{S}^{3}/\Gamma')$
are isometric whenever $\Gamma,\Gamma'$ are conjugate in $O(4,\mathbb{R})$,
so there is a fixed distance $d>0$ such that any non-isometric $(C(\mathbb{S}^{3}/\Gamma),d_{C(\mathbb{S}^{3}/\Gamma)},o_{\ast}),(C(\mathbb{S}^{3}/\Gamma'),d_{C(\mathbb{S}^{3}/\Gamma')},o_{\ast}')$
with $|\Gamma|,|\Gamma'|\leq N$ satisfy
\[
d_{PGH}\left((B(o_{\ast},1),d_{C(\mathbb{S}^{3}/\Gamma)},o_{\ast}),(B(o_{\ast}',1),d_{C(\mathbb{S}^{3}/\Gamma')},o_{\ast}')\right)\geq d.
\]
However, $t\mapsto(B_{g}(x,t,1),(T-t)^{-\frac{1}{2}}d_{g_{t}},x)$
is continuous in the pointed Gromov-Hausdorff topology, so the conjugacy
class of $\Gamma_{t}$, hence the isometry class of $C(\mathbb{S}^{3}/\Gamma_{t})$,
must be constant for $t\in(T-\delta,T)$ for sufficiently small $\delta>0$. 
\end{proof}
\begin{rem}
A similar idea to the above argument for showing that $\Gamma$ does
not change along the subsequence was used in Proposition 2.2 of \cite{zilubamler}.
\end{rem}

\begin{proof}[Proof of Theorem \ref{thm:theorem2}]
 Part $(ii)$ is exactly the statement of Lemma \ref{lem:part2lemma}.
For part $(i)$, Proposition \ref{prop:tangentflowricciflat} tells
us that any tangent flow is a static flow corresponding to $C(\mathbb{S}^{3}/\Gamma_{x})$
for some finite subgroup $\Gamma_{x}\leq O(4,\mathbb{R})$. However,
Proposition \ref{prop:part1lemma} implies that the time $-1$ time-slices
of the rescaled flows also Gromov-Hausdorff converge to $C(\mathbb{S}^{3}/\Gamma_{x})$,
so $\mathbb{S}^{3}/\Gamma$ and $\mathbb{S}^{3}/\Gamma_{x}$ are isometric,
hence we can assume $\Gamma=\Gamma_{x}$. 
\end{proof}
\begin{proof}[Proof of Theorem \ref{thm:theorem1}]
 Let $x\in M$ correspond to a singular point $\overline{x}\in\mathcal{S}$.
By Lemma \ref{lem:part2lemma}, there exists $\delta=\delta(x)>0$
and a subgroup $\Gamma\leq O(4,\mathbb{R})$ such that
\[
\lim_{t\nearrow T}d_{PGH}\left((M,(T-t)^{-\frac{1}{2}}d_{g_{t}},x),(C(\mathbb{S}^{3}/\Gamma),d_{C(\mathbb{S}^{3}/\Gamma)},o_{\ast})\right),
\]
where $o_{\ast}$ is the vertex of $C(\mathbb{S}^{3}/\Gamma)$. 

\noindent \textbf{Claim 1: }For any $\beta\in(0,1)$, there exists
$\delta=\delta(x,\beta)>0$ such that $\widetilde{r}_{Rm}^{g}(y,t)\geq\frac{1}{6}d_{g_{t}}(y,x)$
for all $t\in(T-\delta,T)$ and $y\in B_{g}(x,t,\beta^{-1}\sqrt{T-t})\setminus\overline{B}_{g}(x,t,\beta\sqrt{T-t})$. 

Suppose by way of contradiction that $t_{i}\nearrow T$ and 
\[
y_{i}\in B_{g}(x,t_{i},\beta^{-1}\sqrt{T-t_{i}})\setminus\overline{B}_{g}(x,t_{i},\beta\sqrt{T-t_{i}})
\]
 satisfy
\[
\frac{\widetilde{r}_{Rm}^{g}(y_{i},t_{i})}{d_{g_{t_{i}}}(y_{i},x)}\leq\frac{1}{6}.
\]
and let $\phi_{i}:(B(o_{\ast},\alpha_{i}^{-1}),d,o_{\ast})\to(B_{g}(x,t_{i},\alpha_{i}^{-1}(T-t_{i})^{\frac{1}{2}}),(T-t_{i})^{-\frac{1}{2}}d_{g_{t_{i}}},x)$
be $\alpha_{i}$-Gromov-Hausdorff maps for some sequence $\alpha_{i}\searrow0$.
By hypothesis, we have $(T-t_{i})^{-\frac{1}{2}}d_{g_{t_{i}}}(x,y_{i})\in(\beta,\beta^{-1})$,
so we can pass to a subsequence so that 
\[
D:=\lim_{i\to\infty}(T-t_{i})^{-\frac{1}{2}}d_{g_{t_{i}}}(x,y_{i})\in[\beta,\beta^{-1}]
\]
exists. Also choose $y_{i}'\in C(\mathbb{S}^{3}/\Gamma)$ with $(T-t_{i})^{-\frac{1}{2}}d_{g_{t_{i}}}(\phi_{i}(y_{i}'),y_{i})\to0$,
so that $d(y_{i}',o_{\ast})\to D$ as $i\to\infty$. Then $B(y_{i}',\frac{D}{2})\subseteq C(\mathbb{S}^{3}/\Gamma)\setminus\{o_{\ast}\}$,
and

\[
B_{g}(y_{i},t_{i},\frac{D}{3}\sqrt{T-t_{i}})\subseteq\psi_{i}\left(B(y_{i}',D/2)\right)
\]
for large $i\in\mathbb{N}$, so because $C(\mathbb{S}^{3}/\Gamma)\setminus\{o_{\ast}\}$
is flat and (by Theorem \ref{thm:openandsmooth}) we have locally
uniform smooth convergence on $C(\mathbb{S}^{3}/\Gamma)\setminus\{o_{\ast}\}$,
\[
\lim_{i\to\infty}\sup_{B_{g}(y_{i},t_{i},D/3)}|Rm|_{g}(\cdot,t_{i})(T-t_{i})=0.
\]
This means $\widetilde{r}_{Rm}^{g}(y_{i},t_{i})\geq\frac{D}{4}(T-t_{i})^{\frac{1}{2}}$
for sufficiently large, hence 
\begin{align*}
\frac{1}{6}\geq\frac{\widetilde{r}_{Rm}^{g}(y_{i},t_{i})}{d_{g_{t_{i}}}(y_{i},x)}\geq\frac{\widetilde{r}_{Rm}^{g}(y_{i},t_{i})}{\sqrt{T-t_{i}}}\cdot\frac{\sqrt{T-t_{i}}}{d_{g_{t_{i}}}(y_{i},x)}\geq\frac{D}{4}\cdot\frac{4}{5D},
\end{align*}
for large $i\in\mathbb{N},$ a contradiction. $\square$

Now set $r:=r(A):=8\max\{\epsilon_{P}^{-1},e^{4}\}<\infty$, where
$\epsilon_{P}(A)>0$ is as in Theorem \ref{thm:pseudolocality}. Also
fix $E\geq2$, so that for $t\in(T-\delta,T)$ and $y\in B_{g}(x,t,Er\sqrt{T-t})\setminus\overline{B}_{g}(x,t,\frac{1}{2}r\sqrt{T-t})$,
we have

\[
|Rm|_{g}(y,s)\leq\frac{1}{T-t}.
\]
for all $s\in(t,T)$. Here we have fixed $\delta=\delta(x,r^{-1}E^{-1})>0$
as in Claim 1. In particular, $y\in M\setminus\Sigma$, and $(g_{t})_{t\in[0,T)}$
extends smoothly to the Riemannian metric $g_{T}$ on 
\[
B_{g}(x,t,Er\sqrt{T-t})\setminus\overline{B}_{g}(x,t,\frac{1}{2}r\sqrt{T-t})\subseteq M\setminus\Sigma
\]
whenever $t\in(T-\delta,T)$. Moreover, we have 
\[
|Rm|_{g_{T}}\leq\frac{1}{T-t}
\]
on this subset. Now suppose $y\in\partial B_{g}(x,t,r\sqrt{T-t})$,
where $t\in(T-\delta,T)$, and fix $z\in B_{g_{T}}(y,\sqrt{T-t})$.
Let $\gamma:[0,1]\to M\setminus\Sigma$ be a curve from $y$ to $z$
with $\text{length}_{g_{T}}(\gamma)\leq2\sqrt{T-t}$. If $\gamma([0,1])\subseteq B_{g}(y,t,\frac{r}{2}\sqrt{T-t})$,
then $|Rm|_{g}(\gamma(u),s)\leq\frac{1}{T-t}$ for all $u\in[0,1]$
and $s\in[t,T]$, so standard distortion estimates give 
\[
d_{g_{t}}(y,z)\leq\text{length}_{g_{t}}(\gamma)\leq e^{4}\text{length}_{g_{T}}(\gamma)\leq2e^{4}\sqrt{T-t}.
\]
If $\gamma([0,1])\not\subseteq B_{g}(y,t,\frac{r}{2}\sqrt{T-t})$,
then set 
\[
u_{\ast}:=\inf\{u\in[0,1];d_{g_{t}}(y,\gamma(u))=\frac{r}{2}\sqrt{T-t}\},
\]
and again apply distortion estimates, this time concluding
\[
\frac{r}{2}\sqrt{T-t}=d_{g_{t}}(y,\gamma(u))\leq e^{4}\text{length}_{g_{T}}(\gamma|[0,u_{\ast}])\leq2e^{4}\sqrt{T-t},
\]
contradicting our choice of $r$. We thus conclude that $\widetilde{r}_{Rm}^{(M\setminus\Sigma,g_{T})}(y)\geq\sqrt{T-t}$. 

Now let $y\in B_{g}(x,T-\delta/2,r\sqrt{\delta/2})$ satisfy $\overline{y}\neq\overline{x}$,
and define
\[
D(t):=\frac{d_{g_{t}}(x,y)}{\sqrt{T-t}}.
\]
Then $\lim_{t\nearrow T}D(t)=\infty$ and $D(T-\delta/2)<r$, so there
exists $t=t(y)\in(T-\delta/2,T)$ such that $D(t)=r$. Because $2d_{g_{t}}(x,y)\geq d_{X}(\overline{x},\overline{y})$
for all $t\in(T-\delta,T)$, 
\[
r_{Rm}^{X}(\overline{y})=r_{Rm}^{(M\setminus\Sigma,g_{T})}(y)\geq\sqrt{T-t(y)}=r^{-1}d_{g_{t(y)}}(x,y)\geq c(A)d_{X}(\overline{x},\overline{y}).
\]
\textbf{Claim 2: }If $y\in M\setminus B_{g}(x,t,r\sqrt{T-t})$ for
some $t\in(T-\delta/2,T)$, then $\overline{y}\neq\overline{x}$.

Fix $\epsilon>0$, and let $\gamma:[0,1]\to M$ be any curve from
$x$ to $y$. Set 
\[
u_{-}:=\sup\left\{ u\in[0,1];d_{g_{t}}(\gamma(u),x)=\frac{r}{2}\sqrt{T-t}\right\} ,
\]
\[
u_{+}:=\inf\left\{ u\in[0,1];d_{g_{t}}(\gamma(u),x)=r\sqrt{T-t}\right\} ,
\]
so that $\text{length}_{g_{t}}(\gamma|[u_{-},u_{+}])\geq\frac{r}{2}\sqrt{T-t}$,
and $|Rm|(\gamma(u),s)\leq\frac{1}{T-t}$ for all $u\in[u_{-},u_{+}]$
and $s\in[t,T)$. In particular, we can estimate 

\[
\text{length}_{g_{s}}(\gamma)\geq\text{length}_{g_{s}}(\gamma|[s_{-},s_{+}])\geq\frac{r}{2}e^{-4}\sqrt{T-t},
\]
so taking the infimum over all curves $\gamma$ gives $d_{g_{s}}(x,y)\geq c(x,A)$,
and taking $s\nearrow T$ gives $d_{X}(\overline{x},\overline{y})\geq c(x,t,A)>0$.
$\square$

By Claim 2, $B_{g}(x,T-\delta/2,\sqrt{\delta/2}r)$ is a saturated
open set with respect to the equivalence relation $\sim$, so if $\pi:M\to X$
is the quotient map, then
\[
U:=\pi\left(B_{g}(x,T-\delta/2,\sqrt{\delta/2}r)\right)
\]
is a neighborhood of $\overline{x}$ in $X$ such that $U\cap\mathcal{S}(X)=\{\overline{x}\}$.
Moreover, we have $\widetilde{r}_{Rm}^{X}(\overline{y})\geq c(A)d(\overline{x},\overline{y})$
for all $\overline{y}\in U\setminus\{\overline{x}\}$, so by applying
Theorem \ref{thm:pseudolocality} and then Shi's local derivative
estimates on 

\[
B_{g_{T}}(y,c(A)\epsilon_{P}d(\overline{x},\overline{y}))\times[T-(c(A)\epsilon_{P}d(\overline{x},\overline{y}))^{2},T],
\]
we obtain $C_{k}=C_{k}(A)<\infty$ for each $k\in\mathbb{N}$ such
that 
\[
|\nabla^{k}Rm|_{g}(\overline{y})\leq C_{k}d^{-2-k}(\overline{x},\overline{y})
\]
for all $\overline{y}\in U\setminus\{\overline{x}\}$. Thus, for any
tangent cone $(C(Y),c_{\ast})$ of $(X,d)$ at $\overline{x}$, convergence
is smooth away from the vertex $c_{\ast}$, and in particular $C(Y)$
must have smooth link, so $C(Y)$ cannot split any factor of $\mathbb{R}$
(otherwise the vertex would produce a line of nonsmooth points, a
contradiction). In particular, $\mathcal{S}(X)=\mathcal{S}^{0}(X)$.
Moreover, we showed that $\mathcal{S}(X)$ is discrete, so by the
compactness of $X$, $\mathcal{S}(X)$ must be finite. 

\noindent \textbf{Claim 3: }$C(Y)\setminus\{c_{\ast}\}$ is flat.

\noindent Fix $x_{0}=[r,y_{0}]\in(C(Y)\setminus\{c_{\ast}\})$ arbitrary
(see Section 2 for notation), and set $v:=\partial_{r}|_{x_{0}}$
(where $\partial_{r}$ is the radial vector field corresponding to
$c_{\ast}$) which is a zero eigenvector for $Rc(g_{C(Y)})|_{x_{0}}$.
By a standard diagonal argument, $C(Y)$ is itself a Gromov-Hausdorff
limit of appropriate dilations of $(M,d_{g_{t_{i}}},x_{i})$ for some
$(x_{i},t_{i})\in M\times[0,T)$, so by Theorems \ref{thm:pseudolocality},
\ref{thm:openandsmooth}, Shi's estimates, and the Hamilton-Cheeger-Gromov
compactness theorem, there is a neighborhood $U_{0}$ of $x_{0}$
in $C(Y)\setminus\{c_{\ast}\}$, and some $\delta_{0}>0$ such that
we can extract a Ricci flow $(U_{0},(\overline{g}_{t})_{t\in(-\delta_{0},0]})$
satisfying $\overline{g}_{0}=g_{C(Y)}$ on $U_{0}$ as well as $Rc(\overline{g}_{t})\geq0$
for all $t\in(-\delta,0]$. By possibly shrinking $U_{0}$, we can
extend $v$ to a unit-length vector field $V$ on $(U_{0},\overline{g}_{0})$
by parallel translation along radial geodesics emanating from $x_{0}$,
and then extend to a vector field on $U_{0}\times(-\delta_{0},0]$
by letting $V$ be constant in time. Then $\phi:=Rc_{\overline{g}_{0}}(V,V)\in C^{\infty}(U_{0}\times(-\delta_{0},0])$
satisfies $\phi\geq0$ on $U\times(-\delta,0]$, with $\phi(x_{0},t_{0})=0$.
We observe that $Rc_{\overline{g}_{0}}(\partial_{r})=0$ and $Rm_{\overline{g}_{0}}(\partial_{r},\cdot,\cdot,\cdot)=0$,
hence at $(x_{0},0)$, we have (where $(\partial_{r},e_{1},e_{2},e_{3})$
is an orthonormal basis of $T_{x_{0}}(C(Y)\setminus\{c_{\ast}\})$)
\begin{align*}
(\partial_{t}-\Delta)\phi(x_{0},0)= & (\partial_{t}Rc_{\overline{g}})(v,v)-(\Delta Rc_{\overline{g}})(v,v)\\
= & 2\sum_{j,k=1}^{3}Rm_{\overline{g}_{0}}(e_{j},\partial_{r},\partial_{r},e_{k})Rc_{\overline{g}_{0}}(e_{j},e_{k})-2\sum_{j=1}^{3}Rc_{\overline{g}_{0}}(e_{j},\partial_{r})Rc_{\overline{g}_{0}}(e_{j},\partial_{r})=0.
\end{align*}
Because $\partial_{t}\phi(x_{0},0)\leq0$ and $\Delta\phi(x_{0},0)\geq0$,
we have (abbreviating $Rc=Rc(g_{C(Y)})$)
\[
0=\Delta\phi(x_{0},0)=(\Delta Rc)_{x_{0}}(\partial_{r},\partial_{r}).
\]
Since $x_{0}\in C(Y)\setminus\{c_{\ast}\}$ was arbitrary, we obtain
$(\Delta Rc)(\partial_{r},\partial_{r})=0$ everywhere. Now once again
fix $x_{0}\in C(Y)\setminus\{c_{\ast}\}$, and choose an orthonormal
frame $(E_{i})_{i=0}^{3}$ for $g_{C(Y)}$ in a neighborhood $U_{0}$
of $x_{0}$ with $E_{0}=\partial_{r}$. For any $W\in\mathfrak{X}(U_{0})$,
we compute 
\[
(\nabla_{W}Rc)(\partial_{r},\partial_{r})=-2Rc(\nabla_{W}\partial_{r},\partial_{r})=0
\]
on $U_{0}$ since $Rc(\partial_{r})=0$. Because $\nabla_{E_{i}}\partial_{r}=\frac{1}{r}E_{i}$
for $i=1,2,3$, we have (at $x_{0}$)
\begin{align*}
0= & \left(\nabla_{\partial_{r}}(\nabla_{\partial_{r}}Rc)\right)(\partial_{r},\partial_{r})+\sum_{i=1}^{3}\left(\nabla_{E_{i}}(\nabla_{E_{i}}Rc)\right)(\partial_{r},\partial_{r})-\sum_{i=1}^{3}(\nabla_{\nabla_{E_{i}}E_{i}}Rc)(\partial_{r},\partial_{r})\\
= & -2\sum_{i=1}^{3}(\nabla_{E_{i}}Rc)(\nabla_{E_{i}}\partial_{r},\partial_{r})=2\sum_{i=1}^{3}Rc(\nabla_{E_{i}}\partial_{r},\nabla_{E_{i}}\partial_{r})\\
= & \frac{2}{r^{2}}\sum_{i=1}^{3}Rc(E_{i},E_{i}).
\end{align*}
Because $Rc\geq0$ on $C(Y)\setminus\{c_{\ast}\}$, we conclude that
$C(Y)\setminus\{c_{\ast}\}$ is Ricci-flat, hence $Rc(g_{Y})=2g_{Y}$.
Because $\dim(Y)=3$, this is only possible if $(Y,g_{Y})$ has constant
sectional curvature $1$, so that $C(Y)\setminus\{c_{\ast}\}$ is
flat. $\square$

Because the Gromov-Hausdorff convergence of dilations of $(X,d_{X},\overline{x})$
to $(C(Y),d_{C(Y)},c_{\ast})$ is smooth away from the vertex, and
because $C(Y)$ was an arbitrary tangent cone at $\overline{x}$,
we conclude that
\[
\lim_{\overline{y}\to\overline{x}}d^{2+k}(\overline{x},\overline{y})|\nabla^{k}Rm|(\overline{y})=0.
\]
Now one can show (see, for example, the proof of Theorem 7.2 in \cite{typeIscalar})
that there exists a finite subgroup $\Gamma'\leq O(4,\mathbb{R})$
such that any tangent cone of $(X,d)$ at $\overline{x}$ is $C(\mathbb{S}^{3}/\Gamma')$.
Gluing together Cheeger-Gromov diffeomorphisms on dyadic annuli as
in the proof of Proposition 3.2 of \cite{tiancalabi}, we get that
$X$ has the structure of a $C^{0}$ orbifold (which is $C^{\infty}$
away from the singular points) with finitely many conical orbifold
singularities. In fact, since we know that every tangent cone at $\overline{x}$
is $C(\mathbb{S}^{3}/\Gamma')$, with smooth convergence away from
the vertex, we can appeal to Step 1 of Theorem 5.7 in \cite{donaldsun1}
verbatim. It remains only to establish the following claim.

\noindent \textbf{Claim 4: }$C(\Gamma)=C(\Gamma')$. 

In fact, for any fixed $\beta\in(0,1)$, we have (writing $t=T-\beta\tau$)

\begin{align*}
d_{PGH} & \left((B_{g}(x,T-\beta\tau,\tau^{\frac{1}{2}}),\tau^{-\frac{1}{2}}d_{g_{T-\beta\tau}},x),(B(o_{\ast},1),d_{C(\mathbb{S}^{3}/\Gamma)},o_{\ast})\right)\\
 & =d_{PGH}\left((B_{g}(x,T-\beta\tau,\beta^{-\frac{1}{2}}(\beta\tau)^{\frac{1}{2}}),\beta^{\frac{1}{2}}(\beta\tau)^{-\frac{1}{2}}d_{g_{T-\beta\tau}},x),(B(o_{\ast},\beta^{-\frac{1}{2}}),\beta^{\frac{1}{2}}d_{C(\mathbb{S}^{3}/\Gamma)},o_{\ast})\right)\\
 & =\beta^{\frac{1}{2}}d_{PGH}\left((B_{g}(x,t,\beta^{-\frac{1}{2}}\sqrt{T-t}),(T-t)^{-\frac{1}{2}}d_{g_{t}},x),(B(o_{\ast},\beta^{-\frac{1}{2}}),d_{C(\mathbb{S}^{3}/\Gamma)},o_{\ast})\right)\to0
\end{align*}
as $\tau\searrow0$ (or equivalently as $t\nearrow T$). By a diagonal
argument, we can therefore find sequences $\beta_{j}\searrow0$ and
$\tau_{j}\searrow0$ such that 
\[
\lim_{j\to\infty}d_{PGH}\left((B_{g}(x,T-\beta_{j}\tau_{j},\tau_{j}^{\frac{1}{2}}),\tau_{j}^{-\frac{1}{2}}d_{g_{T-\beta_{j}\tau_{j}}},x),(B(o_{\ast},1),d_{C(\mathbb{S}^{3}/\Gamma)},o_{\ast})\right)=0,
\]
\[
\lim_{j\to\infty}d_{PGH}\left((B^{X}(\overline{x},\tau_{j}^{\frac{1}{2}}),\tau_{j}^{-\frac{1}{2}}d,\overline{x}),(B(o_{\ast}',1),d_{C(\mathbb{S}^{3}/\Gamma')},o_{\ast}')\right)=0,
\]
so it suffices to prove that if $g^{j}:=\tau_{j}^{-1}g_{T-\beta_{j}\tau_{j}}$
and $d_{j}:=\tau_{j}^{-\frac{1}{2}}d$, then 
\[
\lim_{j\to\infty}d_{PGH}\left((B_{g^{j}}(x,1),d_{g^{j}},x),(B^{(X,d_{j})}(\overline{x},1),d_{j},\overline{x})\right)=0.
\]
Recall that $d_{j}|(\mathcal{R}_{X}\times\mathcal{R}_{X})$ is equal
to the length metric of $(M\setminus\Sigma,\tau_{j}^{-1}g_{T})$,
and $\pi:M\to X=M/\sim$ is given by $\pi(x)=\overline{x}$. For all
$y,z\in B_{g^{j}}(x,1)$, we have
\begin{align*}
d_{j}(\pi(y),\pi(z))-d_{g^{j}}(y,z)\leq & \tau_{j}^{-\frac{1}{2}}(e^{A\beta_{j}\tau_{j}}-1)d_{g_{T-\beta_{j}\tau_{j}}}(y,z)\\
\leq & 2\tau_{j}^{-1}\cdot A\beta_{j}d_{g^{j}}(y,z)\leq\beta_{j}.
\end{align*}

Because $(B_{g^{j}}(x,2),d_{g^{j}},x)\to(B(o_{\ast},2),d_{C(\mathbb{S}^{3}/\Gamma)},o_{\ast})$
in the pointed Gromov-Hausdorff sense, with smooth convergence away
from the vertex (by Theorem \ref{thm:openandsmooth}), we have the
following: for any $\epsilon>0$, there exists $\sigma=\sigma(\epsilon)>0$
such that for $j=j(\epsilon)\in\mathbb{N}$ sufficiently large, we
have $\widetilde{r}_{Rm}^{g^{j}}(y)>\sigma$ for all $y\in B_{g^{j}}(x,2)\setminus B_{g^{j}}(x,\epsilon)$.
Now fix $y_{1},y_{2}\in B_{g^{j}}(x,2)\setminus B_{g^{j}}(x,\epsilon)$.
For $j=j(\sigma)\in\mathbb{N}$ sufficiently large, we can apply Theorem
\ref{thm:pseudolocality} on $B_{g^{j}}(y_{k},\sigma)$ for each $u\in[0,1]$
to get
\[
\sup_{B_{g}(y_{k},T-\beta_{j}\tau_{j},\epsilon_{P}\sigma\tau_{j}^{\frac{1}{2}})\times[T-\beta_{j}\tau_{j},T)}|Rm|\leq\tau_{j}^{-1}(\epsilon_{P}\sigma)^{-2}
\]
for $k=1,2$ and all sufficiently large $j=j(\sigma)\in\mathbb{N}$.
By an easy distortion estimate,
\[
\sup_{s\in[T-\beta_{j}\tau_{j},T)}\sup_{B_{g}(y_{k},s,e^{-4}\epsilon_{P}\sigma\tau_{j}^{\frac{1}{2}})}|Rm|\leq\tau_{j}^{-1}(\epsilon_{P}\sigma)^{-2}
\]
for $k=1,2$ and large $j=j(\sigma)\in\mathbb{N}$. We can thus apply
the standard lower bound for distance distortion (Theorem 18.7 in
\cite{chowbook3}) with $r_{0}=e^{-4}\epsilon_{P}\sigma\tau_{j}^{\frac{1}{2}}$
and $K=\tau_{j}^{-1}(\epsilon_{P}\sigma)^{-2}$ to get 
\begin{equation}
\partial_{s}d_{g_{s}}(y_{1},y_{2})\geq-6\left(Kr_{0}+\frac{1}{r_{0}}\right)\geq-c(\sigma,A)\tau_{j}^{-\frac{1}{2}}\label{eq:distderiv}
\end{equation}
for all $s\in[T-\beta_{j}\tau_{j},T)$. Integrating from $s=T-\beta_{j}\tau_{j}$
to $T$ gives
\[
d_{g_{T}}(y_{1},y_{2})-d_{T-\beta_{j}\tau_{j}}(y_{1},y_{2})\geq-c(\sigma,A)\beta_{j}\tau_{j}^{\frac{1}{2}},
\]
or equivalently
\[
d_{j}(\pi(y_{1}),\pi(y_{2}))-d_{g^{j}}(y_{1},y_{2})\geq-c(\sigma,A)\beta_{j}.
\]
Now fix $\epsilon>0$, and suppose $\overline{y}\in B^{(X,d_{j})}(\overline{x},1-3\epsilon)$.
Since $d_{j}(\overline{x},\overline{y})=\lim_{t\nearrow T}\tau_{j}^{-\frac{1}{2}}d_{g_{t}}(x,y)$,
for any $j\in\mathbb{N}$, we can find $t_{j}^{\ast}\in(T-\beta_{j}\tau_{j},T)$
satisfying $\tau_{j}^{-\frac{1}{2}}d_{g_{t_{j}^{\ast}}}(x,y)<1-3\epsilon$.
Let let $\gamma:[0,1]\to M$ be any $g_{t_{j}^{\ast}}$-minimizing
geodesic from $x$ to $y$. Suppose by way of contradiction that $\text{length}_{g^{j}}(\gamma)\geq1$,
and set
\[
u_{-}:=\sup\left\{ u\in[0,1];d_{g^{j}}(\gamma(u),x)=\epsilon\right\} ,
\]
\[
u_{+}:=\inf\left\{ u\in[0,1];d_{g^{j}}(\gamma(u),x)=1-\epsilon\right\} .
\]
Also let $\sigma=\sigma(\epsilon)>0$ be as above, so that integrating
(\ref{eq:distderiv}) again gives
\begin{align*}
\tau_{j}^{-\frac{1}{2}}d_{g_{t_{j}^{\ast}}}(x,y)\geq\tau_{j}^{-\frac{1}{2}} & d_{g_{t_{j}^{\ast}}}(\gamma(u_{1}),\gamma(u_{2}))\geq d_{g^{j}}(\gamma(u_{1}),\gamma(u_{2}))-c(\sigma,A)\beta_{j}\\
\geq & 1-2\epsilon-c(\sigma,A)\beta_{j},
\end{align*}
a contradiction for sufficiently large $j=j(\epsilon,A)\in\mathbb{N}$
(independent of $y$ and $\gamma$). In particular, $y\in B_{g^{j}}(x,1)$
for $j=j(\epsilon,A)\in\mathbb{N}$ sufficiently large, so that 
\[
\pi(B_{g^{j}}(x,1))\supseteq B^{(X,d_{j})}(\overline{x},1-3\epsilon).
\]
For any fixed $\epsilon>0$, $\pi$ is thus a $3\epsilon$-Gromov-Hausdorff
map
\[
B_{g^{j}}(x,1)\to B^{(X,d_{j})}(\overline{x},1)
\]
for $j=j(\epsilon)\in\mathbb{N}$ sufficiently large. Since $\epsilon>0$
was arbitrary, the claim follows.
\end{proof}
\begin{rem}
With small modifications, the proof of Claim 4 can be used to prove
Theorem 1.1 using Lemma \ref{lem:part2lemma} (without Claims 1-3).
However, Claims 1-3 or their proofs will be referenced in later sections. 
\end{rem}

\section{Codimension Two $\epsilon$-Regularity}
\begin{prop}
\label{prop:codim2} (Codimension two $\epsilon$-Regularity) For
any $A<\infty$ and $\underline{T}\geq0$, there exists $\epsilon_{0}=\epsilon_{0}(A,\underline{T})>0$
such that the following holds. Suppose $(M^{n},(g_{t})_{t\in[0,T)})$
is a closed Ricci flow satisfying $Rc(g_{t})\geq-Ag_{t}$ and $|B(x,t,r)|_{g_{t}}\geq A^{-1}r^{n}$,
for all $(x,t)\in M\times[0,T)$ and $r\in(0,1]$, where $T\geq\underline{T}$.
Then for any $(x,t)\in M\times[\frac{T}{2},T)$ and $r\in(0,\epsilon_{0}\sqrt{T-t}]$,
if 
\[
d_{PGH}\left(\left(B(x,t,\epsilon_{0}^{-1}r),d_{g_{t}},x\right),\left(B\left((0^{n-2},z),\epsilon_{0}^{-1}r\right),d_{\mathbb{R}^{n-2}\times Z},(0^{n-2},z)\right)\right)<\epsilon_{0}r
\]
for some pointed metric space $(Z,d_{Z},z)$, then $\widetilde{r}_{Rm}(x,t)\geq\epsilon_{0}r$. 
\end{prop}

\begin{rem}
The first part of the proof is a modification of Theorem 5.2 in \cite{cheegernaberdim4}
and Lemma 13.5 in \cite{bam1}. 
\end{rem}

\begin{proof}
Suppose the claim is false. Then we can find closed Ricci flows $(M_{i}^{n},(g_{t}^{i})_{t\in[0,T_{i})})$
satisfying $Rc(g_{t}^{i})\geq-Ag_{t}^{i}$, $|B_{g^{i}}(x,t,r)|_{g_{t}^{i}}\geq A^{-1}r^{n}$
for all $(x,t)\in M_{i}\times[0,T_{i})$ and $r\in(0,1]$, where $T_{i}\geq\underline{T}$,
along with points $(x_{i},t_{i})\in M\times[\frac{T_{i}}{2},T_{i})$,
a sequence $\epsilon_{i}\searrow0$, and scales $r_{i}\in(0,\epsilon_{i}\sqrt{T_{i}-t_{i}})$,
such that 
\[
d_{PGH}\left(\left(B_{g^{i}}(x_{i},t_{i},\epsilon_{i}^{-1}r_{i}),d_{g_{t}^{i}},x_{i}\right),\left(B_{g^{i}}\left((0^{n-2},z_{i}),\epsilon_{i}^{-1}r_{i}\right),d,(0^{n-2},z_{i})\right)\right)<\epsilon_{i}r_{i}
\]
for some pointed metric spaces $(Z_{i},d_{i},z_{i})$, yet $\widetilde{r}_{Rm}^{g^{i}}(x_{i},t_{i})<\epsilon_{i}r_{i}$.
Define $\widetilde{g}_{t}^{i}:=r_{i}^{-2}g_{t_{i}+r_{i}^{2}t}^{i}$
for $t\in[-r_{i}^{-2}t_{i},r_{i}^{-2}(T-t_{i}))$, so that we can
pass to a subsequence to assume that $(M,d_{\widetilde{g}_{0}^{i}},x_{i})$
converges in the pointed Gromov-Hausdorff sense to some metric product
\[
(\mathbb{R}^{n-2}\times Z_{\infty},d_{\mathbb{R}^{n-2}\times Z_{\infty}},(0^{n-2},z_{\ast}))
\]
Because $|B_{\widetilde{g}^{i}}(x_{i},0,1)|_{\widetilde{g}_{0}^{i}}\geq A^{-1}$
and $Rc(\widetilde{g}_{0}^{i})\geq-(n-1)r_{i}^{2}\to0$ as $i\to\infty$,
we can apply the ``almost metric product implies existence of splitting
maps'' theorem (Theorem 9.29 of \cite{cheegernotes}) to obtain $\delta_{i}$-splitting
maps (see Definition 1.20 of \cite{cheegernaberdim4}) 
\[
u_{i}:B_{\widetilde{g}^{i}}(x_{i},0,2)\to\mathbb{R}^{n-2},
\]
where $\lim_{i\to\infty}\delta_{i}=0$. Now fix a sequence $\delta_{i}'\searrow0$
such that we can apply Cheeger-Naber's Slicing theorem (Theorem 1.23
of \cite{cheegernaberdim4}) with parameter $\delta_{i}'$ given $\delta_{i}$-splitting
maps. That is, there are subsets $G_{\delta_{i}'}\subseteq B^{\mathbb{R}^{n-2}}(0^{n-2},1)$
such that the following hold:

$(1)$ $|G_{\delta_{i}'}|_{g_{\mathbb{R}^{n-2}}}>\omega_{n-2}-\delta_{i}'$,

$(2)$ If $s\in G_{\delta_{i}'}$, then $u_{i}^{-1}(s)\cap B_{\widetilde{g}^{i}}(x_{i},0,1)\neq\emptyset$,

$(3)$ For each $z\in u_{i}^{-1}(G_{\delta_{i}'})$, and $r\in(0,1]$,
there is a lower triangular matrix $T\in GL(n-2,\mathbb{R})$ with
positive diagonal entries such that $T\circ u_{i}:B_{\widetilde{g}^{i}}(z,0,r)\to\mathbb{R}^{n-2}$
is a $\delta_{i}'$-splitting map.

Observe that, by applying this theorem on $B_{\widetilde{g}^{i}}(x_{i},0,\gamma)$
for some fixed $\gamma\in(0,1)$, and using the fact that for any
$\epsilon>0$, $u_{i}|B_{\widetilde{g}^{i}}(x_{i},0,\gamma)$ is an
$\epsilon$-splitting map for sufficiently large $i\in\mathbb{N}$,
a diagonal argument gives regular values $s_{i}\in\mathbb{R}^{n-2}$
of $u_{i}$ such that $(3)$ holds, and
\[
u_{i}^{-1}(s_{i})\cap B_{\widetilde{g}^{i}}(x_{i},0,\gamma_{i})\neq\emptyset,
\]
where $\gamma_{i}\searrow0$. Any $w_{i}\in u_{i}^{-1}(s_{i})\cap B_{\widetilde{g}^{i}}(x_{i},0,\gamma_{i})$
satisfy $\widetilde{r}_{Rm}^{\widetilde{g}^{i}}(w_{i},0)\leq\widetilde{r}_{Rm}(x_{i},0)+\gamma_{i}$,
so that 
\[
\min_{y\in u_{i}^{-1}(s_{i})\cap B_{\widetilde{g}^{i}}(x_{i},0,1)}\frac{\widetilde{r}_{Rm}^{\widetilde{g}^{i}}(y,0)}{d_{\widetilde{g}_{0}^{i}}\left(y,\partial B_{\widetilde{g}^{i}}(x_{i},0,1)\right)}\to0
\]
Choose $y_{i}\in u_{i}^{-1}(s_{i})\cap B_{\widetilde{g}^{i}}(x_{i},0,1)$
achieving the minima, set $\rho_{i}:=\widetilde{r}_{Rm}^{\widetilde{g}^{i}}(y_{i},0)$,
and define $\widehat{g}_{t}^{i}:=\rho_{i}^{-2}\widetilde{g}_{\rho_{i}^{2}t}^{i}$,
$t\in[(r_{i}\rho_{i})^{-2}t_{i},(r_{i}\rho_{i})^{-2}(T-t_{i}))$.
We know $\widetilde{r}_{Rm}^{\widehat{g}^{i}}(y_{i},0)=1$, $\lim_{i\to\infty}d_{\widehat{g}_{0}^{i}}(y_{i},\partial B_{\widetilde{g}^{i}}(x_{i},0,1))=\infty$,
and for any $z\in u_{i}^{-1}(s_{i})$ with $d_{\widehat{g}_{0}^{i}}(y,z)\leq\frac{1}{2}d_{\widehat{g}_{0}^{i}}(y_{i},\partial B_{\widetilde{g}^{i}}(x_{i},0,1))$,
we have
\[
\widetilde{r}_{Rm}^{\widehat{g}^{i}}(z,0)=\frac{\widetilde{r}_{Rm}^{\widetilde{g}^{i}}(z,0)}{\widetilde{r}_{Rm}^{\widetilde{g}^{i}}(y_{i},0)}\geq\frac{d_{\widetilde{g}_{0}^{i}}\left(z,\partial B_{\widetilde{g}^{i}}(x_{i},0,1)\right)}{d_{\widetilde{g}_{0}^{i}}\left(y,\partial B_{\widetilde{g}^{i}}(x_{i},0,1)\right)}\geq\frac{1}{2}.
\]
That is, for any $D<\infty$, we have $\widetilde{r}_{Rm}^{\widehat{g}^{i}}(\cdot,0)\geq\frac{1}{2}$
on $B_{\widehat{g}_{0}^{i}}(y_{i},0,D)\cap u_{i}^{-1}(s_{i})$ for
sufficiently large $i\in\mathbb{N}$. By $(3)$ and Theorem 9.29 of
\cite{cheegernotes}, we can find $\delta_{i}''$-splitting maps (with
respect to $\widehat{g}_{0}^{i}$) $v_{i}:=T_{i}\circ(u_{i}-s_{i}):B_{\widehat{g}_{0}^{i}}(z_{i},0,1)\to\mathbb{R}^{n-2}$,
where $T_{i}\in GL(n-2,\mathbb{R})$ and $\delta_{i}''\searrow0$. 

After passing to a subsequence, we can assume $(M,d_{\widehat{g}_{0}^{i}},y_{i})$
converge in the pointed Gromov-Hausdorff sense to some noncollapsed
Ricci limit space $(W,d_{W},w)$. Also, $|B_{\widehat{g}_{0}^{i}}(y,0,r)|_{\widehat{g}_{0}^{i}}\geq A^{-1}r^{n}$
for all $y\in M$ and $r>0$ when $i=i(r)\in\mathbb{N}$ is sufficiently
large, so $\mathcal{H}^{n}(B^{W}(w,r)))\geq A^{-1}r^{n}$ for all
$r\in(0,\infty)$ by Colding's volume convergence theorem. It follows
from the proof of the Transformation theorem (Theorem 1.32 of \cite{cheegernaberdim4})
that in fact $v_{i}:B_{\widehat{g}^{i}}(y_{i},0,r)\to\mathbb{R}^{n-2}$
are $C(r)\delta_{i}''$-splittings for any $r>0$ when $i=i(r)\in\mathbb{N}$
is sufficiently large. This is essentially because we know $T_{i,r}v_{i}:B_{\widehat{g}^{i}}(y_{i},0,r)\to\mathbb{R}^{n-2}$
are $\delta_{i}''$-splittings for some $T_{i,r}\in GL(n-2,\mathbb{R})$,
and we can then compare $T_{i}$ with $T_{i,r}$, using the fact that
they are both lower triangular with positive diagonal entries, to
conclude they are actually close (see the proof of Theorem 1.32 in
\cite{cheegernaberdim4} for details). We can conclude from the ``splitting
maps imply metric almost-splitting'' theorem (Theorem 3.6 of \cite{cheegercoldingwarped})
that $W$ splits off $\mathbb{R}^{n-2}$ isometrically, and $v_{i}$
converge locally uniformly (with respect to the Gromov-Hausdorff convergence)
to a 1-Lipschitz function $v:W\to\mathbb{R}^{n-2}$, such that $v:W=\mathbb{R}^{n-2}\times S\to\mathbb{R}^{n-2}$
is the projection map.

\noindent \textbf{Claim 1: }$S$ is smooth.

We follow Lemma 13.5 of \cite{bam1}. Because $\widetilde{r}_{Rm}^{\widehat{g}^{i}}(y_{i},0)=1$,
we know $w$ is in the (smooth and open) regular set of $W$. Suppose
$z\in W$ is also in the regular set, and choose a sequence $z_{i}\in M_{i}$
converging to $z$ with respect to the Gromov-Hausdorff convergence.
By smooth convergence on the regular set, we have $\sigma:=\liminf_{i\to\infty}\widetilde{r}_{Rm}^{\widehat{g}^{i}}(z_{i},0)>0$.
Let $f_{i}:B^{W}(z,\frac{1}{2}\sigma)\to B_{\widehat{g}^{i}}(y_{i},0,\sigma)$
be diffeomorphisms with $f_{i}^{\ast}\widehat{g}_{0}^{i}\to g^{W}$,
and such that $f_{i}\to\text{id}_{W}$ locally uniformly with respect
to the Gromov-Hausdorff convergence, where $g^{W}$ is the metric
on the regular set of $W$. Because $v_{i}$ are harmonic and converge
locally uniformly to $v$, local elliptic regularity and Arzela-Ascoli's
theorem lets us pass to a subsequence such that $f_{i}^{\ast}v_{i}\to v$
in $C_{loc}^{\infty}(B^{W}(z,\frac{1}{2}))$. The $C^{2}$ convergence
allows us to apply a quantitative version of the implicit function
theorem to find $z_{i}'\in u_{i}^{-1}(0)$ with $d_{\widehat{g}_{0}^{i}}(z_{i}',z_{i})\to0$.
This implies $z_{i}'\to z$, and $\widetilde{r}_{Rm}^{\widehat{g}^{i}}(z_{i}',0)\geq\frac{1}{2}$
then implies $r_{Rm}^{W}(z)\geq\frac{1}{2}$ (using backwards pseudolocality
and Shi's local derivative estimates). Because $S$ is connected,
this implies $r_{Rm}^{W}\ge\frac{1}{2}$ on all of $S$. $\square$

Since $W=\mathbb{R}^{n-2}\times S$ is a metric product and $S$ is
$C^{\infty}$, we can conclude that $W$ is $C^{\infty}$, and (by
Theorem \ref{thm:openandsmooth}) $(M,\widehat{g}_{0}^{i},y_{i})$
converges to $(W,g^{W},w)$ in the $C^{\infty}$ Cheeger-Gromov sense,
where $g^{W}$ is a $C^{\infty}$ Riemannian metric on $W$ whose
length metric coincides with $d_{W}$. We can also conclude that $Rc(g_{W})\geq0$
everywhere, and $g^{W}=g_{\mathbb{R}^{n-2}}+g^{S}$, where $g^{S}$
is a $C^{\infty}$ Riemannian metric on $S$ with nonnegative curvature.
In fact, we can conclude by Theorem \ref{thm:pseudolocality} and
Shi's derivative estimates that $(M,(\widehat{g}_{t}^{i})_{t\in[-\epsilon,\epsilon]},y_{i})$
converges in the $C^{\infty}$ Cheeger-Gromov sense to a complete,
pointed Ricci flow $(\mathbb{R}^{n-2}\times S,g_{\mathbb{R}^{n-2}}+g_{t}^{S},(0^{n-2},w^{\ast}))_{t\in[-\epsilon,\epsilon]}$
with $g_{0}^{S}=g^{S}$, $w=(0^{n-2},w^{\ast})$, for some $\epsilon>0$,
which has uniformly bounded curvature, and satisfies $Rc(g_{t}^{S})\geq0$
everywhere. Set $g_{t}^{\infty}:=g_{\mathbb{R}^{n-2}}+g_{t}^{S}$.
If $(S,g^{S})$ has vanishing curvature somewhere, then $R(g_{t}^{\infty})=0$
somewhere, and because $R(g_{t}^{\infty})\geq0$ everywhere, the strong
maximum principle for $R(g^{\infty})$ and uniqueness for complete
Ricci flow solutions with bounded curvature (Theorem 1.1 of \cite{completenoncompactunique})
implies that $R(g_{t}^{\infty})=0$ everywhere. This is only possible
if $g_{t}^{\infty}$ is flat, contradicting $\widetilde{r}_{Rm}^{g^{\infty}}(w,0)=1$.
We may therefore assume $(S,g^{S})$ has strictly positive curvature
everywhere. In particular, $S$ is diffeomorphic to $\mathbb{R}^{2}$.
The Cohn-Vossen inequality thus gives $\int_{S}R(g_{0}^{S})dg_{0}^{S}\leq4\pi$,
so that for any $x\in M$ and $r>0$, we have
\begin{align*}
\frac{1}{|B_{g^{S}}(x,0,r)|_{g_{0}^{S}}}\int_{B_{g^{S}}(x,0,r)}R(g_{0}^{S})dg_{0}^{S}\leq & \frac{\omega_{n-2}r^{n-2}}{|B_{g^{S}}(x,0,r)\times B^{\mathbb{R}^{n-2}}(0^{n-2},r)|_{g_{0}^{\infty}}}4\pi\leq\frac{4\pi A\omega_{n-2}}{r^{2}}.
\end{align*}
By a criterion of Shi (Theorem 9.2 in \cite{hamchow}), the flow $(g_{t}^{S})_{t\in(-\epsilon,\epsilon)}$
can be extended to a complete, immortal Ricci flow $(g_{t}^{S})_{t\in(-\epsilon,\infty)}$
with bounded curvature on compact time intervals. 

\noindent \textbf{Claim 2: }After passing to a further subsequence,
we can assume $(M_{i},(\widehat{g}_{t}^{i})_{t\in(-\epsilon,(r_{i}\rho_{i})^{-2}(T-t_{i}))},y_{i})$
converge in the $C^{\infty}$ pointed Cheeger-Gromov-Hamilton sense
to $(W,(g_{t}^{\infty})_{t\in(-\epsilon,\infty)},w)$.

By $\lim_{i\to\infty}r_{i}(T-t_{i})^{-\frac{1}{2}}=0$, for any $\tau<\infty$,
$\widehat{g}_{t}^{i}$ is defined for $t\in(-\epsilon,\tau]$ whenever
$i=i(\tau)\in\mathbb{N}$ is sufficiently large. Let $t^{\ast}\in[0,\infty]$
be supremal such that there exists $C<\infty$ such that for any $D<\infty$
and $\delta>0$, we have
\[
\sup_{B_{\widehat{g}^{i}}(y_{i},t,D)\times[0,t^{\ast}-\delta]}|Rm|_{\widehat{g}^{i}}\leq C
\]
for all $i=i(D,\delta,t^{\ast})\in\mathbb{N}$ sufficiently large.
Suppose by way of contradiction that $t^{\ast}<\infty$. Then we can
pass to a subsequence to get pointed Cheeger-Gromov-Hamilton convergence
\[
(M_{i},(\widehat{g}_{t}^{i})_{t\in(-\epsilon,t^{\ast})},y_{i})\to(\overline{W},(\overline{g}_{t}^{\infty})_{t\in(-\epsilon,t^{\ast})},\overline{w})
\]
to some complete Ricci flow with bounded curvature on compact time
intervals. By the uniqueness \cite{completenoncompactunique} of complete
Ricci flows with bounded curvature, and because $(M_{i},\widehat{g}_{0}^{i},y_{i})\to(W,g^{\infty},w)$
in the pointed Cheeger-Gromov sense, we have $(\overline{W},(\overline{g}_{t}^{\infty})_{t\in(-\epsilon,t^{\ast})},\overline{w})=(W,(g_{t}^{\infty})_{t\in(-\epsilon,t^{\ast})},w)$. 

Because $(W,(g_{t}^{\infty})_{t\in(-\epsilon,\infty)})$ is immortal,
with bounded curvature on compact time intervals, there exists $C=C(t^{\ast})<\infty$
such that for any $D'<\infty$ and $\delta>0$,
\[
\sup_{t\in[0,t^{\ast}-\delta]}\sup_{B_{\widehat{g}^{i}}(y_{i},t,D')}|Rm|\leq C
\]
for $i=i(D',\delta)\in\mathbb{N}$ sufficiently large. For any $D<\infty$,
we can thus take $\delta=\delta(t^{\ast})>0$ small and $D'=D'(D,t^{\ast})<\infty$
large and apply Theorem \ref{thm:pseudolocality} to obtain
\[
\sup_{t\in[0,t^{\ast}+\delta)}\sup_{B_{\widehat{g}^{i}}(y_{i},t,D)}|Rm|\leq\overline{C}(t^{\ast}),
\]
for $i=i(D,t^{\ast})\in\mathbb{N}$ sufficiently large. This contradicts
the definition of $t^{\ast}$, so we must have $t^{\ast}=\infty$,
and the claim follows from the Cheeger-Gromov-Hamilton compactness
theorem and the aforementioned uniqueness result \cite{completenoncompactunique}.
$\square$

If $(S,(g_{t}^{S})_{t\in(-\epsilon,\infty)})$ is a Type-IIb immortal
solution:
\[
\lim_{t\to\infty}\sup_{x\in S}t|Rm|_{g^{S}}(x,t)=\infty,
\]
then a Type-IIb rescaling (as in Proposition 8.20 of \cite{hamchow})
procedure produces a singularity model which is a the Cigar soliton
(Theorem 9.4 of \cite{hamchow}). However, because the limit of a
limit is a limit (see Lemma 8.26 of \cite{hamchow}), we obtain the
Cigar soliton (times flat $\mathbb{R}^{n-2}$) as a smooth Cheeger-Gromov-Hamilton
limit of some Ricci flows $(M_{i},(\overline{g}_{t}^{i})_{t\in(a_{i},b_{i})},z_{i})$
satisfying $Rc(\overline{g}_{t}^{i})\geq-A\overline{g}_{t}^{i}$ and
$|B_{\overline{g}^{i}}(z_{i},0,r)|_{\overline{g}_{0}^{i}}\geq A^{-1}r^{n}$
for all $r\in(0,\infty)$, when $i=i(r)\in\mathbb{N}$ is sufficiently
large. We thus obtain that the Cigar soliton (times flat $\mathbb{R}^{n-2}$)
has maximal volume growth, which is a contradiction.

We may therefore assume the immortal solution is Type-III, so that
\[
\Lambda:=\limsup_{t\to\infty}\sup_{x\in S}tR_{g^{S}}(x,t)<\infty.
\]
Hamilton's Harnack inequality implies $\partial_{t}((t+\epsilon)R_{g^{S}}(w,t))\geq0$,
so $tR_{g^{S}}(w,t)\geq\frac{1}{2}\epsilon R_{g^{S}}(w,0)$ for all
$t\in[1,\infty)$, hence $\Lambda>0$. Choose $(w_{i},\tau_{i})\in S\times(0,\infty)$
such that $\tau_{i}\nearrow\infty$ and $\lim_{i\to\infty}\tau_{i}R_{g^{S}}(w_{i},\tau_{i})=\Lambda$.
Define $\overline{Q}_{i}:=R_{g^{S}}(w_{i},\tau_{i})\searrow0$ and
$\overline{g}_{t}^{i}:=\overline{Q}_{i}g_{\overline{Q}_{i}^{-1}t}^{S}$
for $t\in[0,\infty)$, so that 
\[
R_{\overline{g}^{i}}\left(w_{i},\tau_{i}\overline{Q}_{i}\right)=\overline{Q}_{i}^{-1}R_{g^{S}}(w_{i},\tau_{i})=1,
\]
and for $\sigma\in(0,1)$, we have the following for all $(x,t)\in S\times[\sigma,\sigma^{-1}]$
when $i=i(\sigma)\in\mathbb{N}$ is sufficiently large: 
\[
tR_{\overline{g}^{i}}(x,t)=t\overline{Q}_{i}^{-1}R_{g^{S}}(x,\overline{Q}_{i}t)\leq\Lambda+\sigma.
\]
After passing to a subsequence, $(S,(\tau_{i}^{-1}g_{\tau_{i}t}^{S})_{t\in(0,\infty)},w)$
thus converges in the $C^{\infty}$ pointed Cheeger-Gromov sense as
$i\to\infty$ to a complete noncompact immortal Ricci flow $(E,(g_{t}^{E})_{t\in(0,\infty)},e_{\ast})$
with positive curvature satisfying $tR_{g^{E}}(x,t)\leq\Lambda$ for
all $t>0$, with equality at $t=\Lambda$, since $\tau_{i}\overline{Q}_{i}\to\Lambda$.
By Theorem 1.4 of \cite{huaidongtypeiii}, $(E,(g_{t}^{E})_{t\in(0,\infty)})$
is a nonflat expanding gradient Ricci soliton $(E,(g_{t}^{E})_{t\in(0,\infty)},e_{\ast})$
by Theorem 1.4 of \cite{huaidongtypeiii}. Reasoning as in the Type-IIb
case, $(E,g_{1}^{E})$ has maximal volume growth. By the classification
of 2-dimensional gradient Ricci solitons in \cite{2dsolitons}, $g^{E}$
must be an asymptotically conical expanding Ricci soliton asymptotic
at infinity to $C(\mathbb{S}_{\beta}^{1})$ for some $\beta\in(0,2\pi)$,
constructed in Appendix A of \cite{2dexpanderconstruction} (see also
section 4.5 of \cite{hamchow}). Here, $e_{\ast}\in E$ corresponds
to the point fixed by the isometric $\mathbb{S}^{1}$ action, which
is also the global maximum of the soliton potential function.

Again using the diagonal argument from Lemma 8.26 in \cite{hamchow},
we can find $(\check{y}_{i},\check{t}_{i})\in M_{i}\times[\frac{T_{i}}{2},T_{i})$
and some $\check{r}_{i}\searrow0$ such that $-\check{r}_{i}^{-2}\check{t}_{i}\to-\infty$,
$(T-\check{t}_{i})\check{r}_{i}^{-2}\to\infty$ and the rescaled flows
$\check{g}_{t}^{i}:=\check{r}_{i}^{-2}g_{\check{r}_{i}^{2}t+\check{t}_{i}}$
are such that $(M_{i},(\check{g}_{t}^{i})_{t\in(0,(T-\check{t}_{i})\check{r}_{i}^{-2})},\check{y}_{i})$
converge in the pointed Cheeger-Gromov sense to the expanding soliton
$(\mathbb{R}^{n-2}\times E,(g_{\mathbb{R}^{n-2}}+g_{t}^{E})_{t\in(0,\infty)},(0^{n-2},e_{\ast}))$.
Moreover, if we let $(\check{\nu}_{t}^{i})_{t\in(-\check{r}_{i}^{-2}\check{t}_{i},1)}$
be the conjugate heat kernels of the rescaled flows based at $(\check{y}_{i},1)$,
then we can pass to a subsequence to obtain a metric flow pair $(\check{\mathcal{X}},(\check{\mu}_{t}^{\infty})_{t<0})$
such that
\[
(M_{i},(\check{g}_{t}^{i})_{t\in[-\check{r}_{i}^{-2}\check{t}_{i},1]},(\check{\nu}_{t}^{i})_{t\in[-\check{r}_{i}^{-2}\check{t}_{i},1[})\xrightarrow[i\to\infty]{\mathbb{F},\mathfrak{C}}(\check{\mathcal{X}},(\check{\mu}_{t}^{\infty})_{t\in(-\infty,1[})
\]
on compact time intervals, where $\mathfrak{C}$ is a fixed correspondence,
and
\[
\mathcal{\check{\mathcal{X}}}=\left((\check{\mathcal{X}}_{t})_{t\in(-\infty,1]},\check{\mathfrak{t}},(\check{d}_{t})_{t\in(-\infty,1]},(\check{\nu}_{x;s})_{x\in\check{\mathcal{X}},s\in(-\infty,\check{\mathfrak{t}}(x))}\right)
\]
is an $H_{n}$-concentrated future-continuous metric flow of full
support. In fact, this follows from Section 2 of \cite{bamlergen3},
where the required lower bound on Nash entropy follows as in Lemma
\ref{lem:basiclemma}, using Theorem 8.1 of \cite{bamlergen1} to
estimate
\[
A^{-1}<|B_{\check{g}^{i}}(\check{y}_{i},0,1)|_{\check{g}_{0}^{i}}\leq C(A,n)\exp\left(\mathcal{N}_{\check{y}_{i},0}^{\check{g}^{i}}(1)\right).
\]
Let $(\check{\mathcal{R}},\check{\mathfrak{t}},\partial_{\check{\mathfrak{t}}},\check{g}^{\infty})$
denote the corresponding Ricci flow spacetime structure on the regular
set of $\check{\mathcal{X}}$. Let $(\check{U}_{i})$ be an increasing
compact exhaustion of $\check{\mathcal{R}}$, and $\check{\psi}_{i}:\check{U}_{i}\to M_{i}$
be diffeomorphisms such that 
\[
||\check{\psi}_{i}^{\ast}\check{K}^{i}-\check{K}^{\infty}||_{C^{i}(\check{U}_{i})}+||\check{\psi}_{i}^{\ast}\check{g}^{i}-\check{g}^{\infty}||_{C^{i}(\check{U}_{i})}+||(\check{\psi}_{i}^{-1})_{\ast}\partial_{t}-\partial_{\check{\mathfrak{t}}}||_{C^{i}(\check{U}_{i})}\leq\alpha_{i}
\]
for some sequence $\alpha_{i}\searrow0$, where $d\check{\mu}_{t}^{\infty}=\check{K}^{\infty}(\cdot,t)d\check{g}_{t}^{\infty}$
on $\check{\mathcal{R}}$ and $d\check{\nu}_{t}^{i}=\check{K}^{i}(\cdot,t)d\check{g}_{t}^{i}$. 

\noindent \textbf{Claim 3:} $\check{\mathcal{X}}_{(0,1)}$ is isometric
as a metric flow to the smooth Ricci flow $(\mathbb{R}^{n-2}\times E,(g_{\mathbb{R}^{n-2}}+g_{t}^{E})_{t\in(0,1)})$. 

From the smooth Cheeger-Gromov-Hamilton convergence
\[
(M_{i},(\check{g}_{t}^{i})_{t\in(0,2)},\check{y}_{i})\to(\mathbb{R}^{n-2}\times E,(g_{\mathbb{R}^{n-2}}+g_{t}^{E})_{t\in(0,2)},(0^{n-2},e_{\ast})),
\]
we know that
\[
\limsup_{i\to\infty}\sup_{B_{\check{g}^{i}}(\check{y}_{i},0,D)\times(\kappa,1]}|Rm|_{\check{g}^{i}}\leq C(\kappa)<\infty
\]
for any fixed $D<\infty$ and $\kappa\in(0,1)$. Thus, for any fixed
$\kappa\in(0,1)$, we can apply Proposition 9.5 of \cite{bam1} to
get
\[
\limsup_{i\to\infty}d_{W_{1}}^{\check{g}_{-\kappa}^{i}}(\check{\nu}_{1-\kappa}^{i},\delta_{\check{y}_{i}})\leq C
\]
for some $C=C(\kappa)<\infty$. Fix $\overline{y}_{\infty}\in\mathcal{\check{R}}_{-\kappa}$,
and set $\overline{y}_{i}:=\check{\psi}_{i}(\overline{y}_{\infty})$,
so that $\overline{y}_{i}\xrightarrow[i\to\infty]{\mathfrak{C}}\overline{y}_{\infty}$
by Theorem 9.31(c) of \cite{bamlergen2}. Because $\check{\nu}_{1-\kappa}^{\infty}(B(\overline{y}_{\infty},1))>0$
and $\lim_{i\to\infty}||\check{\psi}_{i}^{\ast}\check{K}^{i}-\check{K}^{\infty}||_{C^{0}(\check{U}_{i})}=0$,
we have
\[
\liminf_{i\to\infty}\check{\nu}^{i}(B_{\check{g}^{i}}(\overline{y}_{i},1-\kappa,2))\geq\check{\nu}_{1-\kappa}^{\infty}(B(\overline{y}_{\infty},1))>0.
\]
Thus, for sufficiently large $i\in\mathbb{N}$, 
\[
\frac{1}{2}\check{\nu}_{1-\kappa}^{\infty}(B(\overline{y}_{\infty},1))\left(d_{\check{g}_{1-\kappa}^{i}}(\check{y}_{i},\overline{y}_{i})-1\right)<\int_{M_{i}}d_{\check{g}_{1-\kappa}^{i}}(\check{y}_{i},y)d\check{\nu}_{1-\kappa}^{i}(y)\leq C,
\]
and in particular, 
\[
\limsup_{i\to\infty}d_{\check{g}_{1-\kappa}^{i}}(\check{y}_{i},\overline{y}_{i})<\infty.
\]
We can now apply Theorem 9.58 of \cite{bamlergen3} to conclude that
the parabolic cylinders $P(\overline{y}_{\infty};D,-1+2\kappa,0)\subseteq\check{\mathcal{R}}$
are unscathed for any $\kappa\in(0,\frac{1}{4})$ and $D<\infty$.
Because $\kappa,D$ were arbitrary, it follows that $\check{\mathcal{R}}_{(-1,0)}=\check{\mathcal{X}}_{(-1,0)}$,
and we can identify $\check{\mathfrak{\mathcal{R}}}_{(-1,0)}$ with
the canonical spacetime of a smooth Ricci flow. Moreover, we have
Cheeger-Gromov-Hamilton convergence $(M,(\check{g}_{t}^{i})_{t\in(0,1)},\overline{y}_{i})\to(\check{\mathcal{R}}_{t},(\check{g}_{t}^{\infty})_{t\in(0,1)},\overline{y}_{\infty})$,
hence the Ricci flow spacetime is isometric to $(\mathbb{R}^{n-2}\times E,(g_{\mathbb{R}^{n-2}}+g_{t}^{E})_{t\in(0,1)})$.
$\square$

Suppose $(\mathcal{U}_{i})_{i\in\mathbb{N}}$ is an exhaustion of
$\mathbb{R}^{n-2}\times E$ and $\zeta_{i}:\mathcal{U}_{i}\to M_{i}$
are embeddings such that $\text{\ensuremath{\zeta_{i}(\overline{y}_{\infty})=\overline{y}_{i}}}$
and $\zeta_{i}^{\ast}\check{g}^{i}\to\check{g}^{\infty}$ as $i\to\infty$
in $C_{loc}^{\infty}((\mathbb{R}^{n-2}\times E)\times(0,1))$. Direct
computation (see Section 5 of \cite{hamchow}) shows that 
\[
\lim_{d_{g_{1}^{E}}(e_{\ast},y)\to\infty}|Rm|_{g^{E}}(y,1)d_{g_{1}^{E}}^{2}(e_{\ast},y)=0.
\]
Now fix $r_{0}\in(0,\infty)$ such that 
\[
|Rm|_{g^{E}}(y,1)d_{g_{1}^{E}}^{2}(e_{\ast},y)\leq\frac{1}{4}
\]
for all $y\in E$ with $d_{g_{1}^{E}}(y,e_{\ast})\geq r_{0}$. Suppose
$y\in E$ satisfies $r_{y}:=d_{g_{1}^{E}}(y,e_{\ast})\geq r_{0}$,
so that 
\[
\sup_{B_{g^{E}}(y,1,\frac{1}{2}r_{y})}|Rm|_{\check{g}^{\infty}}(\cdot,1)\leq\frac{1}{r_{y}^{2}},
\]
hence 
\[
\sup_{B_{\check{g}^{i}}(\zeta_{i}(0^{n-2},y),1,\frac{1}{4}r_{y})}|Rm|_{\check{g}^{i}}(\cdot,1)\leq\frac{2}{r_{y}^{2}}
\]
for sufficiently large $i\in\mathbb{N}$. By Theorem \ref{thm:pseudolocality}
and Shi's estimates on the backwards cylinder $B_{\check{g}^{i}}(\zeta_{i}(0^{n-2},y),1,\frac{\epsilon_{P}}{4}r_{y})\times[1-\frac{1}{16}(\epsilon_{P}r_{y})^{2},1]$
when $i\in\mathbb{N}$ is large, we get
\[
|\nabla^{k}Rm|_{\check{g}^{i}}(\zeta_{i}(0^{n-2},y),1)\leq C(k,A)r_{y}^{-2-k},
\]
hence taking $i\to\infty$ gives $|\nabla^{k}Rm|_{\check{g}^{\infty}}(0^{n-2},y)\leq C(k,A)r_{y}^{-2-k}$,
hence $|\nabla^{k}Rm|_{g_{E}}(e_{\ast},y)\leq C(k,A)r_{y}^{-2-k}$.
We have thus shown 
\[
\sup_{y\in E}|\nabla^{k}Rm|_{g_{E}}(y)d_{g_{1}^{E}}^{2+k}(e_{\ast},y)\leq C(k,A)<\infty.
\]
This is the condition needed to apply Theorem 4.3.1 of \cite{conethesis}
to guarantee that the expanding soliton $(E,(g_{t}^{E})_{t\in(-1,\infty)})$
converges smoothly to $C(\mathbb{S}_{\beta}^{1})$ away from the vertex:
\[
\lim_{t\searrow0}\sup_{K}|(\nabla^{g_{C(\mathbb{S}_{\beta}^{1})}})^{k}(g_{C(\mathbb{S}_{\beta}^{1})}-g_{t}^{E})|_{g_{C(\mathbb{S}_{\beta}^{1})}}=0
\]
for any compact subset $K\subseteq C(\mathbb{S}_{\beta''}^{1})\setminus\{o_{\ast}\}\cong E\setminus\{e_{\ast}\}$
and integer $k\in\mathbb{N}$. In particular, we have 
\[
\lim_{t\searrow0}\sup_{K}|Rm|_{g^{E}}(\cdot,t)=0.
\]

\noindent Now let $y'\in\left((\mathbb{R}^{n-2}\times E)\setminus(\mathbb{R}^{n-2}\times\{e_{\ast}\})\right)\times\{1\}\subseteq\check{\mathcal{X}}_{1}$
be arbitrary. Because $\lim_{t\searrow0}\sup|Rm|_{g^{E}}(\cdot,t)\to0$,
Theorem \ref{thm:pseudolocality} gives uniform curvature bounds on
$B_{\widehat{g}^{i}}(\zeta_{i}(y'),t',2r')\times[t'-2(r')^{2},t'+2(r')^{2}]$
for all $t'\in(0,1]$, where $r'=r'(y')>0$ is independent of $t'$.
We can again apply Theorem 9.58, now concluding that $P(y'(t);r',-(r')^{2},0)\subseteq\check{\mathcal{R}}_{(t-(r')^{2},\delta)}$
is unscathed for all $t\in(0,1]$. Choosing $t'\in(0,\frac{1}{4}(r')^{2})$,
we know that $P(y';r',-(r')^{2},0)\cap\check{\mathcal{X}}_{(0,t)}$
corresponds to the flow $(g_{\mathbb{R}^{n-2}}+g_{t}^{E})_{t\in(0,t')}$
on some open subset $U\subseteq(\mathbb{R}^{n-2}\times E)\setminus(\mathbb{R}^{n-2}\times\{e_{\ast}\})$,
so that 
\[
(P(y';r',-(r')^{2},0)\cap\check{\mathcal{X}}_{0},\check{g}_{0}^{\infty})
\]
is isometric to an open subset of the flat Riemannian cone $\mathcal{C}:=\mathbb{R}^{n-2}\times(C(\mathbb{S}_{\beta}^{1})\setminus\{o_{\ast}\})$
and in particular, $Rm(\check{g}_{0}^{\infty})=0$ on $P(y';r',-(r')^{2},0)\cap\check{\mathcal{X}}_{0}$.
However, we know that $R_{\check{g}^{\infty}}\geq0$ everywhere on
$\check{\mathcal{R}}$, so the strong maximum principle for $R_{\check{g}^{\infty}}$
(Corollary 12.43 of \cite{chowbook2}) gives $Rc(\check{g}^{\infty})=0$
on $\check{\mathcal{R}}_{<0}$. Because $y'$ was arbitrary, we see
that the flow of $\partial_{\check{\mathfrak{t}}}$ produces an open
embedding
\[
\iota:\mathcal{C}\cong(\mathbb{R}^{n-2}\times E)\setminus(\mathbb{R}^{n-2}\times\{e_{\ast}\})\times\{1\}\hookrightarrow\check{\mathcal{R}}_{0},
\]
and that the image of $\iota$ is Riemannian isometric to $\mathcal{C}$
equipped with the flat cone metric $g_{\mathcal{C}}$. 

\noindent Suppose by way of contradiction that $z\in\check{\mathcal{R}}_{0}\setminus\iota(\mathcal{C})$,
and let $\epsilon>0$ be such that the integral curve $\gamma:(-2\epsilon,2\epsilon)\to\check{\mathcal{R}}$
of $\partial_{\check{\mathfrak{t}}}$ with $\gamma(0)=z$ is well-defined.
Then $\gamma(\epsilon)$ corresponds to a point in $\mathbb{R}^{n-2}\times E$
not in the domain of $\iota$; that is, $\gamma(\epsilon)\in(\mathbb{R}^{n-2}\times\{e_{\ast}\})\times\{\epsilon\}$.
However, $\lim_{t\searrow-1}|Rm|_{\check{g}^{\infty}}(\gamma(t))=\infty$,
contradicting $r_{Rm}^{\check{\mathcal{X}}}(z)>0$. Thus $\iota$
is surjective. Because $\check{\mathcal{X}}_{-1}$ is the metric completion
of $\check{\mathcal{R}}_{-1}$ equipped with its length metric, it
follows that $\iota$ extends to a metric isometry$\check{\mathcal{X}}_{0}\to\mathbb{R}^{n-2}\times C(\mathbb{S}_{\beta}^{1})$. 

\noindent Because $Rc(\check{g}^{\infty})=0$ on $\check{\mathcal{R}}_{\leq0}$,
we know that $\check{\mathcal{X}}_{<0}$ is a static metric flow by
Theorem 1.17 of \cite{bamlergen3}, corresponding to some Ricci flat
singular space $(\check{X}',\check{d}',\check{\mathcal{R}}',\check{g}')$.

\noindent \textbf{Claim 4: $(\check{X}',\check{d}')$ }is isometric
to $\mathbb{R}^{n-2}\times C(\mathbb{S}_{\beta}^{1})$. 

For any $y'\in\check{\mathcal{R}}_{0}$, there exists $\epsilon>0$
such that the integral curve $\gamma:(-\epsilon,\epsilon)\to\check{\mathcal{R}}$
of $\partial_{\check{\mathfrak{t}}}$ with $\gamma(0)=y'$ is well-defined.
Because $\check{\mathcal{X}}_{<0}$ is static and $\check{\mathcal{X}}_{(0,1)}=\check{\mathcal{R}}_{(0,1)}$,
we conclude that $\gamma$ extends to an integral curve of $\partial_{\check{\mathfrak{t}}}$
defined on $(-\infty,1)$. Flowing along $\partial_{\check{\mathfrak{t}}}$
therefore gives a smooth embedding $\iota':\check{\mathcal{R}}_{0}\hookrightarrow\check{\mathcal{R}}_{1}=\check{\mathcal{R}}'$,
or equivalently $\mathcal{C}\hookrightarrow\check{X}'$. Moreover,
because $Rc(\check{g}^{\infty})=0$ on $\check{\mathcal{R}}_{(-\infty,0]}$,
we have $\check{d}_{-1}(\iota'(z_{1}),\iota'(z_{2}))\leq\check{d}_{0}(z_{1},z_{2})$
for all $z_{1},z_{2}\in\check{\mathcal{R}}_{0}$. 

Suppose by way of contradiction that there exist $\epsilon>0$, $z',z''\in\check{\mathcal{R}}_{0}$,
and a curve $\gamma:[0,1]\to\check{\mathcal{R}}_{-1}$ from $\iota'(z')$
to $\iota'(z'')$ such that $\text{length}_{\check{g}_{-1}^{\infty}}(\gamma)<\check{d}_{0}(z',z'')-\epsilon$.
For sufficiently large $i\in\mathbb{N}$, we know that $\gamma([0,1])\subseteq\check{U}_{i}$,
so we can define $\gamma_{i}:=\check{\psi}_{i}\circ\gamma$. Then,
for large $i\in\mathbb{N}$, $\gamma_{i}$ is a curve from $z_{i}':=(\check{\psi}_{i}\circ\iota')(z')$
to $z_{i}'':=(\check{\psi}_{i}\circ\iota')(z'')$ in $M$ with
\[
\text{length}_{\check{g}_{-1}^{i}}(\gamma_{i})<\check{d}_{0}(z',z'')-\frac{\epsilon}{2}.
\]
Because $Rc(\check{g}_{t}^{i})\geq-\check{r}_{i}^{2}A$, we obtain 

\[
\text{length}_{\check{g}_{\kappa}^{i}}(\gamma_{i})<\check{d}_{0}(z',z'')-\frac{\epsilon}{4}
\]
for all $\kappa\in(0,1)$ when $i\in\mathbb{N}$ is sufficiently large.
Now let $\eta_{z'},\eta_{z''}:[-1,\frac{1}{2}]\to\check{\mathcal{R}}$
be the integral curves of $\partial_{\check{\mathfrak{t}}}$ with
$\eta_{z'}(0)=z'$ and $\eta_{z''}(0)=z''$, so that $\eta_{z'}(-1)=\iota'(z')$,
$\eta_{z''}(-1)=\iota'(z'')$, and $\eta_{z'}([-1,\frac{1}{2}]),\eta_{z''}([-1,\frac{1}{2}])\subseteq V\subset\subset\check{\mathcal{R}}$,
where $V$ is some open product domain $V'\times[-1,\frac{1}{2}]$.
Because 
\[
||(\check{\psi}_{i}^{-1})_{\ast}\partial_{t}-\partial_{\check{\mathfrak{t}}}||_{C^{0}(U_{i})}<\alpha_{i},
\]
a standard result about continuous dependence of ODE solutions on
parameters (see Chapter 5 of \cite{hartmanode}) tells us that the
integral curves $\eta_{z'}^{i},\eta_{z''}^{i}:[-1,\frac{1}{2}]\to\check{\mathcal{R}}$
of $(\check{\psi}_{i}^{-1})_{\ast}\partial_{t}$ satisfying $\eta_{z'}^{i}(-1)=\iota'(z')$,
$\eta_{z''}^{i}(-1)=\iota'(z'')$ are well-defined (do not escape
$V'\times[-1,\frac{1}{2}]$), and satisfy $\lim_{i\to\infty}\eta_{z'}^{i}(s)=\eta_{z'}(s)$,
$\lim_{i\to\infty}\eta_{z''}^{i}(s)=\eta_{z''}(s)$ for any $s\in[-1,\frac{1}{2}]$.
In particular, we have $\gamma_{i}(0)\times[-1,\frac{1}{2}]\in\check{\psi}_{i}(\check{U}_{i})$
for sufficiently large $i\in\mathbb{N}$. Using the locally uniform
curvature estimates at times $t\in[\frac{\kappa}{2},\kappa]$, Theorem
9.58 of \cite{bamlergen2} guarantees that $\gamma_{i}([0,1])\times\{\kappa\}\subseteq\check{\psi}_{i}(\check{U}_{i})$
for sufficienly large $i\in\mathbb{N}$. Because $\check{g}^{i}\to\check{g}^{\infty}$
in $C_{loc}^{0}(\check{\mathcal{R}})$, we obtain
\[
\check{d}_{\kappa}(\check{\psi}_{i}^{-1}(z_{i}'),\check{\psi}_{i}^{-1}(z_{i}''))\leq\text{length}_{\check{g}_{\kappa}^{\infty}}(\psi_{i,\kappa}^{-1}\circ\gamma_{i})<\check{d}_{0}(z',z'')-\frac{\epsilon}{8}.
\]
Moreover, we have 
\[
\check{d}_{\kappa}(\check{\psi}_{i}^{-1}(z_{i}'),z'(\kappa))=\check{d}_{\kappa}(\eta_{z'}^{i}(\kappa),\eta_{z'}(\kappa))\to0
\]
as $i\to\infty$, and similarly for $z''$. Combining estimates gives
\[
\check{d}_{\kappa}(z'(\kappa),z''(\kappa))\leq\check{d}_{0}(z',z'')-\frac{\epsilon}{16}
\]
for all $\kappa\in(0,\frac{1}{2}]$. For any $\kappa\in(0,\frac{1}{2}]$,
we can choose a curve $\zeta:[0,1]\to\check{\mathcal{R}}_{-1}$ from
$z'(\kappa)$ to $z''(\kappa)$ with 
\[
\text{length}_{\check{g}_{\kappa}^{\infty}}(\zeta)<\check{d}_{\kappa}(z'(\kappa),z''(\kappa))+\frac{\epsilon}{32},
\]
and $\zeta([0,1])\cap(\mathbb{R}^{n-2}\times B(e_{\ast},c\epsilon))=\emptyset$
(this is possible because $\mathbb{R}^{n-2}\times\{e_{\ast}\}$ has
codimension 2 in $\mathbb{R}^{n-2}\times E$). However, we know that
\[
\sup_{\mathbb{R}^{n-2}\times(E\setminus B(e_{\ast},c\epsilon))}|Rc|(\cdot,t)\leq1
\]
for $t\in[0,\tau(\epsilon)]$. Defining $\zeta_{t}:[0,1]\to\mathcal{R}_{t}$
by $\zeta_{t}(s):=(\zeta(s))(t)$, we have
\[
\frac{d}{dt}\left(e^{t}\text{length}_{\check{g}_{t}^{\infty}}(\zeta_{t})\right)=e^{t}\left(1-\int_{0}^{1}\frac{Rc(\dot{\zeta}_{t}(s),\dot{\zeta}_{t}(s))}{|\dot{\zeta}_{t}(s)|}ds\right)\text{length}_{\check{g}_{t}^{\infty}}(\zeta_{t})\geq0
\]
for any $t\in[0,\tau(\epsilon)]$, so that 
\[
\limsup_{\kappa\searrow0}\check{d}_{\kappa}(z'(\kappa),z''(\kappa))\geq\limsup_{\kappa\searrow0}e^{-\kappa}\text{length}_{\check{g}_{\kappa}^{\infty}}(\zeta_{0})-\frac{\epsilon}{32}\geq\check{d}_{0}(z',z'')-\frac{\epsilon}{32}.
\]
Combining estimates, we therefore arrive at
\begin{align*}
\check{d}_{0}(z',z'')-\frac{\epsilon}{16}\geq & \limsup_{\kappa\searrow0}\check{d}_{\kappa}(z'(\kappa),z''(\kappa))\geq\check{d}_{0}(z',z'')-\frac{\epsilon}{32},
\end{align*}
a contradiction.

Therefore, $\iota':(\check{\mathcal{R}}_{0},\check{d}_{0})\hookrightarrow(\check{\mathcal{R}}_{-1},\check{d}_{-1})$
is an isometric embedding, which extends to an isometric embedding
$(\check{\mathcal{X}}_{0},\check{d}_{0})\hookrightarrow(\check{\mathcal{X}}_{-1},\check{d}_{-1})$.
Suppose by way of contradiction that $\iota'(\check{\mathcal{R}}_{0})\neq\check{\mathcal{R}}_{-1}$.
Because $\check{\mathcal{R}}_{-1}$ is connected, we can find a curve
$\gamma:[0,1]\to\check{\mathcal{R}}_{-1}$ from a point $v_{1}\in\iota'(\check{\mathcal{R}}_{0})$
to some $v_{2}\in\check{\mathcal{R}}_{-1}\setminus\iota'(\check{\mathcal{R}}_{0})$,
and such that $\widetilde{r}_{Rm}^{\check{\mathcal{X}}_{t}}\left((\gamma(u))(t)\right)\geq\sigma$
for all $u\in[0,1]$ and $t\in[-1,0)$, where $\sigma>0$ is an appropriately
chosen constant. By truncating $\gamma$, we can assume $\gamma([0,1))\subseteq\iota'(\check{\mathcal{R}}_{0})$
and $\gamma(1)\in\iota'(\check{\mathcal{X}}_{0}\setminus\check{\mathcal{R}}_{0})$.
For any fixed $t\in[-1,0)$, we have $\left(\gamma([0,1])\right)(t)\subseteq\check{U_{i}}$
for $i=i(t)\in\mathbb{N}$ sufficiently large, hence $\widetilde{r}_{Rm}^{\check{g}^{i}}(\psi_{i}(\gamma(u)(t)),t)\geq c(A,\sigma)$
for all $u\in[0,1]$ for large $i=i(t)\in\mathbb{N}$. By choosing
$t\in[-1,0)$ sufficiently close to $0$, we can apply Theorem \ref{thm:pseudolocality}
to conclude $r_{Rm}^{\check{g}^{i}}(\psi_{i}(\gamma(u)(0)),0)\geq c(A,\sigma)$
for all $u\in[0,1)$ when $i=i(u)\in\mathbb{N}$ is sufficiently large,
since by hypothesis $\gamma(u)(0)\in\check{\mathcal{R}}_{0}$ for
all $u\in[0,1)$. For $u\in[0,1)$ sufficiently close to $1$, Theorem
9.58 of \cite{bamlergen2} gives
\[
r_{Rm}^{\check{g}^{\infty}}((\gamma(u))(0),0)\geq2\check{d}_{0}((\gamma(u))(0),\mathbb{R}^{n-2}\times\{o_{\ast}\})
\]
since $(\iota')^{-1}(\gamma(1))\notin\check{\mathcal{R}}_{0}$, a
contradiction. Thus $\iota'(\check{\mathcal{R}}_{0})=\check{\mathcal{R}}_{-1}$,
but because $\check{\mathcal{X}}_{-1}$ is the metric completion of
$\check{\mathcal{R}}_{-1}$, it must be isometric to $\mathbb{R}^{n-2}\times C(\mathbb{S}_{\beta}^{1})$.
$\square$

\noindent However, Theorem 1.17 of \cite{bamlergen3} then tells us
that the singular set $\mathbb{R}^{n-2}\times\{o_{\ast}\}$ of $\mathbb{R}^{n-2}\times C(\mathbb{S}_{\beta''}^{1})$
has Minkowski dimension at most $n-4$, which is false.
\end{proof}

\section{Codimension Three $\epsilon$-Regularity}

We can use codimension two $\epsilon$-regularity to estimate the
size of the large curvature region at curvature scales $\widetilde{r}_{Rm}\apprle\sqrt{T-t}$. 
\begin{prop}
\label{prop:bigbigpoints} For any $A<\infty$ and $\underline{T}>0$,
there exist $r_{0}=r_{0}(A,\underline{T})>0$ and $E=E(A,\underline{T})<\infty$
such that the following holds. Let $(M^{n},(g_{t})_{t\in[0,T)})$
be a closed Ricci flow satisfying $Rc(g_{t})\geq-Ag_{t}$ and $|B(x,t,r)|_{g_{t}}\geq A^{-1}r^{n}$
for all $(x,t)\in M\times[0,T)$ and $r\in(0,1]$, where $T\geq\underline{T}$.
Then, for all $(x,t)\in M\times[\frac{T}{2},T)$, $r\in(0,r_{0}\sqrt{T-t}]$,
and $s\in(0,1]$ we have 
\[
|\{\widetilde{r}_{Rm}^{g}(\cdot,t)<sr\}\cap B_{g}(x,t,r)|_{g_{t}}\leq Es^{3}r^{n}.
\]
\end{prop}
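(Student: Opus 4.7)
The plan is to combine the codimension-two $\epsilon$-regularity of Proposition \ref{prop:codim2} with the volume estimate for the quantitative singular stratum $\mathcal{S}^{n-3}_{\eta,\cdot}$ from Theorem \ref{thm:singsetsize}, replacing the codimension-one input used in Corollary \ref{cor:highcurvaturesize}. This is exactly the scheme anticipated by Remark \ref{rem:epsregremark}: promoting codimension-one to codimension-two $\epsilon$-regularity promotes the volume decay from $s^{2}$ (codimension $2$) to $s^{3}$ (codimension $3$).

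First I would rescale. Set $\tilde g := r^{-2} g_{t}$; for $r\leq 1$ (the case $r>1$ is reduced to this by covering with balls of $g_{t}$-radius $1$), the rescaled manifold $(M,\tilde g)$ satisfies $Rc(\tilde g)\geq -A\tilde g$ and the unit-scale volume noncollapsing $|B_{\tilde g}(y,1)|_{\tilde g}\geq A^{-1}$ needed for Theorem \ref{thm:singsetsize}. Since $\widetilde r_{Rm}^{\tilde g}=r^{-1}\widetilde r_{Rm}^{g_{t}}$ and $dg_{t}=r^{n}d\tilde g$, the claim reduces to
\[
|\{\widetilde r_{Rm}^{\tilde g}<s\}\cap B_{\tilde g}(x,1)|_{\tilde g}\leq Es^{3}.
\]

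The key step is the inclusion
\[
\{\widetilde r_{Rm}^{\tilde g}<s\}\cap B_{\tilde g}(x,1)\subseteq\widetilde{\mathcal S}^{n-3}_{\epsilon_{0}^{2},\,s/\epsilon_{0}^{2}}\cap B_{\tilde g}(x,1),
\]
where $\epsilon_{0}=\epsilon_{0}(A,\underline T)$ is the constant from Proposition \ref{prop:codim2} and $s\in(0,\epsilon_{0}^{2}]$ (the complementary range $s\in(\epsilon_{0}^{2},1]$ gives a trivial bound by volume comparison). To prove this by contrapositive, suppose $y$ lies outside the stratum: then there exist $\rho\in[s/\epsilon_{0}^{2},2]$ and a metric cone $C(Z)$ with
\[
d_{PGH}\bigl(B_{\tilde g}(y,\rho),\,B^{\mathbb R^{n-2}\times C(Z)}((0^{n-2},z_{\ast}),\rho)\bigr)<\epsilon_{0}^{2}\rho.
\]
Unscaling to $g_{t}$ and setting $r':=\epsilon_{0}\rho r$, this becomes exactly the hypothesis of Proposition \ref{prop:codim2} at scale $r'$, namely closeness $\epsilon_{0}r'$ at scale $\epsilon_{0}^{-1}r'=\rho r$. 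The scale constraint $r'\leq\epsilon_{0}\sqrt{T-t}$ holds provided $r_{0}\leq\tfrac12$, using $\rho\leq 2$ and $r\leq r_{0}\sqrt{T-t}$. Proposition \ref{prop:codim2} then yields $\widetilde r_{Rm}^{g_{t}}(y)\geq\epsilon_{0}r'=\epsilon_{0}^{2}\rho r\geq sr$, i.e.\ $\widetilde r_{Rm}^{\tilde g}(y)\geq s$, contradicting the hypothesis.

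Finally, applying Theorem \ref{thm:singsetsize} to $(M,\tilde g)$ at stratum level $k=n-3$ with parameter $\eta:=\epsilon_{0}^{2}$ gives
\[
|\widetilde{\mathcal S}^{n-3}_{\epsilon_{0}^{2},\,s/\epsilon_{0}^{2}}\cap B_{\tilde g}(x,1)|_{\tilde g}\leq C(n,A,\epsilon_{0})\,(s/\epsilon_{0}^{2})^{3}\leq Es^{3},
\]
with $E=E(n,A,\underline T)$, and unwinding the rescaling by $r^{n}$ concludes. The main technical obstacle is the scale bookkeeping between the two inputs: the rescaled stratum tests cone-splitting in the $g_{t}$-scale range $[sr/\epsilon_{0}^{2},2r]$, while Proposition \ref{prop:codim2} is only available at scales $(0,\epsilon_{0}\sqrt{T-t}]$; the hypothesis $r\leq r_{0}\sqrt{T-t}$ with small $r_{0}=r_{0}(n,A,\underline T)$ is exactly what reconciles these ranges.
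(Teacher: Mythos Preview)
Your proof is correct and follows essentially the same approach as the paper: combine the codimension-two $\epsilon$-regularity of Proposition \ref{prop:codim2} with the quantitative stratification estimate of Theorem \ref{thm:singsetsize} to trap $\{\widetilde r_{Rm}<sr\}$ in $\mathcal S^{n-3}_{\epsilon_0^2,\cdot}$ and read off the $s^3$ volume bound. The only cosmetic difference is the normalization---you rescale by $r^{-2}$ while the paper rescales by $(T-t)^{-1}$---but the scale bookkeeping and the use of $r_0$ to reconcile the stratification range with the $\epsilon_0\sqrt{T-t}$ restriction in Proposition \ref{prop:codim2} are handled the same way.
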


\begin{proof}
Let $\epsilon_{0}>0$ be as in Theorem \ref{prop:codim2}. Define
the rescaled metric $\widetilde{g}:=(T-t)^{-1}g_{t}$, and suppose
$x\in M$ satisfies 
\[
d_{PGH}\left(\left(B_{\widetilde{g}}(x,r'),d_{\widetilde{g}},x\right),\left(B\left((0^{n-2},z_{\ast}),r'\right),d_{\mathbb{R}^{n-2}\times C(Z)},(0^{n-2},z_{\ast})\right)\right)<\epsilon_{0}^{2}r'
\]
for some $r'\in[s,1]$ and some metric cone $C(Z)$. Then 
\[
d_{PGH}\left(\left(B_{g}(x,t,\sqrt{T-t}r'),(T-t)^{-\frac{1}{2}}d_{g_{t}},x\right),\left(B\left((0^{n-2},z_{\ast}),r'\right),d_{\mathbb{R}^{n-2}\times C(Z)},(0^{n-2},z_{\ast})\right)\right)<\epsilon_{0}^{2}r',
\]
so we can apply Theorem \ref{prop:codim2} with $r=\epsilon_{0}\sqrt{T-t}r'<\epsilon_{0}\sqrt{T-t}$
to get $\widetilde{r}_{Rm}^{g}(x,t)\geq\epsilon_{0}^{2}\sqrt{T-t}r'$,
or equivalently $\widetilde{r}_{Rm}^{\widetilde{g}}(x)\geq\epsilon_{0}^{2}r'\geq\epsilon_{0}^{2}s$.
Let $\widetilde{\mathcal{S}}_{\eta,r}^{k}$ denote the quantitative
singular strata of the rescaled metric $\widetilde{g}$. Then 
\[
\{\widetilde{r}_{Rm}^{\widetilde{g}}<s\}\subseteq\widetilde{\mathcal{S}}_{\epsilon_{0}^{2},\epsilon_{0}^{-2}s}^{n-3}
\]
for all $s\in(0,\epsilon_{0}^{2}]$. We can thus apply Theorem \ref{thm:singsetsize}
with $\eta:=\epsilon_{0}^{2}$ to obtain
\begin{align*}
|\{\widetilde{r}_{Rm}^{\widetilde{g}}<sr\}\cap B_{\widetilde{g}}(x,r)|_{\widetilde{g}}\leq & |\widetilde{\mathcal{S}}_{\epsilon_{0}^{2},\epsilon_{0}^{-2}sr}^{n-3}\cap B_{\widetilde{g}}(x,r)|_{\widetilde{g}}\leq Cs^{3}r^{n}
\end{align*}
for all $s\in(0,1]$ and $r\in(0,\epsilon_{0}^{2}]$, where $C=C(A,\underline{T})<\infty$.
In terms of the unrescaled metric, this is
\[
|\{\widetilde{r}_{Rm}^{g}(\cdot,t)<sr\}\cap B_{g}(x,t,r)|_{g_{t}}\leq Cs^{3}r^{n}
\]
for all $s\in(0,1]$ and $r\in(0,\epsilon_{0}^{2}\sqrt{T-t}]$. 
\end{proof}
\begin{prop}
\label{prop:codim3} (Codimension 3 $\epsilon$-Regularity) For any
$A<\infty$ and $\underline{T}\geq0$, there exists $\epsilon_{0}=\epsilon_{0}(A,\underline{T})>0$
such that the following holds. Suppose $(M^{n},(g_{t})_{t\in[0,T)})$
is a closed Ricci flow satisfying $Rc(g_{t})\geq-Ag_{t}$ and $|B(x,t,r)|_{g_{t}}\geq A^{-1}r^{n}$,
for all $(x,t)\in M\times[0,T)$ and $r\in(0,1]$, where $T\geq\underline{T}$.
For any $(x,t)\in M\times[\frac{T}{2},T)$ and $r\in(0,\epsilon_{0}\sqrt{T-t}]$,
if 
\[
d_{PGH}\left(\left(B(x,t,r),d_{g_{t}},x\right),\left(B\left((0^{n-3},z_{\ast}),r\right),d_{\mathbb{R}^{n-3}\times C(Z)},(0^{n-3},z_{\ast})\right)\right)<\epsilon_{0}r
\]
for some metric cone $C(Z)$, then $\widetilde{r}_{Rm}(x,t)\geq\epsilon_{0}r$. 
\end{prop}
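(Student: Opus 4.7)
The proof proceeds by contradiction, paralleling Proposition \ref{prop:codim2}. Suppose the claim fails; then there exist closed Ricci flows $(M_{i}^{n},(g_{t}^{i})_{t\in[0,T_{i})})$ with $T_{i}\geq\underline{T}$ satisfying the hypotheses, points $(x_{i},t_{i})\in M_{i}\times[T_{i}/2,T_{i})$, scales $r_{i}\in(0,\epsilon_{i}\sqrt{T_{i}-t_{i}}]$, and $\epsilon_{i}\searrow0$ such that
\[
d_{PGH}\!\left(\bigl(B_{g^{i}}(x_{i},t_{i},r_{i}),d_{g_{t_{i}}^{i}},x_{i}\bigr),\bigl(B((0^{n-3},z_{\ast}^{i}),r_{i}),d_{\mathbb{R}^{n-3}\times C(Z_{i})},(0^{n-3},z_{\ast}^{i})\bigr)\right)<\epsilon_{i}r_{i}
\]
for some metric spaces $Z_{i}$ of diameter $\leq\pi$, yet $\widetilde{r}_{Rm}^{g^{i}}(x_{i},t_{i})<\epsilon_{i}r_{i}$. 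Setting $\widetilde{g}_{t}^{i}:=r_{i}^{-2}g_{t_{i}+r_{i}^{2}t}^{i}$ and passing to a subsequence, $(M_{i},d_{\widetilde{g}_{0}^{i}},x_{i})$ converges in the pointed Gromov-Hausdorff sense to a metric cone $(\mathbb{R}^{n-3}\times C(Z_{\infty}),d,(0^{n-3},o_{\ast}))$ based at its vertex, for some compact metric space $Z_{\infty}$ of diameter $\leq\pi$.

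The first substantive step is to show that $Z_{\infty}$ is a smooth $2$-manifold. Since $r_{i}\leq\epsilon_{i}\sqrt{T_{i}-t_{i}}$, Proposition \ref{prop:bigbigpoints} applied to the rescaled flows gives, for any fixed $D<\infty$ and $s\in(0,1]$,
\[
\bigl|\{\widetilde{r}_{Rm}^{\widetilde{g}^{i}}(\cdot,0)<sD\}\cap B_{\widetilde{g}^{i}}(x_{i},0,D)\bigr|_{\widetilde{g}_{0}^{i}}\leq E(A,\underline{T})\,s^{3}D^{n}
\]
for all $i=i(D)\in\mathbb{N}$ sufficiently large. Using Colding's volume convergence theorem and the $1$-Lipschitz property of $\widetilde{r}_{Rm}$ under Gromov-Hausdorff approximations, this estimate passes to the limit, so by a standard covering argument the singular set of $\mathbb{R}^{n-3}\times C(Z_{\infty})$ has Hausdorff dimension at most $n-3$. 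Since this singular set splits as $\mathbb{R}^{n-3}\times\mathcal{S}(C(Z_{\infty}))$ and any singular point $z'\in\mathcal{S}(Z_{\infty})$ would generate the $1$-dimensional ray $\{[r,z']:r>0\}\subseteq\mathcal{S}(C(Z_{\infty}))$, contributing an $(n-2)$-dimensional stratum, we must have $\mathcal{S}(Z_{\infty})=\emptyset$. Thus $Z_{\infty}$ is a compact smooth $2$-manifold, and by Theorem \ref{thm:openandsmooth} the convergence refines to smooth Cheeger-Gromov convergence on the regular set $\mathbb{R}^{n-3}\times(C(Z_{\infty})\setminus\{o_{\ast}\})$, with the limit metric $g_{\mathbb{R}^{n-3}}+dr^{2}+r^{2}g_{Z_{\infty}}$ satisfying $Rc\geq0$.

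To conclude that $C(Z_{\infty})\setminus\{o_{\ast}\}$ is Ricci-flat, I would adapt the maximum principle argument from Claim 3 in the proof of Theorem \ref{thm:theorem1} verbatim. At any $x_{0}\in C(Z_{\infty})\setminus\{o_{\ast}\}$, since $x_{0}$ arises as a smooth Cheeger-Gromov limit point, backwards pseudolocality (Theorem \ref{thm:pseudolocality}) and Shi's estimates produce a short-time Ricci flow $(\overline{g}_{t})_{t\in(-\delta_{0},0]}$ on a neighborhood $U_{0}\ni x_{0}$ with $\overline{g}_{0}=g_{C(Z_{\infty})}|U_{0}$ and $Rc(\overline{g}_{t})\geq0$. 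Apply the parabolic maximum principle to $\phi:=Rc_{\overline{g}}(V,V)$, where $V$ is the parallel translation of the radial vector $\partial_{r}|_{x_{0}}$ (a zero eigenvector of $Rc(g_{C(Z_{\infty})})$) along radial geodesics and extended to be constant in time. Using the Jacobi-field identity $R(X,\partial_{r},\partial_{r},X)=0$ for horizontal $X$ on any Riemannian cone together with $Rc(\partial_{r},\cdot)=0$, the standard Lichnerowicz expansion yields $(\Delta Rc)(\partial_{r},\partial_{r})=0$ at $x_{0}$. Expanding this Laplacian in a radial orthonormal frame $(\partial_{r},E_{1},E_{2})$ and using $\nabla_{E_{i}}\partial_{r}=r^{-1}E_{i}$ then gives $\sum_{i=1}^{2}Rc(E_{i},E_{i})=0$, which combined with $Rc\geq0$ forces $Rc(g_{C(Z_{\infty})})\equiv0$ on $C(Z_{\infty})\setminus\{o_{\ast}\}$. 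In dimension $3$, Ricci-flat implies flat, so $g_{Z_{\infty}}$ has constant sectional curvature $1$ and $Z_{\infty}$ is isometric to either $\mathbb{S}^{2}$ or $\mathbb{RP}^{2}$.

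The final step is to rule out $Z_{\infty}=\mathbb{RP}^{2}$, and this is the main technical obstacle. The open set $\mathbb{R}^{n-3}\times(C(\mathbb{RP}^{2})\setminus\{o_{\ast}\})$ is non-orientable, and the smooth Cheeger-Gromov diffeomorphisms $\psi_{i}:U_{i}\to M_{i}$ would then embed a non-orientable open submanifold into each $M_{i}$, which is incompatible with orientability of $M_{i}$. For a general $M_{i}$, I would pass to the orientation double cover $\widehat{M}_{i}\to M_{i}$, under which the Ricci lower bound and volume non-collapsing hypotheses lift because the covering is a local isometry, and the Ricci flow pulls back to the cover. The Gromov-Hausdorff closeness hypothesis also lifts with adjusted constants, since $\mathbb{R}^{n-3}\times C(\mathbb{RP}^{2})$ has orientable double cover $\mathbb{R}^{n-3}\times C(\mathbb{S}^{2})=\mathbb{R}^{n}$; the careful justification of this lifting (in particular accounting for the possible change of basepoint and the relation between injectivity radii of cover and base) is the delicate point. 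Granting this reduction, we have $Z_{\infty}=\mathbb{S}^{2}$, so the limit is isometric to $\mathbb{R}^{n}$, which is smooth everywhere. Theorem \ref{thm:openandsmooth} then extends the smooth convergence across $(0^{n-3},o_{\ast})$, forcing $\widetilde{r}_{Rm}^{\widetilde{g}^{i}}(x_{i},0)\to\infty$, contradicting $\widetilde{r}_{Rm}^{\widetilde{g}^{i}}(x_{i},0)<\epsilon_{i}\searrow0$.
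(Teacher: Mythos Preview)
Your proof follows the paper's essentially step for step: the contradiction setup and rescaling, the invocation of Proposition \ref{prop:bigbigpoints} to obtain the $s^{3}$ volume bound on sub-scale high-curvature regions, passing this to the limit to force the singular set of $\mathbb{R}^{n-3}\times C(Z_{\infty})$ to have Hausdorff dimension at most $n-3$ (hence $Z_{\infty}$ smooth), the appeal to the maximum-principle argument of Claim 3 in Theorem \ref{thm:theorem1} to obtain Ricci-flatness (hence flatness in dimension three), and the orientability obstruction to $Z_{\infty}=\mathbb{RP}^{2}$. The paper's Claim 1 is somewhat more explicit than your sketch about \emph{how} the curvature-scale volume estimate transfers to the limit space --- it proves a subclaim that $\widetilde{r}_{Rm}$ is essentially upper-semicontinuous under the smooth Cheeger-Gromov convergence --- but the content is the same.

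On orientability you are in fact more careful than the paper. The paper simply writes ``because $M$ is orientable'' to exclude $\mathbb{RP}^{2}$, even though orientability does not appear among the hypotheses of the proposition as stated; this is harmless in the paper's applications (the four-dimensional theorems assume simple connectedness), but it is an implicit assumption. Your proposed reduction to the orientation double cover is the natural patch. The delicate point you identify --- lifting the Gromov-Hausdorff closeness hypothesis --- is real, but note that one does not need to lift the hypothesis at the outset: rather, run the entire contradiction argument on $M_{i}$ as written, obtain the smooth Cheeger-Gromov convergence to $\mathbb{R}^{n-3}\times(C(Z_{\infty})\setminus\{o_{\ast}\})$, and only then observe that the diffeomorphisms $\psi_{i}$ embed an open piece of this limit into $M_{i}$; if $M_{i}$ is orientable this rules out $\mathbb{RP}^{2}$ directly, and if not one can lift the $\psi_{i}$ themselves (rather than the original GH hypothesis) to the cover.
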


\begin{proof}
Suppose the claim is false. Then we can find closed Ricci flows $(M_{i}^{n},(g_{t}^{i})_{t\in[0,T_{i})})$
satisfying $Rc(g_{t}^{i})\geq-Ag_{t}^{i}$, $|B_{g^{i}}(x,t,r)|_{g_{t}^{i}}\geq A^{-1}r^{n}$
for all $(x,t)\in M_{i}\times[0,T_{i})$ and $r\in(0,1]$, where $T_{i}\geq\underline{T}$,
along with points $(x_{i},t_{i})\in M\times[\frac{T_{i}}{2},T_{i})$,
a sequence $\epsilon_{i}\searrow0$, and scales $r_{i}\in(0,\epsilon_{i}\sqrt{T_{i}-t_{i}})$,
such that 
\[
d_{PGH}\left(\left(B_{g^{i}}(x_{i},t_{i},\epsilon_{i}^{-1}r_{i}),d_{g_{t}^{i}},x_{i}\right),\left(B\left((0^{n-3},z_{\ast}^{i}),\epsilon_{i}^{-1}r_{i}\right),d_{\mathbb{R}^{n-3}\times C(Z_{i})},(0^{n-3},z_{\ast}^{i})\right)\right)<\epsilon_{i}r_{i}
\]
for some metric spaces $(Z_{i},d_{i})$, yet $\widetilde{r}_{Rm}^{g^{i}}(x_{i},t_{i})<\epsilon_{i}r_{i}$.
Define $\widetilde{g}_{t}^{i}:=r_{i}^{-2}g_{t_{i}+r_{i}^{2}t}^{i}$
for $t\in[-r_{i}^{-2}t_{i},r_{i}^{-2}(T-t_{i}))$, so that we can
pass to a subsequence to assume that $(M,d_{\widetilde{g}_{0}^{i}},x_{i})$
converges in the pointed Gromov-Hausdorff sense to some metric cone
\[
(\mathbb{R}^{n-3}\times C(Z_{\infty}),d_{\mathbb{R}^{n-3}\times C(Z_{\infty})},(0^{n-3},z_{\ast}))
\]
based at its vertex, where $\text{diam}(Z_{\infty})\leq\pi$. For
brevity, we write $(W,d_{W},w_{\ast}):=(\mathbb{R}^{n-3}\times C(Z_{\infty}),d_{\mathbb{R}^{n-3}\times C(Z_{\infty})},(0^{n-3},z_{\ast}))$
and $(\mathcal{R}_{W},g_{W})$ for the smooth Riemannian structure
on $\mathcal{R}(W)$. 

By Proposition \ref{prop:bigbigpoints}, there exist $E=E(A,\underline{T})<\infty$
and $r_{0}=r_{0}(A,\underline{T})>0$ such that
\[
|\{\widetilde{r}_{Rm}^{g^{i}}(\cdot,t)<sr\}\cap B_{g}(x,t,r)|_{g_{t}}\leq Es^{3}r^{n}
\]
for all $(x,t)\in M_{i}\times[\frac{T_{i}}{2},T_{i})$, $r\in(0,r_{0}\sqrt{T-t})$
$s\in(0,1]$. In terms of the rescaled metrics, this is
\begin{align}
|\{\widetilde{r}_{Rm}^{\widetilde{g}^{i}}(\cdot,0)<sr\}\cap B_{\widetilde{g}^{i}}(x,0,r)|_{\widetilde{g}_{0}^{i}}= & r_{i}^{-n}|\{\widetilde{r}_{Rm}^{g^{i}}(\cdot,t_{i})<srr_{i}\}\cap B_{g^{i}}(x,t_{i},rr_{i})|_{g_{t_{i}}^{i}}\label{eq:usefulscaleinvariant}\\
\leq & r_{i}^{-n}\cdot Es^{3}(rr_{i})^{n}=Es^{3}r^{n}\nonumber 
\end{align}
for all $s\in(0,1]$ and $r\in(0,r_{0}r_{i}^{-1}\sqrt{T-t_{i}}]$.
Since $r_{i}^{-1}\sqrt{T-t_{i}}\to\infty$, (\ref{eq:usefulscaleinvariant})
holds for all $s,r\in(0,1]$ when $i\in\mathbb{N}$ is sufficiently
large.

\noindent \textbf{Claim 1: }For any $y\in W$ and $s\in(0,\frac{1}{2}]$,
we have 
\[
|\{\widetilde{r}_{Rm}^{W}<s\}\cap B^{W}(y,1)\cap\mathcal{R}_{W}|_{g_{W}}\leq2^{10n}Es^{3}.
\]

Suppose by way of contradiction there are $y\in W$, $r,s\in(0,\frac{1}{2}]$
such that the claim fails. Because $\mathcal{R}_{W}\subseteq W$ is
dense, we can assume $y\in\mathcal{R}_{W}$. Then we can find $\sigma>0$
such that the set 
\[
S:=\{\sigma<\widetilde{r}_{Rm}^{W}<s\}\cap B^{W}(y,1)
\]
satisfies $|S|_{g_{W}}>2^{10n}Es^{3}$. By Theorem \ref{thm:openandsmooth},
there is an exhaustion $(U_{i})$ of $\mathcal{R}_{W}$ along with
diffeomorphisms $\psi_{i}:U_{i}\to M$ such that $\psi_{i}^{\ast}\widetilde{g}_{0}^{i}\to g_{W}$
in $C_{loc}^{\infty}(\mathcal{R}_{W})$ as well as 
\begin{equation}
\sup_{K\cap U_{i}}|d_{\widetilde{g}_{0}^{i}}(\psi_{i}(y_{1}),\psi_{i}(y_{2}))-d_{W}(y_{1},y_{2})|\leq\eta_{i}(K)\label{eq:distest}
\end{equation}
for each compact subset $K\subseteq\mathcal{R}_{W}$, where $\lim_{i\to\infty}\eta_{i}(K)=0$.
Because $S\subset\subset\mathcal{R}_{W}$, we have $S\subseteq U_{i}$
for sufficiently large $i\in\mathbb{N}$. Moreover, (\ref{eq:distest})
implies that $\psi_{i}(S)\subseteq B_{\widetilde{g}_{0}^{i}}(\psi_{i}(y),0,2)$
for sufficiently large $i\in\mathbb{N}$. 

\noindent \textbf{Subclaim: }$\widetilde{r}_{Rm}^{\widetilde{g}^{i}}(\psi_{i}(x),0)<4s$
for all $x\in S$ when $i\in\mathbb{N}$ is sufficiently large.

Otherwise, there are $x_{i}\in S$ such that 
\[
\sup_{B_{\widetilde{g}^{i}}(\psi_{i}(x_{i}),0,4s)}|Rm|_{\widetilde{g}_{0}^{i}}\leq\frac{1}{16}s^{-2}.
\]
Pass to a subsequence so that 
\[
x_{i}\to x\in\{\sigma\leq\widetilde{r}_{Rm}^{W}\leq s\}\cap\overline{B}^{W}(y,1)
\]
with respect to the Gromov-Hausdorff convergence. Then 
\[
(B_{\widetilde{g}^{i}}(\psi_{i}(x_{i}),0,4s),d_{\widetilde{g}_{0}^{i}},x_{i})\to(B^{W}(x,4s),d,x)
\]
in the pointed Gromov-Hausdorff sense, but the Cheeger-Gromov compactness
theorem tells us that 
\[
(B_{\widetilde{g}^{i}}(\psi_{i}(x_{i}),0,4s),\widetilde{g}_{0}^{i},x_{i})\to(\widehat{B},\widehat{g},\widehat{x})
\]
in the $C^{\infty}$ Cheeger-Gromov sense for some smooth (incomplete)
Riemannian manifold $\widehat{B}$ with curvature tensor satisfying
$|Rm|_{\widehat{g}}\leq\frac{1}{16}s^{-2}$. Moreover, 
\[
(B_{\widetilde{g}^{i}}(\psi_{i}(x_{i}),0,2s),d_{\widetilde{g}_{0}^{i}},x_{i})\to(B_{\widehat{g}}(\widehat{x},2s),d_{\widehat{g}},\widehat{x}),
\]
so $B^{W}(x,2s)$ is isometric to a ball in a smooth Riemannian manifold
equipped with its length metric, and with $|Rm|\leq\frac{1}{16}s^{-2}$.
In particular, $B^{W}(x,2s)\subseteq\mathcal{R}$ and $|Rm|_{g}\leq\frac{1}{4}s^{-2}$
on $B^{W}(x,2s)$, so that $\widehat{r}_{Rm}^{W}(x)\geq2s$, a contradiction.
$\square$

From the subclaim, we get
\[
\psi_{i}(S)\subseteq\{\widetilde{r}_{Rm}^{\widetilde{g}^{i}}(\cdot,0)<4s\}\cap B_{\widetilde{g}^{i}}(x,0,2),
\]
so that (\ref{eq:usefulscaleinvariant}) and $\psi_{i}^{\ast}\widetilde{g}_{0}^{i}\to g_{W}$
in $C_{loc}^{\infty}(\mathcal{R}_{W})$ imply
\begin{align*}
2^{10n}Es^{3}<|S|_{g_{W}}= & \lim_{i\to\infty}|\psi_{i}(S)|_{\widetilde{g}_{0}^{i}}\leq\liminf_{i\to\infty}|\{\widetilde{r}_{Rm}^{\widetilde{g}^{i}}(\cdot,0)<4s\}\cap B_{\widetilde{g}^{i}}(x,0,2)|_{\widetilde{g}_{0}^{i}}\\
\leq & E(4s)^{3}2^{n}<2^{8n}Es^{3},
\end{align*}
a contradiction. $\square$

\noindent \textbf{Claim 2: $Z_{\infty}$ }is the length space corresponding
to a smooth Riemannian manifold.

If $Z_{\infty}$ is not smooth, then $\mathcal{S}(W)$ contains a
subset isometric to $\mathbb{R}^{n-3}\times[0,\infty)$, so that the
Hausdorff dimension of $\mathcal{S}(W)$ is at least $n-2$. To get
a contradiction, it therefore suffices to prove that the Hausdorff
dimension of 
\[
S:=\mathcal{S}(W)\cap B^{W}(y,1)
\]
is at most $n-3$ for any $y\in W$. The proof of this given Claim
1 is standard, but we include it for completeness.

Given $s\in(0,1]$, let $\{y_{1},...,y_{N}\}$ be a maximal subset
of $S$ such that $d_{W}(y_{i},y_{j})\geq\frac{s}{2}$ for any distinct
$i,j\in\{1,...,N\}$. Then $S\subseteq\cup_{j=1}^{N}B^{W}(y_{j},s)$,
and 
\[
\cup_{j=1}^{N}B^{W}(y_{j},s)\subseteq\{\widetilde{r}_{Rm}^{W}<s\}\cap B^{W}(y,2)
\]
since $\widetilde{r}_{Rm}^{W}$ is $1$-Lipschitz. Thus
\begin{align*}
NA^{-1}s^{n}\leq & \sum_{j=1}^{N}\mathcal{H}^{n}(B^{W}(y_{j},s))\leq\mathcal{H}^{n}(\{\widetilde{r}_{Rm}^{W}<s\}\cap B^{W}(y,2))\\
= & |\{\widetilde{r}_{Rm}^{W}<s\}\cap B^{W}(y,2)\cap\mathcal{R}_{W}|_{g_{W}}\leq2^{14n}Es^{3},
\end{align*}
which implies that $N\leq2^{14}AEs^{3-n}$, hence 
\[
\mathcal{H}_{s}^{n-3}(S)\leq CNs^{n-3}\leq2^{14n}CAE.
\]
Taking $s\searrow0$ gives $\mathcal{H}^{n-3}(S)<\infty$. $\square$

In particular, $\mathbb{R}^{n-3}\times(C(Z_{\infty})\setminus\{z_{\ast}\})$
is a smooth 3-dimensional Riemannian cone with nonnegative Ricci curvature.
By the same argument as Claim 3 of Theorem 1, we know that $Z_{\infty}$
has constant curvature $1$, hence that $Z_{\infty}$ is the round
sphere or the round $\mathbb{R}P^{2}$, but because the pointed Gromov-Hausdorff
convergence 
\[
(M,d_{\widetilde{g}_{0}^{i}},x_{i})\to(\mathbb{R}^{n-3}\times C(Z_{\infty}),d_{\mathbb{R}^{n-3}\times C(Z_{\infty})},(0^{n-3},z_{\ast}))
\]
is smooth away from $\mathbb{R}^{n-3}\times\{z_{\ast}\}$, and because
$M$ is orientable, we must have $Z_{\infty}\cong\mathbb{S}^{2}$
(otherwise, there is an embedding $B(0^{n-3},1)\times\mathbb{R}P^{2}\times(1,2)\hookrightarrow M$,
which is impossible). That is, $(M,d_{\widetilde{g}_{0}^{i}},x_{i})$
converges to flat $\mathbb{R}^{n}$, so $r_{Rm}^{\widetilde{g}_{0}^{i}}(x_{i},0)\to\infty$,
contradicting $r_{Rm}^{\widetilde{g}_{0}^{i}}(x_{i},0)<\epsilon_{i}\to0$. 
\end{proof}
\begin{proof}[Proof of Theorem \ref{thm:highdimlp}]
 The proof of $(i)$ is a trivial modification of the proof of Proposition
\ref{prop:bigbigpoints}, where we use codimension three $\epsilon$-regularity
(Proposition \ref{prop:codim3}) instead of codimension two (that
is, we replace $\mathcal{S}_{\epsilon,r}^{n-3}$ with $\mathcal{S}_{\epsilon,r}^{n-4}$).

$(ii)$ Replacing $r$ with $r_{0}\sqrt{T-t}$ and $s$ with $\frac{s^{-1}}{r_{0}\sqrt{T-t}}$
in $(i)$, we can estimate
\begin{align*}
\int_{B(x,t,r_{0}\sqrt{T-t})}\widetilde{r}_{Rm}^{-p}(y,t)dg_{t}(y)\leq & p\int_{r_{0}^{-\frac{1}{2}}(T-t)^{-\frac{1}{2}}}^{\infty}s^{p-1}|\{\widetilde{r}_{Rm}^{-1}(\cdot,t)>s\}\cap B(x,t,r_{0}\sqrt{T-t})|_{g_{t}}ds\\
 & +\left(r_{0}^{-\frac{1}{2}}(T-t)^{-\frac{1}{2}}\right)^{p}|B(x,t,r_{0}\sqrt{T-t})|_{g_{t}}\\
\leq & pE\int_{r_{0}^{-\frac{1}{2}}(T-t)^{-\frac{1}{2}}}^{\infty}s^{p-1}\left(\frac{s^{-1}}{r_{0}\sqrt{T-t}}\right)^{4}\left(r_{0}\sqrt{T-t}\right)^{n}ds\\
 & +C(A,\underline{T})(T-t)^{\frac{n-p}{2}}\\
\leq & C(A,\underline{T},p)(T-t)^{\frac{n-p}{2}}.
\end{align*}
The claim follows from a standard covering argument.
\end{proof}

\section{Curvature Scale Decomposition in Dimension 4}

In this section, we again specialize to dimension four, where we decompose
each time slice of a Ricci flow satisfying (\ref{eq:ric}),(\ref{eq:vol})
according its curvature scale relative to the Type-I scale. The region
where $\widetilde{r}_{Rm}(\cdot,t)<<\sqrt{T-t}$ was estimated using
codimension three $\epsilon$-regularity. The region where $\widetilde{r}_{Rm}(\cdot,t)>>\sqrt{T-t}$
is estimated using the following proposition, and the intermediate
region where $\widetilde{r}_{Rm}(\cdot,t)\approx\sqrt{T-t}$ is dealt
with in the proof of Theorem \ref{thm:theorem3}.
\begin{prop}
\label{prop:smallbigpoints} Suppose $(M^{4},(g_{t})_{t\in[0,T)})$
is a closed, simply connected Ricci flow satisfying (\ref{eq:ric}),(\ref{eq:vol}).
Then there exist $C=C(A,T)<\infty$, and $E'=E'(X)<\infty$ such that
the following hold:

$(i)$ For any $(x,t)\in M\times[\frac{T}{2},T)$ and $s\in[\epsilon_{P}^{-1}\sqrt{T-t},1]$,
we have
\[
|\{\epsilon_{P}^{-1}\sqrt{T-t}\leq\widetilde{r}_{Rm}^{g}(\cdot,t)<s\}|_{g_{t}}\leq C(A,T)|\{\widetilde{r}_{Rm}^{X}<\epsilon_{P}^{-1}s\}\cap\mathcal{R}_{X}|_{g_{X}},
\]

$(ii)$ For any $s\in(0,1]$, we have 
\[
|\{\widetilde{r}_{Rm}^{X}<s\}\cap\mathcal{R}_{X}|_{g_{X}}\leq E's^{4}.
\]
\end{prop}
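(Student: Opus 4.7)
The plan is to prove $(i)$ and $(ii)$ by distinct mechanisms: part $(i)$ transports volume from the time-$t$ slice to the limit $X$ via the quotient map $\pi$ at points where pseudolocality propagates a curvature bound up to the singular time, while part $(ii)$ exploits the orbifold structure of $X$ from Theorem \ref{thm:theorem1}.

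For part $(i)$, fix $(x,t)$ and $s\in[\epsilon_P^{-1}\sqrt{T-t},1]$. For any $y$ with $r:=\widetilde{r}_{Rm}^g(y,t)\in[\epsilon_P^{-1}\sqrt{T-t},s)$, the forward half of Theorem \ref{thm:pseudolocality} applied at $(y,t)$ gives $|Rm|\leq(\epsilon_P r)^{-2}$ on $B(y,t,\epsilon_P r)\times[t,T)$, since $(\epsilon_P r)^2\geq T-t$. Hence $y\in M\setminus\Sigma$ and $\overline{y}=\pi(y)\in\mathcal{R}_X$. Integrating $|\partial_{t'}g_{t'}|_{g_{t'}}\leq C|Rc|\leq C(\epsilon_P r)^{-2}$ over $[t,T)$, whose length is at most $(\epsilon_P r)^2$, yields a tensor comparison $e^{-C(A)}g_t\leq g_{t'}\leq e^{C(A)}g_t$ on $B(y,t,\epsilon_P r)$; letting $t'\nearrow T$ and using $g_{t'}\to g_X$ in $C^\infty_{\text{loc}}(M\setminus\Sigma)$ gives the same comparison between $g_t$ and $\pi^*g_X$ near $y$. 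Thus $\pi$ restricted to the set in question has volume distortion controlled by a constant depending only on $A$.

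It remains to show $\pi$ sends this set into $\{\widetilde{r}_{Rm}^X<\epsilon_P^{-1}s\}\cap\mathcal{R}_X$, which I argue by contrapositive. If $\widetilde{r}_{Rm}^X(\overline{y})\geq\epsilon_P^{-1}s$, then by the smooth convergence $g_{t'}\to g_X$ on a neighborhood of $\overline{y}$, for $t'$ sufficiently close to $T$ one has $|Rm|_{g_{t'}}\leq2\epsilon_P^2 s^{-2}$ on $B(y,t',\tfrac{1}{2}\epsilon_P^{-1}s)$. The backward half of Theorem \ref{thm:pseudolocality} at $(y,t')$ with parameter of order $\epsilon_P^{-1}s$ then propagates the bound back to time $t$ (the condition $s\geq\epsilon_P^{-1}\sqrt{T-t}$ ensures the backward parabolic neighborhood covers $[t,t']$), yielding $\widetilde{r}_{Rm}^g(y,t)\geq s$, a contradiction. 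Combining with the volume distortion bound, $|\{\epsilon_P^{-1}\sqrt{T-t}\leq\widetilde{r}_{Rm}^g(\cdot,t)<s\}|_{g_t}\leq C(A)|\pi(\{\cdots\})|_{g_X}\leq C(A)|\{\widetilde{r}_{Rm}^X<\epsilon_P^{-1}s\}\cap\mathcal{R}_X|_{g_X}$, possibly after reducing the universal $\epsilon_P$ by a dimensional factor so the two invocations of pseudolocality compose cleanly.

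For part $(ii)$, Theorem \ref{thm:theorem1} provides a finite singular set $\mathcal{S}_X=\{\overline{x}_1,\ldots,\overline{x}_N\}$ of isolated orbifold points, and its proof also furnishes constants $r_0=r_0(X)>0$ and $c=c(A)>0$ with $\widetilde{r}_{Rm}^X(\overline{y})\geq c\,d(\overline{x}_k,\overline{y})$ on $B(\overline{x}_k,r_0)\cap\mathcal{R}_X$. Since $X\setminus\bigcup_k B(\overline{x}_k,r_0)$ is a compact subset of $\mathcal{R}_X$, the curvature scale is bounded below there by some $r_1(X)>0$. For $s\leq\min\{cr_0,r_1\}$, it follows that $\{\widetilde{r}_{Rm}^X<s\}\cap\mathcal{R}_X\subseteq\bigcup_k B(\overline{x}_k,c^{-1}s)$, and Bishop-Gromov volume comparison on the noncollapsed Ricci limit space $(X,d)$ bounds each ball by $C(A)s^4$; summing over the $N$ singular points gives $E'(X)s^4$. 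Larger $s$ is handled trivially via $|X|_{g_X}\leq C(X)$. The main obstacle is the constant-chasing in part $(i)$: the precise scale $\epsilon_P^{-1}s$ on the right arises from the composition of forward pseudolocality (to reach the singular time), metric comparison (to pass from $g_t$ to $g_X$), and backward pseudolocality (to rule out $\widetilde{r}_{Rm}^X\geq\epsilon_P^{-1}s$), and it may be necessary to replace $\epsilon_P$ by a smaller but still $A$-controlled constant to make this composition close up; part $(ii)$ is essentially mechanical given Theorem \ref{thm:theorem1}.
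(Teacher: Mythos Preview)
Your proposal is correct and follows essentially the same approach as the paper. For $(i)$ the paper also uses forward pseudolocality to land in $M\setminus\Sigma$, the contrapositive via backward pseudolocality to obtain the containment in $\{\widetilde{r}_{Rm}^{X}<\epsilon_P^{-1}s\}$, and the curvature bound on $[t,T)$ to compare $dg_t$ with $dg_T$; for $(ii)$ the paper uses exactly the finite isolated singular set, the lower bound $\widetilde{r}_{Rm}^X\geq c_0\,d(\overline{x}_j,\cdot)$ near each $\overline{x}_j$, and the trivial bound for large $s$, just as you do.
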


\begin{proof}
$(i)$ By Theorem \ref{thm:pseudolocality},
\[
\{\epsilon_{P}^{-1}\sqrt{T-t}<\widetilde{r}_{Rm}^{g}(\cdot,t)<s\}\subseteq\{x\in M\setminus\Sigma;\widetilde{r}_{Rm}^{g}(x,T)<\epsilon_{P}^{-1}s\}.
\]
Moreover, for any $x\in M$ with $\widetilde{r}_{Rm}^{g}(x,t)\geq\epsilon_{P}^{-1}(T-t)$,
we have $|Rm|(x,\tau)\leq\frac{1}{T-\tau}$ for all $\tau\in[t,T]$,
so we can integrate $\partial_{\tau}dg_{\tau}|_{x}\geq c(T-t)^{-1}dg_{\tau}|_{x}$
from $\tau=t$ to $\tau=T$ to obtain $dg_{T}|_{x}\geq cdg_{t}|_{x}$.
Thus
\begin{align*}
|\{\epsilon_{P}^{-1}\sqrt{T-t}<\widetilde{r}_{Rm}^{g}(\cdot,t)<s\}|_{g_{t}}\leq & C|\{x\in M\setminus\Sigma;\widetilde{r}_{Rm}^{g}(x,T)<\epsilon_{P}^{-1}s\}|_{g_{T}}\\
= & C|\{\overline{x}\in\mathcal{R}_{X};\widetilde{r}_{Rm}^{X}(\overline{x})<\epsilon_{P}^{-1}s\}|_{g_{X}}.
\end{align*}
$(ii)$ Let $\overline{x}_{1},...,\overline{x}_{N}\in X$ be the singular
points. Fix $r_{0}>0$ such that $d(\overline{x}_{i},\overline{x}_{j})>2r_{0}$
for distinct $i,j\in\{1,...,N\}$. Because $X\setminus\{\overline{x}_{1},...,\overline{x}_{N}\}$
is smooth, we can find $\sigma_{0}>0$ such that $\{\widetilde{r}_{Rm}^{X}<\sigma_{0}\}\subseteq\bigcup_{j=1}^{N}B^{X}(\overline{x}_{j},r_{0})$.
Moreover, by possibly shrinking $r_{0}$ (and $\sigma_{0}$ accordingly),
we can assume $\widetilde{r}_{Rm}^{X}\geq c_{0}d(\overline{x}_{j},\cdot)$
on $B^{X}(\overline{x}_{j},r_{0})$, for $j=1,...,N$, and some constant
$c_{0}>0$. When $s\geq\sigma_{0}$, we can estimate
\[
|\{\widetilde{r}_{Rm}^{X}<s\}\cap\mathcal{R}_{X}|_{g_{X}}\leq|\mathcal{R}_{X}|_{g_{X}}\leq C\sigma^{-4}s^{4},
\]
so it suffices to consider the case where $s<\sigma_{0}$. Then 
\begin{align*}
|\{\widetilde{r}_{Rm}^{X}<s\}\cap\mathcal{R}_{X}|_{g_{X}}\leq & \sum_{j=1}^{N}|B^{X}(\overline{x}_{j},c_{0}^{-1}s)\cap\mathcal{R}_{X}|_{g_{X}}\\
\leq & NC(A,T)c_{0}^{-4}s^{4}.
\end{align*}
\end{proof}
\begin{proof}[Proof of Theorem \ref{thm:theorem3}]
 Since $(M^{4},(g_{t})_{t\in[0,T-\tau]})$ is smooth, it suffices
to prove this for $t\in[T-\tau,T)$. Let $\overline{x}_{1},...,\overline{x}_{N}\in X$
be the singular points of $X$, and choose representative points $x_{1},...,x_{N}\in M$. 

\noindent \textbf{Claim 1: }For any $\alpha\in[1,\infty)$, there
exists $\delta=\delta(\alpha)>0$ such that
\[
\{\widetilde{r}_{Rm}^{g}(\cdot,t)<\alpha\sqrt{T-t}\}\subseteq\cup_{j=1}^{N}B_{g}(x_{j},t,\delta^{-1}\sqrt{T-t})
\]
for all $t\in[T-\delta,T)$. 

Assume not, so that we can find sequences $\delta_{i}\searrow0$,
$t_{i}\in[T-\delta_{i},T)$, and points 
\[
y_{i}\in M\setminus\cup_{j=1}^{N}B_{g}(x_{j},t,\delta_{i}^{-1}\sqrt{T-t})
\]
satisfying 
\[
\frac{\widetilde{r}_{Rm}^{g}(y_{i},t_{i})}{\sqrt{T-t_{i}}}<\alpha.
\]
Pass to a subsequence so that $y_{i}\to y$ in $M$. Because $\widetilde{r}_{Rm}^{g}(y_{i},t_{i})\to0$,
we must have $y\in\Sigma$, hence $\overline{y}\in\{\overline{x}_{1},...,\overline{x}_{N}\}$.
By Claim 2 in the proof of Theorem \ref{thm:theorem1}, we get $\limsup_{t\nearrow T}d_{g_{t}}(y,x)(T-t)^{-\frac{1}{2}}<\infty$
for some $x\in\{x_{1},...,x_{N}\}$. By the choice of $y_{i}$, this
implies
\[
\liminf_{i\to\infty}\frac{d_{g_{t_{i}}}(y,y_{i})}{\sqrt{T-t_{i}}}\geq\liminf_{i\to\infty}\left(\frac{d_{g_{t_{i}}}(x,y_{i})-d_{g_{t_{i}}}(x,y)}{\sqrt{T-t_{i}}}\right)=\infty.
\]
Let $E\geq2$ be arbitrary. Let $\gamma:[0,1]\to M$ be any curve
from $y$ to $y_{i}$. By Claim 1 of Theorem \ref{thm:theorem1},
there exist $\delta=\delta(A,E)>0$ and $r=r(A)<\infty$ such that
for $t\in(T-\delta,T)$ and $y'\in B_{g}(y,t,2Er\sqrt{T-t})\setminus\overline{B}_{g}(y,t,\frac{1}{2}r\sqrt{T-t})$,
we have

\[
|Rm|(y',s)\leq\frac{1}{T-t}
\]
for all $s\in(T-\delta,T)$. Similarly to the proof of Theorem \ref{thm:theorem1},
we define

\[
u_{-}^{i}:=\sup\left\{ u\in[0,1];d_{g_{t_{i}}}(\gamma(u),y)=r\sqrt{T-t_{i}}\right\} ,
\]
\[
u_{+}^{i}:=\inf\left\{ u\in[0,1];d_{g_{t_{i}}}(\gamma(u),y)=(E+1)r\sqrt{T-t_{i}}\right\} ,
\]
so that $\text{length}_{g_{t_{i}}}(\gamma|[u_{-}^{i},u_{+}^{i}])\geq Er\sqrt{T-t_{i}}$
for $i\in\mathbb{N}$ sufficiently large (independently of $\gamma$).
Because $|Rm|_{g}(\gamma(u),s)\leq\frac{1}{T-t_{i}}$ for all $u\in[u_{-}^{i},u_{+}^{i}]$
and $s\in[T-t_{i},T)$, we get
\[
\text{length}_{g_{s}}(\gamma|[u_{-}^{i},u_{+}^{i}])\geq\frac{r}{e^{4}}E\sqrt{T-t_{i}},
\]
so letting $s\nearrow T$, then taking the infimum over all curves
$\gamma$ gives 
\[
d_{X}(\overline{y},\overline{y}_{i})\geq\frac{r}{e^{4}}E\sqrt{T-t_{i}},
\]
hence
\[
\liminf_{i\to\infty}(T-t_{i})^{-\frac{1}{2}}d_{X}(\overline{y},\overline{y}_{i})\geq\frac{r}{e^{4}}E.
\]
However, $E<\infty$ was arbitrary, so we have
\[
\lim_{i\to\infty}(T-t_{i})^{-\frac{1}{2}}d_{X}(\overline{y},\overline{y}_{i})=\infty.
\]
In particular, we can use $\liminf_{\overline{z}\to\overline{y}}d_{X}^{-1}(\overline{z},\overline{y})\widetilde{r}_{Rm}^{X}(\overline{z})>0$
to get
\[
(T-t_{i})^{-\frac{1}{2}}\widetilde{r}_{Rm}^{X}(\overline{y}_{i})=\frac{d_{X}(\overline{y},\overline{y}_{i})}{\sqrt{T-t_{i}}}\frac{\widetilde{r}_{Rm}^{X}(\overline{y}_{i})}{d_{X}(\overline{y},\overline{y}_{i})}\to\infty
\]
as $i\to\infty$. By Theorem \ref{thm:pseudolocality}, this implies
\[
\lim_{i\to\infty}\frac{\widetilde{r}_{Rm}^{g}(y_{i},t_{i})}{\sqrt{T-t_{i}}}=\infty,
\]
a contradiction. $\square$

We now apply Claim 1 with $\alpha=\epsilon_{P}^{-1}$ as in Theorem
\ref{thm:pseudolocality}. Then a standard covering argument on each
ball $B(x_{j},t,\delta^{-1}\sqrt{T-t})$ using volume doubling gives,
for any $t\in(T-\delta,T)$, some $N_{1}=N_{1}((g_{t})_{t\in[0,T)})\in\mathbb{N}$
(independent of $t$) along with points $z_{t}^{1},...,z_{t}^{N_{1}}$
such that 
\[
\{\widetilde{r}_{Rm}^{g}(\cdot,t)<\epsilon_{P}^{-1}\sqrt{T-t}\}\subseteq\bigcup_{j=1}^{N_{1}}B_{g}(z_{t}^{j},t,r_{0}\sqrt{T-t}),
\]
where $r_{0}$ is as in Theorem \ref{thm:highdimlp}. 

For any $s\in(0,r_{0}\sqrt{T-t}]$, we may thus apply Theorem \ref{thm:highdimlp}
to get 
\begin{align*}
|\{\widetilde{r}_{Rm}^{g}(\cdot,t)<s\}|_{g_{t}}\leq & \sum_{j=1}^{N_{1}}|\{\widetilde{r}_{Rm}^{g}(\cdot,t)<s\}\cap B_{g}(z_{t}^{j},t,r_{0}\sqrt{T-t})|_{g_{t}}\\
\leq & N_{1}E\left(\frac{s}{r_{0}\sqrt{T-t}}\right)^{4}\left(r_{0}\sqrt{T-t}\right)^{4}\\
\leq & N_{1}Es^{4}.
\end{align*}
If $s\in[r_{0}\sqrt{T-t},\epsilon_{P}^{-1}\sqrt{T-t}]$, then Claim
1 gives
\begin{align*}
|\{\widetilde{r}_{Rm}^{g}(\cdot,t)<s\}|_{g_{t}}\leq & \sum_{j=1}^{N}|B_{g}(x_{j},t,\delta^{-1}\sqrt{T-t})|_{g_{t}}\\
\leq & C(A)\delta^{-4}N(T-t)^{2}\leq C(A)\delta^{-4}Nr_{0}^{-4}s^{4}.
\end{align*}
If $s\in[\epsilon_{P}^{-1}\sqrt{T-t},\infty)$, use Proposition \ref{prop:smallbigpoints}
and Claim 1 to get

\begin{align*}
|\{\widetilde{r}_{Rm}^{g}(\cdot,t)<s\}|_{g_{t}}\leq & |\{\epsilon_{P}^{-1}\sqrt{T-t}\leq\widetilde{r}_{Rm}^{g}(\cdot,t)<s\}|_{g_{t}}\\
 & +|\{\widetilde{r}_{Rm}^{g}(\cdot,t)<\epsilon_{P}^{-1}\sqrt{T-t}\}|_{g_{t}}\\
\leq & C(A)\delta^{-4}Nr_{0}^{-4}\epsilon_{P}^{-4}(T-t)^{2}+E's^{4}\\
\leq & C((g_{t})_{t\in[0,T)})s^{4}.
\end{align*}
\end{proof}

\section{Appendix: Gaussian Upper Bound for the Heat Kernel}

The following is a modification of the proof of Theorem 3.1 in \cite{cao},
which now applies to manifolds with possibly negative lower Ricci
curvature bounds. This estimate is similar to Theorem 1.4 of \cite{daviesricciflow},
but importantly does not depend on a bound for $\int_{0}^{T}\sup_{M}|Rc|(\cdot,t)dt$. 
\begin{prop}
\label{prop:heatkernel} Let $(M^{n},(g_{t})_{t\in[0,T)})$ be a solution
to the Ricci flow satisfying $Rc(g_{t})\geq-Ag_{t}$ and $|B(x,t,r)|_{g_{t}}\geq A^{-1}r^{n}$
for all $(x,t)\in M\times[0,T)$ and $r\in(0,1]$. Then there exists
$C=C(n,A,T,\text{diam}_{g_{0}}(M))<\infty$ such that
\[
K(x,t;y,s)\leq\dfrac{C}{(t-s)^{\frac{n}{2}}}\exp\left(-\dfrac{d_{g_{t}}^{2}(x,y)}{C(t-s)}\right)
\]
for all $x,y\in M$ and $0\leq s<t<T$.
\end{prop}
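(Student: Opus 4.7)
The plan is to follow the Davies weighted-energy method in the form adapted to Ricci flow by Cao \cite{cao}, modified so that the argument only needs a Ricci lower bound (rather than a two-sided bound) and so that constants depend only on $n, A, T, \mathrm{diam}_{g_0}(M)$. Throughout, the uniform diameter and volume comparison estimates from Lemma \ref{lem:basiclemma} will replace the role that stronger curvature assumptions play in Cao's setting.

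First I would establish the on-diagonal estimate $K(x,t;y,s) \leq C_1 (t-s)^{-n/2}$ for $x,y \in M$ and $0 \leq s < t \leq T$, with $C_1 = C_1(n,A,T,\mathrm{diam}_{g_0}(M))$. The Ricci lower bound gives $R(g_s) \geq -nA$, and Lemma \ref{lem:basiclemma}(iv) provides a uniform lower bound on the pointed Nash entropy; combining these with Theorem 7.1 of \cite{bamlergen1} yields the on-diagonal bound. Alternatively, Lemma \ref{lem:basiclemma}(i),(ii) and the Ricci lower bound yield a uniform Sobolev inequality on each time slice, and Moser iteration then produces the same estimate.

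Next fix $(x_0, t_0) \in M \times (0,T]$ and set $u(y,s) := K(x_0,t_0;y,s)$ for $s \in [0,t_0)$, which satisfies $\partial_s u = -\Delta_{g_s} u + R(g_s) u$. Setting $\phi(y) := d_{g_{t_0}}(x_0,y)$, the comparison $g_s \geq e^{-2A(t_0-s)} g_{t_0}$ (from $Rc \geq -Ag$) gives $|\nabla \phi|_{g_s} \leq e^{A(t_0-s)} \leq e^{AT}$. For $\xi \geq 0$ I consider the weighted energy
\[
I_\xi(s) := \int_M u^2 e^{2\xi\phi}\,dg_s.
\]
Differentiating in $s$, using $\partial_s\, dg_s = -R\, dg_s$, integrating by parts, and absorbing the cross term $4\xi \int u \langle \nabla u, \nabla\phi\rangle e^{2\xi\phi}\,dg_s$ into the quadratic gradient term via Cauchy--Schwarz, the scalar curvature contributions combine favorably and one obtains a differential inequality of the form
\[
\frac{dI_\xi}{ds} \geq - C_2 \xi^2 I_\xi, \qquad C_2 = C_2(n,A,T).
\]

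To pass from the integral estimate to the pointwise Gaussian bound, I would then apply the classical Davies trick: via the semigroup identity at the midpoint time $\tau := (s+t_0)/2$, together with Cauchy--Schwarz and the on-diagonal estimate, $K(x_0,t_0;y_0,s)$ can be bounded by a product of factors of the form $e^{-\xi \phi(y_0)} \sqrt{I_\xi(\tau)}$ and a dual quantity weighted around $y_0$. The seed value as $s \nearrow t_0$ comes from the concentration of $u(\cdot,s)\,dg_s$ at $x_0$; integrating the differential inequality backwards from there yields $I_\xi(s) \leq C_3 e^{C_4 \xi^2(t_0-s)}(t_0-s)^{-n/2}$, and optimizing over $\xi > 0$ with the choice $\xi \sim d_{g_{t_0}}(x_0,y_0)/(t_0-s)$ produces the Gaussian factor $\exp\bigl(-d_{g_{t_0}}^2(x_0,y_0)/(C(t_0-s))\bigr)$. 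The main technical obstacle is controlling the error arising from the time-dependent volume form $\partial_s\, dg_s = -R\, dg_s$ using only a one-sided bound on $R$, and ensuring the weighted estimate is evaluated against the distance at time $t_0$ rather than $s$; both issues are handled by the comparison $e^{-2AT} g_{t_0} \leq g_s \leq e^{2AT} g_{t_0}$ and by the uniform quantitative bounds of Lemma \ref{lem:basiclemma}, which absorb the resulting factors into constants depending only on $n, A, T, \mathrm{diam}_{g_0}(M)$.
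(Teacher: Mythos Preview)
Your approach is genuinely different from the paper's. You propose the classical Davies method with a linear weight $e^{2\xi\phi}$, $\phi=d_{g_{t_0}}(x_0,\cdot)$, then optimize over $\xi$. The paper instead follows Cao's scheme: it works with the \emph{forward} heat kernel $u(y,t)=K(y,t;x,s)$, uses the truncated quadratic weight $\xi(y,t)=-\frac{(r-d_{g_t}(x,y))_+^2}{8(t_0-t)}$ built from the \emph{time-dependent} distance, derives a recursive inequality for $I_r(t)=\int_{M\setminus B(x,t,r)}u^2\,dg_t$, iterates over dyadic sequences $r_k\searrow r/2$, $t_k\searrow s$, and finally passes from the $L^2$ estimate to a pointwise bound via Qi Zhang's gradient estimate. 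The only place the Ricci lower bound enters the weight computation is through $\partial_t d_{g_t}(x,y)\le A\,d_{g_t}(x,y)$, which produces the harmless factor $e^{A^2 r^2(t_1-t_2)}$.

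There are two concrete problems with your sketch. First, the two-sided comparison $e^{-2AT}g_{t_0}\le g_s\le e^{2AT}g_{t_0}$ is false under only a Ricci lower bound: $Rc\ge -Ag$ gives $\partial_t g\le 2Ag$, hence only $g_s\ge e^{-2A(t_0-s)}g_{t_0}$ for $s<t_0$; there is no upper bound on $g_s$ without a Ricci upper bound. This is not fatal --- your gradient bound $|\nabla\phi|_{g_s}\le e^{AT}$ only needs the correct inequality --- but you should not invoke the other direction.

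Second, and more seriously, the ``seed value'' step does not work as written. Your differential inequality $\frac{d}{ds}I_\xi\ge -C_2\xi^2 I_\xi$ lets you bound $I_\xi(s)$ by $e^{C_2\xi^2(s'-s)}I_\xi(s')$ for $s'>s$, but $I_\xi(s')\to\infty$ as $s'\nearrow t_0$ (the conjugate heat kernel concentrates to a delta, and its $L^2$ norm blows up), so you cannot seed from there. Seeding at an intermediate time requires controlling $\int K^2 e^{2\xi\phi}\,dg_\tau$ with the \emph{unbounded} weight $e^{2\xi\phi}$, which is exactly the Gaussian decay you are trying to prove; the on-diagonal bound and $\int K\,dg_\tau=1$ alone are not enough. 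Making the Davies route rigorous here requires an additional ingredient (e.g.\ a Nash--Moser type $L^1\to L^2$ smoothing estimate for the \emph{twisted} evolution, uniform in $\xi$), which you do not supply. The paper avoids this difficulty entirely: Cao's weight satisfies $\xi\le 0$, so $e^\xi\le 1$ and the weighted energy is always dominated by the unweighted $L^2$ norm, which the on-diagonal bound controls immediately; the Gaussian factor then emerges from the iteration, not from the seed.
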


\begin{proof}
First suppose $u\in C^{\infty}(M\times[t_{2},t_{1}])$ is a positive
solution of $\partial_{t}u=\Delta_{g_{t}}u$. Fix $\xi\in C^{\infty}(M\times[t_{2},t_{1}])$
to be determined. Then 
\begin{align*}
\dfrac{d}{dt}\int_{M}u^{2}e^{\xi}dg_{t}= & \int_{M}(2u\Delta_{g_{t}}u+u^{2}\partial_{t}\xi-Ru^{2})e^{\xi}dg_{t}\\
= & \int_{M}\left(-2|\nabla u|_{g(t)}^{2}-2u\langle\nabla u,\nabla\xi\rangle+u^{2}\partial_{t}\xi-Ru^{2}\right)e^{\xi}dg_{t}\\
\leq & \int_{M}\left(\frac{1}{2}|\nabla\xi|_{g(t)}^{2}+\partial_{t}\xi\right)u^{2}e^{\xi}dg_{t}+nA\int_{M}u^{2}e^{\xi}dg_{t}.
\end{align*}
Fix $x\in M$, and define 
\[
I_{r}(t):=\int_{M\setminus B(x,t,r)}u^{2}(y,t)dg_{t}(y).
\]
Fix $t_{0}\in(t_{1},T)$, and define 
\[
\xi(y,t):=-\dfrac{(r-d_{g_{t}}(x,y))_{+}^{2}}{8(t_{0}-t)},
\]
for $(y,t)\in M\times[t_{2},t_{0})$. For any $t\in[t_{2},t_{1}]$
where $t\mapsto d_{g_{t}}(x,y)$ is differentiable,
\begin{align*}
\partial_{t}d_{g_{t}}(x,y)= & \frac{d}{dt}\left(\inf_{\gamma}\int_{0}^{l}|\dot{\gamma}(\tau)|_{g_{t}}d\tau\right)=\inf_{\gamma}\left(-\int_{0}^{l}\frac{Rc(\dot{\gamma}(\tau),\dot{\gamma}(\tau))}{|\dot{\gamma}(\tau)|}d\tau\right)\\
\leq & \inf_{\gamma}\left(A\int_{0}^{l}|\dot{\gamma}(\tau)|_{g_{t}}d\tau\right)=Ad_{g_{t}}(x,y),
\end{align*}
where the infimum is taken over all $g_{t}$-minimizing geodesics
$\gamma:[0,l]\to M$ from $x$ to $y$. We can thus estimate (when
$d_{g_{t}}(x,y)\leq r$, otherwise everything is zero)
\begin{align*}
\partial_{t}\xi+\frac{1}{2}|\nabla\xi|_{g_{t}}^{2}= & -\dfrac{(r-d_{g_{t}}(x,y))^{2}}{8(t_{0}-t)^{2}}+\dfrac{(r-d_{g_{t}}(x,y))}{4(t_{0}-t)}\partial_{t}d_{g_{t}}(x,y)+\dfrac{(r-d_{g_{t}}(x,y))^{2}}{32(t_{0}-t)^{2}}\\
\leq & -\dfrac{(r-d_{g_{t}}(x,y))^{2}}{16(t_{0}-t)^{2}}+Ar\dfrac{(r-d_{g_{t}}(x,y))}{4(t_{0}-t)}\\
\leq & A^{2}r^{2}.
\end{align*}
This implies
\[
\dfrac{d}{dt}\left(e^{-(nA+A^{2}r^{2})t}\int_{M}u^{2}e^{\xi}dg_{t}\right)\leq0.
\]
For $r>\rho>0$ and $0<t_{2}<t_{1}<t_{0}$, we can integrate to obtain

\begin{align*}
I_{r}(t_{1})\leq & \int_{M}u^{2}(y,t_{1})e^{\xi(y,t_{1})}dg_{t_{1}}(y)\\
\leq & e^{(nA+A^{2}r^{2})(t_{1}-t_{2})}\int_{M}u^{2}(y,t_{2})e^{\xi(y,t_{2})}dg_{t_{2}}(y)\\
\leq & e^{(nA+A^{2}r^{2})(t_{1}-t_{2})}\left(I_{\rho}(t_{2})+\int_{B(x,t_{2},\rho)}u^{2}(y,t_{2})e^{\xi(y,t_{2})}dg_{t_{2}}(y)\right)\\
\leq & e^{(nA+A^{2}r^{2})(t_{1}-t_{2})}\left(I_{\rho}(t_{2})+\exp\left(-\dfrac{(r-\rho)^{2}}{8(t_{0}-t_{2})}\right)\int_{B(x,t_{2},\rho)}u^{2}(y,t_{2})dg_{t_{2}}(y)\right).
\end{align*}
By Lemma \ref{lem:basiclemma}, Theorem 7.1 of \cite{bamlergen1}
gives $B^{\ast}=B^{\ast}(A,T,D)<\infty$ such that
\[
K(y,t;y,s)\leq\dfrac{B^{\ast}}{(t-s)^{\frac{n}{2}}}
\]
for all $y\in M$ and $0\leq s<t<T$, where $D:=\text{diam}_{g_{0}}(M)$.
From
\[
\frac{d}{dt}\int_{M}udg_{t}=-\int_{M}Rudg_{t}\leq nA\int_{M}udg_{t},
\]
we get $\int_{M}udg_{t}\leq e^{nAT}\int_{M}udg_{s}$, so by taking
$u(y,t):=K(y,t;x,s)$, we obtain
\[
\int_{B(x,t_{2},\rho)}K^{2}(y,t_{2};x,s)dg_{t_{2}}(y)\leq\dfrac{B^{\ast}}{(t_{2}-s)^{\frac{n}{2}}}\int_{B(x,t_{2},\rho)}u(y,t_{2})dg_{t_{2}}(y)\leq\dfrac{B}{(t_{2}-s)^{\frac{n}{2}}},
\]
where $B=B(A,T,D)<\infty$. Combining estimates, and then taking $t_{0}\searrow t_{1}$
gives
\[
I_{r}(t_{1})\leq e^{(nA+A^{2}r^{2})(t_{1}-t_{2})}\left(I_{\rho}(t_{2})+\dfrac{B}{(t_{2}-s)^{\frac{n}{2}}}\exp\left(-\dfrac{(r-\rho)^{2}}{8(t_{1}-t_{2})}\right)\right).
\]
Now fix $r>0$, $0\leq s<t\leq T$ and suppose $(r_{k}),(t_{k})$
are decreasing sequences with $r_{0}=r$, $t_{0}=t$, $t_{k}\searrow s$.
Then
\[
I_{r_{k}}(t_{k})\leq e^{(nA+A^{2}r_{k}^{2})(t_{k}-t_{k+1})}\left(I_{r_{k+1}}(t_{k+1})+\frac{B}{(t_{k+1}-s)^{\frac{n}{2}}}\exp\left(-\dfrac{(r_{k}-r_{k+1})^{2}}{8(t_{k}-t_{k+1})}\right)\right),
\]
so iterating and using $I_{r_{k}}(t_{k})\to0$ (since $K(\cdot,t;x,s)\to0$
locally uniformly on $M\setminus\{x\}$ as $t\searrow s$) gives
\begin{align*}
I_{r}(t)= & I_{r_{0}}(t_{0})\leq\sum_{k=0}^{\infty}\exp\left(\sum_{j=0}^{k}(nA+A^{2}r_{j}^{2})(t_{j}-t_{j+1})\right)\frac{B}{(t_{k+1}-s)^{\frac{n}{2}}}\exp\left(-\dfrac{(r_{k}-r_{k+1})^{2}}{8(t_{k}-t_{k+1})}\right)\\
\leq & c(A,T,D)e^{A^{2}Tr^{2}}\sum_{k=0}^{\infty}\frac{1}{(t_{k+1}-s)^{\frac{n}{2}}}\exp\left(-\frac{(r_{k}-r_{k+1})^{2}}{8(t_{k}-t_{k+1})}\right).
\end{align*}
We now choose the sequences of radii and times:
\[
r_{k}:=\left(\frac{1}{2}+\frac{1}{k+2}\right)r,\hfill t_{k}:=s+2^{-k}(t-s),
\]
so that
\[
r_{k}-r_{k+1}\geq\frac{r}{(k+3)^{2}},\qquad t_{k}-t_{k+1}=2^{-k-1}(t-s),
\]
hence we have
\[
\frac{(r_{k}-r_{k+1})^{2}}{8(t_{k}-t_{k+1})}\geq\frac{2^{k-2}}{(k+3)^{4}}\frac{r^{2}}{t-s}\geq\frac{\gamma2^{\frac{k}{2}+1}r^{2}}{t-s}
\]
for all $k\in\mathbb{N}$, where $\gamma>0$ is universal. Assuming
$r^{2}\geq t-s$, we therefore have
\begin{align*}
I_{r}(t)\leq & \dfrac{C(A,T,D)}{(t-s)^{\frac{n}{2}}}\exp\left(A^{2}Tr^{2}-\dfrac{\gamma r^{2}}{t-s}\right)\sum_{k=0}^{\infty}2^{nk}\exp\left(-\gamma2^{\frac{k}{2}}\right)\\
\leq & \dfrac{C(A,T,D)}{(t-s)^{\frac{n}{2}}}\exp\left(A^{2}Tr^{2}-\dfrac{\gamma r^{2}}{t-s}\right).
\end{align*}
Now set $\eta:=\frac{\gamma}{2A^{2}T}$. If $|t-s|<\eta$, and if
$r:=d_{g_{t}}(x,y)$ satisfies $r^{2}\geq4(t-s)$, then 
\begin{align*}
\int_{B(y,t,\sqrt{t-s})}K^{2}(z,t;x,s)dg_{t}(z)\leq & \int_{M\setminus B(x,t,\frac{r}{2})}K^{2}(z,t;x,s)dg_{t}(z)\\
\leq & \dfrac{C(A,T,D)}{(t-s)^{\frac{n}{2}}}\exp\left(-\frac{\gamma}{8}\dfrac{d_{g_{t}}^{2}(x,y)}{t-s}\right),
\end{align*}
so there exists $z_{0}\in B(y,t,\sqrt{t-s})$ such that
\[
K^{2}(z_{0},t;x,s)|B(y,t,\sqrt{t-s})|_{g_{t}}\leq\dfrac{C(A,T,D)}{(t-s)^{\frac{n}{2}}}\exp\left(-\frac{\gamma}{8}\dfrac{d_{g_{t}}^{2}(x,y)}{t-s}\right),
\]
By the volume lower bound and Q. Zhang's gradient estimate (Theorem
3.3 of \cite{qizhanggradient}) we therefore conclude
\[
K(y,t;x,s)\leq\dfrac{C(A,T,D)}{(t-s)^{\frac{n}{2}}}\exp\left(-\frac{\gamma}{16}\dfrac{d_{g_{t}}^{2}(x,y)}{t-s}\right)
\]
for all $x,y\in M$ and $0\leq s<t\leq T$ with $|t-s|<\eta$ and
$d_{g_{t}}^{2}(x,y)\geq4(t-s)$. If instead $d_{g_{t}}^{2}(x,y)\leq4(t-s)$,
then the on-diagonal upper bound gives
\[
K(y,t;x,s)\leq\frac{C(A,T,D)}{(t-s)^{\frac{n}{2}}}\leq\dfrac{C(A,T,D)}{(t-s)^{\frac{n}{2}}}\exp\left(-\frac{\gamma}{4}\dfrac{d_{g_{t}}^{2}(x,y)}{t-s}\right),
\]
so the estimate holds in this case as well. Finally, from $\text{diam}_{g_{t}}(M)\leq C(A,T,D)$,
if $|t-s|\geq\eta,$then 
\begin{align*}
K(y,t;x,s)\leq & \frac{C(A,T,D)}{(t-s)^{\frac{n}{2}}}\leq\dfrac{C(A,T,D)}{(t-s)^{\frac{n}{2}}}\exp\left(-\frac{\gamma}{4}\dfrac{d_{g_{t}}^{2}(x,y)}{t-s}\right)\exp\left(\frac{\gamma}{4}\frac{D^{2}}{\eta}\right)\\
\leq & \frac{C(A,T,D)}{(t-s)^{\frac{n}{2}}}\exp\left(-\frac{\gamma}{4}\dfrac{d_{g_{t}}^{2}(x,y)}{t-s}\right).
\end{align*}
\end{proof}
\printbibliography

\end{document}